\newcommand{\showdate}{false}
\title{Extra-twisted connected sum $G_2$-manifolds}
\author[J. Nordström]{Johannes Nordström}
\address{Department of Mathematical Sciences,
University of Bath,
Bath BA2 7AY, UK}
\email{j.nordstrom@bath.ac.uk}
\newcommand{\jncomm}[1]{\textbf{\color{red}[}#1\textbf{\color{red}]}}
\newcommand{\ignore}[1]{}
\DeclareMathOperator{\re}{Re}
\DeclareMathOperator{\im}{Im}
\DeclareMathOperator{\coker}{coker}
\DeclareMathOperator{\rk}{rk}
\newcommand{\ie}{\emph{i.e.} }
\newcommand{\eg}{\emph{e.g.} }
\newcommand{\cf}{\emph{cf.} }
\newcommand{\kd}{\Sigma}
\newcommand{\rs}{Y}
\newcommand{\bm}{H}
\newcommand{\hdg}{h}
\newcommand{\sm}[1]{\left(\begin{smallmatrix} #1 \end{smallmatrix} \right)}
\newcommand{\cvec}[2]{\sm{#1 \\ #2}}
\newcommand{\rvec}[2]{\sm{\!#1 & #2\!}}
\newcommand{\fbb}{\mathcal{Z}}
\newcommand{\sff}{\mathcal{Y}}
\newcommand{\calo}{\mathcal{O}}
\DeclareMathOperator{\Amp}{Amp}
\newcommand{\trivc}{\underline{\bbc}}
\newcommand{\trivr}{\underline{\bbr}}
\newcommand{\cyl}{\infty}
\newcommand{\thet}{\vartheta}
\newcommand{\hk}{hyper-Kähler\xspace}
\DeclareMathAlphabet{\matheur}{U}{eur}{m}{n}
\newcommand{\hkr}{\matheur{r}}
\newcommand{\tormat}{\matheur{t}}
\newcommand{\kclass}{\matheur{k}}
\newcommand{\oM}{\hspace*{0pt}\mkern 4mu\overline{\mkern-4mu M\mkern-1mu}\mkern 1mu}
\DeclareMathOperator{\Pic}{Pic}
\newcommand{\PP}{\mathbb{P}}
\newcommand{\Sph}{\mathbb{S}}
\newcommand{\isom}{\stackrel{\sim}{\to}}
\DeclareFontFamily{U} {MnSymbolA}{}
\DeclareFontShape{U}{MnSymbolA}{m}{n}{
  <-6> MnSymbolA5
  <6-7> MnSymbolA6
  <7-8> MnSymbolA7
  <8-9> MnSymbolA8
  <9-10> MnSymbolA9
  <10-12> MnSymbolA10
  <12-> MnSymbolA12}{}
\DeclareFontShape{U}{MnSymbolA}{b}{n}{
  <-6> MnSymbolA-Bold5
  <6-7> MnSymbolA-Bold6
  <7-8> MnSymbolA-Bold7
  <8-9> MnSymbolA-Bold8
  <9-10> MnSymbolA-Bold9
  <10-12> MnSymbolA-Bold10
  <12-> MnSymbolA-Bold12}{}
\DeclareSymbolFont{MnSyA} {U} {MnSymbolA}{m}{n}
\DeclareMathSymbol{\dashedleftarrow}{\mathrel}{MnSyA}{98}
\DeclareMathSymbol{\dashedrightarrow}{\mathrel}{MnSyA}{96}
\newcommand{\ZZ}{\mathbb{Z}}
\DeclareMathOperator{\Tor}{Tor}
\DeclareMathOperator{\Hom}{Hom}
\newcommand{\krel}[2]{[#1,\,#2]}
\newcommand{\ctrel}[2]{c_2[#1,\,#2]}
\newcommand{\corel}[2]{c_1[#1,\,#2]}
\newcommand{\mmod}{\!\!\mod}
\newcommand{\fpif}{\texorpdfstring{$\tfrac{\pi}{4}$}{pi/4}}
\newcommand{\fpis}{\texorpdfstring{$\tfrac{\pi}{6}$}{pi/6}}
\newcommand{\half}{{\textstyle\frac{1}{2}}}
\newcommand{\threehalf}{{\textstyle\frac{3}{2}}}
\newcommand{\third}{{\textstyle\frac{1}{3}}}
\newcommand{\twothird}{{\textstyle\frac{2}{3}}}
\newcommand{\quart}{\textstyle\frac{1}{4}}
\newcommand{\threequart}{{\textstyle\frac{3}{4}}}
\newcommand{\bbz}{\mathbb{Z}}
\newcommand{\Z}{\mathbb{Z}}
\newcommand{\cg}[1]{\bbz_{#1}}
\newcommand{\Q}{\mathbb{Q}}
\newcommand{\bbq}{\mathbb{Q}}
\newcommand{\bbr}{\mathbb{R}}
\newcommand{\R}{\mathbb{R}}
\newcommand{\bbc}{\mathbb{C}}
\newcommand{\bbp}{\mathbb{P}}
\newcommand{\bbrp}{\mathbb{R}^{+}}
\newcommand{\into}{\hookrightarrow}
\newcommand{\Id}{\textup{Id}}
\newcommand{\gtstr}{$G_{2}$\nobreakdash-\hspace{0pt}structure}
\newcommand{\gtmfd}{\texorpdfstring{$G_{2}$\nobreakdash-\hspace{0pt}}{G2-}manifold}
\newcommand{\norm}[1]{\Vert #1 \Vert}
\newcommand{\anglen}{u}
\newcommand{\anglex}{v}
\newcommand{\drn}{\mathbf{u}}
\newcommand{\drx}{\mathbf{v}}
\newcommand{\lnx}{\xi}
\newcommand{\lnn}{\zeta}
\newcommand{\arx}{\psset{linecolor=red}}
\newcommand{\arn}{\psset{linecolor=blue}}
\newcommand{\antip}{a}
\newcommand{\mtor}[2]{S^1_\lnx \mathbin{\wt\times} #1} 
\newcommand{\mtt}[1]{\mtor{#1}{\tau}}
\newcommand{\mtlift}[1]{\wt{{#1}}}
\newcommand{\clplift}[1]{\wh{{#1}}}
\newcommand{\wt}[1]{\widetilde #1}
\newcommand{\wh}[1]{\widehat #1}
\newcommand{\gen}[1]{\langle#1\rangle}
\newcommand{\inner}[1]{\langle#1\rangle}
\newcommand{\contra}[1]{\textstyle\frac{\partial}{\partial #1}}
\numberwithin{equation}{section} 
\newtheorem{thm}[equation]{Theorem}
\newtheorem{prop}[equation]{Proposition}
\newtheorem{lem}[equation]{Lemma}
\newtheorem{cor}[equation]{Corollary}
\theoremstyle{definition}
\newtheorem{defn}[equation]{Definition}
\newtheorem{notn}[equation]{Notation}
\newtheorem{constr}[equation]{Construction}
\theoremstyle{remark}
\newtheorem{rmk}[equation]{Remark}
\newtheorem*{rmk*}{Remark}
\newtheorem{ex}[equation]{Example}
\setlist{leftmargin=*}
\newcommand{\halfedge}{}
\newcommand{\gridx}{}
\newcommand{\gridy}{}
\newcommand{\labdarrow}[4]{
\psline[arrowscale=2]{->}(0,0)(!#1 5 mul \gridx\space mul #2 5 mul \gridy\space mul)
\uput[#4](!#1 5 mul \gridx\space mul \halfedge\space add #2 5 mul \gridy\space mul \halfedge\space add){#3}}
\newcommand{\grid}[2]{
\multido{\ix= \numexpr -#1 * 2 \relax +2}{\numexpr #1 * 2 + 1}{
\multido{\iy= \numexpr -#2 * 2 \relax +2}{\numexpr #2 * 2 + 1}{
\psdot[dotsize=1](!\ix\space 5 mul \gridx\space mul \iy\space 5 mul \gridy\space mul)}}
}
\newcommand{\halfgrid}[2]{
\multido{\ix= \numexpr -#1 * 2 - -1 \relax +2}{\numexpr #1 * 2}{
\multido{\iy= \numexpr -#2 * 2 - -1 \relax +2}{\numexpr #2 * 2}{
\psdot[dotsize=1](!\ix\space 5 mul \gridx\space mul \iy\space 5 mul \gridy\space mul)}}}
\newcommand{\insertgrid}[3]{
\renewcommand{\gridx}{#1}
\renewcommand{\gridy}{#2}
\psset{unit=1.2mm}
\begin{pspicture}(\numexpr \halfedge*2,\numexpr \halfedge*2)
\psset{origin={\halfedge,\halfedge}} 
\newgray{intergray}{0.75}
\newgray{vlight}{0.85}
\newgray{darkgray}{0.5}
\SpecialCoor
#3
\psdot[linecolor=black, dotsize=1.5](0,0)
\end{pspicture}}
\newcommand{\grida}[1]{
\insertgrid{1}{1}{
\grid{1}{1}
#1}}
\newcommand{\gridb}[1]{
\insertgrid{1.412}{1.412}{
\grid{1}{1}
\halfgrid{1}{1}
#1}}
\newcommand{\gridc}[1]{
\insertgrid{1.732}{1}{
\grid{1}{1}
\halfgrid{1}{2}
#1}}
\newcommand{\gridd}[1]{
\insertgrid{1}{1.732}{
\grid{1}{1}
\halfgrid{2}{1}
#1}}
\newcommand{\doublefig}[3]{
\renewcommand{\halfedge}{#1}
\protect \begin{figure}
\protect \begin{minipage}{0.48\textwidth}
\centering #2
\protect \end{minipage}\hfill
\protect \begin{minipage}{0.48\textwidth}
\centering #3
\protect \end{minipage}
\protect \end{figure}
}
\begin{document}

\begin{abstract}
We present a construction of closed 7-manifolds of holonomy $G_2$, which
generalises Kovalev's twisted connected sums by taking quotients of the
pieces in the construction before gluing.
This makes it possible to realise a wider range of topological types,
and Crowley, Goette and the author use this to exhibit examples
of closed 7-manifolds with disconnected moduli space of holonomy $G_2$ metrics.
\end{abstract}

\maketitle

The twisted connected sum construction pioneered by Kovalev \cite{kovalev03}
is a way to construct closed 7-dimensional Riemannian 7-manifolds with holonomy
$G_2$ from algebraic geometric data. Corti, Haskins, Pacini and the author
\cite{g2m} employed the construction to exhibit many examples of
$G_2$-manifolds whose topology can be understood in great detail.
The aim of this paper is to present a variation of the twisted connected sum
construction that removes some restrictions on the topology of the resulting
7-manifolds and $G_2$-structures. In particular, it is proved by Crowley,
Goette and the author in \cite{eta} that this construction can be used to
produce examples of 7-manifolds such that the moduli space of $G_2$ metrics is
disconnected.

7-dimensional manifolds with holonomy $G_2$ appear as an exceptional
case in Berger's classification of possible holonomy groups of Riemannian
manifolds \cite{berger55}. The first complete examples of manifolds with
holonomy $G_2$ were found by Bryant and Salamon \cite{bryant89}, and have
large symmetry group. In contrast, closed $G_2$-manifolds can never have
continuous symmetries, because $G_2$-metrics are always Ricci-flat.
The first examples of holonomy $G_2$ metrics on closed manifolds were
found by Joyce~\cite{joyce96-I}, gluing together reducible pieces to resolve
quotients of flat orbifolds.

The twisted connected sum construction developed later by
Kovalev \cite{kovalev03} works by gluing together two
pieces, each of which is a product of a circle $S^1$ and a complex 3-fold
with an asymptotically cylindrical Calabi-Yau metric. Each piece thus has
holonomy $SU(3)$, a proper subgroup of $G_2$. The asymptotically cylindrical
Calabi-Yau 3-folds can be obtained from algebraic geometry data, \eg starting from
Fano 3-folds. The cross-section of the asymptotic cylinder is of the form
$S^1 \times \kd$ for a K3 surface $\kd$. In the gluing,
the asymptotic cylinders of the pieces---each with cross-section
$S^1 \times S^1 \times \kd$---are identified by an isomorphism that swaps the
$S^1$ factors in order to produce a simply-connected 7-manifold $M$, admitting
metrics with holonomy exactly $G_2$. This relies on finding a so-called
\emph{\hk rotation} between the K3 factors in the cross-sections,
see Definition~\ref{def:hkr}.

Corti, Haskins, Pacini and the author \cite{cym, g2m} extended the supply of
algebraic geometric building blocks to which the twisted connected sum
construction can be applied, and analysed the topology of millions of the
resulting $G_2$-manifolds. While the $G_2$-manifolds constructed by Joyce
typically have non-zero second Betti number $b_2$, many twisted connected
sums---indeed, the ones that can be constructed with the least effort---are
2-connected, making it possible to apply the classification
theory of Wilkens \cite{wilkens72, wilkens74}, Crowley \cite{crowley02} and
Crowley and the author \cite{7class} (see Theorem \ref{thm:7class})
to completely determine the diffeomorphism type of the underlying 7-manifold.
 
Twisted connected sum $G_2$-manifolds $M$ always have the following topological
properties.
\begin{enumerate}
\item $b_2(M) + b_3(M)$ is odd \cite[(8.56)]{kovalev03}.
\item 
The torsion subgroup $\Tor H^4(M)$ equipped with the linking form 
splits as ${G \times \Hom(G, \Q/\Z)}$ for some finite group $G$
\cite[Proposition 3.8]{exotic}.
In particular, the size of $\Tor H^4(M)$ is a square integer.
\item The invariant $\nu \in \Z/48$ takes the value 24 \cite[Theorem 1.7]{nu},
and the refinement $\bar \nu \in \Z$ vanishes \cite[Corollary 3]{eta}.
\end{enumerate}
Here $\nu$ and $\bar\nu$ are invariants not of the 7-manifold, but of the
$G_2$-metric. A metric with holonomy exactly $G_2$ is equivalent to a
torsion-free \gtstr. A \gtstr{} means a reduction of the structure group of
the frame bundle from $GL(7, \bbr)$ to $G_2$, but is simplest described in
terms of a smooth pointwise stable 3-form $\varphi \in \Omega^3(M)$.
The torsion-free condition corresponds to a
first-order partial differential equation for the 3-form $\varphi$. 

Now, given a $G_2$-structure $\varphi$ on any closed 7-manifold,
we may define $\nu(\varphi) \in \Z/48$ in terms of a spin coboundary
\cite[Definition 3.1]{nu}. This is invariant under both diffeomorphisms and
homotopies (continuous deformations of the \gtstr, ignoring the torsion-free
condition). Further, \mbox{\cite[Definition 1.4]{eta}} introduces a refinement
$\bar\nu(\varphi) \in \Z$ in terms of eta invariants. It is a refinement in the
sense that for \gtstr s of holonomy $G_2$ metrics, $\bar\nu$ determines $\nu$
by the relation $\nu(\varphi) \equiv \bar \nu(\varphi) +24 \mmod 48$.
While $\bar\nu(\varphi)$ too is invariant under diffeomorphisms, it is
not invariant under arbitrary homotopies of \gtstr s. However, $\bar\nu$ is
invariant under deformations through torsion-free \gtstr s.

\begin{rmk*}
There is a parity constraint
\begin{equation}
\label{eq:nu_parity}
\nu(\varphi) = \chi_2(M) \mmod 2,
\end{equation}
where $\chi_2(M)$ is the semi-characteristic $\sum_{i=0}^3 b_i(M) \in \Z/2$,
reducing to $1 + b_2(M) + b_3(M)$ for a simply-connected 7-manifold. Thus (iii)
formally entails (i).
\end{rmk*}

These invariants give a potential method to distinguish connected components
of the $G_2$ moduli space on a closed 7-manifold. However, even though
there are many pairs of twisted connected sums whose underlying 7-manifolds
can be shown to be diffeomorphic by the classification theory, (iii) means that
$\nu$ and $\bar\nu$ fail to distinguish their components in the moduli space
in this case.

In this paper we modify the twisted connected sum construction by dividing
either or both of the two pieces in the construction by an involution before
gluing.
This maintains many of the attractive features of the twisted connected sum
construction: examples can be generated starting from algebraic geometry data,
topological invariants can be computed from the algebraic inputs, and the
resulting 7-manifolds are often 2-connected and simple enough to apply
diffeomorphism classification theory. On the other hand, the topology of the result is less restrictive.

\begin{enumerate}[label=(\roman*')]
\item There is no constraint on the parity of $b_2(M) + b_3(M)$.
\item The size of $\Tor H^4(M)$ need not be a square integer, and in particular
the linking form need not split.
\item The values of $\nu$ and $\bar\nu$ can vary.
\end{enumerate}

The drawback compared with the ordinary twisted connected sum construction is
that requiring an involution limits the range of algebraic building blocks to
which the construction can be applied. Also, the topological computations are
more involved. 

We exhibit a selection of 50 explicit examples of 7-manifolds with
holonomy $G_2$ obtained from the new construction.
All except Example \ref{ex:intersection} are 2-connected. 7 of those
have odd $b_3$ and torsion-free $H^4(M)$, and 5 of those are
diffeomorphic to some ordinary twisted connected sum.
The $\bar\nu$-invariant of extra-twisted connected sums is computed in
\cite[Corollary 2]{eta} (see Theorem~\ref{thm:nubar}), and used there to prove
that these lead to examples of closed 7-manifolds with disconnected moduli
space of holonomy $G_2$ metrics.

Among the examples in this paper, we also find
\begin{itemize}
\item
A 7-manifold whose $G_2$ moduli space has at least 3 components
(see Examples \ref{ex:77pi4} and \ref{ex:pi6deg5},
using the formula for $\bar\nu$ from~\cite{eta}).
\item A pair of \gtmfd s whose diffeomorphism types
are distinguished only by the type of the torsion linking form
(Examples \ref{ex:diag} and \ref{ex:hyperb}).
\item A pair of \gtmfd s with equal $\bar\nu$-invariant, such that the
underlying manifolds are diffeo\-morphic, but (due to order 3 torsion in $H^4$)
only by an orientation-reversing diffeomorphism; thus the fact that $\bar\nu$
changes sign under reversing orientation can be used to distinguish connected
components of the $G_2$ moduli space on this 7-manifold (Examples \ref{ex:t3a} and \ref{ex:t3b}).
\item A \gtmfd{} that illustrates a subtlety in the calculation of the number
of smooth structures on 2-connected 7-manifolds with 8-torsion in
$H^4$: Wilkens \cite[Conjecture p.\,548]{wilkens74} predicts that Example
\ref{ex:inertia} has a unique
smooth structure, but according
to \cite[Theorem 1.10]{7class} it has two.
\end{itemize}

\subsection*{Organisation}

The paper consists of two strands. The first is to set up the general
machinery of the extra-twisted connected sum construction. The procedure for
gluing ACyl Calabi-Yau manifolds (possibly with involution) is made precise
in \S\ref{sec:glue}, while \S\ref{sec:blocks} describes
the closed Kähler 3-fold ``building blocks'' from which we obtain ACyl
Calabi-Yau 3-folds, and what data of these blocks is important.
The matching problem, \ie how to find \hk rotations between pairs of
ACyl Calabi-Yau 3-folds, is addressed in \S\ref{sec:match}, and \S\ref{sec:top} explains how to compute key invariants of the resulting \gtmfd s.

The second strand is producing examples. Two methods of producing building
blocks are provided in \S\ref{sec:sfblocks} and \S\ref{sec:k3blocks}, starting
from semi-Fano 3-folds and K3s with non-symplectic involution, respectively.
In \S\ref{sec:examples} we exhibit a number of examples of matchings of those
blocks and compute the topology of the extra-twisted connected sums.
In some cases, the matchings rely on understanding of which K3 surfaces appear
in certain families of building blocks, which is studied in detail in
\S\ref{sec:genericity}.

Some of the machinery we set up---in particular the discussion of the matching
problem in~\S\ref{sec:match}---works in the same way in a more general
setting where one allows to divide by automorphisms of order greater than 2.
This is studied further by Goette and the author in \cite{nuxx}. However,
the topological calculations are less tractable there.

\subsection*{Acknowledgements}

The author thanks
Olivier Debarre,
Alessio Corti,
Diarmuid Crowley,
Sebastian Goette,
Mark Haskins,
Jesus Martinez Garcia,
and Dominic Wallis for valuable discussions,
the referee for constructive comments,
and the Simons Foundation for its support under the Simons Collaboration
on Special Holonomy in Geometry, Analysis and Physics
(grant \#488631, Johannes Nordström).

\section{The basics of the construction}
\label{sec:glue}

\subsection{Reducible \gtmfd s}
\label{subsec:reducible}

For $\lnn > 0$, let $S^1_\lnn$ denote $\bbr/\lnn\bbz$, and $\anglen$ its
coordinate (with period~$\lnn$); the parameter $\lnn$ 
affects the geometric meaning of the coordinate expressions for metrics
below.

\begin{thm}[{\cite[Theorem D]{hhn}}]
\label{thm:acyl}
Let $Z$ be a compact Kähler 3-fold
containing a smooth anti-canonical K3 surface $\kd$ with trivial normal bundle.
Let $V := Z \setminus \kd$, and consider it as a manifold with a cylindrical
end of cross-section $S^1_\lnn \times \kd$.
Let $I$ be the complex structure on $\kd$ induced by~$Z$, and let
$(\omega^I, \omega^J, \omega^K)$ be a \hk K3 structure on $\kd$ such that
$\omega^J + i \omega^K$ is (2,0) with respect to $I$ while $[\omega^I]$ is the
restriction of some Kähler class $\kclass \in H^2(Z;\R)$.
For any $\lnn > 0$ there is a unique ACyl Calabi-Yau structure
$(\Omega, \omega)$ on $V$, with $\omega \in \kclass_{|V}$ and asymptotic limit
\begin{align*} 
\omega_\cyl &:= dt \wedge d\anglen + \omega^I, \\
\Omega_\cyl &:= (d\anglen - i dt) \wedge (\omega^J + i \omega^K) .
\end{align*}
(In this metric, the $S^1_\lnn$ factor in the cross-section has circumference
$\lnn$.)
\end{thm}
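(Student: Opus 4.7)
The proof follows the Tian--Yau/Kovalev template for constructing ACyl Calabi--Yau metrics, refined in \cite{hhn} to obtain the sharp exponential asymptotics needed for $G_2$ gluing. The plan has three steps: construct a holomorphic volume form with the prescribed asymptotics; construct a background Kähler form with the prescribed asymptotics; and solve a complex Monge--Ampère equation to correct the volume form of the metric.

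For the first step, since $\kd$ is anticanonical with trivial normal bundle, $K_Z \cong \calo_Z(-\kd)$, so there is a meromorphic $(3,0)$-form $\Omega_Z$ on $Z$ with a simple pole along $\kd$ whose Poincaré residue may be chosen to be $\omega^J + i\omega^K$. A holomorphic trivialisation of the normal bundle identifies a tubular neighbourhood of $\kd$ with a disc bundle, and the identification of the punctured disc bundle with the half-cylinder via a map of the form $z = e^{-2\pi(t + i\anglen)/\lnn}$ (where $\anglen$ has period $\lnn$) turns the simple pole into an exponentially decaying deviation from $\Omega_\cyl$, up to an overall multiplicative constant absorbed by rescaling $\Omega_Z$. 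For the second step, any Kähler form on $Z$ representing $\kclass$ restricts on $\kd$ to a form in the class $[\omega^I]$; Yau's theorem on the K3 $\kd$ produces the Ricci-flat representative $\omega^I$ of that class compatible with the holomorphic volume form $\omega^J + i\omega^K$, and a $\partial\bar\partial$-lemma/collar construction on the tubular neighbourhood then gives a Kähler form $\omega_0 \in \kclass|_V$ whose difference from $\omega_\cyl$ decays exponentially.

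The main analytic step, and the chief obstacle, is to solve $(\omega_0 + i\partial\bar\partial\varphi)^3 = c \,\Omega \wedge \bar\Omega$ for a real $\varphi$ with exponential decay, where $c$ is the positive constant dictated by matching the asymptotic volume forms of $\omega_\cyl$ and $\Omega_\cyl$. The right-hand side differs from $\omega_0^3$ by an exponentially decaying error, so this fits the framework of the ACyl Calabi--Yau existence theorem of \cite{hhn} (building on Tian--Yau and Hein). The substance of that theorem is the Fredholm and mapping theory of the Laplacian on weighted Hölder spaces adapted to the cylinder, combined with the continuity method at a decay rate small enough to guarantee that the resulting $\omega$ genuinely attains $\omega_\cyl$ as its asymptotic limit, rather than merely in a weak sense. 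Uniqueness then follows because $\Omega$ is determined up to sign by its prescribed asymptotic limit, and within the fixed Kähler class $\kclass|_V$ the Monge--Ampère solution with exponential decay is unique by the maximum principle applied to the difference of any two such $\varphi$.
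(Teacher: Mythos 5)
The paper gives no proof of this statement: it is imported wholesale as Theorem D of \cite{hhn}, so there is nothing internal to compare against. Your sketch (meromorphic volume form with simple pole along $\kd$ and exponential cylindrical coordinates, a background Kähler form in $\kclass_{|V}$ asymptotic to $\omega_\cyl$, then the Tian--Yau/Hein-type Monge--Ampère existence theorem in weighted spaces plus a maximum-principle uniqueness argument) is essentially the strategy of that cited reference, with the caveat that the hard analytic content is invoked rather than re-proved.
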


Given $\lnx > 0$, define a product \gtstr{} $\varphi$ on $S^1_\lnx \times V$ by
\[ \varphi := d\anglex\wedge\omega + \re \Omega , \]
where $\anglex$ denotes the coordinate on the \emph{external} circle factor
$S^1_\lnx$ (whose circumference with respect to the induced metric is $\lnx$).
The asymptotic limit of $\varphi$ is
\[
\varphi_\cyl 
= d\anglex\wedge dt \wedge d\anglen + d\anglex\wedge\omega^I +
d\anglen\wedge\omega^J + dt \wedge \omega^K .
\]
Letting
\begin{equation}
\label{eq:zdef}
z = \anglex + i\anglen,
\end{equation}
we can rewrite the limit as
\begin{equation}
\label{eq:limit}
\varphi_\cyl = 
\re \left(dz \wedge (\omega^I - i \omega^J)\right) +
dt \wedge \left(\omega^K - {\textstyle \frac{i}{2}} dz \wedge d\bar z \right) .
\end{equation}
Note that $\lnn$ and $\lnx$ are the side lengths of the rectangular $T^2$
factor in the cross-section of $S^1_\lnx \times V$.
If $\partial_\anglen, \partial_\anglex \in \bbr^2$ is the orthonormal
frame dual to $d\anglen, d\anglex$, then we can think of
$\lnn\partial_\anglen$ and $\lnx\partial_\anglex$ as the generators of the
lattice defining the $T^2$.
Let $\varphi^{s0}$ be the \gtstr{} obtained by setting $\lnn = \lnx =1$, as we
do in the ordinary twisted connected sum construction; then the $T^2$ factor
is simply the quotient of $\bbc$ by the unit square lattice as illustrated in
Figure \ref{fig:varphi0}.
(Note that real axis (in red)
$\leftrightarrow \; \anglen = 0\; \leftrightarrow$
external circle factor.) 

Suppose now that there is a holomorphic involution $\tau$ on $Z$ such that
$\kd$ is a component of the fixed set; \cf Definition \ref{def:inv_block}.
Then the restriction of $\tau$ to $V$ is asymptotic to the involution
$a \times \Id$ on $S^1_\lnn \times \kd$, where
$a : S^1_\lnn \to S^1_\lnn$ denotes the antipodal map
$\anglex \mapsto \anglex + \half \lnn$. If we choose the Kähler class $\kclass$
in Theorem \ref{thm:acyl} to be $\tau$-invariant, then so is the resulting
Calabi-Yau structure $(\Omega, \omega)$.
The product \gtstr s above then descend to ones on the quotient
\[ \mtt{V} := S^1_\lnx {\times} V \, / \, a {\times} \tau . \]
The cross-section is $T^2 \times \kd$ for
$T^2 := S^1_\lnx \times S^1_\lnn/a \times a$. Note that this $T^2$ is still a flat
2-torus, but \emph{not} a metric product of circles unless $\lnx = \lnn$.
Let $\varphi^{s1}$, $\varphi^{h0}$ and $\varphi^{h1}$ be the \gtstr s on
$\mtt{V}$ corresponding to $(\lnn,\lnx) = (\sqrt{2}, \sqrt{2})$,
$(\sqrt{3}, 1)$ and $(1, \sqrt{3})$ respectively.
As illustrated in Figures \ref{fig:varphi1}--\ref{fig:varphi3}, the $T^2$
factor in the cross-section is a unit square torus with respect to
$\varphi^{s1}$, and a hexagonal torus with side length 1 with respect to
$\varphi^{h0}$ and $\varphi^{h1}$.

\doublefig{17}{
\insertgrid{0.707}{0.707}{
\psframe[linestyle=none,fillstyle=solid,fillcolor=vlight](0,0)(7.3,7.3)
\grid{2}{2}
\small
\arn
\labdarrow{0}{2}{$\partial_\anglen$}{l}
\arx
\labdarrow{2}{0}{$\partial_\anglex$}{d}}

\caption{$\varphi^{s0}$}
\label{fig:varphi0}
}{
\insertgrid{1}{1}{
\pspolygon[linestyle=none,fillstyle=solid,fillcolor=vlight](0,0)(5,5)(10,0)(5,-5)
\small
\grid{2}{2}
\halfgrid{2}{2}
\arn
\labdarrow{0}{2}{$\sqrt{2}\partial_\anglen$}{l}
\arx
\labdarrow{2}{0}{$\sqrt{2}\partial_\anglex$}{d}
}

\caption{$\varphi^{s1}$}
\label{fig:varphi1}
}

\doublefig{21}{
\insertgrid{0.707}{1.225}{
\pspolygon[linestyle=none,fillstyle=solid,fillcolor=vlight](0,0)(3.535,6.125)(10.605,6.125)(7.07,0)
\grid{2}{3}
\halfgrid{2}{2}
\small
\arn
\labdarrow{0}{2}{$\sqrt{3}\partial_\anglen\!\!$}{l}
\arx
\labdarrow{2}{0}{$\partial_\anglex$}{d}}

\caption{$\varphi^{h0}$}
\label{fig:varphi2}
}{
\insertgrid{1.225}{0.707}{
\pspolygon[linestyle=none,fillstyle=solid,fillcolor=vlight](0,0)(6.125,3.535)(6.125,10.605)(0,7.07)
\small
\grid{2}{2}
\halfgrid{2}{2}
\arn
\labdarrow{0}{2}{$\partial_\anglen$}{l}
\arx
\labdarrow{2}{0}{$\sqrt{3}\partial_\anglex$}{d}}

\caption{$\varphi^{h1}$}
\label{fig:varphi3}
}

\subsection{Gluing}

Let $(M_+, \varphi_+)$ and $(M_-, \varphi_-)$ be a pair of reducible
ACyl \gtmfd s, such that either each is of the form
$(S^1_\lnx \times V, \varphi^{s0})$ or $(\mtt{V}, \varphi^{s1})$, or each is of
the form $(\mtt{V}, \varphi^{h0})$ or $(\mtt{V}, \varphi^{h1})$ above.
We strive to treat the cases as uniformly as possible, and may use the
shorthand $\varphi^{ab}$ for symbols $a \in \{s, h\}$ and $b \in \{0, 1\}$.
Let $(\omega^I_\pm, \omega^J_\pm, \omega^K_\pm)$
be the corresponding \hk structures, and define $z_\pm$ by \eqref{eq:zdef}.

Let $\thet \in \bbr$ such that the isometry
$\bbc \to \bbc, \; z_+ \mapsto z_- := e^{i\vartheta} \bar z_+$ descends
to an %
isometry
\begin{equation}
\label{eq:tormat}
\tormat : T^2_+ \to T^2_-
\end{equation}
of the torus factors in the cross-sections of $M_+$ and $M_-$.
The condition that $\tormat$ is well-defined on the quotient is equivalent to
\begin{equation}
\vartheta \; = \; \left\{ \begin{array}{c}
\displaystyle\frac{k\pi}{2} \quad \textrm{if } a = s , \\
\displaystyle\frac{k\pi}{3} \quad \textrm{if } a = h ,
\end{array}
\right.
\end{equation}
for some~$k\in\frac12\bbz$ with~ $k\equiv\frac{b_+ + b_-}{2} \mod \bbz$.
We call $\thet$ the \emph{gluing angle} of $\tormat$.

Let $\hkr : \kd_+ \to \kd_-$ be a diffeomorphism, and
\begin{equation}
\label{eq:gluemap}
F := (-\Id_\R) \times \tormat \times \hkr
\; : \;\R \times T^2_+ \times \kd_+ \;  \longrightarrow \;
\R \times T^2_- \times \kd_- . %
\end{equation}
From \eqref{eq:limit}, we see that \eqref{eq:gluemap} is an isomorphism of the
asymptotic limits of $\varphi_\pm$ if and only if
\begin{equation}
\label{eq:hkr}
\begin{aligned}
\hkr^*\omega^K_- &= - \omega^K_+ \\
\hkr^*(\omega^I_- + i \omega^J_-) &= e^{i\vartheta} (\omega^I_+ - i\omega^J_+) .
\end{aligned}
\end{equation}

\begin{defn}
\label{def:hkr}
Call $\hkr : \kd_+ \to \kd_-$ a \emph{$\vartheta$-\hk rotation} if \eqref{eq:hkr}
holds.
\end{defn}

We consider the problem of finding such \hk rotations in \S\ref{sec:match}.
The special case of a $\frac{\pi}{2}$-\hk rotation coincides with the
notion of a \hk rotation from previous work on twisted connected sums,
\eg \cite[Definition 3.10]{g2m}. 

In these terms, suppose we can find a pair of reducible ACyl \gtmfd s
$(M_\pm, \varphi_\pm)$ of the above form, with asymptotic cross-sections
$T^2_\pm \times \kd_\pm$. Suppose further we can find an isometry
$\tormat : T^2_+ \to T^2_-$ as in \eqref{eq:tormat},
and a $\thet$-\hk rotation $\hkr : \kd_+ \to \kd_-$ for $\thet$ the
gluing angle of~$\tormat$.

\begin{thm}
\label{thm:glue}
For $\ell \gg 0$, let $M_\pm[\ell]$ be the truncation of $M_\pm$ at $t = \ell$,
and form a closed 7-manifold $M$ by gluing $M_+[\ell]$ to $M_-[\ell]$ along
their boundaries by the diffeomorphism
$\tormat \times \hkr : T^2_+ \times \kd_+ \to T^2_- \times \kd_-$.
Use a cut-off function to patch $\varphi_+$ and $\varphi_-$ to a closed
\gtstr{} $\tilde \varphi_\ell$ on $M$ such that
$\norm{\tilde \varphi_{|M_\pm[\ell]} - \varphi_{\pm|M_\pm[\ell]}} = O(e^{-\delta \ell})$.
Then there exists a unique torsion-free
\gtstr{} $\varphi$ in the cohomology class of $\tilde \varphi_\ell$
such that $\norm{\varphi - \tilde \varphi} = O(e^{-\delta \ell})$.
\end{thm}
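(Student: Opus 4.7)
\medskip

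\noindent\textbf{Proof plan.}
The strategy follows the classical twisted connected sum gluing argument of Kovalev~\cite{kovalev03} (refined in \cite{g2m}); the extra twisting introduces no essential analytic novelty because the action $a \times \tau$ on $S^1_\lnx \times V$ is free (the antipodal map $a$ has no fixed points on $S^1_\lnn$), so $\mtt{V}$ is a smooth ACyl manifold with bounded geometry uniformly in its parameters.

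First I would verify the asymptotic matching. The gluing map $F$ defined in \eqref{eq:gluemap} sends the asymptotic limit $\varphi_{\cyl,+}$ to $\varphi_{\cyl,-}$: the pull-back $(-\Id_\bbr)^*dt = -dt$ combines with $\tormat^*(dz_-) = e^{i\thet}d\bar z_+$ and the $\thet$-\hk rotation condition \eqref{eq:hkr} so that all three terms in the expression \eqref{eq:limit} match. Hence on the asymptotic cross-section the forms $\varphi_\pm - \varphi_{\cyl,\pm}$ are exact and decay like $e^{-\delta t}$ for some $\delta>0$ (the smallest positive eigenvalue of the relevant Dirac/Laplace-type operator on the cross-section $T^2 \times \kd$). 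Writing $\varphi_\pm - \varphi_{\cyl,\pm} = d\eta_\pm$ on the cylindrical end using the standard Hodge theory on cylinders with $\eta_\pm = O(e^{-\delta t})$, and choosing a cut-off function $\chi_\ell$ supported near $t = \ell$, I would set
\[ \tilde\varphi_\ell := \varphi_{\cyl,\pm} + d(\chi_\ell \eta_\pm) \]
on the neck (and $\varphi_\pm$ elsewhere), glued via $F$ at $t = \ell$. By construction $\tilde\varphi_\ell$ is closed; it is pointwise stable for $\ell$ sufficiently large because $\chi_\ell\eta_\pm$ is $C^0$-small, so it defines a \gtstr{} on the closed 7-manifold $M$.

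Next I would estimate its torsion. Since $d\tilde\varphi_\ell = 0$, the torsion is measured by $d \ast_{\tilde\varphi_\ell} \tilde\varphi_\ell$, which vanishes outside the cut-off region (where $\tilde\varphi_\ell$ agrees with the torsion-free $\varphi_\pm$) and is bounded pointwise by $C e^{-\delta \ell}$ on the neck, because it depends on at most one derivative of $\chi_\ell \eta_\pm$ and the nonlinear error of replacing $\ast_{\varphi_{\cyl,\pm}}$ by $\ast_{\tilde\varphi_\ell}$. The same bound holds in any $C^{k,\alpha}$-norm after taking $\delta$ slightly smaller. Then I would invoke Joyce's existence theorem in the form adapted to connected-sum gluings (\cite[Thm.\ 5.34]{kovalev03} or its analogue in \cite{g2m}): if the torsion of a closed \gtstr{} is sufficiently small in $C^{0,\alpha}$, and the underlying manifold has bounded geometry with appropriate uniform Sobolev/Schauder constants, then the implicit function theorem applied to the map $\eta \mapsto d \ast_{\tilde\varphi_\ell + d\eta}(\tilde\varphi_\ell + d\eta)$ yields a unique small exact perturbation producing a torsion-free \gtstr{} $\varphi$ in the cohomology class $[\tilde\varphi_\ell]$, with $\norm{\varphi - \tilde\varphi_\ell}_{C^{0,\alpha}} = O(e^{-\delta \ell})$.

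The main obstacle is verifying that the analytic constants in the implicit function theorem are uniform in $\ell$. This amounts to uniform bounds on Sobolev/Schauder constants and on the norm of the right inverse of the linearisation (essentially the Hodge Laplacian on 3-forms, restricted to the complement of harmonic forms). The uniform bounds follow because $M$ has bounded geometry independent of $\ell$ and because the kernel of the Laplacian on the cylindrical cross-section is controlled: translation-invariant harmonic forms on $\bbr \times T^2 \times \kd$ (which might produce small eigenvalues on the long neck) can be matched across $F$ and absorbed into the cohomological ambiguity, leaving the orthogonal complement with a spectral gap uniform in $\ell$. Once this is established—by the same argument as in the ordinary twisted connected sum case, since $\mtt V$ only differs from $S^1 \times V$ by a free quotient and the cross-section is still a flat torus bundle over $\kd$—existence and uniqueness follow from the standard contraction mapping argument.
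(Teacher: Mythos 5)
Your proposal is correct and is essentially the approach the paper takes: the paper simply cites Kovalev's gluing theorem (\cite[Theorem 5.34]{kovalev03}) as "analogous", and your write-up spells out exactly that analogy — asymptotic matching via \eqref{eq:hkr}, cut-off construction of a closed \gtstr{} with exponentially small torsion, and the uniform implicit function theorem argument, noting correctly that the free quotient by $a\times\tau$ introduces no new analytic difficulty.
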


\begin{proof}
Analogous to \cite[Theorem 5.34]{kovalev03}.
\end{proof}

\begin{constr}
\label{constr:xtcs}
We call the 7-manifold $M$ from Theorem \ref{thm:glue} a
\emph{$\thet$-twisted connected sum}.
\end{constr}

\noindent
When $a = s$ and $b_+ = b_- = 0$,
setting $\thet = \frac{\pi}{2}$ recovers the usual
notion of a twisted connected sum (and $\thet\in\pi\bbz$ gives an
``untwisted'' connected sum, with $b_1(M) = 1$ and holonomy not all of~$G_2$).

\subsection{Angles}
\label{subsec:angles}

Before we enumerate the possible combinations of $(a, b_+, b_-, \vartheta)$
that make it possible to match $\varphi^{ab_+}$ to $\varphi^{ab_-}$ with
a torus matching $\tormat$ with gluing angle $\vartheta$,
let us discuss briefly the geometric meaning of $\vartheta$.
We can think of $\vartheta$
as the angle in $T^2$ between the external circle factors in $M_+$ and $M_-$,
but that leaves an ambiguity of sign and complementary angles. However,
because the definition of the \gtstr s involves an orientation of the external
circle factors the direction of the tangent vectors $\partial_{\anglex_+}$ and
$\partial_{\anglex_-}$ have some meaning, and the angle between them is
$|\vartheta| \in (0, \pi)$. The sign can be described in terms of the complex
structure on the cross-section induced by the \gtstr{} on $M_+$ (vector
multiplication by $\partial_t$); because the $T^2$ factor is a complex curve,
it makes sense to consider the oriented angle from $\partial_{\anglex_+}$ to
$\partial_{\anglex_-}$.

If we swap the roles of $M_+$ and $M_-$ then the complex structure on the
cross-section is conjugated, so even though $\partial_{\anglex_+}$ and
$\partial_{\anglex_-}$ are swapped the oriented angle $\vartheta$ is unchanged.
More formally, note that if $\hkr : \kd_+ \to \kd_-$ is a
$\vartheta$-\hk rotation, then so is $\hkr^{-1}$. Let $(M', \varphi')$ be the
corresponding $\vartheta$-twisted connected sum of $M_-$ and $M_+$.
Then there is a tautological (oriented) diffeomorphism
$M \to M'$, and that pulls back $\varphi'$ to~$\varphi$.

\enlargethispage{0.9\baselineskip}

Here is another symmetry to bear in mind. We obtained the product \gtstr s
$\varphi_\pm$ on $M_\pm$ from ACyl Calabi-Yau structures
$(\Omega_\pm, \omega_\pm)$ on $V_\pm$. Phase rotation by $\pi$ gives an equally
good Calabi-Yau structure $(-\Omega_\pm, \omega_\pm)$, and another product
\gtstr{} $\varphi'_\pm$.
The asymptotic limit of $\varphi'_\pm$ is encoded by the \hk
structure $(\omega^I_\pm, -\omega^J_\pm, -\omega^K_\pm)$. Inspecting
\eqref{eq:hkr} we see that a $\vartheta$-\hk rotation for $\varphi_+$ and
$\varphi_-$ is the same thing as a $(-\vartheta)$-\hk rotation for
$\varphi'_+$ and~$\varphi'_-$. Let $(M', \varphi')$ be the resulting
$(-\vartheta)$-twisted connected sum.
Now $(\anglex_\pm, x) \mapsto (-\anglex_\pm, x)$ defines an
orientation-reversing diffeomorphism of $M_\pm$, pulling back $\varphi'_\pm$ to
$-\varphi_\pm$. These match up to define an orientation-reversing
diffeomorphism $M \to M'$ that pulls back $\varphi'$ to~$-\varphi$.
 
Taking these symmetries into account, any extra-twisted connected sum will be
isomorphic to one that has $b_+ \geq b_-$ and $\vartheta \in (0,\pi)$,
and uses exactly the same (unordered) pair of building blocks.

In listing the possibilities, we therefore restrict our attention to
such cases.
We find below that there is essentially a single interesting type of
$\vartheta$-twisted connected sum for each
\begin{equation}
\label{eq:psilist}
\vartheta \in \left\{ \frac{\pi}{6}, \frac{\pi}{4}, \frac{\pi}{3},
\frac{\pi}{2}, \frac{2\pi}{3}, \frac{3\pi}{4}, \frac{5\pi}{6} \right\} .
\end{equation}

\begin{rmk}
\label{rmk:comp}
Finally, one can also argue that every $\vartheta$-twisted connected sum is
diffeomorphic to \emph{some} $\vartheta{+} \pi$-twisted connected sum.
Let $V_+'$ be $V_+$ with the orientation reversed, equipped with the
ACyl Calabi-Yau structure $(\bar \Omega_+, -\omega_+)$. Then a
$\vartheta$-\hk rotation for $M_+$ and $M_-$ is also a $\vartheta {+} \pi$-\hk
rotation for $M'_+$ and $M_-$.
The orientation-preserving diffeomorphism
$S^1_{\lnx_+} \times V_+ \to S^1_{\lnx_+} \times V'_+,
\; (\anglex_+, x) \mapsto (-\anglex_+, x)$
descends to $M_+ \to M'_+$, and pulls back $\varphi'_+$ to $\varphi_+$.
It patches up with the identity map on $M_-$ to define an isomorphism
from $M$ to the $\vartheta{+} \pi$-twisted connected sum of $M'_+$ and $M_-$.

Combined with the symmetries discussed above, this means that any
extra-twisted connected sum is isometric to some extra-twisted connected sum
with $\vartheta \in (0, \frac{\pi}{2}]$, but not necessarily using the same
(in an oriented sense) ACyl Calabi-Yau manifolds.
\end{rmk}

\noindent
Now we list and describe the possible combinations of
$(a, b_+, b_-, \vartheta)$ (equivalently the different kinds of torus isometries $\tormat$).
In each case we illustrate the action on the $T^2$ factor with a figure that
shows the lattice corresponding to the two identified tori. The figure includes arrows indicating the ``external'' and ``internal'' circle factors on each
side, \eg the orthogonal arrows $\lnn_+\partial_{\anglen_+}$ and
$\lnx_+\partial_{\anglex_+}$ indicate the overlattice (of index 2 if it is not
the whole lattice) corresponding to the metric product
$S^1_{\lnn_+} \times S^1_{\lnx_+}$ that appears as the asymptotic cross-section
in $V_+ \times S^1_{\lnx_+}$. The gluing angle can be seen as the angle
between the red arrows $\lnx_+\partial_{\anglex_+}$ and  
$\lnx_-\partial_{\anglex_-}$ corresponding to the two external circle factors.

\doublefig{16}{
\grida{
\psarc{->}{3}{0}{90}
\psset{linecolor=purple}
\labdarrow{0}{2}{$\partial_{\anglen_+} \! = \partial_{\anglex_-}$}{u}
\labdarrow{2}{0}{$\partial_{\anglex_+} \! = \partial_{\anglen_-}$}{d}}
\caption{$\vartheta = \displaystyle \frac{\pi}{2}$}
\label{fig:1/2}
}{
\grida{
\psarcn{->}{3}{0}{-90}
\arx
\labdarrow{2}{0}{$\partial_{\anglex_+}$}{d}
\labdarrow{0}{-2}{$\partial_{\anglex_-}$}{l}
\arn
\labdarrow{0}{2}{$\partial_{\anglen_+}$}{l}
\labdarrow{-2}{0}{$\partial_{\anglen_-}$}{d}}
\caption{$\vartheta = - \displaystyle \frac{\pi}{2}$}
\label{fig:neg1/2}
}

\doublefig{18}{
\gridb{
\psarc{->}{3}{0}{45}
\arx
\labdarrow{2}{0}{$\sqrt{2}\partial_{\anglex_+}$}{d}
\labdarrow{1}{1}{$\partial_{\anglex_-}$}{u}
\arn
\labdarrow{0}{2}{$\sqrt{2}\partial_{\anglen_+}$}{l}
\labdarrow{1}{-1}{$\partial_{\anglen_-}$}{d}}
\caption{$\vartheta = \displaystyle \frac{\pi}{4}$}
\label{fig:1/4}
}{
\gridb{
\psarc{->}{3}{0}{135}
\arx
\labdarrow{2}{0}{$\sqrt{2}\partial_{\anglex_+}$}{d}
\labdarrow{-1}{1}{$\partial_{\anglex_-}$}{l}
\arn
\labdarrow{0}{2}{$\sqrt{2}\partial_{\anglen_+}$}{l}
\labdarrow{1}{1}{$\partial_{\anglen_-}$}{r}}
\caption{$\vartheta = \displaystyle \frac{3\pi}{4}$}
\label{fig:3/4}
}

\begin{itemize}[itemsep=6pt]

\item Square, $b_+ = b_- = 0$, $\vartheta = \displaystyle \frac{\pi}{2}$.\\
As %
already explained, this corresponds to the usual twisted connected
sums. $\vartheta = -\frac{\pi}{2}$ is the same up to orientation.
See Figures \ref{fig:1/2} and
\ref{fig:neg1/2}.

\item Square, $b_+ = 1$, $b_- = 0$,
$\vartheta = \displaystyle \frac{\pi}{4}$ or $\displaystyle \frac{3\pi}{4}$. \\
See Figures \ref{fig:1/4} and \ref{fig:3/4}.
The figures also help us understand the fundamental group. Note that
$\sqrt{2}\partial_{\anglen_+}$ and $\partial_{\anglen_-}$ generate $\pi_1 T^2$.
On the other hand, we can picture $\pi_1 M_\pm$ as the projection of the
lattice onto the line spanned by $\partial_{\anglex_\pm}$ (this uses that
$V_\pm$ is simply connected, which is a consequence of our definition of what
it means for $Z_\pm$ to be a building block, \cf Lemma \ref{lemg:Z&V}(i)).
Thus we see that $\sqrt{2}\partial_{\anglen_+}$ is in the kernel of the
push-forward to $\pi_1 M_+$, while its image in $\pi_1 M_-$ is a generator.
Similarly $\partial_{\anglen_-}$ is in the kernel of the push-forward to
$\pi_1 M_-$, while its image in $\pi_1 M_+$ is a generator.
Van Kampen implies that the resulting extra-twisted connected sums are
simply-connected.

\item Hexagonal, $b_+ = b_- = 1$,
$\vartheta = \displaystyle \frac{\pi}{3}$ or $\displaystyle \frac{2\pi}{3}$. \\
See Figures \ref{fig:1/3} and \ref{fig:2/3}. The resulting extra-twisted
connected sums are simply-connected by the same reasoning as in the previous
case.

\doublefig{21}{
\gridc{
\psarc{->}{3}{0}{60}
\arx
\labdarrow{2}{0}{$\sqrt{3}\partial_{\anglex_+}$}{d}
\labdarrow{1}{3}{$\sqrt{3}\partial_{\anglex_-}$}{r}
\arn
\labdarrow{0}{2}{$\partial_{\anglen_+}$}{l}
\labdarrow{1}{-1}{$\partial_{\anglen_-}$}{d}}
\caption{$\vartheta = \displaystyle \frac{\pi}{3}$}
\label{fig:1/3}
}{
\gridc{
\psarc{->}{3}{0}{120}
\arx
\labdarrow{2}{0}{$\sqrt{3}\partial_{\anglex_+}$}{d}
\labdarrow{-1}{3}{$\sqrt{3}\partial_{\anglex_-}$}{l}
\arn
\labdarrow{0}{2}{$\partial_{\anglen_+}$}{r}
\labdarrow{1}{1}{$\partial_{\anglen_-}$}{r}}
\caption{$\vartheta = \displaystyle \frac{2\pi}{3}$}
\label{fig:2/3}
}

\doublefig{21}{
\gridc{
\psarc[arrowscale=0.8]{->}{4}{0}{30}
\arx
\labdarrow{2}{0}{$\sqrt{3}\partial_{\anglex_+}$}{d}
\labdarrow{1}{1}{$\partial_{\anglex_-}$}{u}
\arn
\labdarrow{0}{2}{$\partial_{\anglen_+}$}{l}
\labdarrow{1}{-3}{$\sqrt{3}\partial_{\anglen_-}$}{r}}
\caption{$\vartheta = \displaystyle \frac{\pi}{6}$}
\label{fig:1/6}
}{
\gridc{
\psarc{->}{3}{0}{150}
\arx
\labdarrow{2}{0}{$\sqrt{3}\partial_{\anglex_+}$}{d}
\labdarrow{-1}{1}{$\partial_{\anglex_-}$}{l}
\arn
\labdarrow{0}{2}{$\partial_{\anglen_+}$}{l}
\labdarrow{1}{3}{$\sqrt{3}\partial_{\anglen_-}$}{r}}
\caption{$\vartheta = \displaystyle \frac{5\pi}{6}$}
\label{fig:5/6}
}

\item Hexagonal, $b_+ = 1$, $b_- = 0$,
$\vartheta = \displaystyle \frac{\pi}{6}$ or $\displaystyle \frac{5\pi}{6}$. \\
See Figures \ref{fig:1/6} and \ref{fig:5/6}.
Once more, the resulting extra-twisted connected sums are simply-connected.

\end{itemize}

\noindent
The remaining possibilities do not give simply-connected extra-twisted
connected sums, and are in fact quotients of twisted connected sums of the
types above.
By a ``$\vartheta$-twisted connected sum'' for $\vartheta$ as in
\eqref{eq:psilist} we will therefore usually mean one of the types above.

\begin{itemize}[itemsep=6pt]

\item Square, $b_+ = b_- = 1$, $\vartheta = \displaystyle \frac{\pi}{2}$.\\
The lattice in Figure \ref{fig:1/2'} has index 2 in the direct sum of the
projections onto the $\partial_{\anglex_\pm}$ axes, so the fundamental group of
the extra-twisted connected sum $M$ is $\bbz_2$. The universal cover is the
ordinary twisted connected sum $\oM$ of $S^1_{\!\sqrt{2}} \times V_+$ and
$S^1_{\!\sqrt{2}} \times V_-$ (where
$M_\pm = S^1_{\!\sqrt{2}} {\times} V_\pm / a {\times} \tau_\pm$):
the involutions $a \times \tau_\pm$ patch up to an involution on $\oM$ with
quotient $M$.

\doublefig{23}{
\gridb{
\psarc{->}{3}{0}{90}
\psset{linecolor=purple}
\labdarrow{0}{2}{$\sqrt{2}\partial_{\anglen_+} {=} \sqrt{2}\partial_{\anglex_-}$}{u}
\labdarrow{2}{0}{$\sqrt{2}\partial_{\anglex_+} {=} \sqrt{2}\partial_{\anglen_-}$\hspace*{3ex}}{d}}
\vspace{-8mm}
\caption{$\vartheta = \displaystyle \frac{\pi}{2}$, $\pi_1 M \cong \bbz_2$}
\label{fig:1/2'}
}{
\gridc{
\psarc{->}{3}{0}{90}
\psset{linecolor=purple}
\labdarrow{0}{2}{$\partial_{\anglen_+} = \partial_{\anglex_-}$}{u}
\labdarrow{2}{0}{$\begin{array}{c}\sqrt{3}\partial_{\anglex_+} \\ {=} \sqrt{3}\partial_{\anglen_-} \end{array}$}{d}}
\vspace{-8mm}
\caption{$\vartheta = \displaystyle \frac{\pi}{2}$, $\pi_1M \cong \bbz_2$}
\label{fig:1/3''}
}

\doublefig{19}{
\gridd{
\psarc{->}{3}{0}{60}
\arx
\labdarrow{2}{0}{$\partial_{\anglex_+}$}{d}
\labdarrow{1}{1}{$\partial_{\anglex_-}$}{u}
\arn
\labdarrow{0}{2}{$\sqrt{3}\partial_{\anglen_+}$}{l}
\labdarrow{3}{-1}{$\sqrt{3}\partial_{\anglen_-}$}{d}}
\caption{$\vartheta = \displaystyle \frac{\pi}{3}$, $\pi_1M \cong \bbz_3$}
\label{fig:1/3'}
}{
\gridd{
\psarc{->}{3}{0}{120}
\arx
\labdarrow{2}{0}{$\partial_{\anglex_+}$}{d}
\labdarrow{-1}{1}{$\partial_{\anglex_-}$}{u}
\arn
\labdarrow{0}{2}{$\sqrt{3}\partial_{\anglen_+}$}{r}
\labdarrow{3}{1}{$\sqrt{3}\partial_{\anglen_-}$}{u}}
\caption{$\vartheta = \displaystyle \frac{2\pi}{3}$, $\pi_1M \cong \bbz_3$}
\label{fig:2/3'}
}

\item Hexagonal, $b_+ = 1$, $b_- = 0$, $\vartheta = \frac{\pi}{2}$.\\
See Figure \ref{fig:1/3''}. Clearly this configuration is essentially the
same as the previous one, up to some squashing of the $T^2$ factor.

\enlargethispage{0.1\baselineskip}

\item Hexagonal, $b_+ = b_- = 0$,
$\vartheta = \displaystyle \frac{\pi}{3}$ or $\displaystyle \frac{2\pi}{3}$. \\
See Figures \ref{fig:1/3'} and \ref{fig:2/3'}.
Using $\{ \partial_{\anglex_+}, \partial_{\anglex_-} \}$
as a basis for $\pi_1 T^2$, and $ \half \partial_{\anglex_\pm}$ as generators
for $\pi_1 M_\pm$, the push-forward $\pi_1 T^2 \to \pi_1 M_+ \times \pi_1 M_-$
is represented by $\sm{2 & \pm 1 \\ \pm 1 & 2}$.
Since the determinant is~3, we find $\pi_1 M \cong \bbz_3$.

Up to scale, the universal cover of $M$ is a $\vartheta$-twisted connected sum
$\oM$ of the form above, \ie with $b_+ = b_- = 1$.
Note that $M_\pm = S^1_{\!\sqrt{3}} {\times} V_\pm / a {\times} \tau_\pm$ has an
innocuous order 3 automorphism
$\rho_\pm : (\anglex_\pm, \, x)
\mapsto (\anglex_\pm {+} \frac{1}{\sqrt{3}}, \, x)$.
The quotient $M_\pm/\rho_\pm$ is diffeomorphic to $M_\pm$,
but the covering map pulls back product \gtstr s of the form $\varphi^{h1}_\pm$
to ones of the form $\varphi^{h0}_\pm$ (up to a scale factor~$\sqrt{3}$).
The automorphisms $\rho_\pm$ patch up to an order 3 automorphism of the
$\vartheta$-twisted connected sum $\oM$, whose quotient is $M$.

\end{itemize}

\section{Building blocks}
\label{sec:blocks}

In \S\ref{sec:glue} we started off by using Theorem \ref{thm:acyl} to produce
ACyl Calabi-Yau 3-folds $V$ from closed Kähler 3-folds $Z$.
We now discuss how the topology of the ACyl Calabi-Yau 3-folds is related to
the topology of these building blocks, especially in the presence of an involution.
Further we discuss the second Chern class of the blocks, and the moduli space
of K3s that appear as anticanonical divisors in the blocks, as these 
will also prove relevant for finding matchings and computing the topology
of the resulting extra-twisted connected sums.

\subsection{Ordinary building blocks}
\label{subsec:ordblocks}

We begin by reviewing the results from \cite[\S 5]{cym} in the absence of
an involution. Like there, we incorporate into our notion of building block
some conditions beyond those needed to apply Theorem \ref{thm:acyl}, in order
to simplify the topological calculations later.

\begin{defn}
  \label{def:block}
A \emph{building block} is a nonsingular algebraic \mbox{3-fold} $Z$ together
with a projective morphism $f\colon Z\to \PP^1$ satisfying the following
assumptions: 
\begin{enumerate}[leftmargin=*]
\item the anticanonical class $-K_Z\in H^2(Z)$ is
  primitive.
\item $\kd=f^\star (\infty)$ is a nonsingular K3 surface and $\kd\sim -K_Z$. 
\end{enumerate}
Identify $H^2(\kd)$ with the K3 lattice $L$ %
(\ie choose a marking for $\kd$), and let $N$ denote the image of
$H^2(Z) \to H^2(\kd)$.
\begin{enumerate}[resume]
\item The inclusion $N\hookrightarrow L$ is primitive, that is,
  $L/N$ is torsion-free.
\item The group $H^3(Z)$---and thus also $H^4(Z)$---is torsion-free.
\end{enumerate}
\end{defn}
\begin{lem}[{\cite[Lemma 5-2]{cym}, \cite[Lemma 3.6]{g2m}}]
\label{lem:basicz}
If $Z$ is a building block then
\hfill
\begin{enumerate}
\item \label{it:zpi1}
$\pi_1(Z) = (0)$. In particular, $H^*(Z)$ and $H_*(Z)$ are torsion-free.
\item \label{it:zh20}
$H^{2,0}(Z) = 0$, so $N \subseteq \Pic \kd$.
\end{enumerate}
\end{lem}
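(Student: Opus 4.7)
The plan is to establish $\pi_1(Z)=0$ first, derive the torsion-free claim from it, and then prove (ii) by a Hodge-theoretic computation using the ideal sheaf sequence of $\kd \subset Z$.

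For $\pi_1(Z)=0$: The projective morphism $f : Z \to \PP^1$ has $\kd=f^*(\infty)$ smooth, so over a small disc $\Delta \ni \infty$ the map $f$ is a smooth $\kd$-bundle and $f^{-1}(\Delta)$ is homotopy equivalent to $\kd$, hence simply-connected. The general fibres of $f$ are smooth anticanonical divisors deforming from $\kd$, hence simply-connected K3s, and Stein factorisation combined with $\kd$ being reduced and irreducible forces every fibre of $f$ to be connected. The approach is then to invoke a Nori/Lefschetz-type theorem for a proper surjective morphism with simply-connected base and simply-connected general fibre to conclude $\pi_1(Z) = 0$, or equivalently to apply van Kampen to the decomposition $Z = f^{-1}(\Delta) \cup f^{-1}(\PP^1 \setminus \{\infty\})$ combined with a vanishing-cycle argument to handle the singular fibres.

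Torsion-freeness of (co)homology follows formally: $\pi_1(Z) = 0$ gives $H_1(Z;\Z) = 0$, so $H^2(Z;\Z)$ is torsion-free by the universal coefficient theorem, and by Poincaré duality on the oriented closed 6-manifold $Z$ the same holds for $H^5(Z;\Z)$. Together with assumption (iv) that $H^3(Z)$ and $H^4(Z)$ are torsion-free, this makes every (co)homology group of $Z$ torsion-free.

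For (ii), $\pi_1(Z) = 0$ implies $H^1(Z;\bbc) = 0$, hence $h^{0,1}(Z) = h^1(\calo_Z) = 0$. Using $\omega_Z \cong \calo_Z(-\kd)$ from $\kd \sim -K_Z$, the ideal sheaf sequence
\[
0 \to \omega_Z \to \calo_Z \to \calo_\kd \to 0
\]
yields $\chi(\omega_Z) = \chi(\calo_Z) - \chi(\calo_\kd) = \chi(\calo_Z) - 2$, while Serre duality on the smooth 3-fold $Z$ gives $\chi(\omega_Z) = -\chi(\calo_Z)$; combining forces $\chi(\calo_Z) = 1$. The long exact sequence at $i=0$ shows that restriction $H^0(\calo_Z) \to H^0(\calo_\kd)$ is the obvious isomorphism of one-dimensional spaces of constants, so $H^0(\omega_Z) = 0$ and therefore $h^{3,0}(Z) = h^{0,3}(Z) = 0$. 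Together with $\chi(\calo_Z) = 1$ and $h^{0,1}(Z) = 0$, this forces $h^{0,2}(Z) = 0$, so $H^{2,0}(Z) = 0$ by Hodge symmetry. Thus $H^2(Z;\bbc)$ is pure of Hodge type $(1,1)$, and since restriction to $\kd$ is a morphism of Hodge structures, the image $N$ lies in $H^{1,1}(\kd) \cap H^2(\kd;\Z) = \Pic(\kd)$ by the Lefschetz $(1,1)$-theorem. The main obstacle is the fibration argument for $\pi_1(Z) = 0$, which requires care around singular fibres; the Hodge-theoretic portion is then formal.
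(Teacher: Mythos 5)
The paper itself contains no proof of this lemma (it is quoted from \cite[\S 5]{cym}), so I assess your argument on its own terms. Part (ii) is correct and clean, and the torsion-freeness bookkeeping via universal coefficients and Poincaré duality is fine; both are conditional on (i).

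The genuine gap is in (i). The principle you propose to invoke --- that a proper surjective morphism with simply-connected base and simply-connected general fibre has simply-connected total space --- is false, so no ``Nori/Lefschetz-type theorem'' of that generality can be cited. Counterexample: let $S$ be a K3 surface with a free Enriques involution $\sigma$, let $\iota(z)=-z$ on $\PP^1$, and set $X=(S\times\PP^1)/\langle\sigma\times\iota\rangle$. This is a smooth projective $3$-fold, and the induced map $X\to\PP^1/\iota\cong\PP^1$ is proper and surjective with connected fibres and simply-connected (K3) general fibre, yet $\pi_1(X)\cong\Z/2$: over each branch point the fibre is an Enriques surface occurring with multiplicity $2$. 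So the ``care around singular fibres'' you defer is exactly the crux, and the issue is multiple fibres, i.e.\ fibres all of whose components have multiplicity divisible by some $m\geq 2$. The hypothesis that excludes them is condition (i) of Definition~\ref{def:block}, which your argument never uses: if $f^*(t)=mD$ for an integral divisor $D$, then $-K_Z=[\kd]=m[D]$ would be divisible in $H^2(Z)$, contradicting primitivity. Once multiple fibres are excluded, your van Kampen plan does close up: over the regular values the bundle has connected, simply-connected fibre, so $\pi_1$ of that part is $\pi_1$ of the punctured $\PP^1$, generated by meridians; $\pi_1$ of a tubular neighbourhood of each singular fibre is generated by the image of $\pi_1$ of the adjacent punctured piece (loops and homotopies can be pushed off the complex-codimension-one fibre), so no new generators appear; and for each critical value, small discs transverse to the components of the fibre kill the powers $\gamma^{m_i}$ of the meridian, whence $\gamma^{\gcd(m_i)}=\gamma=1$ by primitivity. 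This yields $\pi_1(Z)\cong\pi_1(\PP^1)=1$ (equivalently, the standard sequence $\pi_1(F)\to\pi_1(Z)\to\pi_1^{orb}(\PP^1)\to 1$ with trivial orbifold multiplicities). As written, however, the central step of (i) rests on a false general statement and omits the one hypothesis of the definition that makes it true.
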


We regard $N$ as a lattice with the quadratic form inherited from $L$.
In examples, $N$ is almost never unimodular,
so the natural inclusion $N\hookrightarrow N^\ast$ is not an isomorphism.
We write
\[
T = N^\perp = \{ l \in L \mid \inner{l, n} = 0 \ \  \forall\; n\in N \} .
\]
($T$ stands for ``transcendental''; in examples, $N$ and $T$ are the Picard and
transcendental lattices of a lattice polarised K3 surface.) Using $N$ primitive
and $L$ unimodular we find \mbox{$L/T\simeq N^*$}.

Let $V=Z\setminus \kd$.
Since the normal bundle of $\kd$ in $Z$ is trivial, there is an inclusion
$\iota: \kd \into V$ whose homotopy class does not depend on any choices.
We let
\begin{equation}
\label{eq:k_def}
\rho = \iota^* \colon H^2(V) \to L \quad
\text{the natural restriction map, and}
\quad
K=\ker (\rho) .
\end{equation}
It follows from (ii) of the following lemma that the image of $\rho$
equals $N$.

\begin{lem}[{\cite[Lemma 5-3]{cym}}]
  \label{lemg:Z&V}
Let $f\colon Z\to \PP^1$ be a building block. Then:
\begin{enumerate}
\item $\pi_1(V)=(0)$ and $H^1(V)=(0)$;
\item \label{itg:h2}
the class $[\kd]\in H^2(Z)$ fits in a split exact sequence
\[(0)\to \ZZ\overset{[\kd]}{\longrightarrow} H^2(Z)\to H^2(V)\to (0),\]
hence $H^2(Z)\simeq\ZZ[\kd]\oplus H^2(V)$, and the restriction homomorphism $H^2(Z)\to L$ factors through
$\rho\colon H^2(V) \to L$;
\item 
there is a split exact sequence
\[
(0) \to H^3(Z) \to H^3(V) \to T \to (0),
\]
hence $H^3(V)\simeq H^3(Z)\oplus T$;
\item \label{itg:h4}
there is a split exact sequence
\[
(0) \to N^\ast \to H^4(Z)\to H^4(V)\to (0),
\]
hence $H^4(Z)\simeq H^4(V)\oplus N^\ast$;
\item
$H^5(V) = (0)$. 
\end{enumerate}
\end{lem}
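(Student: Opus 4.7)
The plan is to derive everything from the long exact sequence of the pair $(Z, V)$. Since $\kd$ has trivial normal bundle in $Z$, a tubular neighborhood of $\kd$ is diffeomorphic to $\kd \times D^2$, and excision combined with the Thom isomorphism identifies $H^k(Z, V) \cong H^{k-2}(\kd)$. Under this identification the connecting map becomes the Gysin pushforward $i_*\colon H^{k-2}(\kd) \to H^k(Z)$, and triviality of the normal bundle makes $i^* \circ i_* = 0$. Parts (ii)--(v) will all be read off from the long exact sequence
\begin{equation*} \cdots \to H^{k-2}(\kd) \xrightarrow{i_*} H^k(Z) \to H^k(V) \to H^{k-1}(\kd) \xrightarrow{i_*} H^{k+1}(Z) \to \cdots \end{equation*}
once enough kernels and images of $i_*$ are in hand.

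For the computations of $i_*$ I would use the K3 cohomology $H^0(\kd) \cong H^4(\kd) \cong \Z$, $H^1(\kd) = H^3(\kd) = 0$, $H^2(\kd) = L$, together with the adjunction $\langle i_*\alpha, \beta\rangle_Z = \langle \alpha, i^*\beta \rangle_\kd$. The key inputs are that $i_*\colon H^0(\kd) \to H^2(Z)$ sends $1 \mapsto [\kd] = -K_Z$, which is primitive by Definition~\ref{def:block}(i), so the map is injective with saturated image; that adjunction, unimodularity of the K3 intersection form, and the definition of $N$ as the image of $i^*\colon H^2(Z) \to L$ together force $\ker(i_*|_{H^2(\kd)}) = N^\perp = T$ and $\operatorname{im}(i_*|_{H^2(\kd)}) \cong L/T \cong N^*$; and that $i_*\colon H^4(\kd) \to H^6(Z)$ is an isomorphism. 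The short exact sequences in (ii), (iii), (iv) then drop out immediately, and all split because $\Z$, $T$, $N^*$ are free; (v) follows from vanishing at both ends of the relevant piece of the sequence.

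For (i), the cohomological half $H^1(V) = 0$ is immediate from
\begin{equation*} 0 = H^1(Z) \to H^1(V) \to H^0(\kd) \xrightarrow{i_*} H^2(Z) \end{equation*}
together with the injectivity of $1 \mapsto [\kd]$. For $\pi_1(V) = 0$ I would apply van Kampen to $Z = V \cup N(\kd)$, using $\pi_1(N(\kd)) = \pi_1(\kd) = 0$ (K3 is simply connected) and $\pi_1(\partial N(\kd)) = \Z$, to deduce that $\pi_1(V)$ is normally generated by a single meridian $\gamma$ around $\kd$. Upgrading to $\pi_1(V) = 0$ exploits the additional structure of a building block: the morphism $f\colon Z \to \PP^1$ has $\kd$ as a smooth fiber, so after throwing out the finitely many singular values $c_1, \dots, c_r$ one has a locally trivial K3 fibration over $\bbc \setminus \{c_i\}$, and the homotopy exact sequence of this fibration pins down $\pi_1$ of its total space; one then checks that reintroducing small neighborhoods of the singular fibers does not add to $\pi_1$.

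The main obstacle will be that last $\pi_1$ step. All the other statements are mechanical unpacking of the long exact sequence, whereas the passage from ``normally generated by $\gamma$'' to ``trivial'' is the only step that is not pure homological algebra. If a direct fibration argument proves awkward for fibers of arbitrary singularity type, the fallback is to invoke a general Lefschetz-type theorem identifying $\pi_1$ of the complement of a smooth connected divisor with primitive class in a simply connected smooth projective variety.
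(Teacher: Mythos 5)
Your Gysin/long-exact-sequence framework for (ii)--(v) is the standard one, and indeed matches the proof in the source this lemma is quoted from (the paper itself gives no proof, citing \cite[\S 5]{cym}); most of the homological part goes through as you describe. Two points, however, are genuinely gapped.

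First, the splittings. A short exact sequence of finitely generated abelian groups splits when the \emph{quotient} is free, not when the sub is, so ``all split because $\ZZ$, $T$, $N^\ast$ are free'' only covers (iii), where $T$ is the quotient. In (ii) you are rescued by the primitivity of $[\kd]=-K_Z$, which you do invoke. But in (iv) you must show that the image of $i_*\colon L\to H^4(Z)$ is saturated, equivalently that $H^4(V)$ is torsion-free; freeness of $N^\ast$ gives nothing. The repair is short: since $H^4(Z)$ is torsion-free (condition (iv) of Definition~\ref{def:block}), the cup-product pairing identifies $H^4(Z)\cong\Hom(H^2(Z),\ZZ)$, and under this identification $i_*|_L$ factors as $L\twoheadrightarrow N^\ast \into \Hom(H^2(Z),\ZZ)$, the second map being the transpose of the surjection $H^2(Z)\twoheadrightarrow N$; its cokernel embeds in $\Hom\bigl(\ker(H^2(Z)\to N),\ZZ\bigr)$, hence is torsion-free, so the sequence splits. (The same duality, which is where the torsion-freeness hypothesis enters, is what legitimises your identification $\ker(i_*|_L)=N^\perp=T$ in (iii).)

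Second, the $\pi_1(V)$ step, which you rightly flag as the crux, is missing its key ingredient: condition (i) of Definition~\ref{def:block}. Every fibre of $f$ is linearly equivalent to $-K_Z$, so primitivity of the anticanonical class rules out multiple fibres, and that is exactly what makes your plan work. Since the general fibre is a simply connected K3, $\pi_1$ of $V$ minus the singular fibres is free on meridian loops around the critical values; because each singular fibre has a reduced component, a small disc transverse to it at a smooth point of that component lies in $V$ and bounds (a conjugate of) the corresponding meridian, so all generators die and $\pi_1(V)=1$. Note the direction of the issue when reinserting the singular fibres: $\pi_1$ cannot grow (the fibres have real codimension two), the question is whether those meridians die, and that is precisely where no-multiple-fibres is used. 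Your fallback is not available: Nori-type weak Lefschetz theorems for complements of smooth divisors require positivity of the normal bundle, whereas $N_{\kd/Z}$ is trivial, and ``smooth connected divisor with primitive class in a simply connected projective variety has simply connected complement'' is not a theorem in that generality --- it is the fibration structure, together with the absence of multiple fibres, that does the work.
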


We can also use the triviality of the normal bundle of $\kd$ in $Z$ to
get a natural
inclusion $\kd\times S^1_\lnn \subset V$ up to homotopy.
Since we have not introduced any metric yet the notation $S^1_\lnn$ does not
carry much meaning beyond serving to distinguish this ``internal'' circle
factor from the ``external'' one that will soon be introduced.
Let $\drn \in H^1(S^1_\lnn)$ denote the integral generator
($\drn = \lnn^{-1}[du]$ in terms of the coordinate $u$ on $S^1_\lnn$).

\begin{lem}[{\cite[Corollary 5-4]{cym}}]
\label{lem:vres}
Let $f\colon Z \to \PP^1$ be a building block.
The natural restriction homomorphisms:
\[
\beta^m\colon H^m(V) \to H^m(\kd\times S^1_\lnn)
= H^m(\kd)\oplus \drn H^{m-1}(\kd)
\] 
are computed as follows:
\begin{enumerate}
\item $\beta^1 =0$;
\item $\beta^2 \colon H^2(V) \to H^2(\kd\times S^1_\lnn)=H^2(\kd)$ is the
  homomorphism $\rho \colon H^2(V) \to L$;
\item $\beta^3\colon H^3(V)\to H^3(\kd\times S^1_\lnn)=\drn H^2(\kd)$ is
  the composition $H^3(V) \twoheadrightarrow T \subset L$;
\item \label{itg:4} the natural surjective restriction homomorphism
  $H^4(Z)\to H^4(\kd)=\ZZ$ factors through
  $\beta^4\colon H^4(V)\to H^4(\kd\times S^1_\lnn)=H^4(\kd)=\ZZ$, and
  there is a split exact sequence:
\[
(0) \to K^\ast \to H^4(V)\overset{\beta^4}{\longrightarrow} H^4(\kd)\to (0) .
\]
\end{enumerate}
\end{lem}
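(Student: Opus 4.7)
The plan is to relate $\beta^m$ to the exact sequences already established in Lemma \ref{lemg:Z&V}, using the fact that a tubular neighborhood of $\kd$ in $Z$ is diffeomorphic to $\kd \times D^2$ (since the normal bundle is trivial), so its punctured version deformation retracts onto the collar $\kd \times S^1_\lnn \subset V$.

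First I would decompose $\beta^m \colon H^m(V) \to H^m(\kd) \oplus \drn H^{m-1}(\kd)$ into two pieces: a ``horizontal'' component given by pullback along the zero-section $\kd \into V$, and a ``residue'' component that (up to sign) coincides with the connecting homomorphism $H^m(V) \to H^{m+1}(Z, V) \cong H^{m-1}(\kd)$ arising from the long exact sequence of the pair $(Z, V)$ combined with the Thom isomorphism. This is the decomposition implicit in the proof of Lemma \ref{lemg:Z&V}, so the maps appearing there can be reread as the components of the relevant $\beta^m$.

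With this dictionary in hand, (i)--(iii) are essentially bookkeeping. $H^1(V) = 0$ gives (i); because $H^1(\kd) = 0$, $\beta^2$ is purely horizontal and equal to $\rho$ by Lemma \ref{lemg:Z&V}(ii); because $H^3(\kd) = 0$, $\beta^3$ is purely residual, and Lemma \ref{lemg:Z&V}(iii) identifies it with the composition $H^3(V) \twoheadrightarrow T \into L$.

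The substantive case is (iv). Since $H^3(\kd) = 0$, $\beta^4$ is again purely horizontal, and its factorization through $H^4(Z) \twoheadrightarrow H^4(V)$ is naturality of restriction. To identify $\ker \beta^4 \cong K^*$, I would dualize the sequences of Lemma \ref{lemg:Z&V}. Poincaré duality on $Z$ (available because $H^*(Z)$ is torsion-free) gives $H^4(Z) \cong H^2(Z)^*$; dualizing $0 \to \Z[\kd] \to H^2(Z) \to H^2(V) \to 0$ then shows that the restriction $H^4(Z) \to H^4(\kd) = \Z$ has kernel $H^2(V)^*$. Combining with $0 \to N^* \to H^4(Z) \to H^4(V) \to 0$ from Lemma \ref{lemg:Z&V}(iv) yields $\ker \beta^4 \cong H^2(V)^*/N^*$. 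Finally, dualizing $0 \to K \to H^2(V) \xrightarrow{\rho} N \to 0$---legal because $N$ is a free $\Z$-module---identifies this quotient with $K^*$, and the exact sequence splits because $K^*$ is free. The main obstacle I expect is precisely this part: keeping the double dualization coherent, and checking that the composite identification $\ker \beta^4 \cong K^*$ agrees with the maps supplied by the geometry. Torsion-freeness of $H^*(Z)$ and of the sublattices $K, N \subset L$ is what makes all the relevant $\textup{Ext}^1$-terms vanish.
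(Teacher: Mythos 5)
Your proof is correct and follows essentially the argument behind this lemma (the paper itself gives no proof, quoting the statement from \cite[\S 5]{cym}): restriction to the boundary of the tubular neighbourhood $\kd\times D^2$ splits into the slice restriction and the residue/connecting map of the pair $(Z,V)$ via the Thom isomorphism for the trivial normal bundle, and part (iv) follows by dualising the sequences of Lemma \ref{lemg:Z&V}, with torsion-freeness killing the relevant $\operatorname{Ext}$-terms. Two small repairs: the sequence in (iv) splits because the quotient $H^4(\kd)\cong\Z$ is free (a free kernel $K^*$ alone would not force a splitting), and the identification $\ker\beta^4\cong H^2(V)^*/N^*$ tacitly uses that the Gysin image $N^*\subset H^4(Z)$ pairs trivially with $[\kd]$, which again comes from the triviality of the normal bundle of $\kd$ (also, $K$ is a subgroup of $H^2(V)$, not a sublattice of $L$; what you actually need is freeness of $N$ and of $H^2(V)$).
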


When we use $M := S^1_\lnx \times V$ in a gluing construction for a twisted
connected sum, computing the cohomology of the result by Mayer-Vietoris
requires understanding of the boundary maps from cohomology of $M$ to its
cross-section $W := S^1_\lnx \times S^1_\lnn \times \kd$.
These are trivial to write down in terms of the maps in Lemma \ref{lem:vres}.
Letting $\drx \in H^1(S^1_\lnx)$ denote the generator $\xi^{-1}[d\anglex]$
of the ``external'' circle factor, we can write
  \begin{align*}
    H^m(M) &=  H^m(V) \oplus \drx H^{m-1}(V)\\
H^m(W) &=  H^m(\kd) \oplus \drn   H^{m-1}(\kd)
\oplus  \drx  H^{m-1}(\kd) \oplus \drn\drx H^{m-2}(\kd).
  \end{align*}

\begin{cor}
\label{cor:mres}
 The homomorphisms $\gamma^m \colon H^m(M) \to H^m(W)$ are computed as follows:
 \begin{enumerate}
 \item $H^1(M) = \drx  H^0(V)$, \\
	$H^1(W) =  \drx  H^0(\kd)\oplus \drn   H^0(\kd)$, and 
\[
\gamma^1=
\begin{pmatrix}
  \mathbf{1} \\0
\end{pmatrix}
\colon H^0(V) \to H^0(\kd) \oplus H^0(\kd)
\]
   is the natural isomorphism.
 \item $H^2(M) = H^2(V)$, \\
   $H^2(W)= H^2(\kd) \oplus
   \drn\drx H^0(\kd) = L\oplus \ZZ[\kd]$, and 
\[
\gamma^2=
\begin{pmatrix}
  \rho \\ 0 
\end{pmatrix}
\colon H^2(V) \to L\oplus \ZZ[\kd].
\] 
 \item \label{it:mv3} $H^3(M) =H^3(V)\oplus \drx  H^2(V)$, \\
   $H^3(W)=\drn H^2(\kd)\oplus \drx H^2(\kd)$, and
\[
\gamma^3=
\begin{pmatrix}
  \beta^3 & 0 \\ 0 & \rho 
\end{pmatrix}
\colon H^3(V) \oplus H^2(V) \to L \oplus L;
\]
\item $H^4(M) = H^4(V)\oplus \drx H^3(V)$, \\
  $H^4(W) = H^4(\kd)\oplus \drn\drx
  H^2(\kd)=H^4(\kd)\oplus L$, and
\[
\gamma^4=
\begin{pmatrix}
  \beta^4 & 0 \\ 0 & \beta^3
\end{pmatrix}
\colon H^4(V) \oplus H^3(V) \to H^4(\kd) \oplus L.
\]
 \end{enumerate}
\end{cor}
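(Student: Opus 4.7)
The plan is to deduce this corollary directly from Lemma~\ref{lem:vres} together with the Künneth formula. The key observation is that the inclusion $W = S^1_\xi \times S^1_\zeta \times \Sigma \hookrightarrow M = S^1_\xi \times V$ is the product of the identity on the external factor $S^1_\xi$ with the cylindrical-end inclusion $S^1_\zeta \times \Sigma \hookrightarrow V$. Therefore, after identifying
\[
H^*(M) \cong H^*(S^1_\xi) \otimes H^*(V), \qquad
H^*(W) \cong H^*(S^1_\xi) \otimes H^*(S^1_\zeta \times \Sigma)
\]
via Künneth, the restriction map $\gamma^m$ is nothing more than $\Id \otimes \beta^{\bullet}$ in the appropriate bidegree. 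Since $H^0(S^1_\xi) = \Z$ and $H^1(S^1_\xi) = \Z \mathbf{v}$, the formula
\[
H^m(M) \;=\; H^m(V) \oplus \mathbf{v}\, H^{m-1}(V)
\]
recorded in the statement is immediate, and likewise for $W$ one has $H^m(W) = H^m(\Sigma \times S^1_\zeta) \oplus \mathbf{v}\, H^{m-1}(\Sigma \times S^1_\zeta)$, expanded further using $H^*(S^1_\zeta \times \Sigma) = H^*(\Sigma) \oplus \mathbf{u}\, H^{*-1}(\Sigma)$.

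Given this, the proof is really just bookkeeping: for each $m \in \{1,2,3,4\}$, substitute the descriptions of $\beta^m$ from Lemma~\ref{lem:vres} into the two summands $H^m(V)$ and $\mathbf{v}\, H^{m-1}(V)$ of $H^m(M)$, and read off the matrix representing $\gamma^m$. For $m=1$ the $H^1(V)$ summand vanishes by Lemma~\ref{lem:vres}(i), and $\beta^0 \colon H^0(V) \to H^0(\Sigma)$ is the canonical isomorphism, giving the stated $\binom{\mathbf{1}}{0}$. For $m=2$ the $H^1(V) = 0$ summand again drops out, and Lemma~\ref{lem:vres}(ii) identifies $\beta^2$ with $\rho$, landing in the $L = H^2(\Sigma)$ summand of $H^2(W)$ and contributing $0$ to the $\mathbf{u}\mathbf{v} H^0(\Sigma) = \Z[\Sigma]$ summand.

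For $m=3$, the first block row records $\beta^3 \colon H^3(V) \to \mathbf{u}\, H^2(\Sigma)$ from Lemma~\ref{lem:vres}(iii), and the second block row records $\beta^2 = \rho \colon H^2(V) \to H^2(\Sigma)$ multiplied by $\mathbf{v}$, landing in $\mathbf{v}\, H^2(\Sigma)$; the off-diagonal entries vanish because $\beta$ is degree-preserving. The $m=4$ case is analogous, using $\beta^4$ from Lemma~\ref{lem:vres}(iv) on the $H^4(V)$ summand and $\beta^3$ on the $\mathbf{v}\, H^3(V)$ summand, noting that the images land in $H^4(\Sigma)$ and $\mathbf{u}\mathbf{v}\, H^2(\Sigma) = L$ respectively. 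There is no genuine obstacle: the only subtle point is matching up the Künneth summands of $W$ with the pieces appearing in the matrices—in particular keeping track of which copies of $L$ or $H^2(\Sigma)$ come with the $\mathbf{u}$, $\mathbf{v}$, or $\mathbf{u}\mathbf{v}$ prefix—and this is a routine verification.
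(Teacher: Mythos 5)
Your proposal is correct and is essentially the paper's own (implicit) argument: the paper simply records the Künneth decompositions of $H^m(M)$ and $H^m(W)$ and notes that the restriction maps are ``trivial to write down in terms of the maps in Lemma~\ref{lem:vres}'', which is exactly your $\Id\otimes\beta^\bullet$ bookkeeping. The only quibble is a citation slip: the vanishing of $H^1(V)$ (needed for the stated forms of $H^1(M)$ and $H^2(M)$) comes from Lemma~\ref{lemg:Z&V}(i), not from Lemma~\ref{lem:vres}(i), which only asserts $\beta^1=0$.
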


\subsection{Building blocks with involution}
\label{subsec:inv_blocks}

Next we consider involutions of the type required in~\S\ref{subsec:reducible}.
Suppose
$(Z, f, \kd)$ is a building block in the sense of Definition \ref{def:block},
and that $\tau : Z \to Z$ is a holomorphic involution such that $\kd$ is
a connected component of the fixed set of $\tau$.
Because $f \circ \tau : Z \to \PP^1$ is a fibration with
$f^\star(\infty) = \kd$,
it must be equal to $f$. Thus $\tau$ covers an involution of $\PP^1$, and
WLOG that is $(z:w) \mapsto (z:-w)$. Thus there is precisely one other fibre
$\kd' := f^\star(0)$ mapped to itself by $\tau$.

\begin{defn}
\label{def:inv_block}
Call $(Z, f, \kd, \tau)$ a \emph{building block with involution}, or
more briefly an \emph{involution block}, if $(Z,f,\kd)$ is a building block
and $\tau : Z \to Z$ is a holomorphic involution such that $\kd$ is
a connected component of the fixed set of $\tau$, and the other fixed fibre
$\kd'$ is smooth too.

As before, let $V := Z \setminus \kd$.
Let $b_3^\pm(Z)$ and $b_3^\pm(V)$ denote the rank of the
$\pm 1$-eigenlattice of the action of $\tau$ on $H^3(Z)$ and $H^3(V)$
respectively,
\[ b_3^\pm(Z) := \rk H^3(Z)^{\pm \tau}, \quad
b_3^\pm(V) := \rk H^3(V)^{\pm \tau}  \]
(which will not be confused with (anti-)self-dual parts since the degree is
odd). Further, since the quotient by the sum of the invariant and
anti-invariant subspaces is a 2-elementary group we can let
\[ s := \dim_{\Z_2}
\displaystyle\frac{H^3(V)}{H^3(V)^\tau \oplus H^3(V)^{-\tau}} . \]
(To see what $s$ represents, it may be helpful to think about two different reflections on $\Z^2$: $(x,y) \mapsto (-x, y)$ has $s = 0$, while $(x,y) \mapsto (y,x)$ has $s = 1$.)

We call the involution block \emph{pleasant} if $K = 0$, \ie the restriction
map
\begin{equation}
\label{eq:k0}
H^2(V) \into H^2(\kd)
\end{equation}
is injective, and
\begin{equation}
\label{eq:pleasant}
s = b_3^-(V) .
\end{equation}
\end{defn}

When we describe examples of blocks with involution, the data we specify that
relates to the involution is $b_3^+(Z)$ and whether the block is pleasant.
Since $H^3(V) \cong H^3(Z) \oplus T$, so that
$H^3(V)^\tau \cong H^3(Z)^\tau \oplus T$ over $\bbq$ we can then recover
\begin{equation}
b_3^+(V) = b_3^+(Z) + 22 - \rk N .
\end{equation}

We will see in \S\ref{sec:top} that the
conditions \eqref{eq:k0} and \eqref{eq:pleasant} make it much easier to grasp
the cohomology of the extra-twisted connected sums, and in \S\ref{sec:sfblocks}
and \S\ref{sec:k3blocks} that the involution blocks we can most readily write
down do in fact satisfy this pleasantness condition.

Clearly $H^3(V) \subseteq \half H^3(V)^\tau \oplus \half H^3(V)^{-\tau}$.
The projections onto the components induce injective maps
$H^3(V) / H^3(V)^\tau \oplus H^3(V)^{-\tau}
\into \left(\half H^3(V)^{\pm\tau}\right)/ H^3(V)^{\pm\tau}$, so
\begin{equation}
\label{eq:sbound}
s \leq \min(b_3^+(V), b_3^-(V)) . 
\end{equation}
Alternatively, $s$ can be described as the dimension of the image of
$\Id + \tau^* : H^3(V; \bbz_2) \to H^3(V; \bbz_2)$, and \eqref{eq:sbound}
as a consequence of the fact that $\Id + \tau^*$ is 0 on
$H^3(V)^{\pm\tau} \otimes \bbz_2$.

Note that it is not generally the case that
$H^3(V)^\tau \cong H^3(Z)^\tau \oplus T$ over $\bbz$. In particular, $s$ need
not equal the $\Z_2$ rank of $H^3(Z)/(H^3(Z)^\tau \oplus H^3(Z)^{-\tau})$.

\begin{rmk}
\label{rmk:nonsymp_fibre}
The condition that the second fixed fibre $\kd'$ is smooth is not crucial
to the construction, but simplifies topological calculations.
Since $Z$ has a unique (up to scale) holomorphic 3-form with pole along~$\kd$,
that must be preserved by $\tau$. The action of $\tau$ on
$\kd'$ must therefore be by a non-symplectic involution in the sense described
in \S\ref{subsec:k3nonsymp}.

Other fibres of $f$, in particular $\kd$, need not admit a non-symplectic
involution (see Example~\ref{ex:p3cover}).
\end{rmk}

The fixed set of $\tau$ in $\kd'$ is a smooth holomorphic curve $C$.
The quotients $Z^0 := Z/\tau$ and $V^0 := V/\tau = Z^0 \setminus \kd$
have orbifold singularities along the image of $C$.
On the other hand, according to the theory of non-symplectic involutions
summarised in \S\ref{subsec:k3nonsymp},
$Y := \kd'/\tau$ is a smooth (in fact rational) surface;
$\kd' \to Y$ is a double cover branched over $C$, and $C \in |{-}2K_Y|$.
In particular, because $C$ is even in $H_2(Y)$, the image of the
restriction map $H^2(Z^0; \Z_2) \to H^2(C; \Z_2)$
is contained in the kernel of the integration map $H^2(C; \Z_2) \to \Z_2$.
Thus if we let
\begin{align*}
m &:= \rk(H^2(Z^0; \Z_2) \to H^2(C; \Z_2)), \\
k &:= \# (\textrm{connected components of }C) - 1
\end{align*}
then $m \leq k$.

\begin{lem}
\label{lem:pleasant}
If $K = 0$ then
\[ b_3^-(V) - s = \dim_{\Z_2} T_2 H^3(Z^0) + k-m . \]
In particular, an involution block is pleasant if and only if $K = 0$,
$H^3(Z^0)$ is torsion-free and $m = k$.
\end{lem}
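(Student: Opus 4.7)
The plan is to compute $T_2 H^3(Z^0)$ via the sheaf-theoretic long exact sequence of the branched double cover $\pi : Z \to Z^0$, and match the result with $b_3^-(V) - s$ using the $\Z[\tau]$-module structure of $H^3(V)$ combined with the local analysis of the branch curve $C$.

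First I would set up the lattice theory. Since $H^3(V)$ is torsion-free by Lemma \ref{lemg:Z&V} and $\tau^2 = 1$, after localising at $2$ it becomes a $\Z_{(2)}[\tau]$-module. The semi-local ring $\Z_{(2)}[\tau]$ admits exactly three isomorphism classes of finitely generated torsion-free indecomposable modules: the regular module $\Z_{(2)}[\tau]$ (rank~$2$) and the rank-one $\Z_{(2)}^\pm$ on which $\tau$ acts as $\pm 1$. Writing $H^3(V) \otimes \Z_{(2)}$ as $a$ regular plus $r^\pm$ copies of $\Z_{(2)}^\pm$, Krull--Schmidt uniqueness together with a direct calculation gives $s = a$ and $b_3^\pm(V) = s + r^\pm$; in particular $b_3^-(V) - s = r^-$.

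Second, I would identify $T_2 H^3(Z^0)$ via the short exact sequence of sheaves
\[ 0 \to \Z_{Z^0} \to \pi_*\Z_Z \to j_!\mathcal{L} \to 0 \]
on $Z^0$, where $j: Z^0 \setminus (\kd \cup C) \hookrightarrow Z^0$ and $\mathcal{L}$ is the rank-one $\Z$-local system associated to the deck involution on the unramified part. Since $\pi$ is finite, $H^*(Z^0;\pi_*\Z) = H^*(Z;\Z)$, and the induced long exact sequence reads
\[ \cdots \to H^{i-1}_-(Z^0) \xrightarrow{\delta} H^i(Z^0;\Z) \xrightarrow{\pi^*} H^i(Z;\Z) \xrightarrow{\phi_i} H^i_-(Z^0) \to \cdots , \]
where $H^i_-(Z^0) := H^i(Z^0; j_!\mathcal{L})$. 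Since the sequence splits after inverting $2$ and $H^*(Z;\Z)$ is torsion-free, every torsion class in $H^*(Z^0;\Z)$ is $2$-primary and $T_2 H^i(Z^0) = \ker(\pi^*) = \operatorname{im}(\delta)$. Applied in degree~$3$:
\[ T_2 H^3(Z^0) \;\cong\; \coker\bigl(\phi_2: H^2(Z;\Z) \to H^2_-(Z^0)\bigr). \]

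Third, I would compute $\dim_{\Z_2}$ of this cokernel by decomposing $H^2_-(Z^0)$ via Mayer--Vietoris on $Z^0 = V^0 \cup (\text{orbifold neighbourhood of } \kd \cup C)$, and tracking separately the interior (lattice) contribution and the boundary contribution. Under the hypothesis $K = 0$ the map $H^2(V) \to L$ is injective onto $N$, so $H^2(Z) = H^2(Z)^\tau$ and no $\kd$-contribution arises. The interior contribution is precisely $r^-$, transported from the lattice structure of Step~1 through the exact sequence. The boundary contribution from $C$ is computed locally: each of the $k+1$ components of $C$ contributes a local anti-invariant $\Z/2$-class to $H^2_-(Z^0)$, giving $k$ independent classes after modding out the global sum; of these, $m$ lie in $\operatorname{im}(\phi_2)$, corresponding to the $m$-dimensional image of the restriction $H^2(Z^0;\Z_2) \to H^2(C;\Z_2)$, while the remaining $k - m$ are absorbed into the cokernel. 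Combining these contributions gives
\[ \dim_{\Z_2} T_2 H^3(Z^0) = r^- - (k - m), \]
which rearranges to the claimed identity. The ``in particular'' statement is then immediate: pleasantness ($b_3^-(V) = s$) combined with $K = 0$ forces both $T_2 H^3(Z^0) = 0$ and $k = m$, and conversely these three conditions imply $b_3^-(V) = s$.

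The main obstacle is the third step: the precise local analysis near the singular curve $C$ (along which $Z^0$ has $A_1$-singularities in the complex-transverse direction), and in particular the identification of the $m$-dimensional image of $H^2(Z^0;\Z_2) \to H^2(C;\Z_2)$ with the part of $H^2_-(Z^0)$ lying in $\operatorname{im}(\phi_2)$. This requires a careful orbifold cohomology computation and a matching of the connecting homomorphism against the locally defined classes coming from components of $C$; it is here that the specific invariant $k - m$ emerges.
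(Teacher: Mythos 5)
Your steps 1 and 2 are sound: the $\Z_{(2)}[\tau]$-lattice decomposition of the torsion-free module $H^3(V)$ does give $b_3^-(V)-s=r^-$, and the sheaf sequence $0\to\Z_{Z^0}\to\pi_*\Z_Z\to j_!\mathcal{L}\to 0$ together with the transfer does identify the (2-elementary) torsion of $H^3(Z^0)$ with $\coker\bigl(\phi_2\colon H^2(Z)\to H^2(Z^0;j_!\mathcal{L})\bigr)$. But the entire content of the lemma sits in your step 3, and there you assert rather than prove the three facts that make the formula come out: (a) that the branch component $\kd$ contributes nothing to the cokernel --- your justification ($H^2(Z)=H^2(Z)^\tau$ when $K=0$) does not address the local classes that $\kd$, like every component of the branch divisor, can feed into $H^2(Z^0;j_!\mathcal{L})$; note the branch locus $\kd\cup C$ has $k+2$ components, not $k+1$, so the bookkeeping must deal with $\kd$ explicitly; (b) that the ``interior contribution'' to a cokernel of a map into \emph{degree-2} local-system cohomology equals $r^-$, a rank extracted from the \emph{degree-3} cohomology of $V$ --- this requires identifying the torsion of $H^2(Z^0,\kd\cup C;\mathcal{L})$ with mod-2 data of the anti-invariant part of $H^3$, and transporting it from $Z$ to $V$ using $K=0$ and Lemma \ref{lemg:Z&V}; and (c) that exactly $m$ of the $k$ local classes lie in $\im\phi_2$, i.e.\ the identification of $\im\phi_2$ with the rank-$m$ image of $H^2(Z^0;\Z_2)\to H^2(C;\Z_2)$ --- $\phi_2$ is a map out of $H^2(Z;\Z)$, so even comparing the two maps needs an argument. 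You flag this yourself as ``the main obstacle'', but it is precisely where $k-m$ and $\dim_{\Z_2}T_2H^3(Z^0)$ enter, so as written the proposal does not prove the lemma.

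For comparison, the paper's proof avoids integral local-system cohomology on $Z^0$ altogether: it works with $\Z_2$ coefficients on the open quotient $V^0$ and the Lee--Weintraub exact sequence $H^k(V^0;\Z_2)\to H^k(V;\Z_2)\xrightarrow{I} H^k(V^0,C;\Z_2)\to H^{k+1}(V^0;\Z_2)$ for the branched cover $V\to V^0$. There $s=\dim\ker I_3$ because $\pi^*\circ p=\Id+\tau^*$, the needed surjectivity of $p\circ I_3$ is obtained by comparing with the branched cover $\kd'\to Y$ (using $H^4(V^0;\Z_2)\cong H^4(Y;\Z_2)$), and the formula then drops out of a $\Z_2$-dimension count of $H^3(V^0,C;\Z_2)$, which is exactly where $k+1-m$ and $T_2H^3(Z^0)$ appear. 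If you want to salvage your route, the missing work is a relative Smith/Gysin computation around each of the $k+2$ branch components plus the torsion bookkeeping in (b); that computation is the lemma, not a technicality to be deferred.
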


\begin{proof}
Note that $b_3^+(V) = b_3(V^0)$.
If $K=0$ then $\tau$ acts trivially on $H^4(V) \cong \Z$, so $b_4(V^0) = 1$.

By Lee--Weintraub \cite[Theorem 1]{lee95} there exists a long exact sequence
\begin{equation}
\label{eq:coverseq}
H^k(V^0; \bbz_2) \stackrel{\pi^*}{\to} H^k(V; \bbz_2)
\stackrel{I}{\to} H^k(V^0,C; \bbz_2) \stackrel{\cup w_1}{\to}
H^{k+1}(V^0;\bbz_2) ,
\end{equation}
where $I$ is fibre-wise integration, and the connecting map
$H^k(V^0,C; \bbz_2) \to H^{k+1}(V^0;\bbz_2)$ is the cup product with
$w_1 \in H^1(V^0 \setminus C; \Z_2)$ of the double cover. (If $C$ were empty,
this would just be the Gysin sequence of the double cover $\pi: V \to V^0$
regarded as the unit $S^0$-bundle in a real line bundle.)

First note $H^5(V^0; \Z_2) \cong H^5(V^0, C; \Z_2) \cong
H_1(V^0 \setminus C, S^1 \times \kd; \Z_2)$ is isomorphic to the cokernel of
the push-forward $H_1(S^1 \times \kd; \Z_2) \to H_1(V^0 \setminus C; \Z_2)$
of the inclusion of the $S^1 \times \kd$ as the boundary of $V^0$.
Since $\pi_1(S^1 \times \kd) \to \pi_1(V^0 \setminus C)$ is surjective,
we find that $H^5(V^0; \Z_2)$ is trivial.

Now, since $b_4(V^0) = 1$, universal coefficients implies that the rank of
$H^4(V^0;\Z_2) \cong H^4(V^0,C;\Z_2)$ is one more than that of $T_2H^4(V^0)$.
By the exactness of \eqref{eq:coverseq}, we must have that in fact
$T_2H^4(V^0) = 0$, and $I_4 : H^4(V;\Z_2) \to H^4(V^0,C;\Z_2)$ is an
isomorphism.

Next we argue that the composition of $I_3$
with the push-forward $p : H^3(V^0,C;\Z_2) \to H^3(V^0;\Z_2)$ is surjective.
Since $p$ is surjective, it suffices to prove that $\cup w_1$ maps
$\ker p$ %
onto $H^4(V^0;\Z_2)$.
Equivalently, we need the composition of the snake map
$H^2(C;\Z_2) \to H^3(V^0,C;\Z_2)$ with $\cup w_1$ to be non-trivial.
The further composition with the restriction $H^4(V^0;\Z_2) \to H^4(Y; \Z_2)$
must in fact be non-trivial because the snake map
$H^2(C; \Z_2) \to H^3(Y, C; \Z_2)$ and $w_1$ both are.
Hence $p \circ I_3$ is surjective as claimed.

Because $\pi^* \circ p \circ I= \Id + \tau^*$, it follows that
$H^3(V^0; \Z_2)$ has the same image under $\pi^*$ as under $\Id + \tau^*$.
Hence $s = \rk \pi^* = \dim \ker I_3$.
The dimension of $H^3(V^0,C;\Z_2)$ can be expressed as 
$b_3(V^0) + (k+1-m) + \dim_{\Z_2} T_2H^3(Z^0)$, so
\[ b_3^-(V) - s
= (b_3(V) - b_3(V^0)) - (b_3(V) - \dim_{\Z_2} T_2H^3(V^0,C;\Z_2)+1)
= k -m + \dim_{\Z_2} T_2 H^3(Z^0) \]
as desired.
In particular, $b_3^-(V) = s$ if and only if equality holds in $k \geq m$
and $T_2H^3(Z^0)$ is trivial. The latter condition is equivalent to
$H^3(Z^0)$ being torsion-free, since $H^3(Z)$ is torsion-free implies
that the only possible torsion in $H^3(Z^0)$ is 2-torsion.
\end{proof}

\begin{rmk}
In this paper, we will only apply Lemma \ref{lem:pleasant} in cases where
$C$ is connected, so the condition $m = k$ is automatically satisfied
(both are 0). As a consequence of this, the polarising lattice $N$ of the
resulting building blocks with involution will always be completely even, in
the sense that the product of any two elements is even; this is because
$N$ embeds into the sublattice of $H^2(\kd')$ that is fixed by the
non-symplectic involution, which is totally even when the fixed locus $C$ is
connected (see Lemma \ref{lem:pdc}).
\end{rmk}

From now on we assume \eqref{eq:k0}.
This implies in particular that $\tau$ acts trivially on $H^2(V) \cong N$ and
$H^4(V) \cong H^4(\kd) \cong \bbz$, so $V^0$ has the same 
Betti numbers as $V$ except in the middle degree.
Since $\pi : V \to V^0$ is a double cover branched over $C$, we find
\[ \chi(V) = 2\chi(V^0) - \chi(C), \]
from which we deduce
\[ b_3(V) = 2b_3(V^0) - 2 - \rho + \chi(C) . \]
Similarly
\[ \chi(Z) = 2\chi(Z^0) - \chi(C) - \chi(\kd) \]
implies (using $\chi(Z) = 4 + 2\rho - b_3(Z)$ etc) that
\begin{equation}
\label{eq:b3+z}
b_3(Z) = 2b_3^+(Z^0) + 20 - 2\rho - \chi(C) .
\end{equation}
Now let
\[ M := S^1_\lnx {\times} V / a {\times} \tau. \]
The rational cohomology
of $M$ is simply the $\tau$-invariant part of $H^*(S^1_\lnx \times V)$. We see
from Lemma \ref{lemg:Z&V} and our description of $\tau^*$ that
\begin{align*}
b_1(M) &= 0 & 
b_2(M) &= b_2(V) &
b_3(M) &= b_2(V) + b_3^+(V) \\
b_4(M) &= b_3^-(V) + 1 &
b_5(M) &= 1 &
b_6(M) &= 0
\end{align*}
We can also readily compute the integral cohomology of $M$ from
the Mayer-Vietoris sequence
\begin{equation}
\label{eq:mv_maptorus}
\cdots \to H^{k-1}(V) \to H^k(M) \to H^k(V)
\stackrel{\Id - \tau^*}{\longrightarrow} H^k(V) \to \cdots
\end{equation}

\begin{lem} \label{lem:H_mtv} \hfill
\begin{enumerate}
\item $\bbz \isom H^1(M)$
\item \label{it:h2mtv} $H^2(M) \isom H^2(V) = N$
\item $0 \to H^2(V) \to H^3(M) \to H^3(V)^\tau \to 0$
\item $0 \to H^3(V)/(\Id - \tau^*)H^3(V) \to H^4(M) \to \bbz \to 0$ 
\item $\bbz \isom H^5(M)$
\item $H^6(M) = 0$
\end{enumerate}
\end{lem}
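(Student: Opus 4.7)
The plan is to identify $M = S^1_\lnx \times V / a \times \tau$ as the mapping torus of $\tau \colon V \to V$. Since the antipodal map $a$ acts freely on $S^1_\lnx$ with quotient again a circle, projection onto this $S^1$-factor presents $M$ as a $V$-bundle over $S^1$ with monodromy $\tau$. The associated Wang long exact sequence
\begin{equation*}
\cdots \to H^{k-1}(V) \xrightarrow{\Id - \tau^*} H^{k-1}(V) \to H^k(M) \to H^k(V) \xrightarrow{\Id - \tau^*} H^k(V) \to \cdots
\end{equation*}
is exactly the Mayer--Vietoris-type sequence cited above; it arises concretely as the Mayer--Vietoris sequence for the cover of $M$ by the preimages of two contractible arcs in the base circle, each deformation retracting to $V$, with intersection two disjoint copies of $V$ that are glued into $M$ via $\Id$ on one side and $\tau$ on the other.

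Next, I would feed in what is known about $V$. From Lemma~\ref{lemg:Z&V} we have $H^0(V) = \bbz$, $H^1(V) = H^5(V) = 0$, $H^2(V) \cong N$ and $H^4(V) \cong \bbz$. Under the pleasantness hypothesis $K = 0$, Lemma~\ref{lem:vres} identifies the restriction maps $\beta^2 \colon H^2(V) \hookrightarrow H^2(\kd)$ and $\beta^4 \colon H^4(V) \isom H^4(\kd)$ as injective and as an isomorphism respectively. Since $\tau$ fixes $\kd$ pointwise it acts trivially on $H^\ast(\kd)$, and equivariance of $\beta^2, \beta^4$ then forces $\tau^\ast$ to act trivially on $H^2(V)$ and $H^4(V)$. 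Thus $\Id - \tau^\ast$ vanishes identically on $H^k(V)$ for $k \in \{0,2,4\}$, and the Wang sequence fragments accordingly.

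Inserting these vanishings into the Wang sequence yields (i)--(vi) directly: parts (i) and (ii) combine $H^1(V) = 0$ with triviality of $\Id-\tau^\ast$ in degrees $0$ and $2$; part (iii) combines triviality in degree $2$ with the identification $\ker(\Id-\tau^\ast|_{H^3(V)}) = H^3(V)^\tau$; part (iv) uses triviality in degree $4$ to identify $H^3(V)/(\Id-\tau^\ast)H^3(V)$ with the kernel of the surjection $H^4(M) \twoheadrightarrow H^4(V) \cong \bbz$; and parts (v), (vi) use $H^5(V) = 0$ together with triviality of $\Id-\tau^\ast$ on $H^4(V)$. There is no genuine obstacle beyond bookkeeping; the one point requiring care is the direction of the short exact sequence in (iv), where the \emph{cokernel} of $\Id-\tau^\ast$ on $H^3(V)$ appears as a subgroup rather than a quotient of $H^4(M)$, and correspondingly the connecting map $H^4(V) \to H^5(M)$ in the next segment is an isomorphism because $H^5(V)$ vanishes.
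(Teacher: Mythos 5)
Your proof is correct and follows essentially the same route as the paper: the lemma there is read off the same Wang/Mayer--Vietoris sequence of the mapping torus $M = S^1_\lnx \times V / a \times \tau$ with maps $\Id - \tau^*$, using the standing assumption $K=0$ to conclude that $\tau^*$ acts trivially on $H^2(V)\cong N$ and $H^4(V)\cong\bbz$, exactly as you do. One small point of nomenclature: only $K=0$ is needed and assumed in the paper at this stage; full pleasantness (which additionally requires $s=b_3^-(V)$) plays no role in this lemma.
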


\noindent
Note that the only torsion in $H^*(M)$ is
\[
\Tor H^4(M) \; \cong \;
H^3(V)^{-\tau}/(\Id - \tau^*)H^3(V) \; \cong \; \bbz_2^{b_3^-(V) {-} s} ;
\]
thus $H^*(M)$ is torsion-free when the involution block $Z$ is pleasant.

We also need to understand the restriction map to the cross-section of the
cylindrical end, $H^*(M) \to H^*(T^2 \times \kd)$,
where $T^2 := S^1_\lnx {\times} S^1_\lnn / a \times a$. In particular we need
to describe the image.
Over $\bbq$, the image is the same
as for the maps in Corollary \ref{cor:mres},
\eg $H^3(M;\bbq) \to H^3(T^2 \times \kd; \bbq)$ has image
$\drx N \oplus \drn T$, but working with integer coefficients is more complicated.

\begin{notn}
\label{notn:abuse}
Here we are abuse notation slightly and denote classes in $H^*(T^2)$ by their
pull-backs to $H^*(S^1_\lnx {\times} S^1_\lnn)$; thus $2\drx$
and $2\drn \in H^1(T^2)$ are primitive classes, but they generate a subgroup of
index 2, and $H^2(T^2)$ is generated by $2\drx\drn$.
\end{notn}

\begin{lem}\hfill
\label{lem:mtres}
\begin{enumerate}
\item $H^2(M) \to H^2(T^2 \times \kd)$ is an isomorphism onto $N$.
\item $H^3(M) \to H^3(T^2 \times \kd)$ has image contained in
\[ I^3 := \{\drx n + \drn t :
\; n \in N, \, t \in T, \, n+t = 0 \mod 2L\} . \]
If $s = b_3^-(V)$ then equality holds.
\item $H^4(M) \to H^4(T^2 \times \kd)$ has image
$2\drx\drn T \oplus H^4(\kd)$.
\end{enumerate}
\end{lem}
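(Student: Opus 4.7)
The plan is to pull everything back to the double cover $\bar p: \ow = S^1_\lnx \times S^1_\lnn \times \kd \to T^2 \times \kd$, where the restriction maps from $\oM = S^1_\lnx \times V$ have been computed in Corollary~\ref{cor:mres}. The first step is to identify the image of $\bar p^*$. Since $\bar p$ has degree $2$, the primitive generator of $H^2(T^2;\Z)$ pulls back to $2\drx\drn$, and $H^1(T^2;\Z)$ pulls back to the index-$2$ subgroup $\{a\drx + b\drn : a+b \text{ even}\}$ of $H^1(S^1_\lnx \times S^1_\lnn;\Z)$. Tensoring with $H^*(\kd)$ then yields
\[
\bar p^*(H^3(T^2\times\kd)) = \{\drx\ell_1 + \drn\ell_2 \in \drx L \oplus \drn L : \ell_1 + \ell_2 \in 2L\},
\]
and $\bar p^*(H^4(T^2\times\kd)) = H^4(\kd) \oplus 2\drx\drn L$ inside $H^4(\kd) \oplus \drx\drn L = H^4(\ow)$.

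With these identifications, the containments in each part follow from the commutative square relating the top restriction $r: H^*(M) \to H^*(T^2\times\kd)$ to the bottom restriction $\bar r: H^*(\oM) \to H^*(\ow)$ via the covering pullbacks. Injectivity of $\bar p^*$ means that the image of $r$ is captured by the intersection of $\bar p^*(H^*(T^2\times\kd))$ with the image of $\bar r$. For (i), $H^2(M) = H^2(V) = N$ (using $K=0$) maps via $\bar r$ to $N \subseteq L \subseteq H^2(\ow)$, and $N$ lies in $\bar p^*(H^2(T^2\times\kd))$, giving an isomorphism onto $N$. For (ii), Corollary~\ref{cor:mres}(iii) gives $\bar r(H^3(\oM)) = \drx N \oplus \drn T$, whose intersection with $\bar p^*(H^3(T^2 \times \kd))$ is exactly $I^3$. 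For (iii), Corollary~\ref{cor:mres}(iv) gives $\bar r(H^4(\oM)) = H^4(\kd) \oplus \drx\drn T$, and intersecting with $H^4(\kd) \oplus 2\drx\drn L$ gives $H^4(\kd) \oplus 2\drx\drn T$, using that $T$ is primitive in $L$ so $T \cap 2L = 2T$.

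Surjectivity in (i) is immediate, and in (iii) follows by construction: $H^4(V)^\tau = H^4(V) = \Z$ surjects onto $H^4(\kd)$ via $\beta^4$, and for any $t \in T$ the class $(1+\tau^*)\beta$ with $\beta^3(\beta) = t$ is $\tau$-invariant and its product with the generator of $H^1(M)$ gives a class in $H^4(M)$ whose restriction to the cross-section is $2\drx\drn t$.

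The main obstacle is the conditional equality in (ii). Tracking how the Wang exact sequence $0 \to H^2(V) \to H^3(M) \to H^3(V)^\tau \to 0$ interacts with $\bar r$ shows that realising an arbitrary $(n,t) \in I^3$ in the image requires simultaneously producing $t \in \beta^3(H^3(V)^\tau)$ and a compatible residue of $n$ modulo $2N$ prescribed by the extension data of the sequence. The pleasantness condition $s = b_3^-(V)$ is what ensures this: it forces $H^3(V)$ to split as a $\Z[\Z_2]$-module into copies of the trivial and regular representations only, with no sign-representation summand, which rigidifies the interplay between $\beta^3$ and the extension and makes the simultaneous matching possible.
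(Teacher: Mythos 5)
Your covering-space setup gives the containments in (i)--(iii) correctly (the commutative square only yields $\bar p^*(\im r)\subseteq \bar r(H^*(\oM))\cap \bar p^*(H^*(T^2\times\kd))$, not the equality your phrase ``is captured by'' suggests, but you only use the containment there), and your identification of the condition $s=b_3^-(V)$ with the absence of sign-representation summands in $H^3(V)$ is correct. The problem is that the two surjectivity statements that carry the content of the lemma are not actually proved. For (iii), your construction is off by a factor of $2$: a lift to $H^3(M)$ of the invariant class $(1+\tau^*)\beta$ restricts on the fibre $S^1_\lnn\times\kd$ to $\beta^3(\beta)+\beta^3(\tau^*\beta)=2\drn t$, and the generator of $H^1(M)$ restricts to $2\drx$, so the cup product restricts to $4\drx\drn t$, showing only $4\drx\drn T\subseteq\im$. (You cannot repair this by choosing $\gamma\in H^3(V)^\tau$ with $\beta^3(\gamma)=t$, since $\beta^3(H^3(V)^\tau)$ can be a proper subgroup of $T$ even for pleasant blocks, e.g.\ when $H^3(V)$ contains regular $\Z[\Z_2]$-summands.) The correct mechanism is the connecting homomorphism of the Wang/Mayer--Vietoris sequence over the base circle: it is defined on all of $H^3(V)$, not just the invariant part, and its compatibility with the corresponding boundary map for $T^2\times\kd$, which is $x\mapsto 2\drx x$, gives $2\drx\drn T$ in the image at once because $\beta^3:H^3(V)\to T$ is surjective. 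This is exactly the paper's one-line argument for (iii).

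For the conditional equality in (ii), which is the heart of the lemma, you state the goal (``realising an arbitrary $(n,t)\in I^3$ requires simultaneously producing $t\in\beta^3(H^3(V)^\tau)$ and a compatible residue of $n$ modulo $2N$'') and then assert that the no-sign-summand condition ``rigidifies the interplay \dots and makes the simultaneous matching possible.'' No argument is given, and the framing itself is delicate: which pairs $(n,t)$ mod $2$ are achievable is governed by extension data you never pin down, so nothing in your text rules out the image being a proper finite-index subgroup of $I^3$. The paper sidesteps any such bookkeeping: since $I^3$ is precisely the set of integral classes in the known rational image $\drx N\oplus\drn T$, the image of $H^3(M)$ has finite index in $I^3$; the long exact sequence of the pair $(M,T^2\times\kd)$ then embeds $I^3/\im H^3(M)$ into $H^4_{cpt}(M)\cong H_3(M)$, hence (being finite) into $\Tor H_3(M)\cong\Tor H^4(M)\cong\Z_2^{b_3^-(V)-s}$, which vanishes exactly under the hypothesis $s=b_3^-(V)$. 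If you want to keep your representation-theoretic route you would have to carry out the mod-$2$ matching explicitly for trivial and regular summands; as written, that step is missing, so the proposal does not establish part (ii).
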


\begin{proof}
First part is obvious because $H^2(M) \to H^2(V)$ is an isomorphism.
Last part is obvious because the Mayer-Vietoris boundary map in the
computation of $H^*(T^2 \times \kd)$ maps
$H^k(S^1_\lnx \times \kd) \to H^{k+1}(T^2 \times \kd)$ by $x \mapsto 2\drx x$.

$I^3$ is precisely the set of integral classes in the rational image
$\drx N \oplus \drn T \subseteq H^3(T^2 \times \kd; \bbq)$,
so the image of $H^3(M)$ is a finite index subgroup of $I^3$.
The long exact sequence of cohomology of $M$ relative to $T^2 \times \kd$
gives $I^3/\im H^3(M) \into H^4_{cpt}(M) \cong H_3(M)$. Thus
$I^3/\im H^3(M) \into \Tor H_3(M) \cong \Tor H^4(M)$, which is trivial
if $s = b_3^-(V)$.
\end{proof}

\subsection{The second Chern class}

When we compute characteristic classes of extra-twisted connected sums in
\S\ref{subsec:p},
it will prove convenient to present the second Chern class of a building block
with $K = 0$ in the following form:
\begin{equation}
\label{eq:c2cbar}
c_2(Z) = g(\bar c_2(Z)) + 24 h ,
\end{equation}
for some $\bar c_2(Z) \in N^*$ and $h \in H^4(Z)$ such that the restriction of $h$
to $\kd$ is the positive generator of $H^4(Z)$,
where $g : N^* \to H^4(Z)$ is dual to the restriction
$H^2(Z) \to N \subset H^2(\kd)$.
Alternatively, we can describe $g$ as follows: for $\bar c \in N^*$ and
any preimage $x$ of $\bar c$ under the duality map $\flat : H^2(\kd) \to N^*$
(which is surjective since $H^2(\kd)$ is unimodular),
\[ g (\bar c) = i_* \partial (\drn x) , \]
where $\partial : H^3(S^1_\lnn \times \kd) \to H^4_{cpt}(V)$ is the snake
map in the long exact sequence of the cohomology of $V$ relative to its
boundary, and $i_* : H^4_{cpt}(V) \to H^4(Z)$ is the push-forward of the
inclusion $V \into Z$.

For a building block with $K = 0$, Lemma \ref{lemg:Z&V}\ref{itg:h4} and
\ref{lem:vres}\ref{itg:4} give exactness of
\[ 0 \to N^* \stackrel{g}{\to} H^4(Z) \to H^4(\kd) \to 0 . \]
Since the image of $c_2(Z)$ in $H^4(\kd)$ is $\chi(\kd) = 24$ times the
generator, $c_2(Z)$ can then always be written in the form \eqref{eq:c2cbar}.
This presentation is not unique, but we will make convenient choices for
$\bar c_2(Z)$ and $h$ for each class of building blocks.
(If $K \not=0$ then we cannot in general write $c_2(Z)$ in the form
\eqref{eq:c2cbar}, and would need to make some further arbitrary choices to
capture the components in a direct summand isomorphic to $K^*$.) 

In the case of a building block $Z$ with involution $\tau$, we describe the
second Chern class in the same way, but in addition require the class $h$ to
be $\tau^*$-invariant. In the examples we care about, we can in fact do more:
we can essentially pick $h$ to be represented by a $\tau$-invariant integral
cochain.

Let us discuss more generally how to measure the failure of a $\tau$-invariant
class $h \in H^4(Z)$ to be represented by a $\tau$-invariant cochain.
For any chain representative $\alpha$ we can write $\alpha - \tau^*\alpha = d\beta$ for some 3-cochain $\beta$. Then $\beta + \tau^*\beta$ is closed, and
the resulting class
\begin{equation}
B(h) := [\beta + \tau^*\beta] \in H^3(Z)
\end{equation}
depends on the choices only modulo the image of $\Id + \tau^*$ on $H^3(Z)$.

We can relate this to the cohomology of $H^4(\mtt{Z})$. By the Mayer-Vietoris
sequence analogous to \eqref{eq:mv_maptorus}, $h \in H^4(Z)$ has a pre-image
$\tilde h \in H^4(\mtt{Z})$, and such a pre-image
can be pulled back by $\pi : S^1 \times \kd \to \mtt{Z}$.
The $H^4(Z)$ component of
$\pi^*\tilde h \in H^4(S^1 \times \kd) \cong H^4(Z) \oplus H^3(Z)$ is just $h$
itself, while the $H^3(Z)$-component depends
on the choice of $\tilde h$. By the Mayer-Vietoris sequence, the kernel of
$H^4(\mtt{Z}) \to H^4(Z)$ is the image of the snake map
$\delta : H^3(Z) \to H^4(\mtt{Z})$, whose composition with $\pi^*$ equals 
$\Id + \tau^* : H^3(Z) \to H^3(Z) \subset H^4(S^1 \times Z)$.
Thus the $H^3(Z)$-component of $\pi^* \tilde h$ depends on the choice of
$\tilde h$ up to the image of $\Id + \tau^*$, and in fact it equals $B(h)$. 

\begin{rmk}
\label{rmk:tilde_map}
For any $h = [\alpha] \in H^4(Z)$, the $\tau$-invariant cochain
$\alpha + \tau^*\alpha$ defines a class in $\widetilde{2h} \in H^4(\mtt{Z})$.
This depends only on $h$ (and since we assume $H^4(Z)$ is torsion-free,
in fact only on $2h$), and is a pre-image of $2h$ such that
$\pi^* \mtlift{2h} \in H^4(S^1 \times \kd)$ has no $H^3(Z)$-component
(but if $H^4(\mtt{Z})$ has 2-torsion then it is not the unique such pre-image).
However, even if $h$ is $\tau$-invariant, $\mtlift{2h}$ need not be even
in $H^4(\mtt{Z})$. Its parity is given by $\partial B(h)$.
\end{rmk}

The fact that $\kd \subset Z$ is fixed by $\tau$ allows us to define
a refinement of $B(h)$ supported away from~$\kd$, which will also play a role
in \S\ref{subsec:p}.
We can always choose a cochain representative $\alpha$ of $h$ to be $\tau$-invariant in a neighbourhood of $\kd$. Thus $\alpha - \tau^*\alpha$, which is exact on $Z$, has compact support in $V$. Because $H^4_{cpt}(V) \into H^4(Z)$
(since $H^3(\kd) = 0$) we can write $\alpha - \tau^*\alpha = d\beta$ for
a compactly supported cochain $\beta$ on $V$, and consider
\begin{equation}
\label{eq:h_inv_fail}
\wh B(h) = [\beta + \tau^*\beta] \in H^3_{cpt}(V).
\end{equation}
This is again defined up to the image of $\Id + \tau^*$ on $H^3_{cpt}(V)$,
and we can again relate it to the mapping torus.
For a pre-image $\tilde h \in H^4(\mtt{Z})$ of $h$, we can pick a cochain
representative $\tilde \alpha$ that near  $S^1 \times \kd$ is a pull-back of a
cochain on $\kd$. If we pick a cochain representative $\alpha$ of $h$ that near
$\kd$ is a pull-back of that same cochain on $\kd$, then the difference of
the pull-backs of $\tilde \alpha$ and $\alpha$ to $S^1 \times \kd$ has compact
support in $S^1 \times V$.
The $H^3_{cpt}(V)$ component of the resulting class in
$H^4_{cpt}(S^1 \times V)$ corresponds to $\wh B(h)$.

If a $\tau$-invariant class $h$ has a $\tau$-invariant cochain representative,
then certainly $B(h) = \wh B(h) = 0$. For our examples of involution blocks,
we will not be able to argue that we can choose a pre-image $h \in H^4(Z)$
of the generator of $H^4(\kd)$ to have a $\tau$-invariant cochain representative, but we will be able to pick it to be the Poincar\'e dual of a submanifold
that is preserved by $\tau$.

\pagebreak

\begin{lem}
\label{lem:h_inv_pd}
Suppose $h = PD(C)$ for a $\tau$-invariant submanifold $C \subset \kd$.
Then $B(h) = 0$.

If $C$ is transverse to $\kd$ then also $\wh B(h) = 0$.
\end{lem}

\begin{proof}
The pre-image of $C$ in $\mtt{Z}$ is simply $S^1 \times C$.
As a pre-image of $h$ in $H^4(\mtt{Z})$, we can take
$\tilde h = PD(S^1 \times C)$. Then certainly the pull-back of $\tilde h$
to $S^1 \times Z$ has no $H^3(Z)$ component, so $B(h) = 0$.

For the last claim, take a $\tau$-invariant tubular neighbourhood $U \subset Z$
of $C$, pick a cochain representative $\tilde \alpha$ of the above $\tilde h$
with support in $\mtt{U}$, and a cochain representative $\alpha$ of $h$
supported in $U$. Because $C$ is transverse to $\kd$, we can in addition
take both cochains to be pull-backs of the same representative of
$PD(C \cap \kd)$ near $\kd$, so that the difference $\alpha'$ of the
pull-backs to $S^1 \times Z$ is supported in $S^1 \times (U \cap V)$.
Since the image of $[\alpha']$ in $H^4_{cpt}(S^1 \times U)$ is clearly zero
and $H^4_{cpt}(S^1 \times (U \cap V)) \to H^4_{cpt}(U)$ is injective,
it follows that $[\alpha'] = 0$, and in particular the $H^3_{cpt}(Z)$-component $\wh B(h)$ of its image in $H^4_{cpt}(S^1 \times Z)$ vanishes.
\end{proof}

\subsection{Moduli of lattice-polarised K3s}

The final property of building blocks that we will wish to study concerns the
relation to moduli spaces of K3s. Because a K3 surface $\kd$ is simply
connected, its Picard group $\Pic \kd$ is isomorphic to
$H^2(\kd;\Z) \cap H^{1,1}(\kd;\bbc)$. The Picard lattice is $\Pic \kd$ equipped
with the restriction of the intersection form of $H^2(\kd;\Z)$.

Fix a non-singular lattice $L$ of signature $(3,19)$. A marking of a K3 surface
$\kd$ is an isomorphism $\hdg : H^2(\kd;\Z) \to L$. The Picard lattice of a
marked K3 is thus identified with a (primitive) sublattice of~$L$.
Meanwhile, the \emph{period} of the marked K3 is the image in $\PP(L_\bbc)$
of the 1-dimensional subspace $H^{2,0}(\kd;\bbc) \subset H^2(\kd;\bbc)$.
It lies in the subset
$\{ \Pi \in \PP(L_\bbc) : \Pi^2 = 0, \Pi \overline\Pi > 0\}$.
By the Torelli theorem, the moduli space of marked K3s is (modulo some niceties
about the choice of polarisations that do not concern us) isomorphic to an open
subset of this period domain.
 
Crucially, the K3 surfaces $\kd$ that appear in a building block $Z$ always
belong to a more restricted moduli space. According to
Lemma \ref{lem:basicz}\ref{it:zh20}, the Picard lattice of $\kd$ must
contain the polarising lattice $N$ of $Z$. Therefore the period $\Pi$ of
the marked K3 must be orthogonal to $N$. We say that $\kd$ is ``$N$-polarised''.

Equivalently, we can think of the period as the positive definite
2-plane $\Pi \subset L_\R$ spanned by the images of real and imaginary parts
of $H^{2,0}(\kd;\bbc)$.
If $\kd$ is $N$-polarised, then $\Pi$ belongs to the \emph{Griffiths domain}
\begin{equation}
\label{eq:griffiths}
D_N := \{ \textrm{ positive-definite 2-planes } \Pi \in Gr(2, N^\perp)\}.
\end{equation}
A principle that is valid for all building blocks we consider in this paper
is that they come in families, such that a generic $N$-polarised K3 appears
as an anticanonical divisor in some element of the family, and moreover we have
some control on the size of the ample cone
(see Proposition~\ref{prop:generic_fano}). In \S\ref{sec:match} we find
on the one hand that this genericity property is often enough for producing
matchings between some elements of a pair of families. On the other hand, we
find also that in some cases one needs to know that even generic elements of
a more restricted moduli space of K3s (with a larger polarising lattice
$\Lambda \supset N$) appear as anticanonical divisors.
We capture these conditions in the following definition.

\begin{defn}
\label{def:generic}
Let $N \subset L$ be a primitive sublattice, $\Lambda \subset L$ a primitive overlattice of $N$, and $\Amp_\fbb$ an open subcone
of the positive cone in $N_\bbr$. We say that a family of building blocks
$\fbb$ with polarising lattice $N$ is \emph{$(\Lambda, \Amp_\fbb)$-generic} if
there is a subset $U_\fbb$ of the Griffiths domain $D_\Lambda$
with complement a countable union of complex analytic submanifolds of
positive codimension with the property that:
for any $\Pi \in U_\fbb$ and $\kclass \in \Amp_\fbb$ there is a building block
$(Z,\kd) \in \fbb$ and a marking $\hdg : L \to H^2(\kd; \ZZ)$ such that
$\hdg(\Pi) = H^{2,0}(\kd)$, and $\hdg(\kclass)$ is the image of the restriction
to $\kd$ of a Kähler class on $Z$.
\end{defn}

\subsection{Presentation of data}
\label{subsec:data}

To finish the section, let us summarise what we consider to be the key pieces
of data of a building block, which will be sufficient to compute the
topological invariants of the resulting extra-twisted connected sums that we
are interested in.
\begin{itemize}
\item The kernel $K$ of $H^2(V) \to H^2(\kd)$ and (for involution blocks)
whether the block is pleasant,
\item $b_3(Z)$ and---in the case of blocks with involution---$b_3^+(Z)$,
\item the form on the polarising lattice $N$,
\item an element $\bar c_2(Z) \in N^*$ encoding information about $c_2(Z)$
as in \eqref{eq:c2cbar}, and $\wh B(h) \in H^4_{cpt}(V)$,
\item an open cone $\Amp \subset N_\bbr$ such that the family of blocks
is $(N, \Amp)$-generic in the sense of Definition~\ref{def:generic}.
\end{itemize}
Tables \ref{table:species1}, \ref{table:ordblocks} and \ref{table:invblocks}
will include this and some auxiliary data.
In fact, all the ordinary blocks included in the tables will have $K = 0$,
and all the involution blocks will be pleasant, with $\wh B(h) = 0$.

We always use the same basis of $N$ for describing the form on $N$,
$\bar c_2(Z)$ and $\Amp$. For all blocks we consider, it turns out to be
possible to choose a basis for $N$ that consists of the edges of $\Amp$, and in
the tables we always use such a basis.

Note that this means that the sign of $\bar c_2(Z)$ is meaningful.
Multiplying all elements of the basis by $-1$ preserves the intersection form,
but reverses the signs of $\bar c_2(Z)$ and $\Amp$ together. For instance, if
$N$ has rank 1, choosing $\Amp$ amounts to designating one of the two
generators of $N$ to be positive. Whether $\bar c_2(Z)$ evaluates to, say, 2 or
$-2$ mod 24 on the positive generator then has an invariant meaning, and can
affect the homeomorphism class of the extra-twisted connected sums built from
the block.

\begin{rmk}
\label{rmk:conj}
If $Z$ is a building block then so is its complex conjugate $\overline Z$,
\ie the same smooth manifold, but with the complex structure $J$ replaced
by $-J$.
This reverses the orientation of $Z$, but preserves it on $\kd$, so the
sign of the dual map $g : N^* \to H^4(Z)$ is reversed. At the same time,
the Kähler cone of $\kd$ is multiplied by $-1$, so $Z$ and $\overline Z$
are indistinguishable by our topological data. This is quite reasonable, since
in many cases it is clearly possible to deform $Z$ to a building block with
a real structure, and hence to its complex conjugate.
\end{rmk}

\section{Building blocks from semi-Fano 3-folds}
\label{sec:sfblocks}

The main method we use in this paper for producing examples of building blocks
is to blow up Fano 3-folds or semi-Fano 3-folds. Let us briefly recall some
terminology. A projective 3-fold $Y$ is \emph{weak Fano} if the anticanonical
bundle $-K_Y$ is big and nef, \ie if the sections of a sufficiently high power
of $-K_Y$ define a morphism $\phi$ of $Y$ to projective space, whose image $X$
(\emph{the anticanonical model}) is 3-dimensional. If $\phi$ is an
embedding, then $Y$ is Fano, \ie $-K_Y$ is ample. In the terminology from
\cite[Definition 4.11]{cym}, for $Y$ to be semi-Fano means that the fibres of
$\phi$ have dimension at most~1.

\subsection{Ordinary building blocks from Fano 3-folds}
\label{subsec:ordfano}

Let us first summarise the results from \cite{cym} concerning how to
construct building blocks (without involution) from Fano or semi-Fano 3-folds,
along with some previously studied examples of applying this to Fano 3-folds
mainly of Picard rank 1 or 2.

\begin{prop}[{\cite[Prop 4.24]{cym}}]
Let $Y$ be a closed Kähler 3-fold with an anticanonical pencil $|\kd_0: \kd_1|$
with smooth base locus $C$.
Let $Z$ be the blow-up of $Y$ along $C$, and let $\kd \subset Z$ be the proper
transform of $\kd_0$.
Then the image $N$ of $H^2(Z) \to H^2(\kd)$ equals the image
of $H^2(Y) \to H^2(\kd_0)$,
while $\ker (H^2(Z) \to H^2(\kd)) \cong \Z \oplus \ker(H^2(Y) \to H^2(\kd))$.
Further $\Tor H^3(Z) \cong \Tor H^3(Y)$, and the image of the Kähler cone of 
$Z$ in $H^{1,1}(\kd; \R)$ contains the image of the Kähler cone of $Y$.
\end{prop}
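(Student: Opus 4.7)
The plan is to apply the blow-up formula for the cohomology of $Z$ and trace through the restriction map to $\kd$ explicitly, using that the proper transform $\kd$ is isomorphic to $\kd_0$.

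First I would collect the basic geometric facts. Since $C \subset \kd_0$ is a Cartier divisor, $\pi|_\kd \colon \kd \to \kd_0$ is an isomorphism and $\kd = \pi^*\kd_0 - E$. Under this identification, $E|_\kd$ corresponds to the divisor $C$ in $\kd_0$, and $[C] = [\kd_1|_{\kd_0}] = -K_Y|_{\kd_0}$ because $\kd_1 \sim -K_Y$. Combining the standard formula $K_Z = \pi^*K_Y + E$ with $\kd = -K_Z$ and the adjunction identity $K_\kd = (K_Z + \kd)|_\kd = 0$ on the K3 surface $\kd$ also yields the useful fact $K_Z|_\kd = 0$.

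With these in place, the blow-up formula provides
\[ H^k(Z;\Z) \;\cong\; \pi^*H^k(Y;\Z) \,\oplus\, H^{k-2}(C;\Z) \]
in the relevant range. For $k=2$ this gives $H^2(Z) = \pi^*H^2(Y) \oplus \Z\, E$, and every class $\pi^*\alpha + nE$ restricts on $\kd$ to $(\alpha - nK_Y)|_{\kd_0}$. This immediately identifies $N$ with the image of $H^2(Y) \to H^2(\kd_0)$ (the contribution from $nE$ already lies in it), and identifies the kernel with $\ker(H^2(Y) \to H^2(\kd_0)) \oplus \Z\cdot K_Z$, the $\Z$ factor being generated by $K_Z = \pi^*K_Y + E$. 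For $k=3$ the formula reads $H^3(Z) \cong H^3(Y) \oplus H^1(C)$; since $C$ is a disjoint union of smooth projective curves, $H^1(C)$ is torsion-free, so $\Tor H^3(Z) \cong \Tor H^3(Y)$.

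For the Kähler cone containment, the key point is that $K_Z|_\kd = 0$ implies
\[ \pi^*\omega - tK_Z \;=\; \pi^*(\omega - tK_Y) - tE \]
restricts to $\omega|_{\kd_0}$ on $\kd$ for every $t$ and every $\omega \in H^{1,1}(Y;\R)$. Given $\omega$ Kähler on $Y$, openness of $\Amp_Y$ lets me take $t > 0$ small enough that $\omega - tK_Y$ is Kähler, and then the standard fact that $\pi^*\alpha - \epsilon E$ is Kähler on the blow-up for $\alpha$ Kähler and $\epsilon$ small shrinks $t$ further so that the whole expression becomes Kähler on $Z$. The main obstacle is the minor technicality of balancing the two smallness conditions in a single parameter; I would handle this by noting that the bound $\epsilon_0(\alpha)$ from the latter fact is lower-semi-continuous in the Kähler class $\alpha$, so remains bounded below along the ray $\alpha = \omega - tK_Y$ as $t \to 0^+$.
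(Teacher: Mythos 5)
Your proof is correct, and since the paper gives no argument of its own for this statement --- it is quoted directly from \cite[Prop 4.24]{cym} --- there is nothing internal to compare against; your route (integral blow-up formula plus the explicit restriction $(\pi^*\alpha + nE)|_\kd = (\alpha - nK_Y)|_{\kd_0}$, and perturbing $\pi^*\omega$ by $-tK_Z$ for the Kähler cone claim) is the standard one and is what the cited reference does in substance.

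Two small points of hygiene. First, your derivation of $K_Z|_\kd = 0$ from adjunction is circular: since $\kd \sim -K_Z$, the identity $(K_Z + \kd)|_\kd = K_\kd$ carries no information beyond $K_\kd = 0$, and to extract $K_Z|_\kd = 0$ from it you would separately need $\calo_Z(\kd)|_\kd$ trivial (true, e.g.\ because $\kd$ is a fibre of the induced fibration $Z \to \PP^1$, but not something you established). The clean fix is that $K_Z|_\kd = 0$ is an immediate consequence of the restriction formula you prove anyway: $(\pi^* K_Y + E)|_\kd = K_Y|_{\kd_0} + [C] = 0$, since $[C] = -K_Y|_{\kd_0}$; note this also removes any need to invoke K3-ness of $\kd$. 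Second, the lower semi-continuity of $\epsilon_0(\alpha)$ that you invoke at the end is fine but deserves its one-line justification: if $\pi^*\alpha - \epsilon E$ is Kähler then so is $\pi^*\alpha' - \epsilon E$ for all $\alpha'$ sufficiently close to $\alpha$, by openness of the Kähler cone of $Z$ and continuity of $\pi^*$; equivalently, $F(\alpha) := \sup\{\epsilon : \pi^*\alpha - \epsilon E \text{ K\"ahler}\}$ is concave (convex combinations of Kähler classes are Kähler), hence continuous on the Kähler cone of $Y$, so $F(\omega - tK_Y) > t$ for $t$ small and $\pi^*\omega - tK_Z$ is then Kähler and restricts to $\omega|_{\kd_0}$ as required.
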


\begin{constr}
\label{constr:block}
Let $Y$ be a closed Kähler 3-fold such that
\begin{enumerate}
\item $H^3(Y)$ torsion-free,
\item an anticanonical pencil $|\kd_0: \kd_1|$ with smooth base locus $C$, and
\item the image $N$ of $H^2(Y) \to H^2(\kd_0)$ is primitive.
\end{enumerate}
Let $Z$ be the blow-up of $Y$ along $C$, and let $\kd \subset Z$ be the proper
transform of $\kd_0$. Then $(Z, \kd)$ is a building block, with polarising
lattice $N$, and $K \cong \ker H^2(Y) \to H^2(\kd_0)$.
\end{constr}

\begin{prop}
\label{prop:block_from_fano}
If $Y$ is a semi-Fano 3-fold whose anti-canonical ring is generated in degree 1
then conditions \textup{(i)} and \textup{(ii)} in
Construction \ref{constr:block} are satisfied, and $K = 0$.
\end{prop}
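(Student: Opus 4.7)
The plan is to verify the three assertions in turn: (iii) existence of an anti-canonical pencil with smooth base locus, then (i) primitivity of the image $N$, and finally $K=0$.

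I would begin with (iii), which is the most elementary. The assumption that the anti-canonical ring is generated in degree~1 is equivalent to saying that the anti-canonical rational map is everywhere defined, i.e.\ is an honest morphism $\phi : Y \to X \subseteq \PP^N$ onto the anti-canonical model; equivalently, $|{-}K_Y|$ is base-point-free. Since $Y$ is smooth, Bertini's theorem for base-point-free linear systems gives a generic pencil $\{\kd_0, \kd_1\}$ with smooth members meeting transversely, and the base locus $C = \kd_0 \cap \kd_1$ is then a smooth curve.

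For (i) and $K = 0$, the plan is to reduce to the classical Lefschetz hyperplane theorem on the anti-canonical model $X$, where $-K_X$ is ample by definition. The semi-Fano hypothesis says the fibres of $\phi$ have dimension at most~1, so the exceptional locus of $\phi$ is 1-dimensional; a generic $\kd_0 \in |{-}K_Y|$ can then be chosen to avoid every exceptional curve of $\phi$, in which case $\phi|_{\kd_0} : \kd_0 \stackrel{\sim}{\to} \bar\kd_0 := \phi(\kd_0)$ is an isomorphism onto a smooth K3 surface in $|{-}K_X|$. Applying Lefschetz to the smooth ample divisor $\bar\kd_0 \subset X$ yields that $H^2(X;\Z) \to H^2(\bar\kd_0;\Z)$ is injective with primitive image.

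The final step is to transfer these two properties across the commutative square whose vertical arrows are $\phi^*$ and $\phi|_{\kd_0}^*$, the latter an isomorphism by construction. The main technical obstacle is to know that $\phi^* : H^2(X;\Z) \to H^2(Y;\Z)$ is itself an isomorphism: this is where the semi-Fano (as opposed to Fano) case needs care. The key input is that $\phi$ is a small crepant contraction (exceptional locus of codimension~2 in a 3-fold) from a smooth $Y$ to a variety $X$ with at worst terminal Gorenstein singularities, and for such contractions standard results from the MMP identify $H^2$ integrally. Granting this, the injectivity and primitivity pass from $H^2(X;\Z) \to H^2(\bar\kd_0;\Z)$ to $H^2(Y;\Z) \to H^2(\kd_0;\Z)$, yielding $K=0$ and primitivity of $N$ simultaneously.
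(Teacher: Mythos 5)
Your step (iii) is fine and is essentially the standard argument: generation in degree $1$ forces $|{-}K_Y|$ to be base-point free, and two applications of Bertini give a pencil with smooth base locus. The proof of (i) and $K=0$, however, has a genuine gap. First, the semi-Fano condition (fibres of $\phi$ of dimension at most $1$, i.e.\ $\phi$ semismall) does \emph{not} imply that $\phi$ is small: it permits a divisor contracted to a curve, in which case $X$ is singular along a curve, every member of $|{-}K_X|$ meets that curve (ampleness of $-K_X$), and the generic $\kd_0\subset Y$ is only the minimal resolution of its image, not isomorphic to it. Second, and fatally even in the small case, $\phi^*:H^2(X;\Z)\to H^2(Y;\Z)$ is injective but in general \emph{not} surjective, so there is no ``standard MMP result identifying $H^2$ integrally'': for the small resolution $Y$ of a cubic $3$-fold $X$ containing a plane (\cf Remark \ref{rmk:cubics}) one has $H^2(X;\Z)\cong\Z$ while $H^2(Y;\Z)\cong\Z^2$, the extra class being the exceptional divisor of the small resolution. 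Lefschetz on $X$ therefore only controls the restriction of classes pulled back from $X$, and says nothing about precisely those exceptional classes of $Y$ for which injectivity and primitivity of $H^2(Y)\to H^2(\kd_0)$ actually need an argument.

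The proof the paper relies on (\cite[Remark 4.10 and Proposition 5.7]{cym}) bypasses $X$ entirely: the base-point-free system $|{-}K_Y|$ defines a morphism $Y\to\PP(H^0(-K_Y)^*)$ whose fibres have dimension at most $1$, and Goresky--MacPherson's relative Lefschetz theorem with large fibres \cite{goresky88} (the same tool used in the proof of Proposition \ref{prop:smoothing} here) applied to a generic hyperplane section $\kd_0$ gives that $H_2(\kd_0;\Z)\to H_2(Y;\Z)$ is surjective; dualising, $H^2(Y;\Z)\to H^2(\kd_0;\Z)$ is injective with torsion-free cokernel, which is exactly $K=0$ together with primitivity of $N$. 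If you want to rescue your strategy you would have to treat the classes of $H^2(Y)$ not pulled back from $X$ (and, in the divisor-to-curve case, the $(-2)$-curves in $\kd_0$ over $\mathrm{Sing}\,X$) by hand, which in effect reproduces the semismall Lefschetz argument on $Y$.
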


\begin{proof}
See \cite[Remark 4.10 and Proposition 5.7]{cym}.
\end{proof}

\pagebreak[2]

For the anticanonical ring of $Y$ to be generated in degree 1 is equivalent to
the anticanonical model $X$ of $Y$ to have very ample $-K_X$.
The only two classes of Fano 3-folds $Y$ for which $-K_Y$ fails to be very
ample are number 1 in the Mori-Mukai list of rank 2 Fanos, and the product of
$\PP^1$ with a degree 1 del Pezzo surface.
The possible singular anticanonical models $X$ for which $-K_X$ fails to be
very ample are listed by Jahnke-Radloff \cite[Theorem 1.1]{jahnke_radloff}.

Meanwhile, all known examples of semi-Fano 3-folds $Y$ have torsion-free
$H^3(Y)$. Thus we can justifiably say that Construction \ref{constr:block}
can be applied to produce a building block from almost any semi-Fano 3-fold.

Now let us proceed to explain how to obtain the other data listed
in~\S\ref{subsec:data}.

\begin{lem}[{\cite[Lemma 5.6]{cym}}]
\label{lem:b3}
$b_3(Z) = b_3(Y) + b_1(C) = b_3(Y) - \chi(C) + 2 = b_3(Y) -K_Y^3 +2$.
\end{lem}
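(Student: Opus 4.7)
There are three equalities to establish in the chain
$b_3(Z) = b_3(Y) + b_1(C) = b_3(Y) - \chi(C) + 2 = b_3(Y) - K_Y^3 + 2$, and each is handled independently.

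First I would invoke the standard smooth blow-up formula for cohomology. Since $Z \to Y$ is the blow-up of a smooth projective 3-fold along a smooth curve $C$ (codimension 2), the exceptional divisor $E \subset Z$ is a $\bbp^1$-bundle over $C$, and there is a natural isomorphism
\[
H^k(Z;\bbz) \; \cong \; H^k(Y;\bbz) \, \oplus \, H^{k-2}(C;\bbz),
\]
with the second summand realised via pull-back from $C$ followed by multiplication by the exceptional class. Specialising to $k=3$ and recalling that $b_1(C) = 2g(C)$ yields $b_3(Z) = b_3(Y) + b_1(C)$. One can justify the blow-up formula either by a Mayer--Vietoris argument on $Z = (Z \setminus E) \cup (\textup{nbhd of } E)$ using that $Z \setminus E \cong Y \setminus C$ and that $E$ is a projective bundle over $C$, or by quoting it directly from Griffiths--Harris.

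Next, since $C$ is a smooth (possibly disconnected) complex curve, $\chi(C) = 2 - 2g(C) = 2 - b_1(C)$, so $b_1(C) = -\chi(C) + 2$, giving the second equality.

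For the third equality I would compute $\chi(C)$ via adjunction. As $C$ is the transverse intersection of two anticanonical divisors $\kd_0, \kd_1 \in |-K_Y|$, its normal bundle splits as
\[
N_{C/Y} \; \cong \; \calo_Y(-K_Y)|_C \, \oplus \, \calo_Y(-K_Y)|_C,
\]
so adjunction gives
\[
K_C \; = \; \bigl(K_Y + \det N_{C/Y}\bigr)\big|_C \; = \; \bigl(K_Y - 2K_Y\bigr)|_C \; = \; (-K_Y)|_C.
\]
Integrating, and using that $[C] = (-K_Y)^2$ in $H^4(Y;\bbz)$,
\[
\deg K_C \; = \; (-K_Y)\cdot C \; = \; (-K_Y)^3 \; = \; -K_Y^3,
\]
so $\chi(C) = -\deg K_C = K_Y^3$ and the third equality follows.

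The main steps are thus essentially standard; if there is any obstacle it is just being precise about the blow-up formula and keeping track of the sign convention ($(-K_Y)^3 = -K_Y^3$), which is needed to match the statement exactly.
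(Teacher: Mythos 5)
This is correct and is essentially the standard argument behind the cited \cite[Lemma 5.6]{cym} (the paper itself gives no proof beyond that citation): the blow-up formula $H^3(Z;\bbz)\cong H^3(Y;\bbz)\oplus H^1(C;\bbz)$ for a blow-up along a smooth curve, plus adjunction for the transverse intersection $C$ of two anticanonical divisors. One small caveat: drop the ``possibly disconnected'' parenthetical, since the middle equality $b_1(C)=-\chi(C)+2$ requires $C$ connected (in general $\chi(C)=2b_0(C)-b_1(C)$), which does hold in this setting because $C$ is a member of the restriction to the K3 divisor $\kd_0$ of the big and nef class $-K_Y$.
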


\begin{lem}[{\cite[Proposition 5.11]{cym}}]
\label{lem:c2blowup}
Let $Z$ be a building block obtained from a closed Kähler 3-fold $Y$ as in
Construction \ref{constr:block}, and let $\pi : Z \to Y$ denote the blow-up
map.
Let $h \in H^4(Z)$ be the Poincar\'e dual to a $\PP^1$ fibre of $\pi$,
and let $\pi_! : H^4(Z) \to H^4(Y)$, $g : N^* \to H^4(Z)$ and $g_Y : N^* \to H^4(Y)$ be the Poincar\'e dual to $\pi^* : H^2(Y) \to H^2(Z)$ and the restrictions
$H^2(Z) \to N$ and $H^2(Y) \to N$ respectively.
Then $c_2(Z) = g(\bar c_2(Z)) + 24h$, for
\[ \bar c_2(Z) = g_Y^{-1}\pi_!c_2(Z) . \]
\end{lem}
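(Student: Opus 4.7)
My plan is to derive the formula by applying $\pi_!$ to both sides of the presentation \eqref{eq:c2cbar} and exploiting a Poincaré duality compatibility between $g$ and $g_Y$.

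First I would verify that $h$, the Poincaré dual of a $\PP^1$ fibre of $\pi$, indeed restricts to the positive generator of $H^4(\kd)$, so that the presentation \eqref{eq:c2cbar} for $c_2(Z)$ is valid in the first place. The geometry here is as follows: $C \subset \kd_0$ is the smooth base locus, and after blowing up $C$, the proper transforms $\kd, \kd_1$ of $\kd_0, \kd_1$ become disjoint and each is isomorphic to the original surface. The exceptional divisor $E \to C$ is a $\PP^1$-bundle, and $\kd \cap E$ is a section. A single $\PP^1$ fibre of $\pi$ therefore meets $\kd$ transversely in one point, so $h \cdot [\kd] = 1$, confirming that $h_{|\kd}$ is the positive generator of $H^4(\kd)$. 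Since Lemma \ref{lemg:Z&V}\ref{itg:h4} gives exactness of $0 \to N^* \xrightarrow{g} H^4(Z) \to H^4(\kd) \to 0$ and $c_2(Z)$ has degree $\chi(\kd) = 24$ on the K3 fibre, we deduce $c_2(Z) - 24h \in \im g$, which produces some $\bar c_2(Z) \in N^*$ with $c_2(Z) = g(\bar c_2(Z)) + 24h$.

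Next I would identify $\bar c_2(Z)$ by applying $\pi_!$. The key two inputs are:
\begin{enumerate}
\item $\pi_! h = 0$: integrating $h$ along the fibres of $\pi$ yields zero because $h$ is the Poincaré dual of a whole fibre, and $\int_Z h \cup \pi^*\beta = \int_{\PP^1_{\text{fibre}}} \pi^*\beta = 0$ for every $\beta \in H^2(Y)$ since $\pi^*\beta$ vanishes on fibres.
\item $\pi_! \circ g = g_Y$: because $C \subset \kd_0$, the restriction $H^2(Y) \to N$ factors as $H^2(Y) \xrightarrow{\pi^*} H^2(Z) \to N$ via the identification of the proper transform $\kd$ with $\kd_0$; dualising this factorisation under Poincaré duality gives exactly $g_Y = \pi_! \circ g$.
\end{enumerate}
Combining these with $c_2(Z) = g(\bar c_2(Z)) + 24h$ yields $\pi_! c_2(Z) = g_Y(\bar c_2(Z))$, and since $H^2(Y) \to N$ is surjective onto a torsion-free group, its Poincaré dual $g_Y : N^* \to H^4(Y)$ is injective, so we may solve $\bar c_2(Z) = g_Y^{-1}\pi_! c_2(Z)$.

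The only mildly delicate point is the duality identity $g_Y = \pi_! \circ g$; everything else reduces to a routine fibre-integration calculation and the fact that the proper transform $\kd$ is just an isomorphic copy of $\kd_0$ sitting over it. In particular, I would not need to compute $c_2(Z)$ itself via a blow-up formula such as $c_2(Z) = \pi^* c_2(Y) + (\text{correction on } E)$, since the characterisation via $\pi_!$ bypasses the correction terms entirely.
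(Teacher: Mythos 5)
Your derivation is essentially sound, but note that the paper does not prove this lemma at all: it is quoted from \cite[Proposition 5.11]{cym}, so there is no internal proof to compare with. The existence half of your argument in fact reproduces the paragraph following \eqref{eq:c2cbar} in the paper, and your identification of $\bar c_2(Z)$ is a clean adjointness argument: since $g$, $g_Y$ and $\pi_!$ are by definition the adjoints of $r_Z\colon H^2(Z)\to N$, $r_Y\colon H^2(Y)\to N$ and $\pi^*$ with respect to the cup-product pairings $H^4\times H^2\to\Z$ (perfect here because the relevant groups are torsion-free -- for $H^4(Y)$ this is where hypothesis (ii) of Construction \ref{constr:block} enters), the identity $\pi_!\circ g=g_Y$ reduces to $r_Z\circ\pi^*=r_Y$ under the isomorphism $\pi|_\kd\colon\kd\to\kd_0$, which is just naturality of restriction; $\pi_!h=0$ because the fibre is contracted by $\pi$; and injectivity of $g_Y$ follows from surjectivity of $r_Y$ onto $N$, so $\bar c_2(Z)=g_Y^{-1}\pi_!c_2(Z)$ is forced. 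Your observation that the fibre meets the proper transform $\kd$ transversely in one point, so that $h$ restricts to the positive generator of $H^4(\kd)$, is also correct and needed.

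One point does need flagging. You attribute the exactness of $0\to N^*\xrightarrow{g} H^4(Z)\to H^4(\kd)\to 0$ to Lemma \ref{lemg:Z&V}\ref{itg:h4} alone, but that lemma ends in $H^4(V)$; to continue to $H^4(\kd)$ one must also use Lemma \ref{lem:vres}\ref{itg:4}, and the kernel of $H^4(Z)\to H^4(\kd)$ is then $N^*\oplus K^*$, so exactness -- and with it the existence of the presentation \eqref{eq:c2cbar} -- requires $K=0$. Construction \ref{constr:block} by itself only gives $K\cong\ker\bigl(H^2(Y)\to H^2(\kd_0)\bigr)$, which need not vanish, and the paper notes parenthetically that for $K\neq 0$ the form \eqref{eq:c2cbar} is in general unavailable; in that situation even the expression $g_Y^{-1}\pi_!c_2(Z)$ need not be defined, since $\pi_!c_2(Z)$ must annihilate $\ker r_Y$ to lie in $\im g_Y$, and this is not automatic. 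So your proof, like the statement itself, should carry the standing assumption $K=0$ of that subsection (automatic for the semi-Fano blocks of Proposition \ref{prop:block_from_fano}, and true for every block to which the lemma is applied); with that hypothesis made explicit, the argument is complete.
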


\newcommand{\firstblocks}{
\begin{table}[tb]
\renewcommand{\arraystretch}{1.1}
\[
\begin{array}[t]{lcccccc} \toprule
r & -K_Y^3 & b_3(Y) & b_3(Z) & N & \bar c_2(Z) \\ \midrule
4 & 4^3         &   0 &  66 &  (4) & 22 \\ 
3 & 3^3 \cdot 2 &   0 &  56 &  (6) & 26 \\ 
2 & 2^3         &  42 &  52 &  (2) & 16 \\ 
2 & 2^3 \cdot 2 &  20 &  38 &  (4) & 20 \\ 
2 & 2^3 \cdot 3 &  10 &  36 &  (6) & 24 \\ 
2 & 2^3 \cdot 4 &   4 &  38 &  (8) & 28 \\ 
2 & 2^3 \cdot 5 &   0 &  42 & (10) & 32 \\ 
1 &           2 & 104 & 108 &  (2) & 26 \\ 
1 &           4 &  60 &  66 &  (4) & 28 \\ 
1 &           6 &  40 &  48 &  (6) & 30 \\ 
1 &           8 &  28 &  38 &  (8) & 32 \\ 
1 &          10 &  20 &  32 & (10) & 34 \\ 
1 &          12 &  14 &  28 & (12) & 36 \\ 
1 &          14 &  10 &  26 & (14) & 38 \\ 
1 &          16 &   6 &  24 & (16) & 40 \\ 
1 &          18 &   4 &  24 & (18) & 42 \\ 
1 &          22 &   0 &  24 & (22) & 46 \\ 
\bottomrule
\end{array} 
\]
\smallskip
\caption{Rank 1 Fano blocks}
\label{table:species1}
\end{table}}

\firstblocks

This description of $c_2(Z)$ is convenient when coupled with the following claim.

\begin{lem}[{\cite[(5-13)]{cym}}]
\label{lem:c2recurse}
If $\pi : Z \to Y$
is the blow-up of some closed Kähler 3-fold $Y$ along a curve $C$ contained in
an anticanonical divisor $\kd$, then
\[ \pi_!(c_2(Z) + c_1(Z)^2) = c_2(Y) + c_1(Y)^2 . \]
\end{lem}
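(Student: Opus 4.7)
The identity is the degree-4 component of a Grothendieck--Riemann--Roch equality for the birational morphism $\pi$; equivalently, it falls out of comparing Hirzebruch--Riemann--Roch expansions of $\chi(Z,\pi^*L)$ and $\chi(Y,L)$ for line bundles $L\in\Pic Y$. The plan is to execute the latter approach, since it uses only HRR on the two 3-folds together with the vanishing of the higher direct images $R^i\pi_*\mathcal{O}_Z$ for $i\geq 1$.

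First I would observe that because $\pi$ is a birational morphism between smooth projective 3-folds, $R^0\pi_*\mathcal{O}_Z=\mathcal{O}_Y$ and $R^i\pi_*\mathcal{O}_Z=0$ for $i\geq 1$, so the projection formula gives $\chi(Z,\pi^*L)=\chi(Y,L)$ for every $L\in\Pic Y$. Writing $\alpha=c_1(L)\in H^2(Y;\Z)$ and expanding both sides via HRR yields a cubic polynomial identity in $\alpha$. Three of the four coefficients match after routine manipulations: the $\alpha^3$ term via $\pi_!(\pi^*\alpha)^3=\alpha^3$; the $\alpha^2$ term using $c_1(Z)=\pi^*c_1(Y)-E$ together with $\pi_!(E)=0$ in $H^2(Y)$ (a consequence of the exceptional divisor $E\to C$ being a $\PP^1$-bundle, hence of positive relative dimension over $Y$); and the $\alpha^0$ term via Noether's formula $\int c_1 c_2 = 24\chi(\mathcal{O})$ together with $\chi(\mathcal{O}_Z)=\chi(\mathcal{O}_Y)$. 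Equating the remaining linear-in-$\alpha$ coefficient and applying the projection formula once more gives
\[ \int_Y \alpha \cdot \pi_!\bigl(c_1(Z)^2 + c_2(Z)\bigr) = \int_Y \alpha \cdot \bigl(c_1(Y)^2 + c_2(Y)\bigr) \]
for every $\alpha$ in the image of $c_1\colon\Pic Y\to H^2(Y;\Z)$.

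The main obstacle is converting this numerical equality into the claimed cohomological identity. In the setting of Construction~\ref{constr:block} (in particular the semi-Fano applications that motivate the lemma), $H^{2,0}(Y)=0$, so the exponential sheaf sequence makes $c_1$ surjective and Poincaré duality on the closed oriented 6-manifold $Y$ upgrades the display above to an equality in $H^4(Y;\Z)/\Tors$; the torsion-freeness of $H^3(Y)$ that is built into Construction~\ref{constr:block} forces $H^4(Y;\Z)$ to be torsion-free via the universal coefficient theorem, so the identity holds integrally as stated.
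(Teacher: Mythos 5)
Your reduction of the problem to the numerical identity
$\int_Y \alpha\cdot\pi_!\bigl(c_2(Z)+c_1(Z)^2\bigr)=\int_Y \alpha\cdot\bigl(c_2(Y)+c_1(Y)^2\bigr)$
for $\alpha\in c_1(\Pic Y)$ is sound (in fact the separate checks of the $\alpha^3$, $\alpha^2$ and constant coefficients are redundant: equality of the cubic polynomials $k\mapsto\chi(Z,\pi^*L^{\otimes k})=\chi(Y,L^{\otimes k})$ already forces every coefficient to agree). The genuine gap is in the last step, where you convert this into the asserted equation in $H^4(Y;\Z)$. The lemma is stated for an \emph{arbitrary} closed Kähler 3-fold $Y$, but your upgrade needs two extra hypotheses imported from Construction~\ref{constr:block}: surjectivity of $c_1\colon\Pic Y\to H^2(Y;\Z)$ (i.e.\ $H^{2,0}(Y)=0$) and torsion-freeness of $H^4(Y;\Z)$ (which you get from $\Tor H^3(Y)=0$ via Poincar\'e duality and universal coefficients). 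Neither is a hypothesis of the lemma. The Riemann--Roch method only sees the class $\pi_!\bigl(c_2(Z)+c_1(Z)^2\bigr)$ through its pairing with the N\'eron--Severi group, so for instance for $Y=S\times\PP^1$ with $S$ a K3 surface (where $-K_Y$ is effective, so the statement is not vacuous) your argument cannot pin the class down even rationally, let alone integrally. So what you have proved is the lemma in the restricted setting where the paper happens to apply it, not the statement as given (and as cited from \cite[(5-13)]{cym}).

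The route the citation refers to is the direct one, which needs no assumptions on $H^{2,0}$ or on torsion and yields the integral identity on the nose: for the blow-up of a smooth curve $C$ with exceptional divisor $E$ one has $c_1(Z)=\pi^*c_1(Y)-E$ and the standard degree-4 blow-up formula, from which $\pi_!\bigl(c_2(Z)\bigr)=c_2(Y)+[C]$ and $\pi_!\bigl(c_1(Z)^2\bigr)=c_1(Y)^2-[C]$, using only $\pi_!(E)=0$ and $\pi_!(E^2)=-[C]$; the two $[C]$-terms cancel in the sum, which is exactly why the combination $c_2+c_1^2$ is the natural one to push forward. If you want to keep your Riemann--Roch argument, you must either add the hypotheses you actually use (which would still cover all the applications in this paper) or replace the final step by this direct Chern-class computation.
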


Finally, for the matching problem it is an important principle that our blocks
come in families, such that a generic $N$-polarised K3 surface appears as
an anticanonical divisor in some element of the family.

\pagebreak[2]

\begin{prop}[{\cite[Proposition 6-9]{cym}}]
\label{prop:generic_fano}
Let $Y$ be a semi-Fano 3-fold with Picard lattice $N$ (\ie $N$ is the image
of $H^2(Y) \to H^2(\kd)$ for an anticanonical $\kd \subset Y$), and let
$\sff$ be the set of semi-Fano 3-folds in the deformation type of $Y$.
Then there is an open cone $\Amp_\sff \subset N_\R$ such that
$\sff$ is $(N, \Amp_\sff)$-generic in the sense
of Definition \ref{def:generic}.

In particular, the set of building blocks produced from $\sff$ by Construction
\ref{constr:block} is also $(N, \Amp_\sff)$-generic.
\end{prop}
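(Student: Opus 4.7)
The approach is to take $\Amp_\sff$ to be the image in $N_\bbr$ of the ample cone of a Fano $Y \in \sff$, and to define $U_\sff$ as the image of the period map on the family of anticanonical K3 divisors of Fanos in $\sff$, with Noether--Lefschetz loci and $(-2)$-class ample-cone obstructions removed. The substance of the proof is a dominance statement for this period map, closely paralleling \cite{cym}.

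First I would observe that $\sff$ is Zariski-open in the deformation type of $Y$, being the locus where $-K$ is ample, and by assumption it is non-empty. Since $h^{2,0}(Y) = 0$ for a Fano 3-fold, the Picard rank is constant in deformations, and the image lattice $N$ of $H^2(Y) \to H^2(\kd)$ is a deformation invariant (\cf \cite{cym}). On a dense open subset of $\sff$, the ample cone of $Y$ has full dimension $\rk N$ in $N_\bbr$; take $\Amp_\sff$ to be its image in $N_\bbr$. This is an open subcone of the positive cone, containing the class of $-K_Y$.

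Second, parametrise pairs $(Y,\kd)$ with $Y \in \sff$ and $\kd \in |{-}K_Y|$ a smooth K3 by a moduli space $\mathcal{M}$; choose a local marking of the universal K3 family to obtain a period map $\mathcal{P}\colon\mathcal{M}\to D_N$. The main obstacle of the proof is to show that $\mathcal{P}$ is dominant. This combines global Torelli for $N$-polarised K3 surfaces (surjectivity of the K3 period map onto an open subset of $D_N$) with infinitesimal Torelli for anticanonical K3 sections of Fano 3-folds; together they imply that $d\mathcal{P}$ has rank equal to $\dim D_N$ modulo the fibres of $(Y,\kd)\mapsto \kd$ at any smooth point with $\Pic\kd = N$. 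The analogous dominance statement for the whole semi-Fano deformation type is established in \cite{cym}; it restricts to $\sff$ because $\sff$ is open and dense in that type.

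Finally, define $U_\sff \subset D_N$ by three conditions on the period $\Pi$: (a) $\Pi$ lies in the image of $\mathcal{P}$; (b) the marked K3 $\kd$ with period $\Pi$ has $\Pic \kd = N$; (c) the nef cone of $\kd$ contains all of $\Amp_\sff$. Each failure locus is a countable union of complex analytic subvarieties of positive codimension in $D_N$: (b) fails on Noether--Lefschetz loci indexed by primitive lattice extensions of $N$ in $L$, while (c) fails on a locally finite union of hyperplanes cut out by $(-2)$-classes that obstruct ampleness on $\Amp_\sff$. For $\Pi \in U_\sff$ and $\kclass \in \Amp_\sff$ pick a pair $(Y,\kd)\in\mathcal{M}$ realising $\Pi$ together with a Kähler class $\kclass_Y$ on $Y$ restricting to $\kclass$ on $\kd$; the final sentence of the proposition follows by applying Construction~\ref{constr:block} to a generic anticanonical pencil through $\kd$, producing a block $Z$ whose pulled-back Kähler class $\pi^\ast\kclass_Y$ restricts on the proper transform of $\kd$ to $\kclass$.
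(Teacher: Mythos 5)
The paper itself gives no proof of Proposition \ref{prop:generic_fano}: it is imported from \cite{cym} (cf.\ also \cite{g2m}), where the corresponding genericity statement is proved for the family of semi-Fano deformations of $Y$ by a deformation-theoretic analysis of the pairs $(Y,\kd)$. Your sketch follows the same general philosophy, but it has a genuine gap at its centre. You identify ``dominance of the period map $\mathcal{P}$'' as the main point, yet dominance is not what Definition \ref{def:generic} requires: generic submersivity of $\mathcal{P}$ only guarantees that its image contains \emph{some} non-empty open subset of $D_N$, whereas the definition demands a subset $U_\sff$ whose complement is a countable union of positive-codimension analytic subvarieties, i.e.\ essentially every period away from countably many walls must be realised. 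You build the condition ``$\Pi$ lies in the image of $\mathcal{P}$'' into your definition of $U_\sff$ and then verify the codimension condition only for the failure loci of your conditions (b) and (c); the failure locus of (a) --- the complement of the image of $\mathcal{P}$ --- is exactly the non-trivial content of the proposition and is never shown to be small. To close this one needs either to pass to the quasi-projective coarse moduli space of $N$-polarised K3s, so that the image of the algebraic family of pairs is constructible and dominance upgrades to ``contains a dense Zariski-open subset'' (which then pulls back to the required statement on $D_N$, the fibres of $D_N \to D_N/\Gamma_N$ being countable), or to argue directly, as in \cite{cym}, that every sufficiently generic $N$-polarised K3 embeds anticanonically in some member of the family.

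Two further points. First, Definition \ref{def:generic} requires a \emph{single} cone $\Amp_\sff$ serving all generic periods: for the member $Y'$ containing the K3 with period $\Pi$ you must exhibit a Kähler class on $Y'$ restricting to the given $\kclass\in\Amp_\sff$, and you simply ``pick'' one. Your condition (c) concerns the nef cone of the K3, which is neither obviously sufficient nor clearly related to the ample cone of $Y'$; some uniformity of the images of the Kähler cones across the family (for genuine Fano members, deformation invariance of the nef cone, or a wall analysis by $(-2)$-classes) has to be argued here. Second, your reduction ``it restricts to $\sff$ because $\sff$ is open and dense in that type'' is unjustified and in general false: ampleness of $-K$ is open but not dense in a semi-Fano deformation type, and several of the families used in this paper (e.g.\ the rank~2 semi del Pezzo families of Examples \ref{ex:con} and \ref{ex:Qf}) contain no Fano members at all, since no smooth Fano 3-folds with their invariants exist. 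The way the proposition is actually applied, the relevant family is the semi-Fano deformation family itself, which is precisely what \cite{cym} treats, so this restriction step is both the wrong direction and unnecessary; if one insists on the literal reading with $\sff$ the Fano locus, its non-emptiness is an implicit hypothesis rather than something you may assume ``by assumption''.
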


Note however that Proposition \ref{prop:generic_fano} is limited in that it
does not tell us what $\Amp_\sff$ is. In the examples we can work it out from
the explicit description of the semi-Fanos.

\begin{ex}
\label{ex:spec1}
Table \ref{table:species1} summarises the key data of Fano 3-folds of rank 1
and the resulting building blocks (\cf \cite[Table 1]{cym}).
Apart from the data highlighted in \S\ref{subsec:data}, we include in the 
table the index~$r$ (\ie the
largest integer such that $-K_Y = rH$ for some $H \in \Pic Y$),
the anticanonical degree~$-K_Y^3$, and $b_3(Y)$.

$b_3(Z)$ is simply obtained from the preceding data by Lemma \ref{lem:b3}. 
In the rank 1 case, $\bar c$ is also easily determined as follows:
For any Fano one has $c_2(Y)(-K_Y) = 24$, so if $-K_Y = rH$
then
\begin{equation}
\label{eq:c2onH}
(c_2(Y) + c_1(Y)^2) H = \frac{24-K_Y^3}{r} .
\end{equation}
So Lemma \ref{lem:c2blowup} implies that with respect to the basis of $H^4(Z)$
dual to $H$, $\bar c$ is represented by the coordinate $\frac{24-K_Y^3}{r}$. 
The self-intersection of the generator of $N$ (which is not mentioned in the
table) is simply $\frac{-K_Y^3}{r^2}$.

Will refer to these examples as \ref{ex:spec1}$^r_d$.
\end{ex}

\newcommand{\ordblocks}{
\begin{table}
\renewcommand{\arraystretch}{1.25}
\setlength{\arraycolsep}{3pt}
\[
\begin{array}[t]{llcccccc} \toprule
Ex            & r & -K_Y^3 & b_3(Y) & b_3(Z) & N & \bar c_2(Z) \\ \midrule
\ref{ex:mm2}_3      & 1 &  8 & 22 &  32 & \sm{4 & 2 \\ 2 & 0} & \rvec{20}{12} \\
\ref{ex:mm2}_{10}     & 1 & 16 &  6 &  24 & \sm{8 & 4 \\ 4 & 0} & \rvec{28}{12} \\
\ref{ex:mm2}_{17} & 1 & 24  & 2 & 28 & \sm{4 & 7 \\ 7 & 6} & \rvec{22}{26} \\
\ref{ex:mm2}_{27} & 1 & 38  & 0 & 40 & \sm{2 & 5 \\ 5 & 4} & \rvec{18}{22} \\
\ref{ex:mm2}_{32}     & 2   & 2^3 \cdot 6 & 0 & 50 & \sm{2 & 4 \\ 4 & 2}
& \rvec{18}{18} \\ 
\ref{ex:mm2}_{35}     & 2   & 2^3 \cdot 7 & 0 & 58 & \sm{4 & 4 \\ 4 & 2} &
\rvec{22}{18} \\[2mm] 
\ref{ex:p1cube}   & 2 & 2^3 \cdot 6 &  0 &  50 & 
\sm{0 & 2 & 2 \\ 2 & 0 & 2 \\ 2 & 2 & 0} & \sm{12 & 12 & 12} \\[2mm]
\ref{ex:deg7} & 1 & 4 & 2 & 12 & \sm{4 & 9 \\ 9 & 8}
& \rvec{22}{32} \\
\ref{ex:deg_sextic_ord} & 2 & 0 & 42 & 44 & \sm{2 & 2 \\ 2 & 0} & \rvec{16}{12}
\\
\ref{ex:rk2blp}_2 & 2 & 2^3 & 20 & 30 & \sm{4 & 4 \\ 4 & 2}
& \rvec{20}{16} \\
\ref{ex:rk2blp}_3 & 2 & 2^3 \cdot 2 & 10 & 28 & \sm{6 & 6 \\ 6 & 4}
& \rvec{24}{20} \\
\ref{ex:rk2blp}_4 & 2 & 2^3 \cdot 3 & 4 & 30 & \sm{8 & 8 \\ 8 & 6}
& \rvec{28}{24} \\
\ref{ex:rk2blp}_5 & 2 & 2^3 \cdot 4 & 0 & 34 & \sm{10 & 10 \\ 10 & 8}
& \rvec{32}{28} \\
\ref{ex:con}_2 & 2 & 2^3 \cdot 2 & 0 & 18 & \sm{4 & 6 \\ 6 & 2} & \rvec{20}{18} \\
\ref{ex:con}_3 & 2 & 2^3 \cdot 3 & 0 & 26 & \sm{6 & 6 \\ 6 & 2} & \rvec{24}{18} \\
\ref{ex:con}_4 & 2 & 2^3 \cdot 4 & 0 & 34 & \sm{8 & 6 \\ 6 & 2} & \rvec{28}{18} \\
\ref{ex:con}_5 & 2 & 2^3 \cdot 5 & 0 & 42 & \sm{10 & 6 \\ 6 & 2} & \rvec{32}{18} \\
\ref{ex:Qf}_1 & 2 & 2^3 & 8 & 18 & \sm{2 & 4 \\ 4 & 0} & \rvec{16}{12} \\
\ref{ex:Qf}_2 & 2 & 2^3 \cdot 2 & 6 & 24 & \sm{4 & 4 \\ 4 & 0} & \rvec{20}{12}  \\
\ref{ex:Qf}_3 & 2 & 2^3 \cdot 3 & 4 & 30 & \sm{6 & 4 \\ 4 & 0} & \rvec{24}{12}  \\
\ref{ex:Qf}_4 & 2 & 2^3 \cdot 4 & 2 & 36 & \sm{8 & 4 \\ 4 & 0} & \rvec{28}{12}  \\
\ref{ex:Qf}_5 & 2 & 2^3 \cdot 5 & 0 & 42 & \sm{10 & 4 \\ 4 & 0} & \rvec{32}{12}  \\
\ref{ex:octic} & 1 & 0 & 6 & 8 & \sm{8 & 8 \\ 8 & 0} & \rvec{28}{24}
\\
\bottomrule
\end{array} 
\]
\smallskip
\caption{Blocks of rank 2 and 3 from Construction \ref{constr:block}}
\label{table:ordblocks}
\vspace{-5mm}
\end{table}}

We now proceed with a selection of building blocks obtained from Fanos and
semi-Fanos of rank 2 or 3. For later use we prioritise ones with index 2.
We collect in Table \ref{table:ordblocks} the key data for these blocks
highlighted in \S\ref{subsec:data}, along with the index $r$, the anticanonical
degree $-K_Y^3$ and the Betti number $b_3(Y)$ of the (semi-)Fano $Y$ used.
(Table \ref{table:ordblocks} also includes two blocks
from \S\ref{subsec:adhoc} that result from applying
Construction \ref{constr:block} to 3-folds that are not semi-Fano.)

\begin{ex}
\label{ex:mm2}
Construction \ref{constr:block} can be applied to all but the first of the 36
entries in the Mori-Mukai list of classes of rank 2 Fano 3-folds.
We will refer to blocks resulting from the $k$th entry as
Example \ref{ex:mm2}$_k$.
The invariants of the resulting blocks can be found in \cite[Table 3]{exotic}.
Let us briefly describe those classes that we will make use of later.
\begin{enumerate}[align=left]
\item[$k = \phantom{0}3$]  Double cover of $\PP^3$ branched over a quartic, blown up in the
pre-image of a line (which is \hspace*{1.5mm} an elliptic curve).
\item[$k = 10$] Complete intersection of two quadrics in $\PP^5$, blown up
in the intersection of two hyperplanes.
\item[$k = 17$] Blow-up of a smooth quadric in $\PP^4$ along an elliptic curve of degree 5.
\item[$k = 27$] Blow-up of $\PP^3$ along a twisted cubic.
\item[$k = 32$]
A (1,1) divisor in $\PP^2 \times \PP^2$.
\item[$k = 35$]
The blow-up of $\PP^3$ in a point.
\end{enumerate}
The last two cases (\ie $k = 32$ and $35$) are the only rank 2 Fanos of index 2.
\end{ex}

\begin{ex}
\label{ex:p1cube}
The only rank 3 Fano 3-fold of index 2
is $Y = \PP^1 \times \PP^1 \times \PP^1$. It has
\[ N \cong \begin{pmatrix} 0 & 2 & 2 \\ 2 & 0 & 2 \\ 2 & 2 & 0 \end{pmatrix}, \]
$b_3(Z) = 50$ and $\bar c_2(Z) = \sm{12 & 12 & 12}$. 
\end{ex}

\ordblocks

\subsection{Semi-Fano 3-folds of rank 2}
\label{subsec:rk2sf}

Smooth weak Fano 3-folds must have Picard rank at least 2, and there is a
classification programme for Picard rank exactly 2, see \eg Jahnke-Peternell
\cite{jahnke08}, Blanc-Lamy \cite{blanc12}, Arap-Cutrone-Marshburn
\cite{arap17}, Cutrone-Marshburn \cite{cutrone13} and Fukuoka \cite{fukuoka17}.
We will not explore this fully, but focus on the cases that will prove most
relevant later.

As seen in Examples \ref{ex:mm2}$_{17}$ and \ref{ex:mm2}$_{27}$,
rank 2 Fano 3-folds are often obtained by blowing up curves of small genus
and degree in $\PP^3$. Blanc and Lamy study cases where the degree is a little
larger relative to the genus, and produce many semi-Fano 3-folds this way.

\enlargethispage{2\baselineskip}

\begin{ex}
\label{ex:deg7}
Let $Y$ be the blow-up of $\PP^3$ in an elliptic curve of degree 7.
Then $Y$ is semi-Fano---indeed, $-K_Y$ is a small contraction according to
Blanc-Lamy \cite[Table 1]{blanc12}. In the basis formed by
the pull-back of the hyperplane class from $\PP^3$ and $-K_Y$ (which also
span the nef cone) the Picard lattice is
\[ N \cong \begin{pmatrix} 4 & 9 \\ 9 & 8 \end{pmatrix} . \]
Compute as above that $b_3(Y) = 2$ and $b_3(Z) = 12$.
Since $Z$ can be viewed as the result of performing two blow-ups, we can
apply Lemma \ref{lem:c2recurse} and \eqref{eq:c2onH} twice
to find $\bar c_2(Z) = \rvec{22}{32}$. 
\end{ex}

We could produce blocks from 21 further cases in \cite[Table 1]{blanc12}
in a similar way, but let us instead restrict attention to the case of rank 2
``semi del Pezzo 3-folds'' (\ie semi-Fanos of index 2), where Jahnke-Peternell
\cite{jahnke08} have provided a complete classification.

Example \ref{ex:mm2}$_{35}$ produced a Fano 3-fold of index 2 by blowing
up $\PP^3$ at a point.
It is true more generally that the canonical bundle being even is preserved
by blowing up a point, but the Fano condition is not. However, for 4 of the 5
families of index 2 Fanos the blow-up has small anticanonical morphism.
(The remaining case is considered in \S\ref{subsec:adhoc}.)

\pagebreak

\begin{ex}
\label{ex:rk2blp}
For $2 \leq d \leq 5$, let $X'$ be a Fano of rank 1, index 2 and degree $d$
as in Example~\ref{ex:rk1index2}$_d$. Blowing up $X'$ at a generic point $p$
yields a semi-Fano $X$ \cite[Theorem 3.7]{jahnke08}.

$H' := \pi^*(-\half K_{X'})$ clearly spans one edge of the nef cone of $X$
(the corresponding morphism is just the blow-down $X \to X'$), and $X$ being
semi-Fano means that $H := -\half K_X = H' - E$ spans the other (where $E$ is
the class of the exceptional divisor). In the basis $H, E$ the Picard form
of $X$ is simply $\sm{2d & 0 \\ 0 & -2}$, so with respect to the basis $H, H'$
for the nef cone we get
\[ N \cong \begin{pmatrix} 2d & 2d \\ 2d & 2d-2 \end{pmatrix} . \]
We see from \eqref{eq:c2onH} that $c_2(X) + c_1(X)^2$ evaluates to
$24 + 8d-8$ on $-K_X$. On the other hand, since $-K_{X'}$ can be represented by
a divisor that does not contain the blow-up point, \cite[Lemma 5.15]{cym} gives
$(c_2(X) + c_1(X)^2)\pi^*(-K_{X'}) = (c_2(X') + c_1(X')^2)(-K_{X'})
= 24 -K_{X'}^3 = 24 + 8d$.
Hence $\bar c_2(Z) = c_2(X) + c_1(X)^2$ is represented by $\rvec{12+4d}{8+4d}$
with respect to the basis of $N^*$ dual to~$H, \, H'$.
\end{ex}

By Jahnke-Peternell \cite{jahnke08}, the remaining classes of rank 2 weak
del Pezzos with small anticanonical morphism fall into two categories:
conic bundles over $\PP^2$ and quadric bundles over $\PP^1$.

\begin{ex}
\label{ex:con}
For $2 \leq d \leq 5$, according to \cite[Theorem 3.7]{jahnke08}
there are degree $d$ weak del Pezzos with small anticanonical morphism
of the form $Y = \PP(E)$, where $E \to \PP^2$ is a rank 2 holomorphic vector
bundle with $c_1(E) = -1$ and $c_2(E) = 7-d$.

Then $-K_Y = \det E - 2T + 3F = 2(-T+F)$, where $F$ is the pull-back of the
hyperplane class from $\PP^2$ and $T$ is the tautological bundle of $\PP(E)$.
As basis for the Picard lattice, we take $-T+F$ and $F$, which also span the
nef cone. Note that $T^2 = c_1(E)T - c_2(E) = -TF + (d-7)F^2$ and $F^3 = 0$ to
find that the Picard lattice is represented with respect to our chosen basis
by
\[ N = \begin {pmatrix} 2d & 6 \\ 6 & 2 \end{pmatrix} . \]
Patently $b_3(Y) = 0$, so $b_3(Z) = -K_Y^3 + 2 = 8d+2$.

To compute $c_2(Y)$, note that $TY$ is stably isomorphic
to $(-T) \otimes E \oplus F^{\oplus 3}$.
We have $c_2((-T) \otimes E) = c_2(E) - Tc_1(E) + T^2 = 0$, so
\[ c_2(Y) = 3F^2 + 3Fc_1(E) + c_2(E) = -6FT . \]
Hence
\[ c_2(Y) + c_1(Y)^2 = -6FT + 4(-T+F)^2 = -18FT + (4d-24)F^2 . \]
This evaluates to 18 on $F$ and to $4d+12$ on $-T+F$, \ie $\bar c_2(Z)$
is represented with respect to our chosen basis by the row vector
$\rvec{4d+12}{18}$.

We refer to the building blocks arising from these semi del Pezzos as
Example \ref{ex:con}$_d$.
\end{ex}

\begin{ex}
\label{ex:Qf}
For each $1 \leq d \leq 5$, according to \cite[Theorem 3.5]{jahnke08} there are
semi del Pezzo 3-folds $Y$ of degree $d$ that are divisors in the
projectivisation of a rank 4 bundle $E$ of $c_1 = 2-d$ over $\PP^1$.
The class of the divisor $Y$ is $-2T + (4-d)F$, where $F$ is the pull-back of
the hyperplane class of the $\PP^1$ base, and $T$ is the class of the
tautological bundle of $\PP(E)$---so the generic fibres of $Y \to \PP^1$ are
quadric surfaces in $\PP^3$.

The anticanonical class of $Y$ is
$-K_Y = -K_{\PP^1} + \det E - 4T - (-2T+(4-d)F) = -2T$.
$-T$~and $F$ form a basis for the Picard lattice.
Noting that on $\PP(E)$ we have $F^2 = 0$ and $T^4 = T^3c_1(E) = d-2$,
we see that the intersection form is represented in this basis by
\[ N \cong \begin{pmatrix} 2d & 4 \\ 4 & 0 \end{pmatrix} .\]
We compute the Chern classes of $Y$ from the tangent bundle of $\PP(E)$ being
stably isomorphic to $(-T) \otimes E \oplus F \oplus F$, and hence find
$b_3(Z) = 12 + 6d$ and $\bar c_2(Z) = \rvec{12+4d}{12}$.

We refer to the building blocks arising from these semi del Pezzos as
Example \ref{ex:Qf}$_d$.
\end{ex}

\pagebreak

\begin{rmk}
In \cite[Theorem 3.5]{jahnke08}, there are actually two different classes
with $d = 2$, corresponding to $E = \calo(-1, 0, 0, 1)$ or $E$ being trivial
over $\PP^1$ (\ie in the latter case $Y$ is a (2,2)-divisor
on $\PP^1 \times \PP^3$). However, these bundles can be deformed to each other,
and so can the semi del Pezzos, so as far as we are concerned they form a
single family of building blocks, \cf \cite[Example 6.11(i)]{cym}.
\end{rmk}

\begin{rmk}
\label{rmk:cubics}
Any rank 2 semi-Fano  whose anticanonical morphism is a small contraction
can be flopped, \ie the anticanonical model has another small resolution
that is also a rank 2 semi-Fano. In some cases the flop is in the same
class as the original semi-Fano, but in some cases it can belong to a
different family.

Consider for instance Example \ref{ex:rk2blp}$_4$, the blow-up $X$ of
the complete intersection $X'$ of two quadrics in $\PP^5$ at a point $p \in X'$.
The morphism defined by $-\half K_X$ can be interpreted as the projection from
$p$ to a hyperplane; it contracts the 4 lines passing through $p$, and
the image (\ie the anticanonical model) is a cubic hypersurface $X''$
that contains a plane $\Pi$. The pre-image of $\Pi$ in $X$ is the exceptional
divisor of the blow-up $X \to X'$, whose intersection number with the
contracted lines in 1. We therefore find that $X$ is the small resolution of
$X''$ obtained by blowing up a quadric surface in $X''$ that intersects $\Pi$
in the singularities of $X''$.

If we instead resolve $X''$ by blowing up $\Pi$ itself, then we obtain a semi
del Pezzo from the class in Example \ref{ex:Qf}$_3$. Indeed we can see in Table
\ref{table:ordblocks} that Examples \ref{ex:rk2blp}$_4$ and \ref{ex:Qf}$_3$
have equal $b_3(Y)$ and $-K_Y^3$ and isometric polarising lattices. However,
the nef cones and $\bar c_2(Z)$ are not identified by that lattice isometry, so
these blocks will produce different extra-twisted connected sums
(see Examples \ref{ex:pi6deg5} and \ref{ex:pi6deg5flop}).

Similarly Examples \ref{ex:con}$_4$ and \ref{ex:rk2blp}$_5$ are both small
resolutions of a singular intersection of two quadrics in $\PP^5$,
while Examples \ref{ex:con}$_5$ and \ref{ex:Qf}$_5$ are both small
resolutions of a singular del Pezzo 3-fold of degree 5.
\end{rmk}

\subsection{Involution blocks from index 2 Fanos}

We now wish to construct building blocks with involution, essentially by
applying Construction \ref{constr:block} to Kähler 3-folds $Y$ that already
admit an involution. One situation where the involution on the resulting block
has the features required in Definition \ref{def:inv_block} is when $Y$ is a
double cover of a smooth K\"ahler 3-fold $X$, branched over an anticanonical
divisor.
It is expedient for us to set up the construction
starting from $X$. 

\begin{constr}
\label{constr:invblock}
Let $X$ be a simply-connected non-singular complex 3-fold with $-K_X$ even, and
suppose there are smooth divisors $\kd \in |{-}K_X|$ and $H \in |{-}\half K_X|$
with transverse intersection~$C$.

Let $Y$ be the double cover of $X$ branched over $\kd$, and
$Z$ the blow-up of $Y$ in $C$.
Because $C$ is contained in the branch set of $Y$, we can lift
the branch-switching involution $\tau$ on $Y$ to an involution on $Z$.
The proper transform in $Z$ of $\kd$ is an anticanonical divisor.
Note that $H^*(Y)^{-\tau}$ has trivial image in $H^*(\kd)$. In particular,
$H^2(Y)$ and $H^2(X)$ have the same image $N$ in $H^2(\kd) = L$.
\end{constr}

\begin{rmk}
Proposition \ref{prop:ind2block} establishes conditions that ensure that
$(Z,\tau)$ is a building block with involution.
Similarly to Proposition \ref{prop:block_from_fano}, these conditions
are satisfied for most semi-Fanos.

Lemma \ref{lem:ampcover}
can be used to prove that $Y$ is Fano/semi-Fano if and only if $X$ is.
Note however that there are usually Fano deformations of $Y$
that are not double covers. Example \ref{ex:p3cover} is one case where there
are not.
\end{rmk}

\begin{prop}
\label{prop:ind2block}
If $N \subset L$ is primitive and $H^3(X)$ is torsion-free then 
$(Z,\tau)$ is an involution block in the sense of
Definition \ref{def:inv_block}.
The image in $H^{1,1}(\kd)$ of the $\tau$-invariant Kähler
cone of $Z$ contains the image of the Kähler cone of $X$.
\end{prop}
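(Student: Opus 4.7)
The plan is to apply Construction \ref{constr:block} to $Y$, equipped with an appropriate anticanonical pencil, to deduce that $(Z, f, \kd)$ is a building block, and then verify separately that $\kd$ is a connected component of the fixed set of the lifted involution $\tau_Z$, and analyse the $\tau$-invariant Kähler cone. The canonical bundle formula for the branched double cover gives $K_Y = \pi_Y^*(K_X + \half \kd) = -\pi_Y^*H$, so both the ramification divisor of $\pi_Y$ (identified with $\kd \subset Y$) and the pull-back $\pi_Y^*H$ lie in $|{-}K_Y|$. They span a pencil on $Y$ with base locus $\pi_Y^{-1}(\kd \cap H) = C$, which is smooth by hypothesis.

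To verify the hypotheses of Construction \ref{constr:block} for $Y$, I first show the image $N_Y$ of $H^2(Y) \to L$ equals $N$. Since $\tau$ acts trivially on $\kd$, the restriction $H^2(Y) \to L$ is $\tau$-equivariant and its $-1$-eigenspace maps to zero in the torsion-free group $L$. Rationally, the transfer identity $\pi_{Y*}\pi_Y^* = 2$ gives $H^2(Y;\Q)^\tau = \pi_Y^*H^2(X;\Q)$, whose image in $L$ equals $N \otimes \Q$ because $\pi_Y|_\kd$ is the identity. Thus $N_Y \otimes \Q = N \otimes \Q$, and primitivity of $N$ in $L$ forces $N_Y = N$. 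For torsion-freeness of $H^3(Y)$, the same transfer argument handles odd primes, while for the 2-primary part I would apply the Lee--Weintraub long exact sequence \cite[Theorem 1]{lee95} for the branched double cover $Y \to X$ and compare $\Z_2$-Betti numbers of $Y$ with those of $X$ and $(X, \kd)$, invoking the hypothesis that $H^3(X;\Z)$ is torsion-free. This 2-torsion step is the main technical obstacle. Construction \ref{constr:block} then produces a building block $(Z, f, \kd)$ with polarising lattice $N$.

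For the involution: since $C \subseteq \kd$ lies in the fixed locus of $\tau$ on $Y$, the involution lifts to $\tau_Z$ on $Z$. The differential of $\tau$ acts as $-1$ on $N_{\kd/Y}$ and trivially on $T\kd$, so on each fibre $\PP^1 \cong \PP(N_{C/\kd} \oplus N_{\kd/Y}|_C)$ of the exceptional divisor $E \to C$, $\tau_Z$ acts by $[x:y] \mapsto [x:-y]$, with fixed points forming two disjoint sections: the $[0:1]$-section, which coincides with $\tilde\kd \cap E$, and the $[1:0]$-section, a disjoint auxiliary fixed curve. Away from $E$, the fixed locus is the open complement $\tilde\kd \setminus E$ in $\tilde\kd$. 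Hence $\tilde\kd$ is a connected component of the fixed set, confirming that $(Z, \tau_Z)$ is an involution block.

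Finally, for the Kähler cone inclusion: given a Kähler class $\omega_X$ on $X$, the class $\pi^*\pi_Y^*(\omega_X + \epsilon H) - \epsilon E$ on $Z$ is $\tau$-invariant (since $\pi_Y$ and $\pi$ are $\tau$-equivariant and $E$ is $\tau$-invariant), and Kähler for $\epsilon > 0$ sufficiently small---using openness of the Kähler cone to keep $\omega_X + \epsilon H$ Kähler, preservation of ampleness under the finite morphism $\pi_Y$, and the standard fact that $\pi^*\omega_Y - \epsilon E$ is Kähler on a blow-up for $\omega_Y$ Kähler and $\epsilon$ small. Using $[E]|_{\tilde\kd} = [C] = H|_\kd$ under the identification $\tilde\kd \cong \kd$, this class restricts to $\omega_X|_\kd + \epsilon H|_\kd - \epsilon H|_\kd = \omega_X|_\kd$, yielding the desired inclusion of images in $H^{1,1}(\kd)$.
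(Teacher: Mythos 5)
Your route is the natural one: verify the hypotheses of Construction \ref{constr:block} for $Y$ with the pencil spanned by the ramification divisor and $\pi_Y^*H$, then check the fixed-locus and Kähler-cone statements by hand. But the verification is genuinely incomplete at its central point, namely condition (iv) of Definition \ref{def:block}: since $Z$ is the blow-up of $Y$ along the smooth curve $C$, $\Tor H^3(Z)\cong \Tor H^3(Y)$, so you must show that $H^3(Y)$ is torsion-free knowing only that $H^3(X)$ is. You explicitly leave the 2-primary part as ``the main technical obstacle'', i.e.\ you state a plan (the Lee--Weintraub sequence \cite{lee95}, as in \eqref{eq:coverseq}, plus a count of mod 2 Betti numbers) without carrying it out; and the odd-primary part is not actually settled by the transfer argument as you claim. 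The relations $\pi_{Y!}\pi_Y^\ast=2$ and $\pi_Y^\ast\pi_{Y!}=1+\tau^\ast$ only show that an odd-order torsion class $t\in H^3(Y)$ satisfies $\tau^\ast t=-t$; they do not force $t=0$. Excluding anti-invariant odd torsion requires identifying the anti-invariant part of $H^3(Y;\Z[\tfrac12])$ with cohomology attached to $(X,\kd)$ (or an odd-primary analogue of the branched-cover sequence) and feeding in the hypotheses on $X$; none of this appears in your text. This is exactly the step the paper does not redo by hand: its proof consists of citing \cite[Proposition 4.14]{cym} for the building-block property (and ``analogously'' for the Kähler cones), plus the observation that the proper transform of $\kd$ is a fixed component of $\tau$, the other preserved fibre being the pre-image of $H$.

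The remainder of your argument is essentially correct and consistent with the paper's brief remarks: the image of $H^2(Y)$ in $L$ being $N$ (already recorded in Construction \ref{constr:invblock}), primitivity, the smooth base locus $C$, the analysis of $\tau$ on the exceptional divisor showing the fixed set of $\tau$ on $Z$ is the disjoint union of the proper transform of $\kd$ and a section over $C$ lying on the proper transform of $\pi_Y^*H$ (the fixed curve of the non-symplectic involution on the other preserved fibre), and the restriction computation for $\pi^\ast\pi_Y^\ast(\omega_X+\epsilon H)-\epsilon E$. One minor imprecision in the last step: the pull-back of a Kähler form under the ramified cover $\pi_Y$ degenerates along the ramification divisor, so ``Kähler for small $\epsilon$'' needs either ample classes and openness in the projective setting or a more careful perturbation; the paper elides this too, so I would not count it as a gap on its own, but the torsion-freeness of $H^3(Y)$ must be supplied for your proof to stand.
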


\begin{proof}
That $Z$ is a building block in the sense of Definition \ref{def:block} follows
from \cite[Proposition 4.14]{cym}, and the claim about Kähler cones is also
analogous.
The proper transform of $\kd$ is a fixed component
of $\tau$.
The other fibre $\kd'$ preserved by $\tau$ is the pre-image of $H$, which is
a double cover of $H$ branched over $C$, and thus smooth.
Therefore $Z$ is a building block with involution in the sense of
Definition~\ref{def:inv_block}.
\end{proof}

If $Y$ is semi-Fano then $H^2(Y) \to L$ is injective. We already used in
\cite[Proposition 5.7]{cym} that this implies $K = 0$,
the first of the conditions for the involution block to be pleasant.
Crucially, it implies the second condition \eqref{eq:pleasant} too.
Let $\rho := b_2(X) = \rk N$.

\begin{prop}
\label{prop:s_fano}
If, in addition to the hypotheses of Proposition \ref{prop:ind2block},
$H^2(Y) \to L$ is injective then so is $H^2(V) \to L$
(\ie the building block $Z$ has $K = 0$), and
\begin{enumerate}
\item $b_2(Z) - 1 = b_2(V) = b_2(Y) = \rho$.
\item $b_3(Z) = b_1(C) + b_3(Y) = b_1(C) + 2b_3(X) + 22 - 2\rho$.
\item $b_3^+(Z) = b_1(C) + b_3(X)$.
\item $s = b_3^-(V)$. %
\end{enumerate}
In particular, $Z$ is pleasant.
\end{prop}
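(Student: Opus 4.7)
The plan is to treat the five items in order, reducing each to standard facts about blow-ups and branched double covers. A preliminary observation is that $C$ is connected: $C \in |{-}2K_H|$ is an ample effective divisor on the simply-connected del Pezzo surface $H$, so Lefschetz hyperplane (equivalently, $h^1(H, \calo_H(-C)) = h^1(H, 2K_H) = h^1(H, \calo_H(-K_H))^\ast = 0$ by Kodaira vanishing) yields $H^0(C) \cong \Z$. Throughout I use the blow-up cohomology formula $H^k(Z) \cong H^k(Y) \oplus H^{k-2}(C)$ for $Z = \mathrm{Bl}_C Y$, the Euler identity $\chi(Y) = 2\chi(X) - 24$, and the rational eigenspace decomposition $H^*(Y;\Q)^\tau \cong \pi^* H^*(X;\Q)$. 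For $K = 0$, apply \cite[Prop.~4.24]{cym} (quoted just before Construction \ref{constr:block}) to $Z \to Y$: this gives $\ker(H^2(Z) \to L) \cong \Z[\kd] \oplus \ker(H^2(Y) \to L)$; restricting to $V$ kills the $\Z[\kd]$ summand, so $K \cong \ker(H^2(Y) \to L) = 0$ by hypothesis. Part (i) then follows from Lemma \ref{lemg:Z&V}\ref{itg:h2} together with the blow-up formula: $b_2(V) = b_2(Z) - 1 = b_2(Y) = \rho$, using that $H^2(Y) \hookrightarrow L$ has image $N$ of rank $\rho$.

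For (ii), since $X$ and $Y$ are simply-connected with $b_2(Y) = b_4(Y) = b_2(X) = b_4(X) = \rho$, solving $\chi(Y) = 2\chi(X) - 24$ for $b_3(Y)$ gives $b_3(Y) = 2b_3(X) + 22 - 2\rho$; adding $b_1(C)$ via the blow-up formula produces $b_3(Z)$. For (iii), since $C \subset \tilde\kd$ lies in the branch locus of $\pi$, $\tau$ fixes $C$ pointwise and thus acts trivially on the $H^1(C;\Q)$ summand of $H^3(Z;\Q)$; on $H^3(Y;\Q)$ the $\tau$-invariant part equals $\pi^*H^3(X;\Q) \cong H^3(X;\Q)$. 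Summing, $b_3^+(Z) = b_3(X) + b_1(C)$.

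Pleasantness (iv) is then an application of Lemma \ref{lem:pleasant}: with $K = 0$ and $C$ connected ($k = 0$, whence $m = k = 0$ automatically), the remaining condition is that $H^3(Z^0)$ is torsion-free, where $Z^0 = Z/\tau$. A local-coordinate computation reveals that the fixed locus of $\tau$ on $Z$ consists of $\tilde\kd$ (codimension 1) together with one smooth curve $C'$ forming a section of the exceptional $\PP^1$-bundle $E \to C$ disjoint from $\tilde\kd$ (codimension 2); consequently $Z^0$ is smooth except for transverse $A_1$-singularities along the image of $C'$, whose blow-up is the smooth 3-fold $X' := \mathrm{Bl}_C X$. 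The Leray spectral sequence for this small resolution $X' \to Z^0$, combined with the standard blow-up formula for $X' \to X$, yields $H^3(Z^0;\Z) \oplus H^1(C;\Z) \cong H^3(X';\Z) \cong H^3(X;\Z) \oplus H^1(C;\Z)$, whence $H^3(Z^0;\Z) \cong H^3(X;\Z)$ is torsion-free by hypothesis. The main obstacle is precisely this final step---the local analysis near $C'$ identifying $X'$ as the small resolution, and justifying the Leray degeneration in degree 3; once that identification is in place, everything else is a mechanical application of the blow-up and branched-double-cover formalism.
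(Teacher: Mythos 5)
Your reduction to $K=0$ via \cite[Prop.~4.24]{cym}, and your items (i)--(iii), are fine; in particular your derivation of (iii) from the $\tau$-action on the blow-up decomposition $H^3(Z;\Q)\cong H^3(Y;\Q)\oplus H^1(C;\Q)$ is a valid (and slightly slicker) alternative to the paper's computation of $b_3(Z^0)$. The problem is exactly the step you flag as the main obstacle. Your description of the fixed locus is correct ($\widetilde{\kd}$ together with a section $C'$ of the exceptional $\PP^1$-bundle, so that $Z^0$ has transverse $A_1$-singularities along a curve isomorphic to $C$), but the claimed ``small resolution $X'=\mathrm{Bl}_C X\to Z^0$'' does not exist. $Z^0$ is a finite quotient of a manifold, hence $\Q$-factorial, so it admits no small birational morphism onto it at all; concretely, any resolution of a curve of transverse $A_1$-points has a divisorial exceptional set ($\PP^1$-bundle over the curve). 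Nor is the blow-up of the singular curve isomorphic to $\mathrm{Bl}_C X$: it has $b_2=\rho+2$, while $b_2(\mathrm{Bl}_C X)=\rho+1$ (equivalently, both $\mathrm{Bl}_C X\to X$ and $Z^0\to X$ already contract an irreducible divisor with $\PP^1$-fibres onto $C$, and a finite birational morphism onto the normal variety $Z^0$ would be an isomorphism). The conclusion you extract, $H^3(Z^0;\Z)\cong H^3(X;\Z)$, is in fact false whenever $b_1(C)>0$ and contradicts your own item (iii): by transfer $H^3(Z^0;\Q)\cong H^3(Z;\Q)^\tau$, so $b_3(Z^0)=b_3^+(Z)=b_3(X)+b_1(C)$.

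The paper's argument avoids resolving $Z^0$ altogether. Writing $E\subset Z^0$ for the image of the exceptional divisor, one has $Z^0\setminus E\cong X\setminus C$, so the relative groups of the pairs $(Z^0,E)$ and $(X,C)$ agree; comparing the two long exact sequences gives
\[ 0 \to H^3(X) \to H^3(Z^0) \to H^3(E) \to H^4(Z^0,E), \]
and since $H^3(E)\cong H^1(C)$ is free, $H^3(Z^0)$ is an extension of a free group by the torsion-free $H^3(X)$, hence torsion-free; the same sequence gives $b_3(Z^0)=b_3(X)+b_1(C)$, i.e.\ (iii), in one stroke. Pleasantness then follows from Lemma \ref{lem:pleasant} (together with the $m=k$ clause, which the paper disposes of via connectedness of $C$ in the remark after that lemma; your vanishing argument for connectedness is a reasonable way to make this explicit, but note that in the semi-Fano setting $H\in|{-}\half K_X|$ is only weak del Pezzo, so you should invoke Kawamata--Viehweg rather than ampleness of $-2K_H$).
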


\begin{proof}
Since $H^2(Y)$ and $H^2(X)$ have the same image in $L$, assuming
$H^2(Y) \to L$ injective implies that $H^2(X) \cong H^2(Y)$.

Let $W := Y \setminus \kd$ and $U := X \setminus \kd$.
Then
\begin{align*}
\chi(W) = \chi(Y) - 24 &= 2\rho - b_3(Y) - 22, \\
\chi(U) = \chi(X) - 24 &= 2\rho - b_3(X) - 22 .
\end{align*}
Therefore $\chi(W) = 2\chi(U)$ implies
\[ b_3(Y) = 2b_3(X) + 22 - 2\rho . \]
$b_3^+(Z) = b_3(Z^0)$, where $Z^0$ is the singular quotient $Z/\tau$.
Let $E \subset Z^0$ be the image of the exceptional divisor in $Z$,
so that $Z^0 \setminus E \cong X \setminus C$ and $E \cong C \times \PP^1$.
Comparing the long exact sequences of $X$ relative to $C$ and $Z^0$ relative
to $E$ gives an exact sequence
$0 \to H^3(X) \to H^3(Z^0) \to H^3(E) \to H^4(Z^0, E)$.
The kernel of the last map is free of rank equal to $b_3(E) = b_1(C)$,
so $b_3(Z^0) = b_3(X) + b_1(C)$. 
Moreover, this shows $H^3(Z^0)$ to be torsion-free, so $Z$ is pleasant by 
Lemma \ref{lem:pleasant}.
\end{proof}

To compute the Chern class data, it is convenient to use that
$TY \oplus \pi^*(-K_X) \cong \pi^*(TX \oplus (-\half K_X))$ implies
$c_2(Y) = \pi^*(c_2(X))$.
If we have already computed $c_2(X) + c_1(X)^2$ then we can use
\begin{equation}
\label{eq:c2double}
c_2(Y) + c_1(Y)^2 = \pi^*\left(c_2(X) + c_1(X)^2\right) - 3c_1(Y)^2
\end{equation}
to say that the building block $Z$ constructed from $Y$ has
$c_2(Z) = g(\bar c_2(Y)) + h$ for
$\bar c_2(Y) = 2\bar c_2(X) - 3\flat(-K_Y) \in N^*$,
and $h \in H^4(Z)$ the Poincar\'e dual of a $\PP^1$ fibre over
the blow-up curve as before.

\begin{rmk}
\label{rmk:h_inv}
Such a $\PP^1$ fibre is $\tau$-invariant, so Lemma \ref{lem:h_inv_pd} implies
the class $\wh B(h)$ from \eqref{eq:h_inv_fail} can be taken to be zero.
\end{rmk}

We now apply Construction \ref{constr:invblock} to the various index 2
Fano 3-folds and semi-Fano 3-folds that we have already considered in
\S\ref{subsec:ordfano}--\ref{subsec:rk2sf}. We collect the data for the
resulting pleasant involution blocks in Table \ref{table:invblocks};
for convenience the table also includes a few blocks
from \S\ref{sec:k3blocks}. The table displays the key data discussed
in \S\ref{subsec:data}, along with the Euler characteristic of the fixed
curve $C \subset Z$ of the involution (corresponding to $-K_Y^3$ for semi-Fano
type blocks). Note that all the blocks in the table could equally well be
used as ordinary blocks if we choose to forget about the involution
(but then there is some redundancy with Table \ref{table:species1}).

\newcommand{\invblocktable}{
\begin{table}
\[
\renewcommand{\arraystretch}{1.25}
\begin{array}[b]{lccccccc}
\toprule
 \hspace{1ex}\text{Ex} & -\chi(C) & b_3(Z) & b_3^+(Z) & N & \bar c_2(Z) \\ \midrule
\ref{ex:p3cover} & 16 & 38 & 18 & \gen{4} & 20 \\ 
\ref{ex:rk1index2}_1 & 2 & 108 & 46 & \gen{2} & 26 \\    
\ref{ex:rk1index2}_2 & 4 & 66 & 26 & \gen{4} & 28 \\ 
\ref{ex:rk1index2}_3 & 6 & 48 & 18 & \gen{6} & 30 \\    
\ref{ex:rk1index2}_4 & 8 & 38 & 14 & \gen{8} & 32 \\
\ref{ex:rk1index2}_5 & 10 & 32 & 12 & \gen{10} & 34 \\
\ref{ex:Dmm2}_6         & 12 & 32 & 14 & \sm{2 & 4 \\ 4 & 2} & \rvec{18}{18} \\ 
\ref{ex:Dmm2}_8         & 14 & 34 & 16 & \sm{4 & 4 \\ 4 & 2} & \rvec{20}{18} \\[2mm] 
\ref{ex:p1cubecover}   &  12 &  30 & 14 & 
\sm{0 & 2 & 2 \\ 2 & 0 & 2 \\ 2 & 2 & 0} & \sm{12 & 12 & 12} \\[2mm]
\ref{ex:deg_sextic} & 0 & 104 & 44 & \sm{2 & 2 \\ 2 & 0} & \rvec{26}{24} \\
\ref{ex:inv2blp}_2 & 2 & 62 & 24 & \sm{4 & 4 \\ 4 & 2} & \rvec{28}{26} \\
\ref{ex:inv2blp}_3 & 4 & 44 & 16 & \sm{6 & 6 \\ 6 & 4} & \rvec{30}{28} \\
\ref{ex:inv2blp}_4 & 6 & 34 & 12 & \sm{8 & 8 \\ 8 & 6} & \rvec{32}{30} \\
\ref{ex:inv2blp}_5 & 8 & 28 & 10 & \sm{10 & 10 \\ 10 & 8} & \rvec{34}{32} \\
\ref{ex:Dcon}_2 & 4 & 24 & 6 & \sm{4 & 6 \\ 6 & 2} & \rvec{28}{18} \\
\ref{ex:Dcon}_3 & 6 & 26 & 8 & \sm{6 & 6 \\ 6 & 2} & \rvec{30}{18} \\
\ref{ex:Dcon}_4 & 8 & 28 & 10 & \sm{8 & 6 \\ 6 & 2} & \rvec{32}{18} \\
\ref{ex:Dcon}_5 & 10 & 30 & 12 & \sm{10 & 6 \\ 6 & 2} & \rvec{34}{18} \\
\ref{ex:DQf}_1 & 2 & 38 & 12 & \sm{2 & 4 \\ 4 & 0} & \rvec{26}{12} \\
\ref{ex:DQf}_2 & 4 & 36 & 12 & \sm{4 & 4 \\ 4 & 0} & \rvec{28}{12} \\
\ref{ex:DQf}_3 & 6 & 34 & 12 & \sm{6 & 4 \\ 4 & 0} & \rvec{30}{12} \\
\ref{ex:DQf}_4 & 8 & 32 & 12 & \sm{8 & 4 \\ 4 & 0} & \rvec{32}{12} \\
\ref{ex:DQf}_5 & 10 & 30 & 12 & \sm{10 & 4 \\ 4 & 0} & \rvec{34}{12} \\
\ref{ex:p1p1smooth} & 16 & 96 & 32 & \sm{0 & 2 \\ 2 & 0} & \rvec{12}{12} \\
\ref{ex:p1rsmooth}_1 & 18 & 108 & 36 & \gen{2} & 18 \\
\ref{ex:p1rsmooth}_2 & 16 & 96 & 32 & \sm{2 & 2 \\ 2 & 0} & \rvec{18}{12} \\[2mm]
\ref{ex:p1rsmooth}_3 & 14 & 84 & 28 &
\sm{2 & 2 & 2 \\ 2 & 0 & 2 \\ 2 & 2 & 0} & \sm{18 & 12 & 12} \\[1mm]
\bottomrule
\end{array}
\]
\vspace{-2mm}
\caption{Examples of pleasant involution blocks}
\vspace{-5mm}
\label{table:invblocks}
\end{table}}

\begin{ex}
\label{ex:p3cover}
Perhaps the simplest example does not in fact use an index two Fano, but rather
the unique one of index 4.
Take $X = \PP^3$, and let $Y$ be the double cover branched over a smooth
quartic $\kd$. (In this case, all deformations of the Fano $Y$ are in fact
branched double covers of $\PP^3$.)

$\rho = 1$ and $b_3(X) = 0$, and $C$ is a degree 8 curve so has
$b_1(C) = 18$. Hence
\[ b_3(Z) = 38, \quad b_3^+(Z) = 18 . \]
The Picard lattice of $Y$ is $N \cong \gen{4}$. Because $Y$ has index 2,
$(-K_Y)^3 = 16$ and $\bar c_2(Z) = \frac{24+16}{2} = 20 \in N^* \cong \bbz$
by \eqref{eq:c2onH}.
(Some of this simply recovers the data for Example \ref{ex:spec1}$^2_2$
in Table \ref{table:species1}.)

Note that the the other preserved fibre of
$\tau$ on $Z$ is a double cover of a quadric, branched over a bidegree $(4,4)$
curve in $\PP^1 \times \PP^1$, or equivalently a K3 with non-symplectic
involution and Picard lattice $\sm{0 & 2 \\ 2 & 0}$
(\cf Example \ref{ex:p1p1smooth}).
So the other preserved fibre is more special than $\kd$.
\end{ex}

\begin{ex}
\label{ex:rk1index2}
There are 5 families of Fano 3-folds $X$ of rank 1 and index 2, and the
computation of the invariants of a double cover $Y$ branched over an
anticanonical K3 divisor $\kd$ and its blow-up $Z$ in an anticanonical curve
$C \subset \kd$ follow the same pattern. We refer to the resulting building
blocks as Example \ref{ex:rk1index2}$_d$, where $d = 1, \ldots, 5$ is
the degree of $X$. Let us provide some varying amounts of additional detail in
the 5 cases.
\begin{enumerate}
\item
$X$ is a smooth sextic hypersurface in %
$\PP^4(3, 2, 1, 1, 1)$, such that the anti\-canonical section
$\kd := \{X_1 = 0\}$ is smooth (where $X_1$ is the weight 2 coordinate).
The double cover $Y$ of $X$ branched over $\kd$ is a sextic
hypersurface in $\PP^4(3, 1, 1, 1, 1)$; it is a double cover of $\PP^3$
branched over a sextic surface.

Let $C \subset \kd$ be the intersection with a hyperplane (of weight 1, like
$\{X_2 = 0\}$). $C$ is a double cover of $\PP^1$ branched over 6
points, so has $b_1(C) = 4$. Let $Z$ be the blow-up of $Y$ at~$C$.
$\rho = 1$ and $b_3(X) = 42$, so
\[ b_3(Z) = 108, \quad b_3^+(Z) = 46 . \]

The Picard lattice of $Y$ is $N \cong \gen{2}$, and $\bar c_2(Z) = 26$
by \eqref{eq:c2onH}.

The other fixed fibre is a double cover of a hyperplane section of $X$,
which is a degree 1 del Pezzo surface; that fibre is therefore a K3 with
non-symplectic involution and diagonal Picard lattice
$\gen{2} \oplus \gen{-2}^8$.

\item $X$ is a double cover of $\PP^3$, as appeared in
Example \ref{ex:p3cover}. In this case the branched double cover $Y$ of $X$
is isomorphic to a quartic 3-fold in $\PP^4$. Note, however, that a generic
quartic in $\PP^4$ is not a double double cover of $\PP^3$
(those in the form $X_0^4 + X_0^2 Q_2(X_1, \ldots X_4) + Q_4(X_1, \ldots, X_4)$
up to projective equivalence are).

\item
Let $X \subset \PP^4$ be a smooth cubic (which has $b_3(X) = 10$) and
$\kd \subset X$ smooth section by a quadric.
The double cover $Y$ over $X$ branched over $\kd$ can be identified
with the complete intersection of a cubic and a quadric in $\PP^5$. Let
$C$ be a hyperplane section of $\kd$ (a genus 4 curve), and $Z$ the
blow-up of $Y$ in $C$. Then $b_3(Z) = 48$, $b_3^+(Z) = 18$, $N \cong \gen{6}$
and $\bar c_2(Z) = 30$.

\item
Let $X \subset \PP^5$ be a complete intersection of two quadrics, $\kd \subset
X$ smooth section by another quadric. The double cover $Y$ of $X$ branched over
$\kd$ embeds as a complete intersection of 3 quadrics in $\PP^6$.

$b_3(X) = 4$, $b_1(C) = 10$, $b_3(Y) = 28$, so $b_3(Z) = 38$ and %
$b_3^+(Z) = 14$. %
$N \cong \gen{8}$, and $\bar c_2(Z) = 32$.

\item $X$ is a section of the Grassmannian $Gr(2,5) \subset \PP^9$ by a
codimension 3 plane.
\end{enumerate}
\end{ex}

\invblocktable

\begin{ex}
\label{ex:Dmm2}
In the Mori-Mukai list of rank 2 Fano 3-folds, two entries are double covers
of index~2 Fanos.
\begin{enumerate}[align=left]
\item[$k = 6$] A branched double cover of a (1,1) divisor $X \subset \PP^2 \times \PP^2$ (\cf Example \ref{ex:mm2}$_{32}$)
\item[$k = 8$] A branched double cover of the blow-up of $\PP^3$ in a point
(\cf Example \ref{ex:mm2}$_{35}$).
\end{enumerate}
In both cases we can read off the topological data from \cite[Table 3]{exotic}.
\end{ex}

\begin{ex}
\label{ex:p1cubecover}
Let $X = \PP^1 \times \PP^1 \times \PP^1$. Then
\[ N \cong \begin{pmatrix} 0 & 2 & 2 \\ 2 & 0 & 2 \\ 2 & 2 & 0 \end{pmatrix}, \]
$b_3(Y) = 16$, $b_1(C) = 14$, $b_3(Z) = 30$, and $b_3^+(Z) = 14$.
\end{ex}

\begin{ex}
\label{ex:inv2blp}
Let $Z$ be the building block obtained by applying
Construction \ref{constr:invblock} to the blow-up of a degree $d$ del Pezzo
3-fold of rank 1 (\cf \ref{ex:rk2blp}$_d$).
We work out $b_3(Z)$ and $b_3^+(Z)$ from $b_3(X) = b_3(X')$
and $b_1(C) = -K_Y^3 = 2d-2$.
By \eqref{eq:c2double},
$c_2(Y) + c_1(Y)^2$ is represented by
$\rvec{24+8d}{16+8d} -3\rvec{2d}{2d-2} = \rvec{24+2d}{22+2d}$.

We refer to these involution blocks as Example \ref{ex:inv2blp}$_d$.
\end{ex}

\begin{ex}
\label{ex:Dcon}
For $2 \leq d \leq 5$, let $Z$ be the building block resulting from applying
Construction \ref{constr:invblock} to the conic-fibred semi del Pezzo 3-fold of
degree $d$ (\cf Example \ref{ex:con}$_d$). 
\eqref{eq:c2double} yields $\bar c_2(Z) = \rvec{26 + 2d}{18}$.

$b_3(Z) = 20+2d$, $b_3^+(Z) = 2 + 2d$.
\end{ex}

\begin{ex}
\label{ex:DQf}
For $1 \leq d \leq 5$, let $Z$ be the building block resulting from applying
Construction \ref{constr:invblock} to the quadric-fibred semi del Pezzo 3-fold
of degree $d$ (\cf Example \ref{ex:Qf}$_d$). 
\eqref{eq:c2double} yields $\bar c_2(Z) = \rvec{24 + 2d}{12}$.

$b_3(Z) = 40-2d$, while $b_3^+(Z) = 12$.
\end{ex}

\subsection{Ad hoc blocks}
\label{subsec:adhoc}

As we have seen, classes of semi-Fano 3-folds often come in sequences.
Sometimes these will be part of a bigger sequence, where the borderline case
fails to be semi-Fano, yet satisfies the hypotheses of
Construction \ref{constr:block}. However, not being able to apply Propositions
\ref{prop:block_from_fano} or \ref{prop:generic_fano} means it takes a bit
more work to employ such blocks.
We carry this out in two cases that lead
to blocks with useful polarising lattices of rank 2---with unusually small
and unusually large discriminants respectively.

The first case comes from extrapolating the classes Example \ref{ex:rk2blp}
consisting of one-point blow-ups of rank 1 del Pezzo 3-folds
of degree $d = 2, \ldots, 5$. This leads us to consider 
$X'$  a rank 1 del Pezzo 3-fold of degree 1, \ie a smooth sextic hypersurface
in $\PP^4(3, 2, 1, 1, 1)$ (this is the family appearing in
Example \ref{ex:rk1index2}$_1$), and let $X$ be the blow-up of $X'$ at a point
$p$, say $p = (0 {:} 0 {:} 0 {:} 0 {:} 1)$.
Then $X$ fails to be weak Fano---in fact, generically $-K_X$ does not even
have any irreducible sections:
$H^0(-K_X)$ is spanned by $X_2^2$, $X_2X_3$ and $X_3^2$.

We can however restrict attention to the case when
$X' \subset \PP^4(3, 2, 1, 1, 1)$ is tangent to $\{X_1 = 0\}$ at $p$.
Then the section $\kd' := \{X_1 = 0\} \cap X'$ has a double point at $p$;
generically it is an ordinary double point, and the proper transform
$\kd \subset X$ is a smooth section of $-K_X$. Now $|{-}K_X|$ is spanned by
$X_1, X_2^2, X_2X_3$ and $X_3^2$, and defines a morphism onto a quadric
cone in $\PP^3$ (mapping $p$ to the vertex of the cone); it is defined
everywhere because the conditions
$p \in X'$ and tangency with $\{X_1 = 0\}$ at $p$ imply that the defining
polynomial of $X'$ has no $X_4^6$ or $X_0X_4^3$ coefficients, so that
$p$ is the only point on $X'$ with $X_1 = X_2 = X_3 = 0$.
(Geometrically, the morphism resolves the projection of $X'$ onto
$\{X_0 = X_4 = 0\} \cong \PP^2(2,1,1) \subset \PP^4(3, 2, 1, 1, 1)$).

Since $-K_X$ is evidently not big, even this non-generic blow-up fails to be
weak Fano. We can nevertheless apply Construction \ref{constr:block} to
construct a building block from $X$, or Construction \ref{constr:invblock}
to construct an involution block from the double cover $Y$ of $X$ branched over $\kd$.
However, it takes more work since we now have to
check some properties, which are automatic if $Y$ is semi-Fano, by hand:
\begin{itemize}
\item In the description of the example that follows, we show that
$H^2(Y) \to L$ is injective with image $N$ primitive.
Then the hypotheses of Propositions \ref{prop:ind2block} and \ref{prop:s_fano}
hold, so that $Z$ is a pleasant involution block.
\item In Lemma \ref{lem:deg_hyper} we show that any generic $N$-polarised K3
appears as an anticanonical divisors in some member of the family of blocks.
\end{itemize}

\begin{ex}
\label{ex:deg_sextic}
Note that there exist sections of $\calo(-1)$ passing through $p$ that meet
$X'$ transversely, defining smooth $H' \in |{-}\half K_{X'}|$. The proper
transform $H \subset X$ of such a divisor is in $|{-}\half K_X|$.
Let $C \subset \kd$ be the intersection with such a section.
It is a double cover of $\PP^2$ branched over 4 points, so $C$ is an elliptic
curve (and $b_1(C) = 2$).
The nef cone of $X$ is spanned by $H$ and $\pi^*H' = H+E$, where $E$ is the
exceptional~$\PP^2$.

Let $Y$ be the double cover of $X$ branched over $\kd$, and
let $Z$ be the blow-up of
$Y$ at $C$. The pre-image $\tilde H \subset Y$ of $H$ is a smooth anticanonical
divisor. The pencil $|\tilde H:\kd| \subseteq |{-}K_Y|$ has base locus $C$, and
yields an anticanonical fibration of $Z$.

$\rho = 2$ and $b_3(X) = 42$, so
\[ b_3(Z) = 104, \quad b_3^+(Z) = 44 . \]
The Picard lattice of $Y$ is $N \cong \sm{2 & 0 \\ 0 & -2}$ with respect to the basis
$\{\tilde H {+} \tilde E, \tilde E\}$, where $\tilde E$ is the exceptional
$\PP^1 \times \PP^1 \subset Y$. Meanwhile the
Picard group of $\kd$ is generated by the hyperplane section and the exceptional
$\PP^1$. Thus we see directly that $H^2(Y) \to L$ is injective with primitive
image.

The other fixed fibre $\tilde H$ has diagonal Picard lattice
$\gen{2} \oplus \gen{-2}^9$, since it is a branched double cover of $H$, which
is a blow-up of a degree 1 del Pezzo $H'$ at a point. The non-genericity of the
choice of blow-up point $p \in X'$ is reflected in the fact that $H$ is the
result of blowing up $H'$ in the nodes of a sextic with 9 nodes
rather than 9 generic points; $\tilde H$ is a K3 with non-symplectic involution
whose fixed set is single elliptic curve (the proper transform of the
nodal sextic) isomorphic to $C$, as appears in Remark \ref{rmk:nodal_sextic}.

In the basis for $N$ given by the edges $\tilde H + \tilde E, \tilde H$
of the nef cone
\[ N = \begin{pmatrix} 2 & 2 \\ 2 & 0 \end{pmatrix} . \]
Analogously to Example \ref{ex:inv2blp} we find that
$\bar c_2(Z) = \rvec{26}{24}$ with respect to this basis.
\end{ex}

\begin{ex}
\label{ex:deg_sextic_ord}
Without taking double cover, we get an ordinary block with
$b_3(Z) = b_3(X) + (-K_X)^3 +2 = 42 + 0 + 2 = 44$,
and $\bar c_2(Z)  = \rvec{16}{12}$.

(Now the blow-up curve is just a fibre of the morphism to the quadric
cone---which is generically smooth as required.)
\end{ex}

\pagebreak

Most of our building blocks have been obtained by applying Construction
\ref{constr:block} to semi-Fano 3-folds. In turn, many semi-Fano 3-folds $Y$
are obtained by blowing up a curve $C$ on simpler Fano 3-fold $X$.
In a sense, for $Y$ to be Fano or semi-Fano requires $C$ to be contained in
sufficiently many anticanonical divisors of $X$. But even if $C$ lies on just
a pencil of anticanonical divisors, $Y$ may still satisfy the conditions
for applying Construction \ref{constr:block}, like in our second ad hoc
example.

\begin{ex}
\label{ex:octic}
Let $Y$ be the blow-up of a complete intersection of quadrics $X \subset \PP^5$
along an elliptic curve of degree~8; that such $X$ exist can be seen as a
consequence of Lemma \ref{lem:octic_elliptic}.
The polarising lattice is spanned by the pull-back $H$ of the hyperplane class
and the exceptional divisor $E$. The nef cone is spanned by $H$ and
$-K_Y = 2H-E$. With respect to that basis, the polarising lattice is represented by $\sm{8 & 8 \\ 8 & 0}$, and (applying Lemma \ref{lem:c2recurse} and \eqref{eq:c2onH} twice) $\bar c_2(Z)$ by $\rvec{28}{24}$.

$b_3(Y) = 6$, and $b_3(Z) = b_3(Y) -K_Y^3 + 2 = 8$.
\end{ex}

\section{Genericity results}
\label{sec:genericity}

In \S\ref{sec:examples}, we will exhibit examples of extra-twisted connected
sums using blocks constructed in \S\ref{sec:sfblocks}. To match pairs of
blocks in the required way (\ie to find \hk rotations in the sense of
Definition \ref{def:hkr} between the K3 surface factors in their asymptotic
cross-sections), we will apply Theorem \ref{thm:matching}. That relies
on establishing that the families of blocks used have certain genericity
properties in the sense of Definition \ref{def:generic}.
As explained in \S\ref{sec:match}, precisely what genericity property is
needed depends on what action on cohomology one tries to achieve for
the \hk rotation, and in some examples, what is needed is stronger than
what Proposition \ref{prop:generic_fano} provides.
We therefore collect here the genericity results that will prove necessary
for our selected examples.

Given a family of building blocks $\fbb$ with polarising lattice $N$,
the problem is basically to
establish sufficient conditions for an overlattice $\Lambda \subset L$ of $N$
that ensure that any K3 surface $\kd$ with $\Pic \kd \cong \Lambda$ embeds
as an anticanonical divisor in some element of $\fbb$.
If the conclusion holds, then elements of $N \subset \Lambda$ are given some
geometric meaning, \eg if elements of $\fbb$ are described in terms of some
embedding into projective space, then there is an element $H \in N$
corresponding to the hyperplane class.
The general strategy to reconstruct these embeddings into projective space
from knowing that $\Pic \kd \cong \Lambda$.

The first step is to recall that the positive cone of a complex K3 has
a chamber structure, where walls are planes orthogonal to $(-2)$-classes in
$\Pic \kd$, and the chambers are possible nef cones. Thus for a marked $K3$
with $\Pic \kd = \Lambda$ and $H \in \Lambda$ such that
has $H^2 > 0$ and $H$ is orthogonal to all $(-2)$-classes in $\Lambda$,
we can always choose a different marking (composing the original choice with
reflections in $(-2)$-classes) to assume WLOG that $H$ is a nef class for
the marked K3.

Once we have a nef class $H$, we can try to apply results of
Saint-Donat \cite{saintdonat74} to prove that $H$ is very ample, \ie that its
sections define an embedding
$\kd \into \PP(H^0(H)) \cong \PP^{\frac{\bm^2}{2} + 1}$.

\begin{lem}[{See Reid \cite[Chapter 3]{reid97}}] 
\label{lem:embed}
Let $\kd$ be a K3 surface, and $\bm \in \Pic \kd$ a nef class.
\begin{enumerate}[label=\textup{(\alph*)}]
\item If $\bm^2 \geq 4$, $\bm$ is not twice an element of square 2, and
\begin{enumerate}[label=\textup{(\roman*)}]
\item \label{it:elliptic}
there is no $v \in \Pic \kd$ such that $v.H = 2$ and $v^2 = 0$
\end{enumerate}
then $|\bm|$
defines a birational morphism to $\PP^{\frac{\bm^2}{2} + 1}$, which is an
isomorphism away from a set of contracted $(-2)$-curves.
If in addition 
\begin{enumerate}[label=\textup{(\roman*)},resume]
\item
\label{it:nodal}
there is no $v \in \Pic \kd$ such that $v.H = 0$ and $v^2 = -2$
\end{enumerate}
then $H$ is very ample.
\item \label{it:hyper}
If $H^2 = 2$ and \ref{it:nodal} holds then $|H|$ defines a double cover of
$\PP^2$, branched over a sextic curve.
(\ref{it:nodal} implies \ref{it:elliptic} in this case.) 
If we instead of \ref{it:nodal} assume that there is no $v \in \Pic \kd$ such
that $v.H = 1$ and $v^2 = 1$, then $|H|$ is basepoint-free and defines
a generically 2-to-1 map $\kd \to \PP^2$, but may contract some $(-2)$-curves.
\end{enumerate}
\end{lem}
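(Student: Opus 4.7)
The plan is to deduce this from the classical results of Mayer and Saint-Donat on linear systems on K3 surfaces, as presented in Reid's lectures. The proof breaks into three stages: show that $|A|$ is base-point free and hence defines a morphism $\phi_A$ to projective space of the expected dimension; apply the ``birational vs.\ hyperelliptic'' dichotomy to $\phi_A$; and then identify the obstructions to $\phi_A$ being an embedding with contracted $(-2)$-curves.

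Riemann--Roch together with Kodaira vanishing (applicable since $A$ is big and nef) gives $h^0(A) = \tfrac{1}{2} A^2 + 2$, and Saint-Donat's analysis shows that base points of $|A|$ arise only via decompositions $A = aE + \Gamma$ with $E$ a smooth elliptic curve and $\Gamma$ a $(-2)$-curve satisfying $E.\Gamma = 1$. Such a decomposition yields a class excluded by hypothesis (i) (or its analogue in case (b)), so $|A|$ is in fact base-point free, and we obtain $\phi_A : \kd \to \PP^{\frac{1}{2} A^2 + 1}$. Next, Saint-Donat's dichotomy asserts that $\phi_A$ is either birational onto its image or a generically 2-to-1 cover of a surface of minimal degree, and the hyperelliptic alternative occurs precisely when $A^2 = 2$, when $A$ is twice a class of square~$2$, or when there is a smooth elliptic curve $E \subset \kd$ with $E.A = 2$. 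Any such $E$ gives a class $v$ with $v^2 = 0$ and $v.A = 2$; conversely any such lattice class is effective---by Riemann--Roch, which forces $h^0(v) + h^0(-v) \geq 2$---and after reflections in $(-2)$-curves can be represented by a smooth elliptic pencil. Thus in case (a), the three assumptions between them eliminate all hyperelliptic possibilities, so $\phi_A$ is birational; in case (b), $A^2 = 2$ puts us in the hyperelliptic regime and $\phi_A$ is a 2-to-1 morphism onto $\PP^2$.

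For the very ampleness claim and the double-cover refinement I would use that $\phi_A$ fails to be an embedding precisely along irreducible curves $C$ with $A.C = 0$, which by adjunction are smooth rational $(-2)$-curves. Conversely every class $v \in \Pic \kd$ of square $-2$ is effective or anti-effective by Riemann--Roch ($h^0(v) + h^0(-v) \geq 1$) and decomposes into irreducible $(-2)$-curves, so condition (ii) is exactly the absence of any $A$-orthogonal $(-2)$-curve. This upgrades the birational morphism of (a) to an embedding, and the 2-to-1 morphism of (b) to a finite double cover $\pi : \kd \to \PP^2$; the branch locus is then a sextic by the formula $0 = K_\kd = \pi^\ast(K_{\PP^2} + \tfrac{1}{2} B)$. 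The chief obstacle throughout is translating each lattice-theoretic hypothesis on $\Pic \kd$ into a geometric statement about curves or elliptic pencils on $\kd$, which everywhere rests on the Riemann--Roch effectivity dichotomy for classes of square $-2$ and $0$.
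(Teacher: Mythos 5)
Your argument is correct and is essentially the standard Mayer--Saint-Donat analysis (base-point freeness, the hyperelliptic dichotomy, and contraction of $A$-orthogonal $(-2)$-curves via the Riemann--Roch effectivity dichotomy), which is exactly the content of the source the paper cites: the paper gives no proof of this lemma beyond the reference to Reid's Chapter 3. So your proposal matches the intended proof; the only point to watch is to spell out the small lattice translations (e.g.\ an elliptic pencil $E$ with $E.A=1$ gives the excluded class $2E$ in case (a), and $A-2E$ of square $-2$ orthogonal to $A$ in case (b)), which you gesture at but do not write out.
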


Using such a map to projective space, one can then proceed to try to ``build
an element of $\fbb$ around $\kd$'', but the details depend on $\fbb$.
These problems are studied more systematically by
Wallis \cite[\S7.7]{wallis:thesis}, but here we are content to note a
handful of consequences of Lemma \ref{lem:embed} that suffice for
the examples in \S\ref{sec:examples}.

\subsection{Hyper-elliptic K3s}

\begin{prop}
\label{prop:hyper}
Let $\Lambda \subset L$ be a primitive lattice, with $H \in \Lambda$ such that
$H^2 = 2$. Suppose that there is no $v \in \Lambda$ such that
\begin{enumerate}
\item $v.H = 2$ and $v^2 = 0$, or
\item $v.H = 0$ and $v^2 = -2$,
\end{enumerate}
Then for any K3 with Picard lattice exactly $\Lambda$, we can choose a marking
such that the linear system $|H|$ defines a double cover $\kd \to \PP^2$,
branched over a smooth sextic curve. 

In particular, the families of blocks from Examples \ref{ex:spec1}$^2_1$ and
\ref{ex:rk1index2}$_1$ (essentially the same as \ref{ex:spec1}$^1_2$) are
$(\Lambda, H\bbrp)$-generic.
\end{prop}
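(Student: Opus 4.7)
The strategy is to apply Lemma \ref{lem:embed}(b) after arranging $H$ to be nef, and then to reconstruct the relevant Fano 3-folds around the resulting double-cover structure. For the first step, given $\kd$ with $\Pic\kd \cong \Lambda$, choose a marking $\hdg : H^2(\kd;\Z) \to L$ restricting to an isomorphism $\Pic\kd \to \Lambda$. Since $H^2 = 2 > 0$, after possibly replacing $\hdg$ by $-\hdg$ we may assume $H$ lies in the positive cone. Composing $\hdg$ with a suitable product of reflections in $(-2)$-classes of $\Pic\kd$ then moves $H$ into the nef chamber; condition~(ii) ensures that $H$ lies on no wall of this chamber, so $H$ is strictly inside the nef cone. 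Lemma \ref{lem:embed}(b) now yields that $|H|$ defines a double cover $\phi : \kd \to \PP^2$ branched over some sextic $B$. Any singular point of $B$ would produce a contracted $(-2)$-curve $E \in \Pic\kd = \Lambda$ with $E \cdot H = 0$, contradicting~(ii); hence $B$ is smooth.

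For the genericity claim, both families consist of degree 1 del Pezzo 3-folds $X \subset \PP^4(3,2,1,1,1)$ with $\kd = X \cap \{X_1 = 0\}$. Writing the defining sextic of $X$ as $a X_0^2 + b X_0 + c = 0$, where $a$ is a constant and $b, c$ are weighted-homogeneous of weights $3$ and $6$ in $(X_1, \ldots, X_4)$, the projection $X \to \PP^3(2,1,1,1)$ away from the weight-$3$ coordinate is a double cover branched along the sextic surface $\{b^2 - 4ac = 0\}$; its intersection with $\{X_1 = 0\} \cong \PP^2$ is precisely the branch locus of $\phi$. Starting from the smooth $B \subset \PP^2$ produced above, one picks weighted-polynomial extensions of the restriction data to all of $(X_1, \ldots, X_4)$; generic choices yield a smooth $X$ by Bertini, and then $\kd$ appears as the required anticanonical divisor with the ample class $-\half K_X$ restricting to $H \in N$. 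For Example \ref{ex:rk1index2}$_1$ we then additionally pass to the double cover of $X$ branched over $\kd$.

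The most delicate point is the final genericity check: the periods $\Pi = H^{2,0}(\kd)$ of the constructed family must fill an open subset of $D_\Lambda$, not merely a proper subvariety. This follows from a standard deformation-theoretic count showing that the period map from the parameter space of such $X$ to $D_\Lambda$ is dominant, combined with the observation that K3s with Picard rank strictly greater than $\rk \Lambda$ form a countable union of positive-codimension subvarieties of $D_\Lambda$, which then sit inside the complement of the open set $U_\fbb$ of Definition~\ref{def:generic}.
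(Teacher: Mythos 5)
Your proposal follows essentially the same route as the paper's proof: compose the marking with reflections in $(-2)$-classes to make $H$ nef (condition (ii) keeping it off the chamber walls), invoke Lemma \ref{lem:embed}(b) to realise $\kd$ as a double cover of $\PP^2$ branched over a smooth sextic, and then build a degree 1 del Pezzo sextic in $\PP^4(3,2,1,1,1)$ around $\kd$, blowing up (resp.\ taking the branched double cover and then blowing up) to land in the families of Examples \ref{ex:spec1}$^2_1$ and \ref{ex:rk1index2}$_1$. The one point to adjust is your final paragraph: no period-map dominance or deformation-theoretic count is needed, since your own construction applies to \emph{every} K3 with Picard lattice exactly $\Lambda$, so one may take $U_\fbb$ in Definition \ref{def:generic} to be the complement in $D_\Lambda$ of the countably many positive-codimension loci where the Picard lattice jumps; presenting dominance as the ``delicate point'' and outsourcing it to a standard count misplaces the weight of the argument, though it does not invalidate it.
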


\begin{proof}
That $\kd$ is branched over a smooth sextic is just a
restatement of Lemma \ref{lem:embed}\ref{it:hyper}.

Now let $F$ be the polynomial defining the sextic curve.
Then for a generic homogeneous quadric~$Q$ and quartic $C$ in three variables,
the sextic hypersurface
\begin{equation}
\label{eq:sextic}
X := \{X_0^2 + X_1 C(X_2, X_3, X_4) + X_1^2 Q(X_2, X_3, X_4) + F(X_2, X_3, X_4 = 0 \} \subset \PP^4(3, 2, 1, 1, 1)
\end{equation}
is a smooth degree 1 del Pezzo 3-fold, with $\{X_1 = 0\} \cong \kd$ as
anticanonical divisor. Blowing up a curve on $X$ yields an building block in
the family of Example \ref{ex:spec1}$^2_1$. Taking a double cover $Y$ of $X$
branched over $\kd$ and then blowing up yields an element of the family
Example \ref{ex:rk1index2}$_1$.

Thus a generic $\Lambda$-polarised K3 embeds as an anticanonical divisor in
Examples \ref{ex:spec1}$^2_1$ and \ref{ex:rk1index2}$_1$ as required.
\end{proof}

\begin{lem}[{\cite[Lemma 7.7]{exotic}}]
\label{lem:deg_hyper}
Let $N \subset L$ be a primitive rank 2 lattice, with quadratic form
represented with respect to a basis $G, H$  by $\sm{0 & 2 \\ 2 & 2}$,
let $\Amp \subset N_\R$ be the open cone spanned by $G$ and $H$.
Let $\Lambda \subset L$ be an overlattice of $N$, and suppose that
\begin{enumerate}
\item
there is no $v \in \Lambda$ such that
$v.H = 1$ and $v^2 = 0$, and
\item there is no $v \in \Lambda$ other than $\pm(H-G)$
such that $v.H = 0$ and $v^2 = -2$,
\end{enumerate}
Then for any K3 with Picard lattice exactly $\Lambda$, we can choose a marking
such that the linear system $|H|$ defines a morphism $\kd \to \PP^2$,
contracting a $(-2)$-curve $E \subset \kd$ to a point $p \in \PP^2$,
which is 2-to-1 except over a sextic curve $C \subset \PP^2$ that is smooth
apart from an ordinary double point at $p$.

In particular, the families of building blocks from Examples
\ref{ex:deg_sextic} and~\ref{ex:deg_sextic_ord}
are $(\Lambda, \Amp)$-generic.
\end{lem}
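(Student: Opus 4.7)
The plan is to mimic the proof of Proposition~\ref{prop:hyper}, with extra care to handle the $(-2)$-class $H - G \in \Lambda$ which is orthogonal to $H$. Given $\kd$ with $\Pic \kd = \Lambda$, I would first adjust the marking so that both $G$ and $H$ are nef on $\kd$: this is possible because $\Amp$ lies in the closure of a chamber of the positive cone, and the Weyl group of $\Lambda$ acts transitively on chambers. Since $(G-H).H = -2 < 0$ rules out $G - H$ being effective (as $H$ is nef), Riemann--Roch forces the $(-2)$-class $H - G$ itself to be effective; because $H - G$ is indivisible in $\Lambda$ (its half would have square $-\tfrac{1}{2}$), condition (ii) rules out any non-trivial decomposition $H-G = D_1 + D_2$ with $D_i$ effective and $D_i.H = 0$, so $H - G$ is represented by a single irreducible $(-2)$-curve $E$.

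Condition (i) rules out a Saint-Donat decomposition $H = 2E' + F$ with $E'$ primitive of square $0$ satisfying $E'.H = 1$, so $|H|$ is basepoint-free and defines a degree-$2$ morphism $\phi \colon \kd \to \PP^2$. The curves contracted by $\phi$ are precisely the $(-2)$-curves orthogonal to $H$, which by (ii) is only $E$, contracted to a single point $p \in \PP^2$. Locally near $E$, the situation is modeled on the minimal resolution of an $A_1$-singularity of a double cover, so the branch locus of $\phi$ is a sextic curve $C \subset \PP^2$ (by the standard degree formula for double covers of $\PP^2$ by a K3) with an ordinary double point at $p$ and smooth elsewhere.

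For the $(\Lambda, \Amp)$-genericity claim, given the sextic polynomial $F$ defining $C$, I would construct a building block in the family of Example~\ref{ex:deg_sextic} as follows. For generic quartic $P$ in $(X_2, X_3, X_4)$ satisfying $P(p) = 0$, and generic quadric $Q$ in the same variables, the sextic hypersurface
\[ X' = \{X_0^2 + X_1\, P(X_2, X_3, X_4) + X_1^2\, Q(X_2, X_3, X_4) + F(X_2, X_3, X_4) = 0\} \subset \PP^4(3,2,1,1,1) \]
is a smooth degree-$1$ del Pezzo $3$-fold. Its section $\{X_1 = 0\} \cap X'$ is a nodal K3 with a single ordinary double point at the preimage $\tilde p$ of $p$, and $X'$ is tangent to $\{X_1 = 0\}$ at $\tilde p$ (the condition being automatic from $P(p) = 0$, as the $X_1$-derivative of the defining equation at $\tilde p$ is $P(p)$). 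Blowing up $X'$ at $\tilde p$ yields $X$ with $\kd$ as an anticanonical divisor, and taking the double cover and blow-up as in Example~\ref{ex:deg_sextic} produces the required building block; the Kähler-class assertion follows from Proposition~\ref{prop:ind2block}. The open set $U_\fbb \subset D_\Lambda$ is the complement of countably many positive-codimension analytic submanifolds where $\Pic \kd$ strictly exceeds $\Lambda$ (potentially violating (i) or (ii)) or where the constructed $X'$ fails to be smooth. The hard part is ensuring the branch sextic acquires exactly one ordinary double point rather than worse singularities---this is controlled precisely by condition (ii), as any additional $(-2)$-class orthogonal to $H$ would contribute an extra contracted curve and hence an extra singular point of the branch locus.
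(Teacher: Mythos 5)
Your treatment of the linear system $|H|$ (nefness of $H$ after changing the marking, effectivity and irreducibility of the $(-2)$-class $H-G$ via condition (ii), basepoint-freeness via condition (i), and the identification of the branch sextic with a single ordinary double point under the contracted curve) is a correct expansion of what the paper simply quotes from Lemma \ref{lem:embed}\ref{it:hyper}, and your genericity construction uses the same normal form \eqref{eq:sextic} as the paper. However, the key step in the second half is wrong as stated: you impose $P(p)=0$ and claim this gives tangency of $X'$ to $\{X_1=0\}$ at $\tilde p$. Compute the gradient of $s = X_0^2 + X_1P + X_1^2Q + F$ at $\tilde p=(0{:}0{:}0{:}0{:}1)$ (a smooth point of $\PP^4(3,2,1,1,1)$): $\partial_{X_0}s = 2X_0 = 0$, $\partial_{X_i}s = \partial_{X_i}F(p) = 0$ for $i=2,3,4$ because $F$ is nodal at $p$, and $\partial_{X_1}s = P(p)$. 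Tangency of $X'$ to the hyperplane $\{X_1=0\}$ means the gradient is a \emph{nonzero} multiple of $dX_1$; if instead $P(p)=0$ the entire gradient vanishes, so $X'$ is \emph{singular} at $\tilde p$, it is not a smooth degree~1 del Pezzo 3-fold, and the subsequent blow-up/double-cover construction does not produce a block in the family of Example \ref{ex:deg_sextic}. The correct condition is $P(p)\neq 0$, which holds for a generic quartic; the tangency (equivalently, the nodal hyperplane section) is then automatic from the node of $F$ at $p$, which is exactly the paper's assertion that a \emph{generic} sextic of the form \eqref{eq:sextic} is smooth and tangent to $\{X_1=0\}$ at $p$. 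So the fix is simply to delete your constraint on $P$, but as written the step fails and the justification reflects a misreading of what tangency requires.

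A smaller point: your opening claim that the marking can be chosen so that both $G$ and $H$ are nef is not justified by hypotheses (i)--(ii) alone (they do not exclude a $(-2)$-class whose wall crosses the interior of the cone spanned by $G$ and $H$), and it is not what the argument needs. For the first part only nefness of $H$ is used, and for the $(\Lambda,\Amp)$-genericity the containment of $\Amp$ in the image of the Kähler cone comes from the constructed 3-fold via Proposition \ref{prop:ind2block} (the nef cone of $X$ restricting to the cone spanned by $G$ and $H$), not from an a priori choice of chamber.
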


\begin{proof}
The first part is immediate from Lemma \ref{lem:embed}\ref{it:hyper}.

Let $F$ be the sextic polynomial that defines the curve with ordinary double
point at $p$. Then a generic sextic hypersurface of the form \eqref{eq:sextic}
is a smooth degree 1 del Pezzo 3-fold tangent to the hyperplane $\{X_1 = 0\}$
at $p$, so we can proceed to construct building blocks as in Examples
\ref{ex:deg_sextic} and~\ref{ex:deg_sextic_ord}.
\end{proof}

\subsection{Quartic K3s}

The conditions on $\Pic \kd$ for $\kd$ to embed as a quartic in $\PP^4$ are
immediate from Lemma \ref{lem:embed}(a).
We also use the following result from \cite[Lemma 7.7, case \#27]{exotic} in an example.
 
\begin{lem}
\label{lem:twisted}
Let $N \subset L$ be a primitive rank 2 lattice, with quadratic form
represented with respect to a basis $G, H$  by $\sm{2 & 5 \\ 5 & 4}$,
let $\Amp \subset N_\R$ be the open cone spanned by $G$ and $H$.
Let $\Lambda \subset L$ be an overlattice of $N$, and suppose that there is
no $v \in \Lambda$ such that
\begin{enumerate}
\item $v.H = 2$ and $v^2 = 0$; or
\item $v.H = 0$ and $v^2 = -2$; or
\item $v.H = 1$ and $v^2 \geq -2$.
\end{enumerate}
Then for any K3 with Picard lattice exactly $\Lambda$, we can choose a marking
such that the linear system $|H|$ defines an embedding $\kd \to \PP^3$,
whose image is a smooth quartic hypersurface and $2G - H$ is represented
by a twisted cubic curve $C$. 

In particular, the family of building blocks from Example \ref{ex:mm2}$_{27}$
is $(\Lambda, \Amp)$-generic.
\end{lem}

\pagebreak

\subsection{Sextic K3s}

\begin{prop}
\label{prop:cubics}
Let $\Lambda \subset L$ be a primitive lattice, with $H \in \Lambda$ such that
$H^2 = 6$. Suppose that there is no $v \in \Lambda$ such that
\begin{enumerate}
\item $v.H = 2$ and $v^2 = 0$; or
\item $v.H = 0$ and $v^2 = -2$.
\end{enumerate}
Then for any K3 with Picard lattice exactly $\Lambda$, we can choose a marking
such that the linear system $|H|$ defines an $\kd \to \PP^4$,
whose image is the intersection of a quadric (which may be singular) and a smooth cubic.

In particular, the families of blocks from Examples \ref{ex:spec1}$^2_3$ and
\ref{ex:rk1index2}$_3$ (essentially same as \ref{ex:spec1}$^1_6$) are
$(\Lambda, H\bbrp)$-generic.
\end{prop}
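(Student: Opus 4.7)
The plan mirrors that of Proposition \ref{prop:hyper}: first invoke Lemma \ref{lem:embed} to embed $\kd$ into $\PP^4$ via $|H|$, then realise the image as an anticanonical divisor in a smooth cubic threefold, from which building blocks in the two families are constructed.

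First I would verify the hypotheses of Lemma \ref{lem:embed}(a). We have $H^2 = 6 \geq 4$, and $H$ cannot be written as $2w$ for any $w \in \Lambda$ since $H^2$ is not divisible by $4$. Hypotheses (i) and (ii) of the proposition are exactly the conditions (i) and (ii) required in Lemma \ref{lem:embed}(a). After composing the given marking with reflections in $(-2)$-classes we may assume $H$ lies in the closure of the nef cone of $\kd$; Lemma \ref{lem:embed}(a) then shows that $|H|$ is very ample and embeds $\kd$ as a smooth sextic surface in $\PP^4$.

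Next I would show the embedded $\kd$ is a complete intersection of a quadric and a smooth cubic. Riemann--Roch gives $h^0(\kd, 2H) = 14 < 15$, so from the sequence $0 \to I_\kd(2) \to \calo_{\PP^4}(2) \to \calo_\kd(2H) \to 0$ there exists a nonzero quadric $Q$ vanishing on $\kd$. A standard Hilbert function computation (comparing with the Koszul resolution of a complete intersection) identifies $\kd$ as $Q \cap C_0$ for some cubic $C_0 \subset \PP^4$. Consider the pencil of cubics $C_\ell := C_0 + Q \cdot \ell$, indexed by linear forms $\ell$ on $\PP^4$; every member meets $Q$ in $\kd$, so the base locus of the pencil is precisely $\kd$. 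At a point $p \in \kd$ the gradient of $C_\ell$ equals $\nabla C_0(p) + \ell(p) \nabla Q(p)$, and this is nonzero since transversality of $\kd = Q \cap C_0$ forces $\nabla Q(p)$ and $\nabla C_0(p)$ to be linearly independent. Hence every cubic in the pencil is smooth along $\kd$, and Bertini delivers an $\ell$ for which $C := C_\ell$ is smooth on $\PP^4 \setminus \kd$; combined with smoothness along $\kd$ this yields a smooth cubic $C$ with $Q \cap C = \kd$.

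Finally I would derive genericity of the two block families. The smooth cubic $C$ is a Fano threefold of Picard rank one and index two, hence lies in the deformation class of the Fano underlying Example \ref{ex:spec1}$^2_3$, with $\kd \in |{-}K_C|$ the given quadric section. Choosing a second quadric section of $C$ meeting $\kd$ transversely in a smooth curve and applying Construction \ref{constr:block} produces a building block in that family. For the involution version, take the double cover $Y$ of $C$ branched along $\kd$: this is a Fano in the class of Example \ref{ex:rk1index2}$_3$, and Construction \ref{constr:invblock} (combined with Propositions \ref{prop:ind2block} and \ref{prop:s_fano}) delivers a pleasant involution block. The loci of periods in $D_\Lambda$ for which these choices fail form a countable union of proper analytic subvarieties, giving $(\Lambda, H\bbrp)$-genericity in the sense of Definition \ref{def:generic}. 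The main obstacle is the second step: since the proposition allows $Q$ to be singular we cannot realise $\kd$ as an anticanonical divisor on $Q$ itself, so we are forced to extract a smooth cubic from the pencil $\{C_0 + Q \cdot \ell\}$, and the technical heart is the observation that every member of this pencil is automatically smooth at the base locus $\kd$—this is what allows Bertini to yield a genuinely smooth hypersurface rather than one with unwanted singularities along $\kd$.
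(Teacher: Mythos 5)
Your proposal is correct and follows essentially the same route as the paper: apply Lemma \ref{lem:embed} (after adjusting the marking by reflections in $(-2)$-classes) to get $|H|$ very ample, realise the image as a quadric–cubic complete intersection with the cubic chosen smooth, and then build blocks of the two families around the cubic 3-fold and its branched double cover. The only difference is that the paper simply cites the quadric–cubic statement and the smoothability of the cubic as well known, whereas you supply the Riemann–Roch/Bertini argument for it (your gradient computation along the base locus is exactly the needed point); your closing remark about not being able to use $Q$ itself is harmless motivational commentary and does not affect the argument.
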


\begin{proof}
Lemma \ref{lem:embed} gives that $H$ is very ample. It is well known that the
image is then a complete intersection of a quadric and a cubic, and that the
cubic may be taken to be smooth
(see Saint-Donat \cite[Theorem 6.1]{saintdonat74}).
\end{proof}

\begin{prop}
\label{prop:deg_cubics}
Let $N \subset L$ be a primitive rank 2 lattice, with quadratic form
represented with respect to a basis $H, \Gamma$  by $\sm{6 & 2 \\ 2 & -2}$.
Let $\Lambda \subset L$ be an overlattice of $N$, and suppose that there is
no $v \in \Lambda$ such that
\begin{enumerate}
\item $v.H = 2$ and $v^2 = 0$; or
\item $v.H = 0$ and $v^2 = -2$; or
\item $v.H = 1$ and $v^2 = -2$.
\end{enumerate}
Then for any K3 with Picard lattice exactly $\Lambda$, we can choose a marking
such that the linear system $|H|$ defines an embedding $\kd \to \PP^4$,
whose image is the intersection of a quadric $Q$ and a
cubic $C$, and contains a conic representing the class $\Gamma$.

The cubic $C$ can be chosen so that it contains the plane $\Pi$ of the conic,
and so that it has no singularities other than 4 ordinary double points along
$\Pi$.

Further if $\Amp_\pm \subset N_\R$ is the open cone spanned by $H$
and $H\pm \Gamma$, then Examples \ref{ex:rk2blp}$_4$ and \ref{ex:inv2blp}$_4$
are $(\Lambda, \Amp_+)$-generic, and
Examples \ref{ex:Qf}$_3$ and \ref{ex:DQf}$_3$ are $(\Lambda, \Amp_-)$-generic.
\end{prop}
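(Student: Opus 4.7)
The plan is to first use $|H|$ to embed $\kd$ in $\PP^4$ as a sextic, identify $\Gamma$ as a smooth plane conic, exhibit a cubic 3-fold through $\kd$ that contains the plane of that conic and has exactly four ordinary double points along it, and finally take the two small resolutions of this nodal cubic to land in the families \ref{ex:rk2blp}$_4$, \ref{ex:Qf}$_3$ and their double-covered versions.

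First, choose the marking so that $H$ lies in the nef cone---possible because reflections in $(-2)$-classes generate a Weyl group action that preserves conditions (i)--(iii). Since $H^2 = 6 \geq 4$ and $H$ is not twice an integral class (as $6/4$ is not a square of an integer), hypotheses (i) and (ii) are precisely the remaining hypotheses of Lemma~\ref{lem:embed}(a), which yields an embedding $\kd \hookrightarrow \PP^4$ whose image is the transverse intersection of a (possibly singular) quadric $Q$ with a smooth cubic $C_3$ (classical for sextic K3 surfaces; $h^0(2H)=14$ gives a unique $Q$ up to scalar, and $h^0(3H)=29$). Next, Riemann--Roch and ampleness force $\Gamma$ to be effective, and any effective decomposition $\Gamma = D_1 + D_2$ would have $D_1.H = D_2.H = 1$ (ampleness forbids $D_i.H = 0$), so each $D_i$ is an irreducible degree-1 curve in the embedding, i.e.\ a line, with $D_i^2 = -2$ by adjunction, contradicting hypothesis~(iii). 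Thus $\Gamma$ is irreducible, and being of square $-2$ it is smooth rational; in $\PP^4$ it is realised as a plane conic spanning a plane $\Pi$.

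The core step is producing a cubic $C_3' \supset \kd \cup \Pi$. Cubics through $\kd$ form a vector space of dimension $35 - 29 = 6$, namely $\langle C_3\rangle \oplus (\text{linear forms})\cdot Q$. The restriction of such a cubic to $\Pi$ is a plane cubic containing the conic $\Gamma$; such cubics form a $3$-dimensional vector space of the form $q_\Gamma \cdot \ell$ where $q_\Gamma$ is the defining quadric of $\Gamma \subset \Pi$ and $\ell$ is a linear form on $\Pi$. Hence the cubics through $\kd$ that vanish on $\Pi$ form a subspace of dimension at least $6 - 3 = 3$; in particular such a $C_3'$ exists. Writing $\Pi = \{X_3 = X_4 = 0\}$, necessarily $C_3' = X_3 F + X_4 G$ for quadric forms $F, G$, and the singular locus of $C_3'$ along $\Pi$ coincides with the common zeros of the two plane conics $F|_\Pi$ and $G|_\Pi$---four points by B\'ezout. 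For a generic choice these are ordinary double points and $C_3'$ has no other singularities, and a degree count shows $\kd = Q \cap C_3'$ as schemes.

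Finally, as in Remark~\ref{rmk:cubics}, the four-nodal cubic $X'' = C_3'$ admits two inequivalent projective small resolutions: blowing up along a quadric surface $X'' \cap H_4$ containing the four nodes produces the one-point blow-up of a degree $4$ del Pezzo of Example~\ref{ex:rk2blp}$_4$, while blowing up $\Pi$ itself produces the quadric-bundle semi-Fano of Example~\ref{ex:Qf}$_3$. In each case the proper transform of $\kd$ is an anticanonical K3 divisor, and the respective nef cones correspond to $\Amp_+$ and $\Amp_-$ in the basis $H,\Gamma$ (matching the classes $H, H\pm \Gamma$ with the generators listed in Table~\ref{table:ordblocks}). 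Applying Construction~\ref{constr:invblock} to the branched double cover of each resolution over $\kd$ and a smooth half-anticanonical divisor yields the pleasant involution blocks of Examples~\ref{ex:inv2blp}$_4$ and~\ref{ex:DQf}$_3$, giving the claimed genericity. The main obstacle is the dimension count in the third paragraph, together with verifying that the generic $C_3'$ has exactly four ordinary nodes and no others; the normal form $C_3' = X_3 F + X_4 G$ handles both points simultaneously.
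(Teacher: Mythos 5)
Your overall route is the same as the paper's: use (i)--(ii) and Lemma~\ref{lem:embed} to embed $\kd$ as a sextic $Q\cap C$ in $\PP^4$, use (iii) to see $\Gamma$ is an irreducible conic spanning a plane $\Pi$, replace $C$ by a cubic containing $\kd\cup\Pi$, argue that a suitable such cubic has exactly four ordinary double points on $\Pi$, and then obtain the two semi-Fano families (and their branched double covers) from the two small resolutions --- blowing up $\Pi$ for Example~\ref{ex:Qf}$_3$/\ref{ex:DQf}$_3$ versus blowing up the residual quadric surface in a hyperplane through $\Pi$ for Example~\ref{ex:rk2blp}$_4$/\ref{ex:inv2blp}$_4$ --- exactly as in Remark~\ref{rmk:cubics}. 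Your dimension count for the existence of a cubic through $\kd\cup\Pi$ is a harmless variant of the paper's explicit replacement $C\mapsto C-LQ$ using $C_{|\Pi}=q\ell$.

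The genuine gap is the genericity step, which you flag as ``the main obstacle'' but then dispose of by asserting that the normal form $C_3'=X_3F+X_4G$ ``handles both points simultaneously''. It does not, by itself: for a \emph{fixed} cubic through $\kd\cup\Pi$ the restricted conics $F_{|\Pi}$ and $G_{|\Pi}$ could share a component (so the singular locus on $\Pi$ is a curve, and B\'ezout gives nothing) or meet non-transversally (so the double points are not ordinary), and you have not shown that a generic member of your $3$-dimensional family avoids this, nor why it is smooth away from $\Pi$. The paper supplies precisely these missing ingredients: Bertini gives smoothness away from the base locus $\kd\cup\Pi$; smoothness along $\kd$ follows because $\kd$ is a smooth Cartier divisor on the cubic (cut out by $Q$); and, writing $C=x_0R_0+x_1R_1$, the smoothness of $\kd=Q\cap C$ forces $q, r_{0|\Pi}, r_{1|\Pi}$ to have no common zero, so the relevant net of conics on $\Pi$ is basepoint-free and for generic coefficients $(a_0{:}a_1{:}a_2)$ the two conics $a_0q+a_2r_0$, $a_1q+a_2r_1$ meet transversally in four distinct points, which is exactly the condition for four ordinary double points. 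You would need to add this basepoint-freeness/transversality argument (and, more minorly, the short computation identifying the restricted nef cones with $\Amp_\pm$, e.g.\ $E_{+|\kd}=H-\Gamma$ so the cone of $Y_+$ restricts to $\langle H, H+\Gamma\rangle$) for the proof to be complete.
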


\begin{proof}
Using (i) and (ii),  Lemma \ref{lem:embed} implies that the class $H$ is very
ample, so $\kd$ embeds as a degree 6 surface in $\PP^4$, %
so has to be a complete intersection of a quadric $Q$ and a cubic $C$
\cite[Theorem 6.1]{saintdonat74}.

Since the $(-2)$-class $\Gamma$ has positive intersection with $H$ it is
effective.  (iii) implies that $\Gamma$ is irreducible, so represented by a
smooth rational curve. The image in $\PP^4$ is a smooth rational curve of
degree 2, so a conic as required.

Recall from Remark \ref{rmk:cubics} that the semi-Fano 3-folds in Examples
\ref{ex:rk2blp}$_4$ and \ref{ex:Qf}$_3$ (whose double covers are used in
Examples \ref{ex:inv2blp}$_4$ and \ref{ex:DQf}$_3$) are small resolutions
of a cubic containing a plane. Let us therefore consider the unique
plane $\Pi \subset \PP^4$ that contains the conic $\Gamma$.

As a variety in $\Pi$, $C$ is defined by the vanishing of $q := Q_{|\Pi}$.
Since $C \cap \Pi$ contains $\Gamma$, we can write $C_{|\Pi} = q\ell$
for a line $\ell$ on $\Pi$. If we take $L$ to be any hyperplane in $\PP^4$
intersecting $\Pi$ in $\ell$, then by replacing $C$ with $C - L Q$ we can
assume without loss of generality that $C$ contains $\Pi$ as well as~$\kd$.

Without loss of generality, $\Pi = \{x_0 = x_1 = 0\}$.
We obtain a 3-dimensional space of cubic polynomials of the
form $(a_0x_0 + a_1x_1) Q + a_2 C$ with base locus exactly $\kd \cup \Pi$.
By Bertini's
theorem a generic element of this linear system is smooth away from the base
locus. On the other hand, it must also be smooth along the smooth
Cartier divisor $\kd$, so any singularities must lie on $\Pi$.
 
If we write $C = x_0 R_0 + x_1 R_1$ for some quadrics $R_0$, $R_1$, then
the singularities of 
$(a_0x_0 + a_1x_1) Q + a_2 C = x_0(a_0 Q + a_2 R_0) + x_1 (a_1 Q + a_2 R_1)$
in $\Pi$ correspond to the intersection points of 
$a_0 q + a_2 r_0$ and $a_1 Q + a_2 r_1$, where $r_i := R_{i|\Pi}$.
The smoothness of $Q \cap C$ implies that $r_0, r_1$ and $q$ have no common
zeros, \ie the linear system that they span is basepoint-free. Therefore for
generic $a_0, a_1, a_2$, the quadrics  
$a_0 q + a_2 r_0$ and $a_1 Q + a_2 r_1$ intersect transversely in 4 points,
and $(a_0x_0 + a_1x_1)Q + a_2C$ is smooth except for ordinary double points
at those 4 points.

Blowing up $C$ in $\Pi$---or equivalently blowing up $\PP^4$ in $\Pi$ and
taking the proper transform of~$C$---gives a semi-Fano del Pezzo $Y_-$ of the
class from Example \ref{ex:Qf}$_3$, with $\kd$ as an anticanonical divisor.
The nef cone of the blow-up of $\PP^4$ is spanned by $H_-$ and $H_- - E_-$,
where $H_-$ is the pull-back of the hyperplane class and $E_-$ is the
exceptional divisor. The restriction to $\kd$ corresponds to $\Amp_-$, so
Example \ref{ex:Qf}$_3$ is $(\Lambda, \Amp_-)$-generic. Since Examples
\ref{ex:Qf}$_3$ and \ref{ex:DQf}$_3$ have the same anticanonical divisors,
Example \ref{ex:DQf}$_3$ is $(\Lambda, \Amp_-)$-generic too. 

Finally, consider the intersection of $C$ with a generic hyperplane that
contains $\Pi$. This intersection will be the union of $\Pi$ and a smooth
quadric surface $S$ that passes through the singularities of $C$.
Blowing up $C$ in $S$ yields another semi del Pezzo $Y_+$, which belongs
to the class from Example \ref{ex:rk2blp}$_4$. If $E_+$ is the exceptional
divisor of the corresponding blow-up of $\PP^4$, then the nef cone is generated
by $H_+$ and $2H_+ - E_+$. The restriction of $E_+$ to $\kd$ is $H - \Gamma$,
so the image of the nef cone of $Y_+$ in $H^2(\kd;\R)$ is spanned by $H$ and
$2H - (H - \Gamma) = H + \Gamma$. Thus Example \ref{ex:rk2blp}$_4$ is
$(\Lambda, \Amp_+)$-generic, as is Example \ref{ex:inv2blp}$_4$.
\end{proof}
\begin{lem}[{\cite[Lemma 7.7, case \#17]{exotic}}]
\label{lem:deg5generic}
Let $N \subset L$ be a primitive rank 2 lattice, with quadratic form
represented with respect to a basis $G, H$  by $\sm{4 & 7 \\ 7 & 6}$,
let $\Amp \subset N_\R$ be the open cone spanned by $G$ and $H$.
Let $\Lambda \subset L$ be an overlattice of $N$, and suppose that there is
no $v \in \Lambda$ such that
\begin{enumerate}
\item $v.H = 2$ or $3$ and $v^2 = 0$, or
\item $v.H = 0$ and $v^2 = -2$, or
\item $v.H = 1$ or 2, and $v^2 \geq -2$
\end{enumerate}
Then for any K3 with Picard lattice exactly $\Lambda$, we can choose a marking
such that the linear system $|H|$ defines an embedding $\kd \to \PP^4$,
whose image is contained in a smooth quadric 3-fold, and $2H - G$ is
represented by an elliptic curve of degree 5.

In particular, the family of building blocks from Example \ref{ex:mm2}$_{17}$
is $(\Lambda, \Amp)$-generic.
\end{lem}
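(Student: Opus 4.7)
The plan is to use the lattice hypotheses to embed $\kd$ as a smooth degree-$6$ K3 surface in $\PP^4$ via the linear system $|H|$, show that the unique quadric $Q$ containing it is smooth, and exhibit the class $E := 2H - G$ as represented by a smooth irreducible elliptic curve of degree $5$ on $\kd$. Blowing up $Q$ along that curve then produces a rank $2$ Fano 3-fold of the class in Example \ref{ex:mm2}$_{17}$ containing $\kd$ as an anticanonical divisor, to which Construction \ref{constr:block} can be applied.

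After composing the marking with Weyl group reflections in $(-2)$-classes, one may assume $H$ is nef. Since $H^2 = 6$ is not divisible by $4$, $H$ cannot be twice a class of square $2$; combined with the $v \cdot H = 2$, $v^2 = 0$ case of hypothesis (i) and hypothesis (ii), this verifies the conditions of Lemma \ref{lem:embed}(a), so $|H|$ embeds $\kd$ into $\PP^4$ as a K3 surface of degree $6$. Such a surface is a transverse intersection of a unique quadric $Q$ and a cubic.

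To see $Q$ is smooth: a singular quadric 3-fold of rank at most $4$ contains a one-parameter family of $2$-planes (the ruling through its vertex or vertex line), and restricting such a plane to $\kd \subset Q$ produces a class $v \in \Pic(\kd)$ with $v^2 = 0$ and $v \cdot H = 3$, contradicting hypothesis (i). Next, $E := 2H - G$ satisfies $E^2 = 0$ and $E \cdot H = 5$, so is effective by Riemann--Roch. The condition in (iii) ruling out $v \cdot H = 1$ with $v^2 = 0$ forces $E$ to be primitive in $\Lambda$, and the $(-2)$-class conditions in (ii) and (iii) rule out effective $(-2)$-classes that could contribute a fixed component to $|E|$ or make its general member reducible. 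Saint-Donat's theorem then gives that $|E|$ is base-point-free, and Bertini produces a smooth irreducible elliptic curve $C \in |E|$ of degree $5$ in $\PP^4$.

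Finally, blowing up $Q$ along $C$ yields a Fano 3-fold $Y$ of the class in Example \ref{ex:mm2}$_{17}$; the proper transform of the cubic section of $Q$ cutting out $\kd$ is an anticanonical divisor on $Y$ isomorphic to $\kd$, and the nef cone of $Y$ restricts to $\Amp$ on $\kd$. Construction \ref{constr:block} applied to an anticanonical pencil through $\kd$ with smooth base locus then produces a building block in the family of Example \ref{ex:mm2}$_{17}$ realising the $\Lambda$-polarised K3 as an anticanonical divisor with the marking chosen, establishing $(\Lambda, \Amp)$-genericity. The main technical obstacle is calibrating the $(-2)$-class conditions in (iii) tightly enough to preclude every way in which effective $(-2)$-classes in $\Lambda$ could prevent $|E|$ from having a smooth irreducible general member, while keeping the hypothesis generic in $D_\Lambda$.
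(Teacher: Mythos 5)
The paper itself does not prove this statement (it is quoted from \cite[Lemma 7.7]{exotic}), but your strategy is the same template the paper uses for the genericity results it does prove (Propositions \ref{prop:hyper}, \ref{prop:cubics}, \ref{prop:deg_cubics}): compose the marking with reflections to make $H$ nef, apply Lemma \ref{lem:embed} to get the degree-$6$ embedding $\kd \subset Q \cap C_3 \subset \PP^4$, rule out a singular quadric because a plane $\Pi \subset Q$ would cut $\kd$ in a plane cubic, i.e.\ a class with $v^2=0$, $v.H=3$ forbidden by (i), and then build the Fano of Example \ref{ex:mm2}$_{17}$ around $\kd$. Your numerics for $E:=2H-G$ ($E^2=0$, $E.H=5$), the effectivity via Riemann--Roch, and the primitivity of $E$ via the $v.H=1$, $v^2=0$ case of (iii) are all correct.

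There are, however, two genuine gaps. First, the step you yourself flag is not closed, and the mechanism you propose is not sufficient as stated: (ii) and (iii) only exclude $(-2)$-classes of $H$-degree $\leq 2$, so a $(-2)$-curve $\Gamma$ of degree $\geq 3$ could a priori have $E.\Gamma<0$ and spoil $|E|$. The missing argument is a Hodge-index computation: if $E.\Gamma=-k<0$ and $a:=H.\Gamma\geq 3$, the Gram determinant of $\langle H,E,\Gamma\rangle$ equals $50-10ak-6k^2$ and must be strictly positive (a vector in the radical would be isotropic and orthogonal to $H$), forcing $k=1$ and $a\in\{3,4\}$; then $E-\Gamma\in\Lambda$ is isotropic with $(E-\Gamma).H=2$ or $1$, contradicting (i) resp.\ (iii). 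Hence $E$ is nef and primitive, so $|E|$ is a free elliptic pencil with smooth irreducible general member --- note that hypothesis (i), which you only use for the quadric, is essential here too. Second, the final assertion that $\mathrm{Bl}_C Q$ is a Fano in the family of Example \ref{ex:mm2}$_{17}$ whose ample cone restricts onto $\Amp$ is stated without proof, and this is where the ``in particular'' clause actually lives: you must show $3H_Q-E_{exc}$ is ample (e.g.\ no line of $Q$ trisecant to $C$, no conic meeting $C$ in length $6$, positivity along the exceptional divisor) and identify the second extremal ray as $2H_Q-E_{exc}$, whose restriction is $G$. The multisecant part does follow from the lattice hypotheses, but via an argument you have not given: since $G-H$ is not effective (an effective class of $H$-degree $1$ would be a line on $\kd$, excluded by (iii) and incompatible with $(G-H)^2=-4$) and $h^1(G-H)=0$, the restriction $H^0(\kd,H)\to H^0(C,H|_C)$ is an isomorphism, so $C$ is a linearly normal elliptic quintic; then any length-$3$ subscheme of $C$ spans a plane and any length-$5$ subscheme spans at least a hyperplane, so no line or conic in $Q$ can be a bad multisecant. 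Without filling in these two steps the proof is incomplete, though both are fillable along the lines above.
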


\subsection{Octic K3s}

We quote the following result from
Wallis \cite[Proposition 7.7.38]{wallis:thesis}.

\begin{prop}
\label{prop:octic}
Let $\Lambda \subset L$ be a primitive lattice, with a primitive class
$H \in \Lambda$ such that $H^2 = 8$.
Suppose that there is no $v \in \Lambda$ such that
\begin{enumerate}
\item $v.H = 2$ or $3$ and $v^2 = 0$; or
\item $v.H = 0$ and $v^2 = -2$.
\end{enumerate}
Then for any K3 with Picard lattice exactly $\Lambda$, we can choose a marking
such that the linear system $|H|$ defines an embedding $\kd \to \PP^5$,
whose image is the complete intersection of three smooth quadrics.
Moreover, one can choose two of those quadrics to intersect transversely.

In particular, the families of blocks from Examples \ref{ex:spec1}$^2_4$ and
\ref{ex:rk1index2}$_4$ (essentially the same as \ref{ex:spec1}$^1_8$) are
$(\Lambda, H\bbrp)$-generic.
\end{prop}

\begin{lem}
\label{lem:octic_elliptic}
Let $N \subset L$ be a primitive rank 2 lattice, with quadratic form
represented with respect to a basis $G, H$  by $\sm{0 & 8 \\ 8 & 8}$, and
let $\Amp \subset N_\R$ be the open cone spanned by $G$ and $H$.
Let $\Lambda \subset L$ be an overlattice of $N$, and suppose that there is
no $v \in \Lambda$ such that
\begin{enumerate}
\item $v.H = 2$ and $v^2 = 0$; or
\item $0 < v.H \leq 4$ and $v^2 \geq -2$; or
\item $v.H = 0$ and $v^2 = -2$.
\end{enumerate}
Then for any K3 with Picard lattice exactly $\Lambda$, we can choose a marking
such that the linear system $|H|$ defines an embedding $\kd \to \PP^5$,
whose image is a complete intersection of 3 smooth quadrics, and
$2H - G$ is represented by a smooth elliptic curve of degree 8. 

In particular, the family of building blocks from Example \ref{ex:octic}
is $(\Lambda, \Amp)$-generic.
\end{lem}

\begin{proof}
That $|H|$ defines an embedding $\kd \to \PP^5$ whose image is a
complete intersection follows from Proposition \ref{prop:octic}.
Now $E := 2H - G$ is a class with $E^2 = 0$ and $H.E = 8$.
To show that it is represented by an elliptic curve we use the
following argument from the proof of Lemma \ref{lem:deg5generic}
from \cite[Lemma 7.7, case \#17]{exotic}.

(ii) rules out the existence of irreducible classes
in $\Pic \kd$ with $d \leq 4$, so $E$ is irreducible. In particular $E$ does
not have any $(-2)$-curve components, so $E$ is nef.
Therefore \cite[Theorem 3.8(b)]{reid97} implies that $|E|$ is basepoint-free.
A generic $C \in |E|$ is therefore a smooth elliptic curve of degree~8.

Finally let $X \subset \PP^5$ be the complete intersection of a generic pencil
in the 3-dimensional space of quadrics cutting out the image of $\kd$. Then the
blow-up $Y$ of $X$ in the image of $C$ belongs to the family of 3-folds
from which Example \ref{ex:octic} is constructed.
\end{proof}

\subsection{Divisors in \texorpdfstring{$\PP^2 \times \PP^2$}{P2 x P2}}

Our final genericity result is slightly different in that we are concerned
with embedding a K3 surface as an anticanonical divisor not into a rank 1
Fano or a blow-up of a rank 1 Fano,
but rather into a primitive rank 2 Fano.

\begin{prop}
\label{prop:11div}
Let $N \subset L$ be a primitive rank 2 lattice, with quadratic form
represented with respect to a basis $G,H$  by $\sm{2 & 4 \\ 4 & 2}$.
Let $\Lambda \subset L$ be an overlattice of $N$, and suppose that there is
no $v \in \Lambda$ such that
\begin{enumerate}
\item $v^2 = -2$, and $(v.G)(v.H) \leq 0$; or
\item $v^2 = 0$ and $v.(G+H) = 2$; or
\item $v^2 = 4$ and $v.G = v.H = 2$
\end{enumerate}
Then for any K3 with Picard lattice exactly $\Lambda$, we can choose a marking
such that the linear systems $|G|$ and $|H|$ defines morphisms $\kd \to \PP^2$,
and their product embeds $\kd$ as a smooth $(1,1)$ divisor
in~$\PP^2 \times \PP^2$.

In particular, if we let $\Amp \subset N_\R$ be the open cone with edges
spanned by $G$ and $H$, then the families of blocks from Examples
\ref{ex:mm2}$_{32}$ and \ref{ex:Dmm2}$_6$ are $(\Lambda,\Amp)$-generic.
\end{prop}

\begin{proof}
Because (i) rules out the existence of any $(-2)$-class $v \in \Lambda$ such
that $v.G$ and $v.H$ have opposite sign, $G$ and $H$ belong to the same
chamber of the positive cone in $\Lambda_\R$.
Hence it is possible to choose a marking so that $G$ and $H$ both belong
to the nef chamber.

Using (i) again, Lemma \ref{lem:embed} ensures
that $|G|$ and $|H|$ both define branched double covers $\kd \to \PP^2$.
Since they are not the same double cover, the product
$|G| \times |H| : \kd \to \PP^2 \times \PP^2$ is birational onto its image.

Meanwhile, the class $G+H$ is nef too. As $(G+H)^2 = 12$, $h^0(G+H) = 8$.
Using (ii), and since (i) prevents the existence
of any $v \in \Lambda$ such that $v^2 = -2$ and $v.(G+H) = 0$,
Lemma \ref{lem:embed} implies that $G+H$ is very ample, embedding
$\kd \into \PP^7$.

Consider now the product map $H^0(G) \otimes H^0(H) \to H^0(G+H)$. As the
domain has dimension~9, the kernel has dimension at least 1. If the kernel has
dimension at least 2, then the image of
$|G| \times |H| : \kd \to \PP^2 \times \PP^2$ is a component of the
intersection of two $(1,1)$-divisors, which is impossible by
degree as $|G| \times |H|$ is birational onto its image.
Thus the image of $\kd$ lies on a unique (1,1)-divisor
$Y \subset \PP^2 \times \PP^2$.

Since $H^0(G) \otimes H^0(H)$ maps onto $H^0(G+H)$, the composition
of $|G| \times |H|$ with the Segre embedding $\PP^2 \times \PP^2 \to \PP^8$
equals the composition of the embedding $|G+H| : \kd \to \PP^7$ with
inclusion into $\PP^8$. In particular, $|G| \times |H| : \kd \to Y$
is an embedding.

It remains to show that the (1,1)-divisor $Y$ is smooth, \ie that the
bilinear form $\bbc^3 \times \bbc^3 \to \bbc$ that defines it has rank 3.
If the rank were 1 then $Y$ would be reducible, which is absurd.
So it remains to rule out that the bilinear form has rank 2, \ie  $Y$
being isomorphic to
\[ \{((X_0 : X_1 : X_2), (Y_0 : Y_1 : Y_2)) \in \PP^2 \times \PP^2 :
X_1Y_2 = X_2Y_1 \}. \]
Then $Y$ would have a small resolution given by the blow-up $\wt Y$ of $\PP^3$
at the points $(1:0:0:0)$ and $(0:1:0:0)$, induced by the rational map
\[ \PP^3 \dashedrightarrow Y, \;
(Z_0: Z_1 : Z_2: Z_3) \mapsto ((Z_0 : Z_2 : Z_3) , (Z_1 : Z_2 : Z_3)) . \]
As the image of $\kd$ in $Y$ is smooth, its proper transform in either $\wt Y$
or its flop would be isomorphic to $\kd$. In either case, there would be a class
$v \in \Pic \kd$ (corresponding to $\calo_{\PP^3}(1)$ in the $\wt Y$ case) such
that $v^2 = 4$ and $v.G = v.H = 2$. That contradicts (iii), so $Y$ must be
a smooth divisor as desired.
\end{proof}

\section{Building blocks from K3s with non-symplectic involution}
\label{sec:k3blocks}

Since involution blocks always contain a K3 fibre with non-symplectic
involution by Remark~\ref{rmk:nonsymp_fibre}, it is natural to consider the
construction of Kovalev and Lee \cite{kovalev03} of building blocks starting
from K3s with non-symplectic involution. We find that these do indeed also lead
to building blocks with involution.
Moreover, by modifying their construction we can also find some pleasant
building blocks with involution.

\subsection{K3s with non-symplectic involution}
\label{subsec:k3nonsymp}

Let $\kd$ be a K3 surface with a
non-symplectic involution, i.e. a holomorphic involution $\tau$ which
acts as $-1$ on $H^{2,0}(\kd)$. Such involutions are classified by Nikulin
\cite{nikulin81}
in terms of the fixed part $N$ of $H^2(\kd; \bbz)$ under the action of $\tau$.
We now summarise the relevant part of the theory. %

The discriminant group of $N$ is 2-elementary, \ie $N^*/N$ is of the
form~$\cg{2}^a$.
The discriminant form of $N$ is the symmetric $\Q/\Z$-valued form $b$
on $N^*/N$ induced by the integral form on $N$; because $N^*/N$ is
2-elementary, $b$ takes values in $\half \Z/\Z$. (Because the lattice $N$ is
even, $b$ also has a $\half \Z/2\Z$-valued quadratic
refinement, but that is unimportant to us.)
The primitive lattice $N$, and hence the deformation family of $(\kd,\tau)$,
is characterised by the rank $r$, the discriminant rank $a$, and a further
invariant $\delta \in \{0,1\}$ defined by
\[ \delta := \left\{ \begin{array}{l}0 \textrm{ if } b(\alpha, \alpha) = 0 \textrm{ for all }
\alpha \in N^*/N, \\ 1 \textrm{ otherwise.} \end{array} \right. \]

The quotient $\rs = \kd/\tau$ is a smooth complex surface, which is rational
when the fixed set $C$ of $\tau$ is non-empty (by Castelnuovo's theorem
\cite{castel95}; if $C$ is empty then $\rs$ is an Enriques surface, but this
case is of no further interest to us). $\kd$ is a double cover of $\rs$,
branched over a smooth reduced divisor $C \in |-2K_\rs|$, and $\tau$
corresponds to the branch-switching involution. With a few exceptions, $C$ has
$k+1$ components, where one has genus $g$ and the other $k$ are $\PP^1$s, for
\[ k = \frac{r-a}{2}, \quad g = \frac{22-r-a}{2} . \]

\noindent
The pull-back of the quotient map gives an inclusion
$H^2(\rs) \to H^2(\kd)$. Denote
\[ N' := \im(H^2(\rs) \to H^2(\kd) . \]
Then $N'$ is a subgroup of $N$, but not in general primitive; $N$ is a
finite index sublattice of $N$. Note that since the quotient map has degree~2,
the intersection form on $N'$ is exactly twice the unimodular form on
$H^2(\rs)$. Its discriminant group is therefore $\cg{2}^r$. Since $N$ is an
overlattice with discriminant group $\cg{2}^a$, the index must be equal to
$2^k$.
(This can also be seen from the long exact sequence \eqref{eq:coverseq}.)

The quotient $N/N' \cong \cg{2}^k$ is generated by the Poincar\'e duals of the
$k+1$ components $C_i$ of the fixed set of $\tau$; the sum of these classes is
contained in $N'$ (as it is the image of $-K_\rs \in H^2(\rs)$), but
(when $k > 0$) the individual classes are not.

\begin{lem}
\label{lem:pdc}
Let $P \in N$ be the Poincar\'e dual of the fixed set $C$;
equivalently, $P := \pi^*(-K_\rs) \in N' \subseteq N$. Then
\begin{enumerate}
\item \label{it:mod4} $P.x = x^2 \mmod 4$ for any $x \in N'$
\item \label{it:divpd}
$\alpha(P) = 2b(\alpha, \alpha) \mmod 2$ for all $\alpha \in N^*$,
where $b$ is the discriminant form. In particular
\begin{itemize}
\item $P$ has even product with all elements of $N$.
\item $P$ is an even element of $N$ if and only if $\delta = 0$.
\end{itemize}
\end{enumerate}
\end{lem}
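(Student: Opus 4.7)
The strategy for both parts is to apply Wu's formula on the smooth quotient $Y = \kd/\tau$ (a rational surface, since $C \neq \emptyset$), exploiting the identity $P = \pi^*(-K_Y)$ together with the standard relations for the double cover $\pi : \kd \to Y$: namely $\pi_*\pi^* = 2\cdot\Id$ on $H^*(Y)$, $\pi^*\pi_* = \Id + \tau^*$ on $H^*(\kd)$, and $\pi^*$ acts as multiplication by $2$ on $H^4$. Wu's formula for a complex surface gives $y^2 + K_Y \cdot y \in 2\Z$ for every $y \in H^2(Y;\Z)$.

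For (i), write $x \in N' = \pi^*H^2(Y;\Z)$ as $x = \pi^* y$. Then $x^2 = 2y^2$ and $x\cdot P = \pi^*y \cdot \pi^*(-K_Y) = 2\,y\cdot(-K_Y)$; Wu on $Y$ applied to $y$ yields $y^2 - y\cdot(-K_Y) \in 2\Z$, so $x^2 - x\cdot P \in 4\Z$.

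For (ii), use that $N \subset L = H^2(\kd;\Z)$ is primitive and $L$ is unimodular, so $L/N$ is free and the restriction $L \to N^*$ is surjective. Given $\alpha \in N^*$, choose any lift $\tilde\alpha \in L$ and set $\bm := \pi_*\tilde\alpha \in H^2(Y;\Z)$. The formula $\pi^*\pi_* = \Id + \tau^*$ gives $\pi^*\bm = \tilde\alpha + \tau^*\tilde\alpha \in L^\tau = N$, and pairing with any $n \in N$ produces $2\tilde\alpha\cdot n = 2\alpha(n)$; hence $\pi^*\bm = 2\hat\alpha$, where $\hat\alpha \in N_\Q$ is the representative of $\alpha$ under $N^* \subset N_\Q$. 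Applying $\pi^*$ to $\bm^2$ and to $\bm\cdot(-K_Y)$ and using the normalisations above yields $\bm^2 = 2\hat\alpha^2$ and $\bm\cdot(-K_Y) = \alpha(P)$. Wu on $Y$ for $\bm$ then gives $\bm^2 \equiv (-K_Y)\cdot \bm \pmod 2$, i.e. $2\hat\alpha^2 \equiv \alpha(P) \pmod 2$. Since $\hat\alpha^2 \equiv b(\alpha,\alpha) \pmod \Z$, we conclude $\alpha(P) \equiv 2b(\alpha,\alpha) \pmod 2$.

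Finally, specialising (ii) to $\alpha \in N \subseteq N^*$ (where $b(\alpha,\alpha) \equiv 0 \pmod 1$) gives $P \cdot N \subseteq 2\Z$. By definition, $\delta = 0$ iff $b(\alpha,\alpha) = 0$ for every $\alpha \in N^*$, which by (ii) is equivalent to $\alpha(P) \in 2\Z$ for every $\alpha \in N^*$, equivalently $P/2 \in (N^*)^* = N$, i.e. $P \in 2N$. The only subtlety to check carefully is the compatibility of $\pi^*$, $\pi_*$ and the intersection form on the branched double cover, so that the Wu-formula computation on $Y$ translates cleanly to the statement about $\alpha(P)$ on $\kd$.
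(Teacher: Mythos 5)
Your proof is correct. Part (i) is exactly the paper's argument: Wu's theorem on the quotient $\rs$ makes $-K_\rs$ characteristic, and pulling back by the degree-2 map doubles both sides, giving the congruence mod 4. For part (ii) you reach the same endpoint by a slightly different route: the paper observes that $N'=\pi^*H^2(\rs)$ carries twice a unimodular form, so every $\alpha\in N^*\subseteq (N')^*$ can be written as $\half\flat(y)$ with $y=\pi^*x\in N'$, and then simply reuses part (i); you instead lift $\alpha$ to the unimodular lattice $L$ (using primitivity of $N$), push down by $\pi_*$ to $H^2(\rs)$, and apply Wu on $\rs$ directly. The two are equivalent---your identity $\pi^*(\pi_*\tilde\alpha)=2\hat\alpha$ is precisely the statement $\alpha=\half\flat(\pi^*x)$ with $x=\pi_*\tilde\alpha$---but your version routes through the transfer relations $\pi_*\pi^*=2$ and $\pi^*\pi_*=\Id+\tau^*$ for the \emph{branched} double cover, which is exactly the point you flag without verifying. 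It does hold here: rationally, $\pi_*$ kills the anti-invariant part and inverts $\pi^*$ up to a factor 2 on the invariant part (check pairings using the projection formula), and since $H^2(\kd;\Z)$ is torsion-free the rational identity implies the integral one; alternatively you can bypass the transfer entirely via the paper's observation that the form on $N'$ is twice the unimodular form on $H^2(\rs)$, so $(N')^*=\half\flat(N')$. Your concluding deductions ($P.N\subseteq 2\Z$, and $P\in 2N$ iff $\delta=0$ via $(N^*)^*=N$) are fine.
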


\begin{proof}
\ref{it:mod4} 
By Wu's theorem, $-K_\rs = c_1(\rs) = w_2(\rs) \in H^2(\rs)$ is characteristic
for the intersection form, \ie
\[ -K_\rs.x = x^2 \mmod 2 \]
for any $x \in H^2(\rs)$. Hence for any $\pi^* x \in N'$,
\[ P.\pi^* x = -2K_\rs.x = 2x^2 = (\pi^* x)^2 \mmod 4 . \]

\ref{it:divpd}
Any $\alpha \in (N')^*$, and hence also any
$\alpha \in N^* \subseteq (N')^*$, can be written as $\half \flat(y)$
for some $y = \pi^* x \in N'$, where $\flat : N \to N^*$ is induced by the
intersection form.
Then
\[ \alpha(P) = \half y . P = \half y^2 \mmod 2, \]
while by definition of the discriminant form,
\[ b(\alpha, \alpha) = (\half y)^2 = \quart y^2 \in \Q/\Z . \qedhere \]
\end{proof}

Let us now make some remarks on Picard lattices and ample cones, needed later
in the context of genericity of families of building blocks in the technical
sense of Definition \ref{def:generic}.
For any K3 surface $\kd$ with non-symplectic involution, the fixed set
$N \subset H^2(\kd)$ is contained in $\Pic \kd$.
By the next lemma, the intersection of the ample cone of $\kd$ with $N_\R$ is
simply the image of the ample cone of $\rs := \kd/\tau$.

\begin{lem}
\label{lem:ampcover}
Let $\kd \to \rs$ be a branched double cover.
A class $\kclass \in \Pic \rs$ is ample if and only if its image
$\pi^*\kclass \in \Pic \kd$ is ample.
\end{lem}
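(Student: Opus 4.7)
The plan is to prove the equivalence using the Nakai--Moishezon criterion on the surface $\rs$, exploiting the fact that a branched double cover is a finite surjective morphism of degree $2$.

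For the forward implication, I would appeal to the standard fact that the pull-back of an ample line bundle by a finite morphism between projective varieties is ample: if $n\kclass$ is very ample it defines an embedding $\rs \hookrightarrow \PP^N$, and composing with the finite map $\pi$ gives a finite morphism $\kd \to \PP^N$ whose associated line bundle $\pi^*(n\kclass)$ is then ample (affine pre-images of the standard affine cover are affine, so Serre's criterion applies). Hence $\pi^*\kclass$ is ample.

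For the converse, I would apply Nakai--Moishezon on the surface $\rs$: it suffices to check that $\kclass^2 > 0$ and $\kclass \cdot C > 0$ for every irreducible curve $C \subset \rs$. The intersection number scales under the double cover: since $\pi$ has degree $2$,
\[
(\pi^*\kclass)^2 = 2\,\kclass^2,
\]
so $(\pi^*\kclass)^2 > 0$ immediately gives $\kclass^2 > 0$. For an irreducible curve $C \subset \rs$, the scheme-theoretic pre-image $\pi^{-1}(C)$ is an effective divisor on $\kd$ (either an irreducible curve $\tilde C$ mapping $2$-to-$1$ onto $C$, or the union of two irreducible curves each mapping isomorphically onto $C$), and in either case the projection formula yields $\pi^*\kclass \cdot [\pi^{-1}(C)] = 2\,\kclass \cdot C$. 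Since $\pi^*\kclass$ is ample, the left-hand side is positive, forcing $\kclass \cdot C > 0$.

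The main conceptual obstacle is the converse direction, but since we work on surfaces, Nakai--Moishezon reduces the problem to a transparent pair of numerical inequalities, both of which are preserved under the multiplicative factor of $2$ coming from the degree of $\pi$. No new ideas beyond the projection formula and the surface version of Nakai--Moishezon are required.
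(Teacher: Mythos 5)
The paper states Lemma \ref{lem:ampcover} without proof, treating it as a standard fact, so there is no argument of the paper's to compare yours against; your proposal must stand on its own, and it does. Both directions are correct: the forward implication is the usual fact that ample classes pull back to ample classes under finite morphisms, and the converse via Nakai--Moishezon on $\rs$ works because intersection numbers scale by $\deg\pi = 2$ under pullback, so $(\pi^*\kclass)^2>0$ and $\pi^*\kclass\cdot\pi^*C>0$ force $\kclass^2>0$ and $\kclass\cdot C>0$ (here $\rs$, being a finite quotient of the projective surface $\kd$, is projective, so the criterion applies).

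One small point of hygiene in the converse: when the irreducible curve $C\subset\rs$ is a component of the branch curve, the scheme-theoretic preimage $\pi^{-1}(C)$ is non-reduced (the reduced preimage $\tilde C$ maps bijectively and $\pi^*C = 2\tilde C$ as cycles), so your parenthetical dichotomy is not exhaustive as stated; but your actual computation $\pi^*\kclass\cdot\pi^*C = 2\,\kclass\cdot C$ uses only the cycle-theoretic pullback and is unaffected. Equivalently one can phrase it via the projection formula applied to an irreducible component $\tilde C$ of the reduced preimage: $\pi^*\kclass\cdot\tilde C = \kclass\cdot\pi_*\tilde C = d\,\kclass\cdot C$ with $d\in\{1,2\}$, and ampleness of $\pi^*\kclass$ gives positivity. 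You could also note that the converse is an instance of the general descent of ampleness along finite surjective morphisms, but your surface-level Nakai--Moishezon argument is a perfectly adequate and self-contained substitute.
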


\begin{proof}
This is a special case of a well-known property of surjective morphisms of
proper schemes with finite fibres, see
Hartshorne \cite[Exercise III.5.7(d)]{hartshorne77} (though the special case
of double covers can also be proved by elementary arguments).
\end{proof}

In particular, the orthogonal complement of $N$ in $\Pic \kd$ cannot contain
any $(-2)$-classes.
Conversely

\begin{prop}
\label{prop:involution_exists}
For any K3 surface $\kd$ such that $\Pic \kd$ contains a primitive
2-elementary sublattice $N$, and the orthogonal complement of $N$ in $\Pic \kd$
contains no $(-2)$-classes, there exists a non-symplectic involution on $\kd$
with fixed lattice $N$.
\end{prop}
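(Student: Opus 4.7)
The plan is to apply the Strong Torelli theorem for K3 surfaces: I will construct an involution $\sigma$ of $H^2(\kd;\Z)$ that is $+\Id$ on $N$ and $-\Id$ on $N^\perp$, verify that $\sigma$ is a Hodge isometry preserving the Kähler cone of $\kd$, and then Torelli produces the required holomorphic involution $\tau$ with $\tau^* = \sigma$, whose fixed lattice in $H^2(\kd;\Z)$ is $N$ by construction.

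First I would build $\sigma$ by extending $\Id_N \oplus (-\Id_{N^\perp})$ from the finite-index sublattice $N \oplus N^\perp \subseteq H^2(\kd;\Z)$ to all of $H^2(\kd;\Z)$. The extension exists because $N$ is 2-elementary, so $-\Id$ acts as $+\Id$ on the discriminant $N^*/N$; and because $H^2(\kd;\Z)$ is unimodular the discriminant $(N^\perp)^*/N^\perp$ is canonically isomorphic to $N^*/N$ (up to sign) and hence is also 2-elementary, so the same holds there. Thus $\Id\oplus(-\Id)$ acts as $\Id$ on the glue subgroup of the discriminant form and extends uniquely to an involution $\sigma$ on $H^2(\kd;\Z)$. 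Because $N \subseteq \Pic \kd$, the holomorphic 2-form $\Omega \in H^{2,0}(\kd)$ lies in $N^\perp \otimes \bbc$, so $\sigma(\Omega) = -\Omega$; in particular $\sigma$ preserves the Hodge decomposition, and the putative involution $\tau$ will automatically be non-symplectic.

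The delicate point is to show that $\sigma$ preserves the Kähler cone $\mathcal{K}$ of $\kd$, equivalently that there is an ample class in $N_\R$. To produce a positive class in $N_\R$ I would take any Kähler class $\kappa_0$ and project it onto $N_\R$ along the orthogonal splitting $\Pic\kd \otimes \R = N_\R \oplus (N^\perp \cap \Pic\kd)_\R$. Since $N$ is hyperbolic of signature $(1,\rk N-1)$ and $\Pic\kd$ is hyperbolic of signature $(1,\rho-1)$, the Hodge index theorem implies that $N^\perp \cap \Pic\kd$ is negative semi-definite, so the projection $\kappa_0^+$ satisfies $(\kappa_0^+)^2 \geq \kappa_0^2 > 0$ and lies in the positive cone of $N_\R$. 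The hypothesis that no $(-2)$-class of $\Pic\kd$ lies in $N^\perp$ ensures that each root hyperplane $v^\perp$ (with $v \in \Pic\kd$, $v^2 = -2$) meets $N_\R$ in a proper subhyperplane, so a generic perturbation of $\kappa_0^+$ inside the positive cone of $N_\R$ yields a class $\kappa_1 \in N_\R$ off every such wall. Such a $\kappa_1$ lies in the interior of a single chamber $\mathcal{K}'$ of the Weyl decomposition of the positive cone of $\Pic\kd \otimes \R$. The main obstacle is to identify $\mathcal{K}'$ with the actual Kähler cone $\mathcal{K}$ of $\kd$: the key input, due to Nikulin, is that the absence of $(-2)$-classes in $N^\perp \cap \Pic\kd$ is precisely what forces every effective $(-2)$-curve of $\kd$ to restrict non-trivially to $N$, so one can connect $\kappa_0 \in \mathcal{K}$ to $\kappa_1$ inside the positive cone of $\Pic\kd \otimes \R$ through a path $\sigma$-symmetrically, without being blocked by any Weyl wall orthogonal to $N_\R$; hence $\mathcal{K}' = \mathcal{K}$.

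Given such a $\sigma$-fixed Kähler class $\kappa_1 \in \mathcal{K} \cap N_\R$, the isometry $\sigma$ preserves $\mathcal{K}$, so the Strong Torelli theorem provides a unique holomorphic automorphism $\tau \colon \kd \to \kd$ with $\tau^* = \sigma$. Uniqueness applied to $\sigma^2 = \Id$ forces $\tau^2 = \Id$. Since $\sigma$ acts as $-1$ on $H^{2,0}(\kd)$ the involution $\tau$ is non-symplectic, and the fixed lattice of $\tau^*$ on $H^2(\kd;\Z)$ is $N$ by construction.
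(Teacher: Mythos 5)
There is nothing to compare against here: the paper states Proposition \ref{prop:involution_exists} without proof, as the converse direction of Nikulin's theory, so your Torelli strategy (extend $\Id_N\oplus(-\Id)$ across the glue using 2-elementarity, check it is a Hodge isometry acting as $-1$ on $H^{2,0}$, find a $\sigma$-invariant Kähler class, apply strong Torelli and uniqueness to get an involution with fixed lattice $N$) is the standard and correct skeleton, and those steps of your write-up are fine.

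The gap is exactly at the step you yourself flag as delicate, and it is genuine: the hypothesis ``no $(-2)$-classes in the orthogonal complement of $N$ in $\Pic\kd$'' only guarantees that no root wall $v^\perp$ \emph{contains} $N_\R$; it does not prevent walls $C^\perp$ with $C$ a $(-2)$-curve whose projection to $N_\R$ is nonzero from separating the positive cone of $N_\R$ (in particular your projected class $\kappa_0^+$ and its perturbation $\kappa_1$) from the Kähler chamber $\mathcal{K}$. Your ``connect $\kappa_0$ to $\kappa_1$ by a $\sigma$-symmetric path not blocked by walls orthogonal to $N_\R$'' is not an argument, and it cannot be made into one at this level of generality. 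Equivalently, the real issue is to show $\sigma$ sends every nodal class $C$ to an effective class; writing $C=C_N+C_\perp$, the bad case $\sigma(C)$ anti-effective produces the effective class $C-\sigma(C)=2C_\perp\in N^\perp\cap\Pic\kd$, and the stated hypothesis only yields a contradiction when $(2C_\perp)^2=-2$, not when it is $\le -4$. Indeed the statement fails in this generality: take $\Pic\kd\cong\sm{2 & 1 \\ 1 & -2}$ with $h,g$ the basis, $g$ a nodal class and $h$ in the Kähler chamber; then $N=\Z(h+g)$ is primitive, 2-elementary, its orthogonal complement in $\Pic\kd$ is $\langle -10\rangle$ (no $(-2)$-classes), yet neither $\pm(h+g)$ is Kähler (it pairs negatively with $g$ resp.\ $h$), so no involution with fixed lattice $N$ exists, since its invariant lattice would have to contain the Kähler class $\kappa+\tau^*\kappa$. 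What rescues the proposition is the stronger non-degeneracy hypothesis that is actually invoked when the paper uses it (Propositions \ref{prop:k3_generic} and \ref{prop:smoothing_generic}): no $(-2)$-classes in $\Pic\kd\setminus N$ (resp.\ $\Lambda\setminus N$). Under that hypothesis every nodal class lies in $N$, hence is $\sigma$-fixed, so for any Kähler $\kappa$ the average $\tfrac12(\kappa+\sigma\kappa)\in N_\R$ pairs positively with every nodal class and is an invariant Kähler class; with that replacement for your third paragraph, the rest of your argument goes through verbatim.
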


As one deforms $\kd$ and $\rs$, the ample cone of $\rs$ can jump due to the
appearance of exceptional curves, \ie a $(-2)$ class in $\Pic Y$ could be
represented by a curve for some $Y$ in the family but not others.

\begin{ex}
Let $\alpha, \beta$ be linearly independent sections of $\calo(1) \to \PP^1$,
and for $t \in \bbc$ consider the rank 2 bundle
$E_t := \{ (x, y, z) \in \calo(0,0,1) : \alpha x + \beta y + tz = 0 \}$ over
$\PP^1$.
Then $E_0 \cong \calo(1,-1)$ while $E_t$ is trivial for $t \not= 0$.
If we let $Y_t = \PP(E_t)$ then $Y_0$ is the Hirzebruch surface $\mathbb{F}_2$,
while $Y_t \cong \PP^1 \times \PP^1$ for $t \not= 0$.

We can choose a basis $G, H$ for $\Pic Y_t$ so that the intersection form is
represented by $\sm{0 & 1 \\ 1 & 0}$. For $t \not= 0$ the ample cone of $Y_t$ is
spanned by $G$ and $H$, but for $t = 0$ the $(-2)$-class $G-H$ is represented
by a section of the bundle, and the ample cone is smaller, spanned by $G+H$
and $H$.
\end{ex}

Helpfully this change in the ample cone leaves a trace in $\Pic \kd$.
If there is a $(-2)$-curve in $Y$, then that will not meet any smooth
divisor in $|-2K_Y|$, so the pre-image in $\kd$ will be a disjoint union of
two $(-2)$-curves $C, C'$ that are swapped by the branch-switching
involution. In particular, they represent $(-2)$-classes
in $\Pic \kd \setminus N$.
Conversely, because the orthogonal complement
of $N$ in $\Pic \kd$ a priori cannot contain any $(-2)$-classes,
any $(-2)$-classes in $\Pic \kd \setminus N$ must come in pairs
like this (and be half the sum of two classes of square $-4$, one in $N$ and
one in its orthogonal complement in $\Pic \kd$).

\begin{defn}
We call a K3 surface with involution \emph{degenerate} if
$\Pic \kd \setminus N$ contains a $(-2)$-class.
\end{defn}

In the moduli space of K3 surface with involution with a fixed $N$, the
non-degenerate ones form a connected moduli space, with essentially constant
ample cone.

\begin{lem}[{\cf Nikulin-Saito \cite[page 5 ($\mathcal{D}$)]{nikulin05}}]
\label{lem:saito}
Let $N \subset L$ be a primitive 2-elementary lattice. Then there exists an
open cone $\Amp_N \subset N_\R$ such that for any non-degenerate K3 surface
with non-symplectic involution $(\kd,\tau)$ and a marking $H^2(\kd) \to L$
mapping the fixed set of $\tau$ to $N$, the intersection of the image
of the ample cone of $\kd$ with $N_\R$ equals $\Amp_N$.
\end{lem}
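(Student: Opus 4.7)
The plan is to identify $\Amp_N$ with a specific chamber of the hyperplane arrangement on $N_\R$ cut out by the $(-2)$-classes of $N$ itself, and then to use the non-degeneracy hypothesis to ensure that this intrinsic arrangement governs the restriction of the ample cone of every $(\kd,\tau)$ in the family.

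First I would recall that the ample cone of a K3 surface $\kd$ is one chamber of the positive cone of $\Pic\kd\otimes\R$, cut out by the hyperplanes $v^\perp$ as $v$ ranges over classes of square $-2$ in $\Pic\kd$. Intersecting with $N_\R$, each such wall becomes $v^\perp\cap N_\R$; and since the pairing $v.w$ for $w\in N$ depends only on the orthogonal projection of $v$ onto $N_\Q$, each wall is determined by that projection. The crucial input is non-degeneracy, which by definition says that every $(-2)$-class of $\Pic\kd$ already lies in $N$, so the arrangement induced on $N_\R$ is exactly
\[ \{ v^\perp\cap N_\R : v\in N,\ v^2=-2 \}, \]
which is intrinsic to the abstract lattice $N$, independent of $\kd$.

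Next, since $N$ has signature $(1,r-1)$, the positive cone of $N_\R$ is non-empty, and Proposition \ref{prop:involution_exists} combined with Lemma \ref{lem:ampcover} guarantees the existence of a non-degenerate $(\kd,\tau)$ whose ample cone meets $N_\R$ in some chamber of the arrangement above; I would define $\Amp_N$ to be that chamber. For any other non-degenerate $(\kd',\tau')$ with fixed lattice isometric to $N$, the ample cone of $\kd'$ meets $N_\R$ in some chamber of the same arrangement. The Weyl group generated by reflections in $(-2)$-classes of $N$ acts transitively on these chambers, and each such reflection extends to an isometry of $L$ that preserves $N$ setwise and fixes $N^\perp$ pointwise. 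Pre-composing the marking of $\kd'$ with a suitable Weyl group element therefore produces a new marking that still sends the fixed lattice of $\tau'$ into $N$ and with respect to which the image of the ample cone of $\kd'$ intersects $N_\R$ in precisely $\Amp_N$.

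The main delicate point is the step where non-degeneracy is invoked to rule out $(-2)$-classes of $\Pic\kd$ lying outside $N$. Although this is built directly into the definition of non-degenerate given in the excerpt, in practice one wants to know that the statement is non-vacuous for every primitive 2-elementary $N$ and that a single chamber really is hit consistently across the family; the standard route is to observe that at a generic point of the Griffiths domain $D_N$ the Picard lattice equals $N$ so non-degeneracy is automatic, and then appeal to the fact that the walls of the ample cone are locally constant across the non-degenerate locus to propagate the statement over its entire connected component.
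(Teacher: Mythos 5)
The paper does not actually prove this lemma: it is quoted from Nikulin--Saito, and the surrounding text only explains the mechanism (a degeneration corresponds to a $(-2)$-class appearing in $\Pic\kd\setminus N$, and the non-degenerate locus is connected with ``essentially constant'' ample cone). Your argument is therefore a genuinely different, self-contained route: you observe that for non-degenerate $(\kd,\tau)$ every $(-2)$-class of $\Pic\kd$ lies in $N$, so the intersection of the ample cone with $N_\R$ is exactly one chamber of the intrinsic arrangement $\{v^\perp\cap N_\R : v\in N,\ v^2=-2\}$, and then you normalise the marking by the Weyl group of $N$ (whose reflections extend to $O(L)$ fixing $N^\perp$ pointwise) to land in a fixed chamber $\Amp_N$. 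That is correct and in fact more elementary than the citation, and it matches how the lemma is used later (existence of a marking realising a fixed cone). Two small points: first, the positive cone of $N_\R$ has two components and the Weyl group preserves each, so to hit a single prescribed chamber you must also allow composing the marking with an isometry such as $-\Id_N\oplus\Id_{N^\perp}$, which extends to $O(L)$ precisely because $N$ is $2$-elementary (its discriminant action is trivial); without some such adjustment your transitivity step only determines $\Amp_N$ up to sign. Second, your closing paragraph about generic points of $D_N$ and local constancy of the walls is redundant once the chamber/Weyl argument is in place -- it is really a sketch of the alternative (Nikulin--Saito-style) connectedness argument that the paper itself alludes to, and as written it is the vaguest part of the proposal; the lemma needs no non-vacuousness statement, which in any case follows from Proposition \ref{prop:involution_exists}.
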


If $\rs$ is a del Pezzo surface, then $N \subset L$ is a totally even
primitive sublattice of rank $\leq 9$. Because $\rs$ does not contain any
$(-2)$-curves, $(\kd,\tau)$ must be non-degenerate.
The converse also holds.

\begin{lem}
Let $(\kd, \tau)$ be a K3 surface with non-symplectic involution. Then the
quotient $\kd/\tau$ is a del Pezzo surface if and only if $(\kd,\tau)$ is
non-degenerate and $N$ is totally even of rank $\leq 9$.
\end{lem}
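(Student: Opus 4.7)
The plan is to verify the three conditions ($(\kd,\tau)$ non-degenerate, $N$ totally even, $\rk N \leq 9$) by translating each into a geometric property of $\rs$ and its anti-bicanonical branch divisor $C \in |{-}2K_\rs|$, via three equivalences: (1) for a primitive 2-elementary lattice $N$, total evenness is equivalent to $k = 0$, in which case $N = N' = \pi^* H^2(\rs)$ has intersection form twice the unimodular form on $H^2(\rs)$; (2) Noether's formula on the rational surface $\rs$ gives $K_\rs^2 = 10 - b_2(\rs) = 10 - \rk N$, so $\rk N \leq 9$ is equivalent to $K_\rs^2 \geq 1$; (3) non-degeneracy of $(\kd,\tau)$ is equivalent to the absence of $(-2)$-curves on $\rs$. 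For (1), total evenness forces $N \subseteq 2N^*$, which together with $|N^*/N| = 2^a$ yields $a = r$ and hence $k = 0$. For the forward direction of (3), pulling back a $(-2)$-curve $D$ on $\rs$ gives $D \cdot C = -2K_\rs \cdot D = 0$ by adjunction, so $D$ is disjoint from the branch locus and $\pi^{-1}(D)$ splits as two disjoint $(-2)$-curves in $\kd$, producing $(-2)$-classes in $\Pic \kd \setminus N$; the converse uses the description of a degenerate class as $\alpha = \half(\nu + \nu')$ with $\nu \in N$, $\nu' \in N^\perp$, both of square $-4$.

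With these equivalences in hand, the forward direction is quick. Assuming $\rs$ is del Pezzo, so $-K_\rs$ is ample, adjunction directly rules out $(-2)$-curves on $\rs$, giving non-degeneracy. Any $\PP^1$ component $C_i^\rs \neq C_0^\rs$ of the branch locus would satisfy $(C_i^\rs)^2 = -4$ on $\rs$ (from $\pi^*[C_i^\rs] = 2[C_i]$ with $[C_i]^2 = -2$ on $\kd$), forcing $-K_\rs \cdot C_i^\rs = -2 < 0$ by adjunction and contradicting ampleness; hence $C$ is irreducible and $N$ is totally even. The rank bound is immediate from $K_\rs^2 \geq 1$ on del Pezzos.

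For the converse, $\rk N \leq 9 < 10$ excludes the Enriques case $(r,a,\delta) = (10,10,0)$, so $C \neq \emptyset$ and $\rs$ is rational. Total evenness then gives $C$ irreducible with $C^2 = 4K_\rs^2 \geq 4 > 0$. The key observation is that $C$ is automatically nef: any irreducible $D$ with $C \cdot D < 0$ would have to be a component of $C$, and irreducibility forces $D = C$, contradicting $C^2 > 0$. Hence $-K_\rs = \half C$ is big and nef, i.e., $\rs$ is a weak del Pezzo; non-degeneracy then supplies the absence of $(-2)$-curves that upgrades this to an honest del Pezzo. The main obstacle is the converse half of equivalence (3)—extracting a $(-2)$-curve on $\rs$ from a given $(-2)$-class $\alpha \in \Pic \kd \setminus N$—which requires an effective-decomposition analysis of the pushforward $\pi_* \alpha$ rather than the direct pullback used in the opposite direction.
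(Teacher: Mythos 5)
Your converse direction is correct, and it takes a genuinely different route from the paper's. The paper never leaves the K3: it shows that $3P$, with $P=\pi^*(-K_\rs)$ the class of the fixed curve, is very ample via the criteria of Lemma \ref{lem:embed} (using Lemma \ref{lem:pdc} to exclude $(-2)$-classes of $N$ orthogonal to $P$, and non-degeneracy to exclude all other $(-2)$-classes), and then descends ampleness of $-3K_\rs$ through Lemma \ref{lem:ampcover}. You instead argue on the quotient: total evenness forces $a=r$, so $k=0$ and (since $r\le 9$ rules out Nikulin's exceptional cases) the branch curve is irreducible, hence nef, $K_\rs^2=10-r\ge 1$ gives bigness, and non-degeneracy excludes $(-2)$-curves on $\rs$ via the splitting $\pi^{-1}(D)=D_1\sqcup D_2$ with $\tau^*[D_1]=[D_2]\ne[D_1]$; Nakai--Moishezon then upgrades the weak del Pezzo to a del Pezzo. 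This is more elementary than the paper's argument, at the cost of quoting the standard fact that a weak del Pezzo with no $(-2)$-curves is del Pezzo. Your forward treatment of total evenness (rational branch components would have square $-4$, hence $-K_\rs$-degree $-2$) also differs harmlessly from the paper's appeal to connectedness of a smooth member of $|{-}2K_\rs|$, and the rank bound is the same in both.

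The genuine gap is the non-degeneracy claim in the forward direction. You assert that absence of $(-2)$-curves on $\rs$ ``gives non-degeneracy'', but in your own scheme this is precisely the converse half of your equivalence (3) --- that every $(-2)$-class $\alpha\in\Pic\kd\setminus N$ produces a $(-2)$-curve on $\rs$ --- which you defer as ``the main obstacle'' and never prove. This is not a routine omission: for arbitrary $(-2)$-classes in $\Pic\kd\setminus N$ the implication fails. For instance, a double cover of $\PP^2$ branched in a smooth sextic admitting a contact conic $Q$ (all intersections with the sextic of even multiplicity) has $\pi^{-1}(Q)=E\cup\tau E$ with $E^2=-2$ and $E\cdot\tau E=6$, so $[E]\in\Pic\kd\setminus N$, while the quotient $\PP^2$ is certainly del Pezzo. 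Hence an ``effective-decomposition analysis of $\pi_*\alpha$'' cannot succeed for all such $\alpha$; any correct argument must restrict to the degenerate classes that actually cut the cone, i.e.\ the half-sums of $(-4)$-classes in $N$ and its orthogonal complement described in the paper just before the definition of degenerate. Note that the paper itself handles this direction only by the assertion in the paragraph preceding the lemma, and its converse argument is arranged so that it never needs to produce curves on $\rs$ at all; your proposal, as written, leaves this step genuinely open.
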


\begin{proof}
The intersection forms of del Pezzo surfaces are precisely the unimodular
lattices of rank $\leq 9$. For a del Pezzo surface $\rs$, a smooth section of
$-2K_\rs$ is connected, so the resulting K3 surface with involution has
$N = N'$ totally even.

Conversely, if $(\kd, \tau)$ is non-degenerate with $N$ totally even of rank
$r \leq 9$, then $P = \pi^*(-K_\rs)$ has a smooth connected section, and
$P^2 = 20-2r \geq 2$, so $P$ is nef.

If we set $\bm = 3P$, then condition %
\ref{it:elliptic} of Lemma \ref{lem:embed} certainly holds.
By Lemma \ref{lem:pdc}\ref{it:mod4}, there can be no $(-2)$-classes in $N$ that
are orthogonal to $P$. The non-degeneracy condition means that there are no
other $(-2)$-classes in $\Pic \kd$, so condition \ref{it:nodal} holds too.
Hence Lemma \ref{lem:embed} shows that $3P$ is very ample.
By Lemma \ref{lem:ampcover}, $-K_\rs$ must therefore be ample.
\end{proof}

\subsection{Kovalev-Lee blocks}

Let $\kd$ be a K3 with non-symplectic involution $\tau$, and let
$\psi : \PP^1 \to \PP^1$ be the holomorphic involution
$\psi : (x:y) \mapsto (y:x)$.
Kovalev and Lee \cite[\S 4]{kovalev-lee08} use the following complex 3-folds
$Z$ as blocks in the twisted connected sum construction.
The quotient $Z_0$ of $\kd \times \PP^1$ by $\tau \times \psi$ has orbifold
singularities along the $2k+2$ components of $C \times \{(1:1), (1:-1)\}$. 

\begin{constr}
\label{constr:kl}
Let $Z$ be the blow-up of $Z^0$ along its singular locus.
\end{constr}

Kovalev and Lee computed the rational cohomology of these 3-folds. By computing
the integral cohomology, we find that $Z$ are indeed building blocks also in
the sense of Definition \ref{def:block}.
Moreover, if we let $\sigma : \PP^1 \to \PP^1$
be the involution $(x:y) \mapsto (x:-y)$, which commutes with $\psi$, then
$\Id_\kd \times \sigma$ induces an involution on~$Z$, making it a building
block with involution in the sense of Definition~\ref{def:inv_block}.

\begin{prop}
Let $\kd$ be a K3 surface with non-symplectic involution $\tau$, and non-empty
fixed set $C$. Then
\begin{align*}
b_2(Z) &= r + 2k+3 = 2r-a+3,\\
b_3(Z) &= 4g = 44-2r-2a, \\
\rk K &= 2k+2 = 2 + r -a.
\end{align*}
Further $H^3(Z)$ is torsion-free, and the image of
$H^2(Z) \to H^2(\kd)$ is the fixed lattice $N$ of $\tau$ (which is primitive).
In particular, $Z$ is a building block in the sense of Definition \ref{def:block}.
\end{prop}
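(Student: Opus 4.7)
The plan is to replace the two-stage construction (quotient then resolve) with a one-stage smooth quotient, after which the cohomology unravels. Let $\tilde W \to W := \Sigma \times \PP^1$ denote the blow-up along the fixed locus $D := C \times \{(1{:}1), (1{:}{-}1)\}$ of $\tau \times \psi$. The lifted involution acts as $(-1,-1)$ on each fibre $\mathbb C^2$ of $N_{D/W}$, hence trivially on $\tilde E := \mathbb P(N_{D/W})$ and as $-1$ on the normal direction to $\tilde E \subset \tilde W$. The quotient $\tilde W/\mathbb{Z}_2$ is therefore smooth, and a standard local computation in the model $\mathbb C \times (\mathbb C^2/\{\pm 1\})$ identifies it with the blow-up of the transverse $A_1$ singular curves of $Z_0$. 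By uniqueness of the minimal resolution, $Z \cong \tilde W/\mathbb{Z}_2$.

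Rationally, $H^*(Z;\Q) \cong H^*(\tilde W;\Q)^{\mathbb{Z}_2}$, and the right-hand side is accessible via the blow-up formula
\[
H^*(\tilde W;\Q) \cong H^*(W;\Q) \oplus H^{*-2}(C \sqcup C;\Q).
\]
The action is $\tau^* \otimes \psi^* = \tau^* \otimes \Id$ on $H^*(W)$ (since $\psi$ is orientation-preserving holomorphic on $\PP^1$), and trivial on the exceptional summand ($\tau$ fixes $C$ pointwise, $\psi$ fixes $(1{:}\pm 1)$, and the lifted action is trivial on $\tilde E$). Taking invariants gives $b_2(Z) = (r+1)+2(k+1) = r+2k+3$ and $b_3(Z) = 4g$, as claimed.

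For the restriction map $\rho\colon H^2(Z) \to L$: the composition $\rho \circ q^*$ factors through the analogous restriction to a preimage fibre $\Sigma_p \subset \tilde W$, and so lands in $N\otimes\Q$; hence the integral image lies in $N$, which is automatically primitive in the unimodular $L$ as the fixed lattice of an involution. Surjectivity onto $N$ follows from the Leray spectral sequence of the K3 fibration $Z \to \PP^1/\psi$: since K3 fibres have $H^1 = 0$, we have $R^1 f_* \Z = 0$, so the $d_2$ differential out of $H^0(\PP^1/\psi; R^2 f_*\Z) = N$ vanishes and the edge homomorphism $H^2(Z;\Z) \to N$ is surjective. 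Combined with Lemma~\ref{lemg:Z&V}\ref{itg:h2}, which splits off $\Z[\Sigma]$ from $\ker\rho$, this gives $\rk K = b_2(Z) - 1 - r = 2k+2$.

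The main obstacle is torsion-freeness of $H^3(Z)$, since a codimension-1 $\mathbb{Z}_2$-fixed locus can a priori introduce $2$-torsion in a quotient. I would tackle this via the Lee--Weintraub exact sequence \eqref{eq:coverseq} applied to the branched double cover $q\colon \tilde W \to Z$ with smooth branch divisor $B := q(\tilde E)$ (a disjoint union of $\PP^1$-bundles over $C \sqcup C$), which reduces the question to a $w_1$-cup-product calculation involving $H^*(B;\Z_2)$. A direct count using the explicit $\PP^1$-bundle structure of $B$ should yield $\dim_{\mathbb{F}_2} H^3(Z;\Z_2) = 4g = \dim_\Q H^3(Z;\Q)$, forcing $H^3(Z)$ to be torsion-free.
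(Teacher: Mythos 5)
Your route is genuinely different from the paper's: you identify $Z$ with the free-after-blow-up quotient $\tilde W/\Z_2$, where $\tilde W$ is the blow-up of $\kd\times\PP^1$ along the fixed curve of $\tau\times\psi$, and compute invariant cohomology; the paper instead cuts $Z$ into two copies of the resolved half $U$ glued along the mapping torus $T$ of $\tau$ and runs an \emph{integral} Mayer--Vietoris. Your identification of $Z$ with $\tilde W/\Z_2$ and the rational computation of $b_2$ and $b_3$ via the blow-up formula are fine. The problem is that the two integral assertions --- surjectivity of $H^2(Z;\Z)\to N$ and torsion-freeness of $H^3(Z;\Z)$ --- are exactly the content of the proposition, and neither is actually established in your proposal.

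For surjectivity onto $N$, the Leray argument fails as stated: $Z\to\PP^1$ is not a K3 fibration. The two fibres over the branch points of $\PP^1\to\PP^1/\psi$ are reducible, containing the image of $\kd/\tau$ together with exceptional $\PP^1$-bundles over $C$, so ``$R^1f_*\Z=0$ because K3 fibres have $H^1=0$'' is unjustified, and $H^0(\PP^1;R^2f_*\Z)$ is in general strictly larger than $N$ (it contains classes supported near the special fibres, e.g.\ duals of exceptional fibres). Even granting surjectivity of the edge homomorphism, what you actually need is that every monodromy-invariant class in $H^2$ of a nearby smooth fibre extends across the special fibres --- in particular the Poincar\'e duals of the individual components $C_i$ of $C$, which generate $N/N'$ where $N'$ is the image of $H^2(\kd/\tau)$. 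This is precisely the delicate point: the paper notes that the obvious classes (pullbacks from $\kd/\tau$ plus classes on the exceptional set) only give $N'$ together with exceptional classes, and then produces the missing generators explicitly as the proper transforms of $C_i\times\Delta$. Your proposal contains no substitute for this step. For torsion-freeness of $H^3(Z)$, your plan is only a promissory note (``a direct count \dots should yield''); moreover a $\Z_2$-Betti-number count via the Lee--Weintraub sequence would at best exclude $2$-torsion, so odd torsion must be excluded separately (this part is easy, e.g.\ by a transfer argument giving $H^*(Z;\Z_p)\cong H^*(\tilde W;\Z_p)^{\Z_2}$ for odd $p$, but it is missing). As it stands the proposal proves the Betti-number statements but not the integral ones, which in the paper fall out of a single integral Mayer--Vietoris once the surjectivity of $H^2(U)\to H^2(T)$ has been checked.
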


\begin{proof}
The Betti numbers were computed in
\cite[Proposition 4.3, and (4.3)]{kovalev-lee08}.

$Z_0$ can be viewed as the result of gluing two copies of
$U_0 = (\kd \times \Delta)/(\tau, -1)$, along their common boundary which is the
mapping torus $T$ of $\tau$. $\pi_1(T) \cong T$, and by a Mayer-Vietoris
sequence
\[ H^2(T) \cong \ker (1-\tau^*) = N, \quad
H^3(T) \cong \coker (1-\tau^*) \cong N^* \times \cg{2}^{2g} . \]
The restriction map $H^2(T) \to H^2(\kd)$ for the slices $\kd \subset T$ is
the natural inclusion $N \into L$.

$U_0$ deformation retracts to the simply-connected rational surface
$\rs = \kd/\tau$.
The restriction map $H^2(U_0) \to H^2(T)$
corresponds to the inclusion $N' \into N$.

Let $U$ be the blow-up of $U_0$ at its singular locus. Comparing the long exact
sequences of $U$ and $U_0$ relative to neighbourhoods of the exceptional
divisor $E$ and singular set $C$, respectively, shows that the difference
between $H^*(U)$ and $H^*(U_0)$ is the same as the difference between
$H^*(E) \cong H^*(C) \otimes H^*(\PP^1)$ and $H^*(C)$, \ie
\[ H^2(U) \cong N' \times \bbz^{k+1}, \quad
H^3(U) \cong \bbz^{2g} . \]
However, the added factors are \emph{not} simply generated by duals of cycles
in the exceptional set, so it does \emph{not} follow that $H^*(U)$ and
$H^*(U_0)$ have the same image in $H^*(T)$ (though this is the case with real
coefficients). For example, for a component
$C_i$ of $C$, consider the proper transform in $U$ of the image of
$C_i \times \Delta$ in $U_0$, and let $c_i \in H^2(U)$ be the class it
represents. Then the image of $c_i$ in $H^2(T) \cong N \subset H^2(\kd)$
corresponds to the dual of $C_i$ in $H^2(\kd)$, which is precisely one of the
generators for $N/N'$ we described before. So $H^2(U) \to H^2(T)$ is
surjective.
The class in $H^2(U)$ represented by the exceptional set over $C_i$ is $2c_i$
modulo the image of $H^2(U_0)$ in $H^2(U)$.

Now Mayer-Vietoris for $Z$ as a union of two copies of $U$ shows that
\[ H^2(Z) \cong \bbz \times \bbz^{2k+2} \times N,
\quad H^3(Z) \cong \bbz^{4g} . \]
So the cohomology is torsion-free, the image of $H^2(Z) \to H^2(\kd)$
is the primitive sublattice $N$, $\rk K = 2k + 2 = r-a+2$ and
$b^3(Z) = 4g = 44-2r-2a$.
\end{proof}

\enlargethispage{0.4\baselineskip}

\begin{prop}
\label{prop:k3_generic}
Fix a primitive 2-elementary lattice $N \subset L$, and let $\fbb$ be the set
of building blocks obtained by applying Construction \ref{constr:kl} to K3s
with non-symplectic involution with fixed lattice~$N$.
Then there exists an open cone $\Amp \subset N_\R$ such that if 
$\Lambda \subset L$ is primitive sublattice that contains $N$ and
$\Lambda \setminus N$ does not contain any $(-2)$-classes, then
$\fbb$ is $(\Lambda, \Amp)$-generic.
\end{prop}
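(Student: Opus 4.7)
The plan is to take $\Amp := \Amp_N$, the open cone from Lemma \ref{lem:saito}, and to let $U_\fbb \subset D_\Lambda$ be the complement of the countable family of complex analytic hypersurfaces $\{\Pi \in D_\Lambda : \Pi \perp v\}$ indexed by primitive classes $v \in L\setminus\Lambda$. By standard Torelli-type arguments, for any $\Pi \in U_\fbb$ the corresponding marked K3 surface $\kd$ has Picard lattice exactly~$\Lambda$, and the complement $D_\Lambda \setminus U_\fbb$ has the shape required by Definition \ref{def:generic}.

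Given $\Pi \in U_\fbb$, I would first equip $\kd$ with a non-symplectic involution. Primitivity of $N$ in $L$ passes to primitivity of $N$ in $\Lambda = \Pic\kd$, and non-degeneracy of $N$ forces the orthogonal complement of $N$ in $\Pic\kd$ to sit inside $(\Lambda \setminus N) \cup \{0\}$, which by hypothesis contains no $(-2)$-classes. Proposition \ref{prop:involution_exists} then supplies an involution $\tau$ on $\kd$ whose fixed lattice is~$N$. The same hypothesis shows that $\Pic\kd \setminus N = \Lambda \setminus N$ contains no $(-2)$-class, so $(\kd,\tau)$ is non-degenerate in the sense of~\S\ref{subsec:k3nonsymp}; hence Lemma \ref{lem:saito} identifies the intersection of the ample cone of $\kd$ with $N_\R$ as precisely $\Amp_N = \Amp$.

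Finally, I would apply Construction \ref{constr:kl} to $(\kd,\tau)$ to obtain a building block $Z \in \fbb$. The generic fibre of the natural morphism $Z \to \PP^1/\psi \cong \PP^1$ is a copy of $\kd$; in particular, the fibre over the image of one of the $\sigma$-fixed points $(1{:}0)$, $(0{:}1)$ (which lie away from the $\psi$-fixed points $(1{:}\pm 1)$) is smooth and may serve as the anticanonical K3 divisor of $Z$, with its marking inherited from $\kd$. For $\kclass \in \Amp$, Lemma \ref{lem:ampcover} provides a $\tau$-invariant Kähler representative on $\kd$; together with a small multiple of the Fubini--Study class on $\PP^1$ this gives a $(\tau\times\psi)$-invariant Kähler class on $\kd\times\PP^1$ which descends to~$Z_0$. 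Because the anticanonical divisor $\kd \subset Z$ is disjoint from the exceptional locus of $Z \to Z_0$ (which sits above the singular fibres over the images of $(1{:}\pm 1)$), a small perturbation of the pull-back by the exceptional divisors yields a Kähler class on $Z$ whose restriction to $\kd$ is still~$\kclass$.

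The main obstacle is the non-degeneracy step: one must carefully leverage the absence of $(-2)$-classes in $\Lambda\setminus N$ to extract both the existence of the involution and the uniform ample-cone identification, using primitivity of $N$ in $\Lambda$. Once that is in place, producing Kähler classes on the blow-up is routine.
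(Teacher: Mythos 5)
Your argument is correct and follows the paper's own route: the paper's proof simply cites Proposition \ref{prop:involution_exists} and Lemma \ref{lem:saito}, which are exactly the two inputs you use, with $\Amp = \Amp_N$. The additional details you supply (generic Picard lattice equal to $\Lambda$, non-degeneracy from the absence of $(-2)$-classes in $\Lambda\setminus N$, and producing a Kähler class on the Kovalev--Lee block restricting to $\kclass$ via the product class plus a small perturbation along the exceptional divisors) are the routine steps the paper leaves implicit.
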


\begin{proof}
Immediate from Proposition \ref{prop:involution_exists} and Lemma
\ref{lem:saito}.
\end{proof}

\subsection{Smoothing}

Let $\kd$ be a K3 surface with non-symplectic involution $\tau$, and
$Z_0 := \kd \times \PP^1 / \tau \times \psi$ as above.
Instead of desingularising $Z_0$ by blowing up each component of the singular
set, we can attempt to smooth those components that have positive genus while
blowing up the $\PP^1$s. Further, we can carry out the smoothing in such a way that the involution $\Id \times \sigma$ on $Z_0$ persists, yielding a building block with involution.

For simplicity, we consider only the cases when the
the fixed curve $C \subset \kd$ of $\tau$ has no $\PP^1$ components.
Moreover, we ignore the cases where $C$ consists of elliptic curves
($a = 10$ and $r = 8$ or $10$). That leaves precisely the 10 cases where $\rs$
is a del Pezzo surface, one each for $a = r \in \{1, 3, 4, \ldots, 9\}$,
and two with $a = r = 2$.

We can regard $Z_0$ as the double cover of $\rs \times \PP^1$ branched over the
zero set of the reducible section $(x^2 + y^2)s$ of $\calo_{\PP^1}(2) -2K_\rs$,
where $s$ is a section of $-2K_\rs$ cutting out $C$. The normal crossing
singularities of the divisor correspond precisely to the orbifold singularities
of $Z_0$.

Considering instead a double cover of $\rs \times \PP^1$ branched over a
smooth divisor in $|\calo_{\PP^1}(2) -2K_\rs|$ we obtain a smoothing
of $Z_0$, which is moreover a building block in the sense of
Definition \ref{def:block}. It is convenient to consider the following concrete realisation of the double cover.

\begin{constr}
\label{constr:smoothing}
Let $\rs$ be a del Pezzo surface, and $z \in \PP^1$.
Let $f$ be a section of the line bundle $\calo_{\PP^1}(2) -2K_Y$ over
$Y \times \PP^1$, such that both its zero locus $D$ and
$C := D \cap Y \times \{z\}$ are smooth.
Thinking of $f$ as a homogeneous quadratic polynomial on $\bbc^2$, taking
values in sections of $-2K_\rs$, we can define a smooth subvariety $Z$ of
the total space $G$ of the projectivisation of $-K_\rs \oplus \bbc^2 \to \rs$
by
\begin{equation}
\label{w1eq}
Z := \{(\alpha:\beta:\gamma) \in G : %
\alpha^2 = f(\beta,\gamma) \} .
\end{equation}
The projection map $p : G \dashedrightarrow \PP^1$,
$(\alpha:\beta:\gamma) \mapsto (\beta:\gamma)$ is defined away from
the section $\beta = \gamma = 0$, and hence in particular on $Z$.
If $\pi : G \to \rs$ is the bundle projection map, then the restriction
$\pi \times p : Z \to \rs \times \PP^1$ realises $Z$ as the double cover
branched over $D$. 
The fibre
\[ \kd := p^{-1}(z) \]
is a double cover of $\rs$ branched
over $C \in |{-}2K_\rs|$, so is a K3 surface with non-symplectic involution.
\end{constr}

\begin{prop}
\label{prop:smoothing}
$(Z,\kd)$ is a building block.
Moreover, the image of $H^2(Z) \to H^2(\kd)$ is precisely~$N$, the subset
invariant under the action of the branch-switching involution of $\kd \to \rs$.
Further $K = 0$.
\end{prop}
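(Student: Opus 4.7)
The strategy is to verify the four conditions of Definition \ref{def:block} together with the image and kernel statements in turn, by first computing $K_Z$ and then analysing $H^*(Z;\Z)$ via the sheet-exchange involution $\tau_Z$, which restricts on $\kd$ to the non-symplectic involution $\tau$. Let $\pi \times p: Z \to \rs \times \PP^1$ denote the branched double cover. The adjunction formula for smooth branched double covers gives
\[
K_Z \;=\; (\pi \times p)^*\bigl(K_\rs + K_{\PP^1} + \tfrac{1}{2}[D]\bigr) \;=\; -(\pi \times p)^*h,
\]
where $h$ is the hyperplane class on $\PP^1$, so $-K_Z = [\kd]$. Thus $\kd$ is a smooth anticanonical divisor (condition (ii)), deferring primitivity. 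Since $\rs \times \PP^1$ is simply connected and $D$ is connected (the hypothesis $k = 0$ makes the generic $D$-fibre $C \in |{-}2K_\rs|$ connected), $Z$ is simply connected, so $H^2(Z;\Z)$ is torsion-free.

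The technical heart is the identification $H^2(Z;\Z) = (\pi\times p)^*H^2(\rs) \oplus \Z[\kd]$. The subgroup on the right always injects and has rank $\rho + 1$, so it suffices to show $b_2(Z) = \rho + 1$. Computing $\chi(Z) = 2\chi(\rs \times \PP^1) - \chi(D)$, where $\chi(D)$ is obtained from adjunction using $[D] = 2h - 2K_\rs \in H^2(\rs \times \PP^1)$, gives $\chi(Z) = 24 - 14d$ for $\rs$ a del Pezzo of degree $d$. Combined with $\chi(Z) = 2 + 2b_2(Z) - b_3(Z)$ and the upper bound on $b_2(Z)$ coming from the vanishing of the $\tau$-anti-invariant part of $H^{1,1}$ of a generic K3 fibre (no anti-invariant algebraic class can be supported globally, since that would enlarge the Picard lattice beyond $N$), we deduce $b_2(Z) = \rho + 1$ and $b_3(Z) = 12d$. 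Torsion-freeness of $H^2(Z;\Z)$ and primitivity of $N \subset L$ (automatic for a fixed lattice of an involution) give saturation of the invariant subgroup, so the integral identification holds; a parallel Leray analysis for $p: Z \to \PP^1$ shows $H^3(Z;\Z)$ is torsion-free, matching Table \ref{table:invblocks} and establishing condition (iv).

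With $H^2(Z) = (\pi\times p)^*H^2(\rs) \oplus \Z[\kd]$ in hand, the remaining statements fall out. The image of $H^2(Z) \to H^2(\kd)$ equals the image of $H^2(\rs)$ (since $[\kd]$ restricts trivially by triviality of its normal bundle), which under $k = 0$ is exactly $N = N'$; condition (iii) follows. The kernel is $\Z[\kd]$ since $H^2(\rs) \hookrightarrow H^2(\kd)$ is injective, so Lemma \ref{lemg:Z&V}\ref{itg:h2} gives $K = 0$. Finally, $-K_Z = [\kd]$ is primitive: if $n\alpha = [\kd]$ for some $\alpha \in H^2(Z)$ then $\alpha|_\kd = 0$ forces $\alpha \in \Z[\kd]$ and hence $n = 1$, completing condition (i). The main obstacle is the integral cohomology step, particularly justifying that the invariant subgroup exhausts $H^2(Z;\Z)$ rather than just $H^2(Z;\Q)$---the Leray sequence combined with the variation-of-Hodge-structure argument works but requires care with the singular fibres of $p$ over points of $\PP^1$ where $f(w)$ has singular zero locus.
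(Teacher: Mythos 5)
Your computation of $K_Z$ via branched-cover adjunction is fine and is essentially the paper's first step in different clothing (the paper gets $K_Z = T_{|Z}$ inside the bundle $G$), but the cohomological heart of your argument has a genuine gap. The Euler characteristic gives only the single relation $\chi(Z) = 2 + 2b_2(Z) - b_3(Z)$, so everything hinges on your claimed bound $b_2(Z) \le \rho + 1$; the justification offered --- that an extra class would produce a $\tau$-anti-invariant algebraic class on a generic fibre and so ``enlarge the Picard lattice beyond $N$'' --- is not a proof. Nothing forces the generic fibre of this particular one-parameter family (cut out by the fixed section $f$) to have Picard lattice exactly $N$, and, more importantly, an extra class in $H^2(Z;\Q)$ need not be visible on the generic fibre at all: in the Leray spectral sequence for $p : Z \to \PP^1$ it can come from the singular fibres, which you defer rather than control. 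The integral refinement is also incomplete: to promote $b_2(Z) = \rho + 1$ to $H^2(Z;\Z) = \pi^*H^2(\rs) \oplus \Z[\kd]$, your saturation argument needs that classes restricting to zero on $\kd$ lie in $\Z[\kd]$, i.e.\ primitivity of the fibre class --- but that is part of what is being proved, and you later deduce it \emph{from} the splitting, so the argument is circular as written. Finally, ``a parallel Leray analysis shows $H^3(Z;\Z)$ is torsion-free'' is only an assertion, and torsion-freeness of $H^3$ of a K3-fibred threefold is exactly the delicate point needed for condition (iv) of Definition \ref{def:block}.

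That these gaps are not mere bookkeeping is shown by Remark \ref{rmk:nodal_sextic}: the identical construction with $r = 10$, where $-K_\rs$ is no longer ample, yields a $Z$ for which the Lefschetz-type conclusions fail. So any correct proof must use the del Pezzo positivity in an essential, non-numerical way, and as sketched your cohomological argument does not distinguish that case from the del Pezzo one. The paper's route is precisely where the positivity enters: very ampleness of $-2K_\rs$ makes $-2T - 2\pi^*K_\rs$ semi-ample, the associated morphism contracts only the section $\{\beta = \gamma = 0\}$ and is therefore semi-small, and Goresky--MacPherson's relative Lefschetz theorem with large fibres then gives $H^2(Z) \cong H^2(G) \cong H^2(\rs) \oplus H^2(\PP^1)$ and torsion-freeness of $H^3(Z)$ integrally, for \emph{every} such $Z$; the image $N$ of $H^2(Z) \to H^2(\kd)$, $K = 0$, and primitivity of $-K_Z$ all drop out of that isomorphism. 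To repair your approach you would have to supply exactly this kind of input (or a genuine monodromy/singular-fibre analysis plus a primitivity argument for $[\kd]$), at which point the paper's argument is the shorter path.
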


\begin{proof}
The canonical bundle of $G$ is %
$\pi^*(K_\rs - \det (-K_\rs \oplus \bbc^2)) + 3T = 2\pi^*(K_\rs) + 3T$, where
$T$ is the tautological bundle.
$Z$ is defined by a degree 2 homogeneous
polynomial taking values in $-2\pi^*K_\rs$, \ie it is cut out by a section of
$-2T-2\pi^*K_\rs$. Therefore its canonical bundle is
$T_{|Z}$; this equals the pull-back of the tautological bundle of
$\PP^1$ by $p : Z \to \PP^1$, so the
fibres of $p$ are anticanonical divisors. (Each of the fibres is a double
cover of $\rs$ branched over a divisor in the linear system
$\im f \subseteq |-2K_\rs|$, so they are deformations of $\kd$ with
non-symplectic involution.)

The fact that $-2K_\rs$ is very ample on the del Pezzo surface $\rs$ implies
that the sections of $-2T -2\pi^*K_\rs$ define a morphism
$G \to \bbp(H^0(-2T-2\pi^*K_\rs)^*)$, and it is easy to
see that the only set contracted by this morphism is the section
$\{\beta = \gamma = 0\} \subset G$.
In particular the morphism is semi-small, and
the ``relative Lefschetz theorem with large fibres''
of Goresky-MacPherson \cite[Theorem~1.1,\ page~150]{goresky88}
implies $H^3(Z)$
torsion-free, and $H^2(Z) \cong H^2(G) \cong H^2(\rs) \oplus H^2(\PP^1)$.
Since $a = r$ implies that $H^2(\rs) \to H^2(\kd)$ has image $N$, the image of
$H^2(Z) \to H^2(\kd)$ is also precisely~$N$. So $Z$ is a building block, with
$K = 0$.
\end{proof}

Note that since $\pi^* : H^2(\rs \times \PP^1) \to H^2(Z)$ is an isomorphism,
$\pi^* : H^4(\rs \times \PP^1) \to H^4(Z)$ must have image exactly $2H^4(Z)$.
Let $h \in H^4(Z)$ be half the image of the generator of $H^4(\rs)$.

\begin{rmk}
\label{rmk:h_smoothing}
Geometrically, the pull-back of the generator of $H^4(\rs)$ is the
Poincar\'e dual of the pre-image in $Z$ of $\{x\} \times \PP^1$ for any
$x \in \rs$. For generic $x$ that preimage is itself a $\PP^1$ (a~double
cover of $\PP^1$ branched over 2 points). However, for $x$ in the zero locus of the discriminant $\Delta \in -4K_Y$ of $f$ (considered as a quartic with
coefficients in $-2K_Y$), the pre-image is a disjoint union of two lines, and
$h$ is the Poincar\'e dual of either of these two lines.
\end{rmk}

\pagebreak[2]

\begin{lem}
\label{lem:smoothing_top}
$b_3(Z) = 12(10-r)$, and $c_2(Z) = 24h + 3\pi^*K_\rs$.
\end{lem}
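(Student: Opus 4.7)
The strategy exploits the realisation of $Z$ as a smooth double cover $\pi\colon Z \to X := \rs \times \PP^1$ branched over the smooth divisor $D \in |2(H - K_\rs)|$, where $H$ denotes the pull-back of $\calo_{\PP^1}(1)$. Both claims reduce to standard Chern-class and Euler-characteristic manipulations for this cover.

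For $b_3(Z)$, the plan is to apply $\chi(Z) = 2\chi(X) - \chi(D)$. The factor $\chi(X) = 2(r+2)$ is immediate, while $\chi(D) = \int_X D\bigl(c_2(TX) - c_1(TX)\, D + D^2\bigr)$ is computed by expanding with $c_1(TX) = -K_\rs + 2H$, $c_2(TX) = c_2(\rs) - 2HK_\rs$, $D = 2H - 2K_\rs$ and the identities $H^2 = 0$, $K_\rs^3 = 0$, $K_\rs^2 = 10 - r$, $c_2(\rs) = r + 2$. This gives $\chi(D) = 124 - 10r$, hence $\chi(Z) = 14r - 116$. Combined with $b_2(Z) = r + 1$ from Proposition~\ref{prop:smoothing} and Poincar\'e duality in the simply-connected $6$-manifold $Z$ (so that $\chi(Z) = 2 + 2(r+1) - b_3(Z)$), this extracts $b_3(Z) = 12(10-r)$.

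For $c_2(Z)$, I realise $Z$ as the smooth divisor $\{w^2 = f\}$ (with $w$ the fibre coordinate) inside the total space of the line bundle $H - K_\rs$ over $X$; it is cut out by a section of the pull-back of $2(H - K_\rs)$. Combining the normal bundle sequence for this embedding with the tangent sequence of the line-bundle projection (whose vertical tangent bundle is the pull-back of $H - K_\rs$) yields the standard double-cover formula
\[
c(TZ) \;=\; \pi^* c(TX) \cdot \frac{1 + \ell}{1 + 2\ell}, \qquad \ell := \pi^*(H - K_\rs),
\]
whose degree-$2$ component is $c_2(TZ) = \pi^*\bigl(c_2(TX) - (H - K_\rs)\, c_1(TX) + 2(H - K_\rs)^2\bigr)$. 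Expanding on $X$ and collapsing the scalar part of the bracket via Noether's identity $c_1(\rs)^2 + c_2(\rs) = 12\chi(\calo_\rs) = 12$ for the del Pezzo surface $\rs$, this simplifies to $c_2(Z) = 12 \pi^*[\mathrm{pt}_\rs] - 3\pi^*(HK_\rs) = 24 h - 3\pi^*(HK_\rs)$.

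The remaining task is to identify the class $-3\pi^*(HK_\rs) \in H^4(Z)$ with the shorthand $3\pi^* K_\rs$ in the form $c_2(Z) = g(\bar c_2(Z)) + 24h$ from~\eqref{eq:c2cbar}. Pairing $c_2(Z) - 24h$ against $\pi^* y \in H^2(Z)$ for $y \in H^2(\rs)$, and using $\int_Z \pi^* \alpha = 2 \int_X \alpha$ on top classes together with the factor-of-two inflation of the intersection form under $\pi|_\kd \colon \kd \to \rs$, one reads off the element $\bar c_2(Z) \in N^*$ as the appropriate multiple of $\flat(\pi|_\kd^* K_\rs)$, matching the sign convention of~\eqref{eq:c2cbar}; this can be cross-checked against entries such as Example~\ref{ex:p1p1smooth} in Table~\ref{table:invblocks}. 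This last step is essentially cosmetic, so the main computational content is the Chern-class calculation of the previous paragraph, whose main bookkeeping hurdle is tracking the $H^2 = 0$ and $K_\rs^3 = 0$ simplifications consistently.
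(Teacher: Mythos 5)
Your computation is correct, and it reaches the paper's intermediate results exactly ($\chi(D)=124-10r$, $\chi(Z)=14r-116$, $c_2(Z)=24h-3\pi^*(HK_\rs)$ with $H$ the $\PP^1$ fibre class), but it gets there by a somewhat different route. The paper works inside the projective bundle $G=\PP(-K_\rs\oplus\bbc^2)$: since $Z$ is cut out by a section of $-2T-2\pi^*K_\rs$, adjunction with the tautological class $T$ (using $T^2_{|Z}=0$) gives the whole total Chern class of $Z$ in one formula, from which $c_2(Z)=3T\pi^*K_\rs+24h$ and $\chi(Z)=\int c_3=14r-116$ are read off simultaneously; the branched-cover Euler characteristic $\chi(Z)=2\chi(\rs\times\PP^1)-\chi(D)$ appears there only as a cross-check, with $\chi(D)$ evaluated via the discriminant double cover $D\to\rs$ branched over a curve in $|{-}4K_\rs|$ rather than by adjunction in $\rs\times\PP^1$ as you do. You instead never touch $G$: you use the cyclic double-cover Chern class formula over $X=\rs\times\PP^1$ for $c_2$ (with Noether's formula $K_\rs^2+c_2(\rs)=12$ playing the role of the paper's identity $\pi^*(-K_\rs)^2=(20-2r)h$), and the branched-cover Euler characteristic plus Poincar\'e duality and $b_2(Z)=r+1$ for $b_3$. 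Both organisations are legitimate; the paper's buys $c_2$ and $c_3$ from a single expansion, yours is more elementary in that it only invokes the standard double-cover and divisor-adjunction formulas. Note also that your class $-3\pi^*(HK_\rs)$ agrees with the paper's $3T\pi^*K_\rs$ because $T_{|Z}=K_Z=-\pi^*H$, so your reading of the lemma's shorthand ``$3\pi^*K_\rs$'' (and the translation into $\bar c_2(Z)\in N^*$ as in Example~\ref{ex:p1p1smooth}) is the intended one; you are right that this last identification is bookkeeping rather than content.
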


\begin{proof}
As a complex bundle, $TG = T_{vert}G \oplus \pi^*TY$ is stably isomorphic to
$T^{-1} \otimes \pi^*(-K_Y \oplus \bbc^2) \oplus TY$.
Using that $\pi^*(-K_\rs)^2 = (20-2r)h \in H^4(\rs)$ and $T^2_{|Z} = 0$
we find
\[ c(Z) = \frac{\pi^*c(Y) (1-T)^2(1-T-\pi^*K_Y)}{1-2T-2\pi^*K_Y} = 
1 - T + (3T\pi^*K_Y + 24h) + (116-14r)Th \in H^*(Z) . \]
This gives the claimed value of $c_2(Z)$, and also shows
$\chi(Z) = -116 + 14r$. This we can determine $b_3(Z)$, since we know the
other Betti numbers:
\[ \chi(Z) = 2 + 2(1+r) - b_3(Z) . \]
Alternatively, we can compute $\chi(Z)$ from
\[ \chi(Z) = 2\chi(\rs \times \PP^1) - \chi(D) . \]
In turn, we can understand $\chi(D)$ by considering the projection $D \to \rs$.
Generically, the linear system $\im f \subseteq |{-}2K_\rs|$ is
base-point free, so that the projection does not contract any curves.
Then the projection is a double cover, whose branch locus $B \subset \rs$ is
cut out by the discriminant of $f$, which is a section of $-4K_\rs$.
By adjunction, $K_B = 3K_{\rs|B}$,
so $\chi(B) = (3K_\rs)(-4K_\rs) = -12(10-r)$, and
\[ \chi(D) = 2 \chi(\rs) - \chi(B) . \] 
Hence
\[ \chi(Z) = 2\chi(\rs) - 12(10-r) , \]
giving the same result as above.
\end{proof}

By considering more special smoothings of $Z_0$ we obtain building blocks with
involution.
Let $\sigma : \PP^1 \to \PP^1, (x:y) \mapsto (x:-y)$ like before,
an involution with fixed points $(1:0)$ and $(0:1)$.
The subset of the space of sections of
$\calo_{\PP^1}(2) - 2K_\rs$ that is invariant under the action of
$\Id_\rs \times \sigma$, consists of elements of the form
$x^2s + y^2s'$, for $s, s'$ sections of $-2K_\rs$. This linear
system is base-point free, so a general element is smooth.

\begin{constr}
\label{constr:inv_smoothing}
Let $\rs$ be a del Pezzo surface,
and let $f$ be a section of $\calo_{\PP^1}(2) -2K_Y$ that is invariant under
$\Id \times \sigma$, such that both its zero locus $D$ and
$C := D \cap (Y {\times} \{(1:0)\})$ are smooth.
Define $G$, $Z$ and $\kd$ as in Construction \ref{constr:smoothing}.
Define an involution $\tau : Z \to Z$ as the restriction of the involution
$(\alpha:\beta:\gamma) \mapsto (\alpha:-\beta:\gamma)$. Then $\tau$ fixes
$\kd := p^{-1}(1:0)$, and acts as a non-symplectic involution on
$\kd' := p^{-1}(0:1)$. (If we instead lifted $\Id \times \sigma$ to $Z$ as
$(\alpha:\beta:\gamma) \mapsto (\alpha:\beta:-\gamma)$, then the lift would
fix $\kd'$ and map $\kd$ to itself by a non-symplectic involution.)
\end{constr}

\begin{prop}
$(Z,\kd)$ is a pleasant involution block.
\end{prop}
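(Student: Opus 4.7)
The plan is to parallel the proof of Proposition~\ref{prop:smoothing} to establish the building block structure, read off the involution data, and then verify the two pleasantness conditions via Lemma~\ref{lem:pleasant}.

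First I would check that $(Z,\kd)$ is a building block with $K=0$. The space of $\sigma$-invariant sections of $\calo_{\PP^1}(2){-}2K_\rs$ is spanned by elements of the form $x^2 s_0+y^2 s_1$ with $s_0, s_1\in H^0(-2K_\rs)$; since $|{-}2K_\rs|$ is basepoint-free on the del Pezzo $\rs$, the invariant linear system is basepoint-free on $\rs\times\PP^1$, so a generic $f$ of the prescribed form has smooth zero locus $D$ and smooth intersection $C\subset\rs\times\{\infty\}$. The morphism on $G$ defined by $|{-}2T-2\pi^\ast K_\rs|$ remains semi-small, with the only contracted set being the section $\{\beta=\gamma=0\}$, which does not meet $Z$; Goresky--MacPherson then gives $H^3(Z)$ torsion-free and $H^2(Z)\cong H^2(\rs)\oplus H^2(\PP^1)$ with image exactly $N$ in $H^2(\kd)$, so $K=0$. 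Next I would read off the involution: the fixed locus of $\tau$ on $G$ decomposes as $\{\beta=0\}\sqcup\{\alpha=\gamma=0\}$; intersecting with $Z$ the first piece gives $\kd=p^{-1}(\infty)$, fixed pointwise, while the second gives the curve $C'\subset\rs$ cut out by $s_0+s_1=0$ (embedded in $G$ as the section $[0{:}1{:}0]$). Hence $\kd$ is a connected component of $\mathrm{Fix}(\tau)$ and $(Z,\kd,\tau)$ is an involution block in the sense of Definition~\ref{def:inv_block}.

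For pleasantness I invoke Lemma~\ref{lem:pleasant}. With $K=0$ in hand, the claim reduces to showing $H^3(Z^0)$ is torsion-free and $m=k$, where $k+1$ is the number of components of the $\tau$-fixed curve $\tilde C\subset\kd'$ on the other preserved fibre. Since $\rs$ is a del Pezzo with $a=r$, Nikulin's classification forces $\tilde C$ to be connected, so $k=0$ and the equality $m=k$ holds vacuously.

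The main obstacle is showing $H^3(Z^0)$ is torsion-free. A local computation at points of $C'$ shows that $\tau$ acts with weights $(-1,-1,+1)$ on $T_pZ$, so $Z^0=Z/\tau$ is smooth away from $C'$ and has $A_1$-type orbifold singularities along $C'$. Blowing up $C'\subset Z$ produces a $\tau$-equivariant smooth resolution $\tilde Z\to Z$ with exceptional $\PP^1$-bundle $E$ over $C'$; since $\tau$ then has only codimension-one fixed locus $\kd\cup E$, the quotient $\tilde Z^0:=\tilde Z/\tau$ is smooth, dominates $Z^0$ by contracting $E/\tau$ to $C'$, and a Gysin-type computation gives $H^3(\tilde Z^0)\cong H^3(Z^0)\oplus H^1(C')$. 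It thus suffices to show $H^3(\tilde Z^0)$ is torsion-free, which I plan to establish by realising $\tilde Z^0$ as a semi-small image in a projective bundle over $\rs\times(\PP^1/\sigma)$ analogous to the one from Construction~\ref{constr:smoothing}, and invoking Goresky--MacPherson once more. The hard part is setting up this semi-small description cleanly, so that the $A_1$ singularities of the natural double-cover model $Z^0\to\rs\times(\PP^1/\sigma)$ are tracked correctly through the resolution.
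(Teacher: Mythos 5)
Your reduction is the same as the paper's: $K=0$ is quoted from Proposition~\ref{prop:smoothing}, connectedness of $C$ disposes of the condition $m=k$, and everything hinges on $H^3(Z^0)$ being torsion-free, via Lemma~\ref{lem:pleasant}. But that last point is exactly where your argument stops being a proof. You propose to blow up the fixed curve in $\kd'$, pass to a smooth quotient $\tilde Z^0$, assert a Gysin-type splitting $H^3(\tilde Z^0)\cong H^3(Z^0)\oplus H^1(C')$, and then show $H^3(\tilde Z^0)$ is torsion-free by exhibiting a semi-small model over $\rs\times(\PP^1/\sigma)$ --- and you yourself flag that setting up this model is ``the hard part'' and leave it undone. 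Since torsion-freeness of $H^3(Z^0)$ is the whole content of pleasantness here, the proposal has a genuine gap: the key step is deferred rather than proved. Moreover the integral splitting you invoke is asserted without argument, and torsion (the very thing in question) is precisely what such a splitting must control; with $\Z$ coefficients a resolution of the $A_1$-locus needs a careful comparison of long exact sequences, not just a rational Gysin count.

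The missing idea in the paper is that no resolution is needed at all: the branched double cover $G\to G$, $(\alpha{:}\beta{:}\gamma)\mapsto(\alpha{:}\beta^2{:}\gamma^2)$, induces an embedding of $Z^0$ itself into $G$. Writing the invariant section as $f=(x^2-y^2)s+(x^2+y^2)s'$, the image of $Z^0$ is $\{\alpha^2=\beta\bigl((\beta-\gamma)s+(\beta+\gamma)s'\bigr)\}$, i.e.\ $Z^0$ is again cut out by a section of the same line bundle $-2T-2\pi^*K_\rs$ whose sections define the semi-small morphism used in Proposition~\ref{prop:smoothing}. Goresky--MacPherson's relative Lefschetz theorem with large fibres applies to this (singular) divisor directly --- the singularities along $\alpha=\beta=s'=0$ are no obstruction --- giving $H^3(Z^0)$ torsion-free in one stroke. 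If you want to salvage your route, you would essentially have to rediscover this model: your projective bundle over $\rs\times(\PP^1/\sigma)$ is the same $G$ after the squaring substitution, and once you have it there is no reason to resolve first. As written, though, the argument does not yet establish the proposition.
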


\begin{proof}
We already know from Proposition \ref{prop:smoothing} that $K = 0$.
Since $C$ is connected, to apply Lemma~\ref{lem:pleasant} it remains only to
check that $H^3(Z^0)$ is torsion-free for $Z^0 := Z/\tau$.

Now observe that the branched double cover $G \to G$,
$(\alpha: \beta:\gamma) \mapsto (\alpha:\beta^2:\gamma^2)$ induced an embedding
$Z^0 \into G$. If $f = (x^2 - y^2)s + (x^2 + y^2)s'$, then the image of
$Z^0$ in $G$ is
\[ \{(\alpha:\beta:\gamma) \in G :
\alpha^2 = \beta((\beta-\gamma)s + (\beta+\gamma)s') \} . \]
So $Z^0$ is cut out by a section of the line bundle
$-2T - 2\pi^*K_\rs$, which we argued to be semi-ample in the proof of
Proposition \ref{prop:smoothing}.
While $Z^0$ is singular along the curve $\alpha = \beta = s' = 0$, that is no
obstacle to applying Goresky-MacPherson's Lefschetz theorem with large fibres
as in Proposition \ref{prop:smoothing} to deduce
that $H^3(Z^0)$ is torsion-free.
\end{proof}

\begin{rmk}
\label{rmk:h_inv2}
The pre-image $h \in H^4(Z)$ for the generator of $H^4(\kd)$ chosen above
is patently $\tau$-invariant. To understand the action of $\tau$ on its cochain representatives, we can think geometrically in terms of
the conic fibration $Z \to \rs$ like in
Remark \ref{rmk:h_smoothing}.

In this case, the discriminant
$\Delta = ss'$ is reducible. At non-singular points $x \in \rs$ where
$s(x) = 0$, the fibre over $x$ is a union of two lines that intersect $\kd$ in
distinct points, so each of these lines is mapped to itself by $\tau$.
Thus, $h$ can be viewed as the Poincar\'e dual to a $\tau$-invariant
submanifold, %
and Lemma \ref{lem:h_inv_pd} implies that $\wh B(h) \in H^3_{cpt}(V)$
from \eqref{eq:h_inv_fail} can be taken to be 0.

Meanwhile, at a non-singular point where $s'(x) = 0$, the fibre is a union of
two lines that intersect $\kd$ in their common intersection point, and $\tau$
interchanges the lines. At a point where $s(x) = s'(x) = 0$, the fibre is a
single line tangent to $\kd$.
\end{rmk}

Applying \eqref{eq:b3+z}, with $\rho = r$ and $\chi(C) = 2r-20$, we obtain
\begin{equation}
\label{eq:b3+smoothing}
b_3^+(Z) = \half(120-12r -20 +2r + 2r-20) = 40 - 4r .
\end{equation} 

\begin{ex}
\label{ex:p1p1smooth}
Consider the del Pezzo $\rs = \PP^1 \times \PP^1$, and a double cover
$\kd$ branched over a bidegree $(2,2)$ divisor $C$. 
The intersection form on the invariant lattice $N \subset H^2(\kd)$ is twice
that on $H^2(\PP^1 \times \PP^1)$, \ie in the obvious basis given by the
pull-backs of the generators of $H^2$ of the two $\PP^1$ factors,
\[ N \cong \begin{pmatrix} 0 & 2 \\ 2 & 0 \end{pmatrix} . \]
These basis vectors also span the nef cone. In terms of this basis,
$-\pi^*K_\rs = \cvec{2}{2}$, and the image $\bar c_2(Z) \in N^*$ of $-3\pi^*K_\rs$
is $\rvec{12}{12}$.
\end{ex}

\begin{ex}
\label{ex:p1rsmooth}
For $r \in \{1, \ldots, 9\}$, consider the blow-up $\rs$ of $\PP^2$ in $r-1$
points in general position.
\end{ex}

\begin{rmk}
\label{rmk:nodal_sextic}
There are two non-symplectic involutions with $r = a = 10$, one of which
corresponds to $\rs$ being an Enriques surface (which is of no interest to us,
since the involution has no fixed points), and the other to $Y$ being
$\PP^2$ blown up in 9 points that are
the nodes of a nodal sextic curve. In the latter case, $|{-}2K_\rs|$ is a
pencil spanned by the proper transform of the given sextic (which is an
elliptic curve) and the square of the unique cubic passing through them.
A double cover branched over a generic section of $|{-}2K_\rs|$ therefore gives
a K3 with non-symplectic involution whose fixed set is an elliptic curve.
We can construct a complex 3-fold $Z$ as a double cover of $\rs \times \PP^1$
branched over a smooth divisor $D \in |\calo_{\PP^1} -2K_\rs|$ as above.
However, because $-K_\rs$ is not ample, we cannot apply the Lefschetz
hyperplane theorem to prove that $H^3(Z)$ is torsion-free; indeed,
considering $D$ as a branched double cover of $\rs$ shows that the conclusions
of the Lefschetz theorem are in fact false. 
\end{rmk}

Finally, we note that the blocks obtained by smoothing have the same convenient
genericity features as the ones obtained by blow-up.

\begin{prop}
\label{prop:smoothing_generic}
Let $N \subset L$ be a primitive sublattice, isometric to twice the
intersection lattice of a del Pezzo surface $Y$. Let $\Amp \subset N_\R$
be the subcone corresponding to the ample cone of $Y$, and
let $\fbb$ be the set of building blocks obtained by applying
Construction \ref{constr:inv_smoothing} to the deformation family of $Y$.
Then $\fbb$ is $(\Lambda, \Amp)$-generic for any primitive sublattice 
$\Lambda \subset L$ that contains $N$ such that
$\Lambda \setminus N$ does not contain any $(-2)$-classes.
\end{prop}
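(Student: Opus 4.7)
The plan is to follow the template of Proposition \ref{prop:k3_generic}, inserting one additional step: verifying that the K3 quotient is a del Pezzo surface in the correct deformation family. First I would take $U \subset D_\Lambda$ to be the complement of the countable union of analytic hypersurfaces on which some class of $\Lambda^\perp \subset L$ becomes $(1,1)$, so that for $\Pi \in U$ the corresponding $\Lambda$-polarised K3 surface $\kd$ has $\Pic \kd = \Lambda$ exactly. The hypothesis that $\Lambda \setminus N$ contains no $(-2)$-classes then ensures that the orthogonal complement of $N$ inside $\Pic \kd$ is free of $(-2)$-classes, so Proposition \ref{prop:involution_exists} supplies a non-symplectic involution $\tau$ on $\kd$ with fixed lattice $N$; the same hypothesis is exactly the non-degeneracy of $(\kd,\tau)$.

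Next I would identify the quotient $Y' := \kd/\tau$. Since $N$ is isometric to twice the intersection lattice of a del Pezzo surface, it is totally even of rank at most $9$; combined with non-degeneracy, the lemma preceding \S5.2 shows that $Y'$ is a del Pezzo surface. The intersection form of $Y'$ is $\tfrac12 N$, which is isometric to that of $Y$, so $Y'$ lies in the deformation family of $Y$. Then I would realise $\kd$ as the K3 fibre of a block from Construction \ref{constr:inv_smoothing}: writing $s \in H^0(Y',-2K_{Y'})$ for a section cutting out the fixed curve $C$ of $\tau$ and $s' \in H^0(Y',-2K_{Y'})$ for a generic section, the invariant section $f = (x^2-y^2)s + (x^2+y^2)s'$ has smooth zero locus $D$ by Bertini applied to the pencil spanned by $s$ and $s'$; this pencil is base-point-free because $-2K_{Y'}$ is very ample on a del Pezzo surface. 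The corresponding $Z$ contains $\kd = p^{-1}(\infty)$ as its anticanonical K3 fibre.

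Finally, for $\kclass \in \Amp$, Lemma \ref{lem:ampcover} identifies $\kclass$ with the pull-back of an ample class $\bar\kclass$ on $Y'$. For small $\epsilon > 0$ the class $\bar\kclass + \epsilon H_{\PP^1}$ is ample on $Y' \times \PP^1$, so its pull-back under the finite branched cover $Z \to Y' \times \PP^1$ is an ample class on $Z$ whose restriction to $\kd$ equals $\kclass$ (since $p^*H_{\PP^1}$ restricts trivially to the fibre). The only non-routine step is the del Pezzo identification and the Bertini smoothness of $D$; everything else is a direct application of constructions already established in the section.
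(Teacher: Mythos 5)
Your route is essentially the paper's: the printed proof is literally ``combine Proposition~\ref{prop:involution_exists} and Lemmas~\ref{lem:ampcover} and~\ref{lem:saito}'', and your argument spells out that chain (generic period gives $\Pic\kd=\Lambda$, hence non-degeneracy and an involution with invariant lattice $N$; the quotient is a del Pezzo in the deformation family of $Y$, since the intersection lattice determines that family; build a block of Construction~\ref{constr:inv_smoothing} around $\kd$ and extend the Kähler class). The one ingredient you elide is Lemma~\ref{lem:saito} (or, equivalently, the lattice-theoretic description of the ample cone of a del Pezzo): Lemma~\ref{lem:ampcover} only transfers ampleness between $Y':=\kd/\tau$ and $\kd$, it does not by itself identify the image of the ample cone of $Y'$ in $N_\R$ with the \emph{particular} chamber $\Amp$ fixed in the statement; that identification (possibly after adjusting the marking by a reflection in a $(-2)$-class of $N$, which extends to $L$) is exactly what Lemma~\ref{lem:saito} supplies.

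Two local slips to repair, neither fatal. First, with $f=(x^2-y^2)s+(x^2+y^2)s'$ the restriction of $f$ to $Y'\times\{\infty\}$ is $s\pm s'$, not $s$; so the fibre $p^{-1}(\infty)$ of your block is branched over a generic member of $|{-}2K_{Y'}|$ rather than over the fixed curve $C$ of $\tau$, and hence is not the marked K3 with period $\Pi$ that Definition~\ref{def:generic} requires. Take instead an invariant section restricting to $s$ over the $\sigma$-fixed point, e.g.\ $f=x^2s+y^2s'$ with $\infty=(1{:}0)$. Second, the pencil spanned by $s$ and $s'$ on $Y'$ is never base-point-free (two members of $|{-}2K_{Y'}|$ always meet), so Bertini must be applied not to it but to the linear system of divisors $D\subset Y'\times\PP^1$ cut out by the invariant sections with fixed restriction $s$ over $\infty$; its base locus is $C\times\{\infty\}$, and smoothness of $D$ along it holds automatically because there $df=x^2\,ds\neq 0$, $C$ being smooth. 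With these corrections the proposal is a correct, more detailed version of the paper's argument.
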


\begin{proof}
Combine Proposition \ref{prop:involution_exists} and Lemmas \ref{lem:ampcover}
and \ref{lem:saito}.
\end{proof}

\section{The matching problem}
\label{sec:match}

To use the extra-twisted connected sum construction to produce closed \gtmfd s
it not enough to produce some examples of ACyl Calabi-Yau
3-folds $V_\pm$---possibly with involutions---as in \S\ref{subsec:reducible}
and pick a compatible torus isometry $\tormat$ as in \S\ref{subsec:angles},
since we also
need the asymptotic K3s of $V_\pm$ to be related by a
$\thet$-\hk rotation $\hkr$.
It is helpful to rearrange the problem as: fix a pair $\fbb_+, \fbb_-$ of
deformation families of building blocks with automorphism groups $\Gamma_\pm$,
fix $\tormat$, and \emph{then} construct the \emph{pair} $V_+, V_-$ with the
desired $\hkr$ from elements of $\fbb_\pm$. 

A key step is that we note in \S\ref{subsec:necessary} that if one prescribes
the action of $\hkr$ on $H^2$ of the K3s (captured by the ``configuration'' in
Definition \ref{def:config})
then that defines certain overlattices $\Lambda_\pm$ of the polarising lattices
$N_\pm$ of the building blocks, such that the K3s in a solution to the
matching problem will be $\Lambda_\pm$-polarised. In \S\ref{subsec:sufficient}
we turn that around to say roughly that if any generic $\Lambda_\pm$-polarised
K3 appears as the anticanonical divisor in some member of $\fbb_\pm$
(see Definition \ref{def:generic}) then the matching problem can be solved.

The argument is largely the same as that for matching in rectangular twisted
connected sums in \cite[\S 6]{g2m} or \cite[\S 5]{exotic} (more closely
following the latter), the main difference being how the description of the
lattices $\Lambda_\pm$ depends on the gluing angle $\thet$.

\subsection{Matchings and \hk rotations}

Let us consider the consequences of the $\thet$-\hk rotation condition for the
action of $\hkr$ on cohomology. Let $N^\R_\pm \subset H^2(\kd_\pm)$
be the image of $H^2(V_\pm; \R) \to H^2(\kd_\pm;\R)$ (generated by the
polarising lattice $N_\pm$ as defined in \ref{def:block}), and let
$\Pi_\pm \subset H^2(\kd_\pm)$ be \emph{period} of $\kd_\pm$, \ie
the space of classes of type (2,0) + (0,2).
Then $[\omega^I_\pm] \in N^\R_\pm$, and it is moreover the restriction of
a Kähler class from $Z_\pm$. Meanwhile $\Pi_\pm$ is orthogonal to $N^\R_\pm$,
and is spanned by $[\omega^J_\pm]$ and $[\omega^J_\pm]$.
If we let $\pi_\pm : H^2(\kd_\pm;\R) \to N^\R_\pm$ be the orthogonal
projection, and $\pi^\perp_\pm = \Id - \pi_\pm$, then $\hkr: \kd_+ \to \kd_-$
satisfying \eqref{eq:hkr} implies the following condition also holds.

\begin{defn}
\label{def:matching}
Given building blocks $(Z_+, \kd_+)$ and $(Z_-, \kd_-)$ and $\thet \not= 0$,
call a diffeomorphism
$\hkr : \kd_+ \to \kd_-$ a \emph{$\thet$-matching} if there are Kähler classes
on $Z_\pm$ whose restrictions $\kclass_\pm \in H^2(\kd_\pm;\R)$ satisfy
\begin{itemize}
\item $\pi_+ \hkr^* \kclass_- = (\cos \thet) \kclass_+$ and
$\pi_- (\hkr^{-1})^* \kclass_+ = (\cos \thet) \kclass_-$;
\item $\pi^\perp_+ \hkr^* \kclass_- \in \Pi_+$ and
$\pi^\perp_- (\hkr^{-1})^* \kclass_+ \in \Pi_-$ and moreover
\item $\hkr^* \Pi_- \cap \Pi_+$ is non-trivial.
\end{itemize}
\end{defn}

\begin{lem}
Given blocks $(Z_\pm, \kd_\pm)$, a diffeomorphism $\hkr : \kd_+ \to \kd_-$
is a $\thet$-matching if and only if there exist \hk triples $\omega^I_\pm,
\omega^J_\pm, \omega^K_\pm$ on $\kd_\pm$ such that $[\omega^I_\pm]$ is the
restriction of a Kähler class from $Z_\pm$, and $\hkr$ is a $\thet$-\hk
rotation
with respect to the triples.
\end{lem}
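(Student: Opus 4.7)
The forward direction is immediate on cohomology: with $\kclass_\pm := [\omega^I_\pm]$, taking cohomology classes of both lines of \eqref{eq:hkr} and using that $[\omega^J_\pm], [\omega^K_\pm] \in \Pi_\pm$ (as real and imaginary parts of a holomorphic $(2,0)$-form for $I_\pm$) directly yields the three conditions of Definition~\ref{def:matching}.

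For the converse, suppose $\hkr$ is a $\thet$-matching with classes $\kclass_\pm$. Rescale so that $\kclass_+^2 = \kclass_-^2$, and apply Yau's theorem on $(\kd_\pm, I_\pm)$ to produce the unique Ricci-flat Kähler metrics $g_\pm$ with Kähler forms $\omega^I_\pm \in \kclass_\pm$; the associated holomorphic volume forms $\Omega_\pm$, normalised by the Calabi-Yau condition, are then determined only up to a $U(1)$-phase, and we write $\Omega_\pm = \omega^J_\pm + i\omega^K_\pm$. The first two matching conditions, together with $\hkr^*$ being an isometry on $H^2$, give $\hkr^*\kclass_- = \cos\thet\,\kclass_+ + \alpha$ with $\alpha \in \Pi_+$ of norm $|\sin\thet|\,\|\kclass_+\|$. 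Rotate the phase of $\Omega_+$ so that $[\omega^J_+] = \tfrac{1}{\sin\thet}\alpha$---possible because $[\omega^J_+]$ traces a circle of radius $\|\kclass_+\|$ in $\Pi_+$ as the phase varies---and analogously set $[\omega^J_-] = \tfrac{1}{\sin\thet}\,\pi_-^\perp(\hkr^{-1})^*\kclass_+$. A short direct computation then gives $\hkr^*[\omega^J_-] = \sin\thet\,\kclass_+ - \cos\thet\,[\omega^J_+]$, the cohomological content of the second equation in \eqref{eq:hkr}. Combining all three matching conditions, and with a brief case split according to whether $\cos\thet$ or $\sin\thet$ vanishes, one checks that $\hkr^*$ in fact carries the oriented positive-definite 3-plane $\Sigma_- := \R\kclass_- \oplus \Pi_-$ onto $\Sigma_+ := \R\kclass_+ \oplus \Pi_+$; the orientation data then forces the cohomological analogue $\hkr^*[\omega^K_-] = -[\omega^K_+]$ of the first equation.

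It remains to upgrade these cohomological identities to identities of forms. The key input is that a hyper-Kähler metric on a K3 surface is determined by its oriented positive-definite 3-plane of parallel self-dual 2-forms in $H^2$: given such a 3-plane $\Sigma$, Yau's theorem applied to any compatible complex structure together with the corresponding element of $\Sigma$ produces the same underlying Riemannian metric. Applying this to $g_+$ and $\hkr^* g_-$ on $\kd_+$---both hyper-Kähler metrics with oriented 3-plane $\Sigma_+$---gives $\hkr^* g_- = g_+$, so that $\hkr$ is an isometry. Consequently $(\hkr^*\omega^I_-, \hkr^*\omega^J_-, \hkr^*\omega^K_-)$ is a hyper-Kähler triple for $g_+$; it differs from $(\omega^I_+, \omega^J_+, \omega^K_+)$ by the element of $SO(3)$ uniquely determined by its induced action on cohomology, which by the previous step is the $\thet$-\hk-rotation matrix. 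This yields \eqref{eq:hkr} on the level of forms. The principal technical obstacle is this final upgrade, relying on the characterisation of hyper-Kähler K3 metrics by their cohomological 3-plane---a standard but delicate consequence of Yau together with Torelli for K3.
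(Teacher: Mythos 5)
Your strategy is the same as the paper's: the forward direction is read off on cohomology, and for the converse you take the Yau representative of the Kähler class on each side and rotate the phase of the holomorphic $2$-form so that $[\omega^J_\pm]$ is aligned with $\pi_\pm^\perp$ of the pulled-back Kähler class, which produces exactly the cohomological identities underlying \eqref{eq:hkr}; this bookkeeping matches the paper's proof step for step.

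The divergence is in the final upgrade from cohomology to forms, and there the input you invoke is stronger than what is true. A \hk metric on the K3 $4$-manifold is \emph{not} determined as a tensor by its positive $3$-plane of self-dual classes in $H^2$: the Torelli-type statement only says that two such metrics with the same period $3$-plane are isometric via \emph{some} diffeomorphism acting suitably (e.g.\ trivially) on $H^2$. So the deduction ``$\hkr^*g_-=g_+$, hence $\hkr$ is an isometry'' does not follow from the hypotheses. Indeed, the conditions of Definition~\ref{def:matching} depend only on the map $\hkr^*$ on $H^2$, while being an isometry between the Yau metrics does not: precomposing a genuine $\thet$-\hk rotation with a diffeomorphism of $\kd_+$ isotopic to the identity but not an isometry leaves the matching conditions untouched and destroys \eqref{eq:hkr} for any triples compatible with the complex structures induced from $Z_\pm$, so no purely cohomological argument can force the given $\hkr$ to be an isometry on the nose. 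What your argument genuinely yields is that $\hkr$ agrees with a $\thet$-\hk rotation up to composition with a diffeomorphism acting trivially on cohomology. To be fair, the paper's own closing sentence (appealing to uniqueness of Ricci-flat Kähler metrics in their Kähler class) is terse at exactly the same point — that appeal presupposes that the relevant complex structures already correspond under $\hkr$; the place where the upgrading is carried out properly, by applying the Torelli theorem to produce a diffeomorphism with prescribed Hodge-theoretic behaviour, is Lemma~\ref{lem:torelli}. Alternatively, if one does not insist that the triple on $\kd_+$ be compatible with the complex structure induced from $Z_+$, the converse closes with no Torelli input at all: define the triple on $\kd_+$ to be the appropriate $SO(3)$ rotation of the $\hkr$-pullback of the triple on $\kd_-$, so that \eqref{eq:hkr} holds tautologically and the only thing to verify is that $[\omega^I_+]$ equals $\kclass_+$, which is exactly the cohomological computation you already performed.
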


\begin{proof}
If $\hkr$ is a $\thet$-\hk rotation then taking $\kclass_\pm = [\omega^I_\pm]$
satisfies the first two conditions in Definition \ref{def:matching},
while $[\omega^K_+] \in \hkr^*\Pi_- \cap \Pi_+$.

For the converse, note that $\pi_+^\perp \hkr^*\kclass_-$ is a non-zero element
of $\Pi_+$, but is not in $\hkr^*\Pi_- \cap \Pi_+$.
Therefore $\kd_+$ has a holomorphic 2-form $\omega^J_+ + i\omega^K_+$ with 
$[\omega^J_+] \in \pi_+^\perp \hkr^*\kclass_-$ and
$[\omega^K_+] \in \hkr^*\Pi_- \cap \Pi_+$.
By the Calabi-Yau theorem, there is a Ricci-flat Kähler metric
$\omega^I_+ \in \kclass_+$.

Choosing $\omega^I_-, \omega^J_-, \omega^K_-$ analogously and normalising
ensures that
$[\hkr^* \omega^I_-] = (\cos \thet)[\omega^I_+] + (\sin \thet)[\omega^J_+]$,
$[(\hkr^{-1})^*\omega^I_+] =
(\cos \thet)[\omega^I_-] + (\sin \thet)[\omega^J_-]$
and $[\hkr^*\omega^K_-] = [\omega^K_+]$.
Uniqueness of Ricci-flat Kähler metrics in their Kähler class implies
\eqref{eq:hkr}, so $\hkr$ is a \hk rotation.
\end{proof}

Note that in combination with Theorem \ref{thm:acyl}, whenever we find a
$\thet$-matching of a pair of building blocks we can also construct a pair of
ACyl Calabi-Yau manifolds with a \hk rotation.
If we have also chosen a torus matching with gluing angle $\thet$, and the
blocks have any necessary involutions, then we have all the ingredients needed
to apply Construction~\ref{constr:xtcs}.

\subsection{Marked K3s and configurations}

To understand the topology of the extra-twisted connected sum $M$ arising
from some $\thet$-matching $\hkr : \kd_+ \to \kd_-$ of a pair of building
blocks $Z_+, Z_-$, we need to know not just some data about $Z_\pm$
(described in \S\ref{subsec:data}), but also something about the action
of $\hkr^* : H^2(\kd_-) \to H^2(\kd_+)$; for a start, $\hkr^*$ clearly plays
a role in the Mayer-Vietoris calculation of the cohomology of $M$
(see \S\ref{subsec:mv}).

At this point is convenient to switch to the language of marked K3 surfaces,
\ie choose isomorphisms $\hdg_\pm : L \to H^2(\kd_\pm)$ where $L$ is a fixed
copy of the unimodular lattice of signature $(3,19)$. Choices of markings
of anticanonical divisors in building blocks in particular identify the
polarising lattices $N_\pm$ with primitive sublattices of $L$.
Now, if we are given a $\thet$-\hk rotation or $\thet$-matching
$\hkr: \kd_+ \to \kd_-$, then we could choose
$\hdg_- := \hkr^* \circ \hdg_+$. Thus we obtain a pair of embeddings of
$N_+$ and $N_-$ into $L$, depending only on the choice of $\hdg_+$.

\begin{defn}
\label{def:config}
A \emph{configuration} of polarising lattices $N_+$, $N_-$ is a pair of
primitive embeddings $N_\pm \into L$. Two configurations are equivalent
if they are related by the action of the isometry group~$O(L)$.
\end{defn}

So in these terms any \hk rotation or matching has an associated configuration
whose equivalence class is well-defined.
As we see in \S\ref{sec:top}, the configuration captures enough information
that we can compute many topological invariants.

On the other hand, for a fixed pair of building blocks there is usually
little chance of finding a matching.
Following the pattern of \cite[\S 6]{g2m} and \cite[\S 5]{exotic}, it is more
fruitful to set up the matching problem as

\smallskip
{\narrower \noindent \em Given $\thet \in \R/2\pi\Z$ and a pair
$\fbb_+$, $\fbb_-$ of sets of building blocks with fixed topological type and
polarising lattices $N_\pm$, which configurations of embeddings
$N_\pm \subset L$ arise from some matching of elements of $\fbb_+$
and~$\fbb_-$?\par}
\smallskip
\noindent
Using the Torelli theorem, we can reduce the problem of finding building blocks with a $\thet$-matching compatible with a given
configuration to finding certain triples of classes in $L_\R$.

\begin{lem}
\label{lem:torelli}
Let $(Z_\pm, \kd_\pm)$ be a pair of blocks, and let
$\hdg_\pm : L \to H^2(\kd_\pm)$ be markings.
Then there exists a $\thet$-matching $\hkr : \kd_+ \to \kd_-$ with
$\hkr^* = \hdg_+ \circ \hdg_-^{-1}$ if and only if
there exists a triple of unit positive classes
$\kclass_0, \kclass_+, \kclass_-$ in $L_\R$ such that
\begin{itemize}
\item $\kclass_0 \perp \kclass_\pm$
\item $\kclass_+.\kclass_- = \cos \thet$, and
\item $\hdg_\pm(\kclass_\pm) \in H^2(\kd_\pm;\R)$
is the restriction of a Kähler class from $Z_\pm$,
\item  $\gen{\kclass_\mp - \cos \vartheta \, \kclass_\pm, \pm \kclass_0}$ is
the period of the marked K3 $(\kd_\pm, \hdg_\pm)$.
\end{itemize}
\end{lem}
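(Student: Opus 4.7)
The forward direction is a straightforward translation. The preceding lemma shows that a $\thet$-matching $\hkr$ is equivalent to a $\thet$-\hk rotation for some choice of \hk triples $(\omega^I_\pm, \omega^J_\pm, \omega^K_\pm)$ on $\kd_\pm$ with $[\omega^I_\pm]$ the restrictions of Kähler classes from $Z_\pm$. Setting $\kclass_\pm := \hdg_\pm^{-1}[\omega^I_\pm]$ and $\kclass_0 := \hdg_+^{-1}[\omega^K_+]$, the conditions \eqref{eq:hkr} and the identity $\hkr^* = \hdg_+ \hdg_-^{-1}$, together with the orthonormality of the \hk triples, translate directly into the four bulleted conditions. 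For instance, $\kclass_+.\kclass_- = \cos\thet$ is obtained by pairing $[\omega^I_+]$ with $\hkr^*[\omega^I_-] = \cos\thet\,[\omega^I_+] + \sin\thet\,[\omega^J_+]$, while $\kclass_0 \perp \kclass_\pm$ is immediate from $\omega^K_+ \perp \omega^I_+, \omega^J_+$ and from $\hkr^*\omega^K_- = -\omega^K_+$.

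For the converse, I would first apply Yau's theorem to obtain Ricci-flat Kähler forms $\omega^I_\pm$ in the classes $\hdg_\pm(\kclass_\pm)$. The period conditions then let me choose holomorphic 2-forms $\omega^J_\pm + i\omega^K_\pm$ on $\kd_\pm$, suitably normalised using the associated \hk metric, so that $[\omega^K_+] = \hdg_+(\kclass_0)$, $[\omega^K_-] = -\hdg_-(\kclass_0)$, and $[\omega^J_\pm]$ is proportional to $\hdg_\pm(\kclass_\mp - \cos\thet\,\kclass_\pm)$ with the ratio fixed by requiring unit length. The hypotheses $\kclass_\pm^2 = 1$ and $\kclass_+.\kclass_- = \cos\thet$ make all of these identities consistent.

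The key step is then to produce the diffeomorphism $\hkr$ by a \hk rotation trick followed by Torelli. On the smooth manifold underlying $\kd_+$, consider the rotated triple
\[ \tilde\omega^I = \cos\thet\,\omega^I_+ + \sin\thet\,\omega^J_+,\quad
\tilde\omega^J = \sin\thet\,\omega^I_+ - \cos\thet\,\omega^J_+,\quad
\tilde\omega^K = -\omega^K_+, \]
which spans the same positive 3-plane in $H^2$ and hence still defines a \hk metric; the associated complex structure gives a new marked K3 surface $\kd_+'$, with marking still $\hdg_+$. A direct cohomological computation shows that the Kähler class of $\kd_+'$ is $\hdg_+(\kclass_-)$ and its period is $\hdg_+\langle \kclass_+ - \cos\thet\,\kclass_-, -\kclass_0\rangle$, which are precisely the images under $\hdg_+\circ\hdg_-^{-1}$ of the Kähler class $\hdg_-(\kclass_-)$ and the period of $(\kd_-, \hdg_-)$. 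Hence $\hdg_+\circ\hdg_-^{-1}$ is an effective Hodge isometry from $\kd_-$ to $\kd_+'$, and the strong Torelli theorem for marked K3 surfaces (Burns--Rapoport, Looijenga--Peters) produces a unique biholomorphism $g:\kd_+' \to \kd_-$ inducing it. Taking $\hkr := g$, viewed as a diffeomorphism $\kd_+ \to \kd_-$ via the identification of underlying smooth manifolds, yields the required $\thet$-\hk rotation.

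The main obstacle is bookkeeping in this last step: one must pin down the scaling of the holomorphic 2-forms on the two sides, keep track of the two complex structures on the underlying manifold of $\kd_+$, and verify that the rotation matrix on the 3-plane spanned by the \hk triple has determinant $+1$ so that the resulting cohomological isometry is orientation-preserving on the positive 3-plane and hence qualifies for the strong Torelli theorem.
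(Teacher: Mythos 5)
Your proposal is correct and takes essentially the same route as the paper: apply Yau's theorem and the period hypothesis to build a \hk triple, rotate it so that $\hdg_+\circ\hdg_-^{-1}$ becomes an effective Hodge isometry, and invoke the strong Torelli theorem, with the form-level identities \eqref{eq:hkr} then following from uniqueness of Ricci-flat metrics in a Kähler class and of the holomorphic $2$-form. The only (immaterial) differences are that you perform the rotation on $\kd_+$ while the paper rotates the triple on $\kd_-$, and that you also record the straightforward forward direction, which the paper leaves implicit.
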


\begin{proof}
Let
$\kclass'_\pm := \frac{\kclass_\mp - \cos \thet \, \kclass_\pm}{\sin \thet}$,
which is a unit class perpendicular to $\kclass_\pm$ and $\kclass_0$.
Let $\omega^J_- + i \omega^K_-$ be the holomorphic 2-form on $\kd_-$ in the
cohomology class
$\hdg_-(\kclass'_- -i\kclass_0)$,
and let $\omega^I_-$ be the unique Ricci-flat Kähler metric in $\kclass_-$.
Then $\omega^I_-, \omega^J_-, \omega^K_-$ is a \hk triple.
The closed complex 2-form
$\Omega' := -(\cos \thet)\omega^I_- + (\sin \thet)\omega^J_- - i \omega^K_-$
defines an integrable complex structure on~$\kd_-$.
 Let $\kd'_-$ denote $\kd_-$
equipped with this complex structure for which $\Omega'$ is holomorphic.
Then $\omega' := -(\sin \thet)\omega^I_- - (\cos \thet)\omega^J_-$ is a
Kähler form on $\kd'_-$.

Now $\hdg_+ \circ \hdg_-^{-1} : H^2(\kd'_-;\Z) \to H^2(\kd_+;\Z)$ is an
isometry that maps $[\Omega']$ to $\hdg_+(\kclass'_+ + i\kclass_0)$,
and $[\omega']$ to $\hdg(\kclass_+)$. Thus $\hdg_+ \circ \hdg_-^{-1}$
is a Hodge isometry, and by the Torelli theorem it is realised as $\hkr^*$
for some biholomorphism $\hkr : \kd_+ \to \kd'_-$.
\end{proof}

\begin{rmk}
\label{rmk:nikulin}
Given a configuration of $N_+$ and $N_-$, we obtain lattice
\[ W := N_+ + N_- \subseteq L \]
containing $N_+$ and $N_-$ as primitive sublattices.
In general, it is possible for $W$ to fail to be primitive in $L$
(see \cite[Example No 8]{g2m} for such a twisted connected sum), but
for simplicity we will not look for such configurations in this paper.
By only using examples of small rank and with
$W$ primitively embedded in $L$, the equivalence classes of the
configurations are completely characterised just by the embeddings of $N_\pm$
into $W$. This is a consequence of the following result
of Nikulin \cite[Theorem 1.12.4]{nikulin79}.
\end{rmk}

\begin{thm}
\label{thm:exist_emb}
Let $W$ be an even non-degenerate lattice of signature $(t_+, t_-)$,
and $L$ an even unimodular lattice of indefinite signature $(\ell_+,\ell_-)$.
If $t_+ \leq \ell_+$, $t_- \leq \ell_-$ and $2\rk W \leq \rk L$, then there
exists a primitive embedding $W \into L$, unique up to $O(L)$.
\end{thm}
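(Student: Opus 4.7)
The plan is to reduce the existence and uniqueness of the primitive embedding to Nikulin's structural results on lattices with prescribed discriminant form. Recall that if $W \hookrightarrow L$ is a primitive embedding with orthogonal complement $T$, then $L$ is an even overlattice of $W \oplus T$, and the subgroup $L/(W \oplus T) \subseteq q_W \oplus q_T$ is isotropic. Since $L$ is unimodular, this subgroup is maximally isotropic and meets each summand trivially; equivalently, it is the graph of an isomorphism of discriminant forms $\gamma \colon q_T \xrightarrow{\sim} -q_W$. Thus primitive embeddings $W \hookrightarrow L$ (up to the action of $O(L)$ that preserves $W$ setwise) are parametrised by pairs $(T, \gamma)$ where $T$ is an even lattice of signature $(\ell_+ {-} t_+, \ell_- {-} t_-)$ with discriminant form isometric to $-q_W$, modulo $O(T)$ acting on $\gamma$.

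For existence, I would invoke Nikulin's sufficient criterion for the existence of an even lattice with prescribed discriminant form and signature: one needs $\rk T = \rk L - \rk W \geq \ell(q_W)$ and the signature to be non-negative in each component. The first follows from $2 \rk W \leq \rk L$ together with the fact that the length $\ell(q_W)$ of the discriminant group of $W$ is at most $\rk W$, and the signature condition is exactly $t_\pm \leq \ell_\pm$. This supplies the lattice $T$ and hence, by picking any isomorphism $\gamma$, a primitive embedding $W \hookrightarrow L$.

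For uniqueness, note that any two primitive embeddings give rise to orthogonal complements $T_1, T_2$ that are in the same genus (same signature and discriminant form $-q_W$). Under our hypotheses, $\rk T \geq \rk W \geq \ell(q_W)$ and $\rk T + \rk W = \rk L$, which combined with indefiniteness of $L$ puts $T$ within the range of Nikulin's genus-uniqueness theorem, so $T_1 \cong T_2$. To promote this to an equivalence of configurations in $O(L)$, I would use the complementary result of Nikulin that under the same numerical conditions the natural map $O(T) \to O(q_T)$ is surjective: any discrepancy between the two choices of $\gamma$ can then be absorbed by an isometry of $T$, and gluing with the identity on $W$ produces the required element of $O(L)$.

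The substance of the argument lies entirely in two black-box inputs from \cite{nikulin79}, namely the existence and genus-uniqueness theorems for even lattices with prescribed discriminant form, plus surjectivity of $O(T) \to O(q_T)$; the main obstacle is simply verifying that the numerical conditions $t_\pm \leq \ell_\pm$ and $2 \rk W \leq \rk L$, together with indefiniteness of $L$, place $T$ within the range where each of these three statements applies. Once that bookkeeping is done, the reformulation via discriminant forms packages everything into a single application of \cite[Theorem 1.12.4]{nikulin79}.
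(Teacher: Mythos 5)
The paper does not actually prove this statement: Theorem \ref{thm:exist_emb} is quoted directly from Nikulin \cite[Theorem 1.12.4]{nikulin79} (see Remark \ref{rmk:nikulin}), so the only meaningful comparison is with Nikulin's own argument, and your reduction---primitive embeddings $W \into L$ correspond to even lattices $T$ of signature $(\ell_+ - t_+,\, \ell_- - t_-)$ with $q_T \cong -q_W$, together with a gluing isomorphism of discriminant forms, modulo isometries of $T$---is exactly the right framework. For the existence half your sketch is essentially Nikulin's proof; the only points you elide are the signature congruence modulo $8$ in the existence criterion for $T$ (automatic here, since $L$ even unimodular indefinite gives $\ell_+ - \ell_- \equiv 0 \pmod 8$ while $\operatorname{sign} q_W \equiv t_+ - t_- \pmod 8$) and the extra $p$-adic conditions that Nikulin's existence theorem imposes in the boundary case $\rk T = \ell(q_{W,p})$.

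The uniqueness half, however, has a genuine gap. The hypotheses only give $\rk T \geq \rk W \geq \ell(q_T)$ and indefiniteness of $L$, whereas the genus-uniqueness and $O(T)\to O(q_T)$-surjectivity results you invoke require indefiniteness of $T$ \emph{itself} and the stronger bound $\rk T \geq \ell(q_T) + 2$ (with refinements at $p=2$). Neither follows from the stated assumptions: if $t_+ = \ell_+$ the complement is negative definite, and the rank margin can be zero. This is not a removable technicality, because uniqueness genuinely fails at the boundary the hypotheses allow: take $W = \langle 12 \rangle$ and $L$ the hyperbolic plane with isotropic basis $e,f$, $e\cdot f = 1$ (so $t_\pm \leq \ell_\pm$ and $2\rk W = \rk L$); then $e + 6f$ and $2e + 3f$ are primitive vectors of square $12$, but the isometry group of the hyperbolic plane has order $4$ and does not relate them, so there are two inequivalent primitive embeddings. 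Hence no argument can close your uniqueness step as stated; one needs the margin demanded by Nikulin's uniqueness theorem (strict inequalities $t_\pm < \ell_\pm$ and $\rk L - \rk W \geq \ell(q_W) + 2$, \cite[Theorem 1.14.4]{nikulin79}), which does hold comfortably in every application made in this paper---there $L$ is the K3 lattice of rank $22$, $\rk W$ is small, and the complement is indefinite---but is not a consequence of the hypotheses as quoted.
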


\subsection{Necessary conditions for matching}
\label{subsec:necessary}

Let us next consider what necessary conditions Lemma \ref{lem:torelli} imposes
on a configuration for it to be realised by a matching of blocks.
Note first of all that one must have
$\kclass_\pm \in N_\pm$, while the period
$\gen{\kclass_\mp - \cos \vartheta \, \kclass_\pm, \pm \kclass_0}$ is orthogonal to
$N_\pm$. Hence $\pi_\pm k_\mp$ is precisely $\cos \vartheta \, k_\pm$, where
$\pi_\pm : L_\bbr \to N_\pm(\bbr)$ is the orthogonal projection.
Observe that $\pi_+\pi_- : N_+(\bbr) \to N_+(\bbr)$ is self-adjoint
(since $\inner{x, \, \pi_+\pi_- y} = \langle \pi_-x, \, \pi_- y \rangle$
is symmetric in $x,y \in N_+$)
so $N_+(\bbr)$ splits as a direct sum of eigenspaces.

\begin{notn}
\label{notn:espace}
For $\psi \in \R$, let $N_\pm^\psi \subset N_\pm(\R)$ denote the
$(\cos \psi)^2$-eigenspace of $\pi_\mp\circ \pi_\mp$.
\end{notn}

Clearly $\pi_+$ maps $N_+(\bbr)^\psi$ to
$N_-(\bbr)^\psi$, and is invertible if $\psi \not= 0$.
Of course, $N_+(\bbr)^0 = N_-(\bbr)^0 = N_+(\bbr) \cap N_-(\bbr)$.
For any $x \in N_+(\bbr)^\psi$ and $y := \pi_+x$
\begin{equation}
\label{eq:psi}
\frac{(x.y)^2}{(x.x)(y.y)} = (\cos \psi)^2 , \quad y.y = (\cos \psi)^2(x.x).
\end{equation}
In particular, it is necessary that $\kclass_\pm \in N_\pm^\thet$.

Here is a qualitative difference between the matching problem for rectangular
twisted connected sums ($\thet = \frac{\pi}{2}$) and extra-twisted connected
sums ($\thet \not= \frac{\pi}{2}$): in the former case we can choose
$\kclass_\pm \in N_\pm^{\frac{\pi}{2}} = N_\pm(\bbr) \cap N_\mp^\perp$
independently of each other, while in the latter case $\kclass_+$ and
$\kclass_-$ \emph{determine each other}.

\begin{rmk*}
If the ambient space $L$ were positive definite then the eigenvalues $\lambda$
of $\pi_+\circ \pi_-$ would obviously be forced to lie in $[0,1]$.
In a space of indefinite signature it could in general happen that
\begin{itemize}
\item $\lambda < 0$, if $x \in N_+$ such that $x^2$ and $(\pi_- x)^2$ have
opposite sign,
\eg if $N_\pm$ in hyperbolic space  with bilinear form $\sm{0 & 1 \\ 1 & 0}$
are spanned by $x_+ = (2,1)$ and $x_- = (1, -2)$ then
$\pi_\mp x_\pm = \pm\frac{3}{4}x_\mp$, and the
unique eigenvalue of $\pi_+\pi_-$ is $-\frac{9}{16}$; or that
\item $\lambda > 1$, if $x \in N_+$ and $\pi_- x \in N_-$
span an indefinite 2-dimensional subspace but $x^2$ and $(\pi_- x)^2$ have
the same sign, \eg if we take
$N_\pm$ in the hyperbolic space
to be the subspaces spanned by $x_+ = (2,1)$ and $x_- = (1,2)$, then
$\pi_\mp x_\pm = \frac{5}{4}x_\mp$, and the unique eigenvalue of $\pi_+\pi_-$
is $\frac{25}{16}$.
\end{itemize}
However, for matchings with the
given configuration to exist, we saw above that there must exist some
positive classes $\kclass_\pm \in N_\pm^\thet$, which are also orthogonal
to the positive class $\kclass_0$. That forces $N_+ + N_-$ to split as an
orthogonal direct sum of its intersection with the orthogonal complement in $L$
to the span of $\kclass_+, \kclass_-$ and $\kclass_0$, which is negative
definite, and the 2-dimensional positive definite span of $\kclass_+$ and
$\kclass_-$. That forces all eigenvalues of $\pi_+\pi_-$ to lie in $[0,1]$, so
that they can be written as $(\cos \psi)^2$. 
\end{rmk*}

The existence of a $\thet$-matching with a given configuration may also
impose constraints on the Picard lattices of the K3s $\kd_\pm$, beyond the
\emph{a priori} condition that $\Pic \kd_\pm$ contains $N_\pm$.
Let $N_\pm^{\not=\thet} \subset N_\pm$ denote the orthogonal complement
of $N_\pm^\thet$. Then $\kclass_0 \perp N_\pm^{\not= \thet}$ because
$N_\pm^{\not=\thet} \subset N_\pm$, while
$\kclass_+, \kclass_- \perp N_\pm^{\not=\thet}$ because
$\kclass_\pm \in N_\pm^\thet$. Therefore
\begin{equation}
\label{eq:Lambda}
\Lambda_\pm :=
\textrm{primitive overlattice of } N_\pm + N_\pm^{\not=\thet} \subset L
\end{equation}
is perpendicular to the period
$\gen{\kclass_\mp - \cos \vartheta \, \kclass_\pm, \pm \kclass_0}$ of
$\kd_\pm$, \ie $\Lambda_\pm \subset \Pic \kd_\pm$.

In summary, given a pair of families of building blocks $\fbb_\pm$, to find
some pair of elements $(Z_\pm, \kd_\pm) \in \fbb_\pm$ with a $\thet$-matching
$\hkr : \kd_+ \to \kd_-$ it is necessary that we can take the marked
$(Z_\pm, \kd_\pm, \hdg_\pm)$ such that
\begin{enumerate}
\item The intersection of $N_\pm(\R)^\thet$ with the image $\mathcal{K}_{Z_\pm}
\subset L_\R$ of the Kähler cone of $Z_\pm$ is non-empty.
Moreover, if $\thet \not= \frac{\pi}{2}$ then the intersection of
$\epsilon\pi_- (N_+ \cap \mathcal{K}_{Z_+})$ and $\mathcal{K}_{Z_-}$ is
non-empty too, where
\[ \epsilon := (\textrm{sign of } \cos \vartheta) \in \{\pm1\}. \]
\item $\kd_\pm$ is $\Lambda_\pm$-polarised.
\end{enumerate}

\subsection{Sufficient conditions for existence of matching}
\label{subsec:sufficient}

On the other hand, for the family $\fbb_\pm$ to be
$(\Lambda_\pm, \Amp_{\fbb_\pm})$-generic for some open cone
$\Amp_{\fbb_\pm} \subset N_\pm(\R)$ (Definition \ref{def:generic}) says roughly
that a generic $\Lambda_\pm$-polarised K3 can be embedded as an anticanonical
divisor in some block $Z_\pm \in \fbb_\pm$, and moreover in such a way that the
Kähler cone of $Z_\pm$ contains $\Amp_{\fbb_\pm}$. This genericity property is
enough to obtain a sufficient condition for the existence of $\thet$-matchings.

\begin{thm}
\label{thm:matching}
Let $\fbb_\pm$ be a pair of families of building blocks with polarising
lattices $N_\pm$, and $\thet \in \R \setminus \frac{\pi}{2}\Z$.
Let $N_\pm \into L$ be a configuration of the polarising lattices, and
define $\Lambda_\pm$ as in \eqref{eq:Lambda}.
Suppose that the family $\fbb_\pm$ is $(\Lambda_\pm, \Amp_{\fbb_\pm})$-generic.
If
\begin{equation}
\label{eq:amps}
\epsilon \pi_-(N_+(\bbr)^\thet \cap \Amp_{\fbb_+}) \cap \Amp_{\fbb_-}
\neq \emptyset .
\end{equation}
then there exist
$(Z_\pm, \kd_\pm) \in \fbb_\pm$ with an angle $\thet$ K3 matching
$\hkr : \kd_+ \to \kd_-$ with the prescribed configuration. 
\end{thm}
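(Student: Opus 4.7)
The plan is to reduce the existence of a matching to Lemma \ref{lem:torelli}, by constructing a triple of positive unit classes $(\kclass_0, \kclass_+, \kclass_-)$ in $L_\R$ together with markings $\hdg_\pm$ of anticanonical K3 surfaces in some $(Z_\pm,\kd_\pm) \in \fbb_\pm$, such that $\hdg_\pm(\kclass_\pm)$ is restricted from a Kähler class on $Z_\pm$ and $\hdg_\pm(\gen{\kclass_\mp - \cos\thet\,\kclass_\pm,\, \pm\kclass_0})$ equals the period of $\kd_\pm$. As observed in \S\ref{subsec:necessary}, once $\kclass_+$ is fixed the $\thet$-eigenspace condition forces $\kclass_-$ to be proportional to $\pi_-(\kclass_+)$, so the only freedom is in the choices of $\kclass_+$ and $\kclass_0$; genericity will then be used to realise the resulting periods by actual building blocks in the families.

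First I would pick $\kclass_+ \in N_+(\R)^\thet \cap \Amp_{\fbb_+}$ of unit norm such that $\epsilon\,\pi_-(\kclass_+) \in \Amp_{\fbb_-}$, which is possible by hypothesis \eqref{eq:amps}, and set $\kclass_- := (\cos\thet)^{-1}\pi_-(\kclass_+)$. The eigenspace identity $\pi_+\pi_-(\kclass_+) = (\cos\thet)^2 \kclass_+$ from Notation \ref{notn:espace} yields $|\kclass_-|=1$ and $\kclass_+ \cdot \kclass_- = \cos\thet$, and the sign $\epsilon$ ensures $\kclass_- \in \Amp_{\fbb_-}$ (since it is a positive multiple of $\epsilon\,\pi_-(\kclass_+)$). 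The paragraph preceding \eqref{eq:Lambda} then guarantees that each $\kclass_\pm$ is perpendicular to both $N_+^{\not=\thet}$ and $N_-^{\not=\thet}$.

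Next I would pick a unit positive class $\kclass_0$ in the orthogonal complement $W^\perp \subset L_\R$ of $W := N_+ + N_-$. The necessary condition on configurations recorded in \S\ref{subsec:necessary}, that $W$ have signature $(2, \rk W - 2)$, ensures that $W^\perp$ has signature $(1, 21-\rk W)$ and so contains positive vectors. By construction $\kclass_0 \perp \Lambda_\pm$, and the elementary computation $(\kclass_\mp - \cos\thet\,\kclass_\pm)^2 = \sin^2\thet > 0$, together with the discussion preceding \eqref{eq:Lambda}, shows that $\Pi_\pm := \gen{\kclass_\mp - \cos\thet\,\kclass_\pm,\, \pm\kclass_0}$ is a positive 2-plane lying in $\Lambda_\pm^\perp$, hence an element of the Griffiths domain $D_{\Lambda_\pm}$ of \eqref{eq:griffiths}.

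Finally I would apply the $(\Lambda_\pm, \Amp_{\fbb_\pm})$-genericity hypothesis. The assignment $\kclass_0 \mapsto \Pi_\pm$ is real analytic, and pulls back the countably many positive-codimension complex analytic subvarieties of $D_{\Lambda_\pm} \setminus U_{\fbb_\pm}$ to countably many positive-codimension real analytic subvarieties of the space of unit positive vectors in $W^\perp$. Taking $\kclass_0$ generic places both $\Pi_+ \in U_{\fbb_+}$ and $\Pi_- \in U_{\fbb_-}$ simultaneously, producing markings $\hdg_\pm$ and building blocks $(Z_\pm, \kd_\pm) \in \fbb_\pm$ realising all the data required by Lemma \ref{lem:torelli}. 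That lemma then delivers a $\thet$-matching $\hkr : \kd_+ \to \kd_-$ with $\hkr^* = \hdg_+ \hdg_-^{-1}$, which by construction realises the prescribed configuration. The main obstacle I anticipate is verifying that these pull-back loci really have positive codimension in the sphere of positive unit vectors in $W^\perp$---i.e.\ that the maps $\kclass_0 \mapsto \Pi_\pm$ into the appropriate Grassmannians are sufficiently non-degenerate that their pre-images of the bad sets are nowhere dense.
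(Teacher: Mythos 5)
Your overall strategy is the same as the paper's: produce the triple $(\kclass_0,\kclass_+,\kclass_-)$ demanded by Lemma \ref{lem:torelli}, with $\kclass_-$ forced to be the normalisation of $\pi_-\kclass_+$, and use $(\Lambda_\pm,\Amp_{\fbb_\pm})$-genericity to realise the resulting periods by actual blocks. The computations you do carry out (unit norms, $\kclass_+.\kclass_- = \cos\thet$, $\Pi_\pm \perp \Lambda_\pm$, positivity of $\kclass_0$ in $(N_++N_-)^\perp$) are correct. But the step you yourself flag as ``the main obstacle'' is a genuine gap, and it is exactly where the paper's proof does its real work. You fix $\kclass_+$ once and for all using \eqref{eq:amps} and then vary only $\kclass_0$. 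With $\kclass_+$ frozen, the family of periods $\kclass_0 \mapsto \gen{\kclass_\mp - \cos\thet\,\kclass_\pm,\,\pm\kclass_0}$ is a totally real submanifold of $D_{\Lambda_\pm}$ of real dimension $\dim \PP(T^+)$, where $T = (N_+(\R)+N_-(\R))^\perp$; this is smaller than $\dim_\bbc D_{\Lambda_\pm}$ by $\dim N_+(\R)^\thet - 1$. Whenever $\dim N_\pm^\thet \geq 2$ (which happens in many of the rank $\geq 2$ examples), nothing prevents this slice from lying \emph{entirely} inside one of the countably many positive-codimension complex subvarieties making up $D_{\Lambda_\pm}\setminus U_{\fbb_\pm}$: for instance, if there is an integral class $\lambda \in (N_++N_-)\setminus \Lambda_\pm$ orthogonal to the fixed vector $\kclass_\mp - \cos\thet\,\kclass_\pm$, then every period in your slice is orthogonal to $\lambda$, i.e.\ lies in a single Noether--Lefschetz-type divisor, and no choice of $\kclass_0$ escapes it. So ``take $\kclass_0$ generic'' cannot be justified for an arbitrary admissible $\kclass_+$; the non-degeneracy you hoped for can actually fail.

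The paper's proof resolves precisely this by perturbing $\kclass_+$ as well: it works with the parameter space $\PP(W_\pm^+) \times \PP(T^+)$, where $W_\pm$ is the orthogonal complement of $N_\pm(\R)^\thet$ in $N_+(\R)^\thet \oplus N_-(\R)^\thet$, viewed inside $D_{\Lambda_\pm}$. This is an analytic, totally real submanifold whose \emph{real} dimension equals $\dim_\bbc D_{\Lambda_\pm}$ (because $\Lambda_\pm^\perp = W_\pm \oplus T$), and a maximal-dimensional totally real submanifold cannot be contained in a proper complex analytic subvariety; hence $U_{\fbb_\pm}$ meets it in an open dense subset. One then chooses the line $\ell_+ \in \PP(N_+(\R)^\thet)$ and $\ell_0 \in \PP(T^+)$ simultaneously: conditions $\ell_+ \in \Amp_{\fbb_+}$ and $\epsilon\pi_-\ell_+ \in \Amp_{\fbb_-}$ are open and non-empty by \eqref{eq:amps}, and intersecting with the two dense open genericity conditions (one for each of $\fbb_+$ and $\fbb_-$, coupled through the same $\ell_0$) is then non-empty. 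To repair your argument you would either need to reinstate this joint variation of $(\kclass_+,\kclass_0)$, or prove the missing non-degeneracy claim for your one-parameter-family-in-$\kclass_0$ slice, which as noted is false in general for a fixed non-generic $\kclass_+$. (A minor additional remark: the signature statement you invoke from \S\ref{subsec:necessary} is not a hypothesis of the theorem, but it does follow from \eqref{eq:amps}, since $N_\pm(\R)$ each contain only one positive direction while $\kclass_+$ and $\kclass_-$ span a positive-definite $2$-plane; it is worth saying this explicitly.)
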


\begin{proof}
The proof uses the same basic idea as the $\thet = \frac{\pi}{2}$ case from
\cite[Proposition 6.18]{g2m}, but the way that $\kclass_+$ and $\kclass_-$
determine each other in this case makes it slightly different. 

Let $W_\pm$ be the orthogonal complement of $N_\pm(\bbr)^\thet$ in
$N_+(\bbr)^\thet \oplus N_-(\bbr)^\thet$, and $T$ the orthogonal complement
of $N_+(\bbr) + N_-(\bbr)$ in $L_\bbr$. $W_\pm$ and $T$ all
have signature $(1,\rk-1)$.
Note that $W_\pm \oplus T$ is the orthogonal complement of $\Lambda_\pm$.
Thus a pair of real lines in the positive cones of $W_\pm$ and $T$ span
a positive-definite 2-plane in $\Lambda_\pm^\perp$, so
\[ \PP\bigl(W_\pm^+\bigr)\times \PP \bigl(T^+\bigr)\]
can be regarded as a submanifold of $G_{\Lambda_\pm}$. Analogously to
\cite[Proposition 6.18]{g2m} it is
an analytic, totally real submanifold. Moreover, because $\Lambda_\pm^\perp$ is
exactly $W_\pm \oplus T$,
\[ \dim_\R \PP\bigl(W_\pm^+\bigr)\times \PP \bigl(T^+\bigr)
= \dim_\bbc G_{\Lambda_\pm} . \]
Therefore the intersection of the submanifold
$\PP\bigl(W_\pm^+\bigr)\times \PP \bigl(T^+\bigr) \subset G_{\Lambda_\pm}$
with the subset $U_{\fbb_\pm} \subset G_{\Lambda_\pm}$ from
Definition \ref{def:generic} is an open dense subset
of $\PP\bigl(W_\pm^+\bigr)\times \PP \bigl(T^+\bigr)$.

Now we wish to find
$(\ell_+, \ell_0) \in \PP(N_+(\bbr)^\thet) \times \PP(T^+)$ such that
\begin{enumerate}
\item $\ell_+ \in \Amp_{\fbb_+}$,
\item $\epsilon \pi_- \ell_+ \in \Amp_{\fbb_-}$,
\item $\big(w_+(\ell_+), \ell_0 \big) \in
\left(\PP\bigl(W_+^+\bigr)\times \PP \bigl(T^+\bigr)\right) \cap U_{\fbb_+}$,
\item $\big(w_-(\ell_+), \ell_0 \big) \in
\left(\PP\bigl(W_-^+\bigr)\times \PP \bigl(T^+\bigr)\right) \cap U_{\fbb_-}$,
\end{enumerate}
where $w_\pm : N_+(\bbr)^\thet \to W_\pm$ are the orthogonal projections
(which are both isomorphisms since $\thet \not= \frac{\pi}{2}$).
The first two conditions define open subsets whose intersection is non-empty by
the hypothesis \eqref{eq:amps}.
The intersection with the open dense subsets defined by the last two conditions
is therefore non-empty. Hence there is a pair $(\ell_+, \ell_0)$ satisfying
(i)-(iv).

By the definition of $\fbb_\pm$ being $(\Lambda_\pm, \Amp_{\fbb_\pm})$-generic,
this means there exist $(Z_\pm, \kd_\pm) \in \fbb_\pm$ with periods
$\big(w_+(\ell_\pm), \ell_0 \big)$ such that $\Amp_{\fbb_\pm}$ is contained in
the image of Kähler cone of $Z_\pm$
Taking $\kclass_+$, $\kclass_-$ and $\kclass_0$ to be the unit norm
representatives of $\ell_+$, $\epsilon \pi_-\ell_+$ and $\ell_0$ respectively,
we can therefore apply Lemma \ref{lem:torelli} to obtain the desired
$\thet$-matching $\hkr : \kd_+ \to \kd_-$.
\end{proof}

\subsection{Configuration angles and pure configurations}

The following invariants of a configuration turn out to have several uses.

\begin{defn}\label{def:angles}
Given a configuration $N_+, N_- \subset L$, let $A_\pm : L_\R \to L_\R$
denote the reflection of $L_\R := L \otimes \R$ in $N_\pm$ (with respect to the
intersection form of $L_\R$; this is well-defined since $N_\pm$ is non-degenerate).
Suppose $A_+ \circ A_-$ preserves some decomposition $L_\R = L^+ \oplus L^-$
as a sum of positive and negative-definite subspaces.
Then the \emph{configuration angles} are the arguments
$\alpha^+_1, \alpha^+_2, \alpha^+_3$ and
$\alpha^-_1, \ldots, \alpha^-_{19}$ of the eigenvalues of the restrictions
$A_+ \circ A_- : L^+ \to L^+$ and $A_+ \circ A_- : L^- \to L^-$ respectively.
\end{defn}

Note that if the configuration is to be realised by a $\thet$-\hk rotation,
then $A_+ \circ A_-$ preserves the decomposition of $L_\R$ into the subspaces
that self-dual and anti-self-dual with respect to the \hk metric, so the
configuration angles are defined. Further, the necessary condition (i) from
\S\ref{subsec:necessary} can be expressed in terms of the configuration angles
as requiring that $\alpha^+_1, \alpha^+_2, \alpha^+_3$ are precisely 0 and
$\pm 2\thet$.

In view of Proposition \ref{prop:generic_fano}, the hypothesis that the family
$\sff_\pm$ is $(\Lambda_\pm, \Amp_{\sff_\pm})$-generic (for some cone
$\Amp_{\sff_\pm}$) is easiest to verify for configurations where
$\Lambda_\pm = N_\pm$.
This amounts to requiring that $N_\mp^{\not=\thet}$ is contained in $N_\pm$,
or equivalently that $N_\mp$ is spanned (at least rationally) by
$N_\mp^0 = N_+ \cap N_-$ and~$N_\mp^\thet$.
Noting that for $0 < |\psi| < \frac{\pi}{2}$
\begin{equation}
\label{eq:mult_angle}
\textrm{multiplicity of $2\psi$ as a configuration angle} \;=\;
\dim N_+^\psi \; = \; \dim N_-^\psi ,
\end{equation}
this is in turn equivalent to requiring that the only non-zero configuration
angles are $\pm 2\thet$.
This is in particular the case if $N_\pm^\thet = N_\pm$; we refer to such
configurations as having ``pure angle $\thet$''.

Configurations with pure angle $\frac{\pi}{2}$ are very easy to produce
(as long as $\rk N_+ + \rk N_- \leq 11$): 
simply apply Theorem \ref{thm:exist_emb} to embed
the perpendicular direct sum $N_+ \perp N_-$ primitively in $L$
On the other hand, for $\thet \not= \frac{\pi}{2}$, the existence of a pure
angle $\thet$ configuration of a given pair of lattices $N_+$, $N_-$ is a
non-trivial condition. To be able to define a bilinear form on
$W := N_+ \oplus N_-$ that restricts to the prescribed one on $N_\pm$ and such that $N_\pm^\thet = N_\pm$, it is necessary but not sufficient that the ranks
be equal.

Consider the case when $\rk N_\pm$ both have rank 1, with generator $n_\pm$
(chosen to be positive, \ie $n_\pm \in \Amp_{\fbb_\pm}$). Then there is only
a single cross-term to choose in $W$, and by \eqref{eq:psi} we must set
\begin{equation}
\label{eq:sqrt}
n_+ . n_- = (\cos \thet) \, \sqrt{(n_+.n_+)(n_-.n_-)} .
\end{equation}
Thus, in this case $W$ exists if and only if the RHS is an integer. 

\begin{ex}
\label{ex:rk1s1}
We can make a $\vartheta = \frac{\pi}{4}$ or $\vartheta = \frac{3\pi}{4}$ matching of the
involution block from Example \ref{ex:rk1index2}$_1$ and a regular block from
Example \ref{ex:spec1}$^4_1$ using
\begin{equation}
\label{eq:rk1}
W = \begin{pmatrix} 2 & \epsilon \, 2 \\ \epsilon \, 2 & 4\end{pmatrix} .
\end{equation}
(This leads to a 2-connected \fpif-twisted connected sum with $b_3(M) = 134$
and $p(M)$ divisible by 24, see Table \ref{table:rk1pi4xtcs}).
\end{ex}

\begin{rmk}
If there does exist a pure angle $\thet$ configuration between the polarising
lattices, then for $\thet \not= \frac{\pi}{2}$ it does not need to be unique,
and different pure angle matchings of blocks from the same families can lead to
non-diffeomorphic $\thet$-twisted connected sums;
see Examples \ref{ex:pi6main} and~\ref{ex:pi6main8}.
\end{rmk}

Let us think a moment about the meaning of changing the sign of $\vartheta$ or
replacing it by a complementary angle. For a start, the condition in Definition
\ref{def:matching} for $\hkr$ to be a $\thet$-matching is actually independent
of the sign of $\thet$, which is related to the earlier observation that a
$\pm \thet$-twisted connected sums of phase rotated ACyl Calabi-Yaus are
(orientation-reversing) diffeomorphic. So the sign is unimportant.

There are several natural ways to modify a matching in order to complement
the angle. We could change the signs of the cross-terms in $W$ like in
\eqref{eq:rk1} while keeping everything else the same, or equivalently, we
could change the sign of the marking on $(\kd_+, I_+)$ (keeping $W$ the same,
but multiplying $\Amp_{\fbb_\pm}$ by $-1$).
Alternatively, we could replace the block $Z_+$
by its complex conjugate; if we keep the marking the same, then $\Amp_{Z_+}$
is multiplied by $-1$.
This is precisely the same way of relating extra-twisted connected sums with
complementary angles as in Remark \ref{rmk:comp}. Any of these changes leaves
the cohomology and $p_1$ of the resulting $G_2$-manifolds unchanged, so we will
not be concerned with distinguishing between complementary angles in the
examples.

\section{Topology}
\label{sec:top}

We now turn to the problem of computing topological properties of extra-twisted connected sums. All our computations will be expressed in terms of data of
the building blocks listed in Tables \ref{table:species1},
\ref{table:ordblocks} and \ref{table:invblocks} (see \S\ref{subsec:data}),
along with the choice of torus isometry, and the configuration of the \hk
rotation in the sense of Definition \ref{def:config}.

The invariants we compute are the integral cohomology, torsion linking form,
and a spin characteristic class (more or less equivalent to the first
Pontrjagin class). Computing the cohomology is routine, though the details
of understanding in particular the torsion in $H^4$ are a bit tedious.
The computation of the spin characteristic class is more involved, and takes up
\S\ref{subsec:p}--\ref{subsec:p_end}. The pay-off is that---as explained in
\S\ref{subsec:nu_and_class}---the invariants we compute are sufficient to
apply classification results for 2-connected 7-manifolds to completely
determine the diffeomorphism types of most examples considered in this paper.
 
\subsection{Mayer-Vietoris generalities}
\label{subsec:mv}

It seems inevitable that computing the full integral cohomology of an
extra-twisted connected sum will involve some case by case checking
for different gluing angles $\vartheta$ . However, some parts of the
computation are common to all non-rectangular extra-twisted connected sums.

Let us briefly recap the context.
We are gluing two ACyl \gtmfd s $M_+$ and $M_-$, each of which is
either a product $S^1_{\lnx_\pm} \times V_\pm$ or
a mapping torus
$\mtt{V_\pm} = (S^1_{\lnx_\pm} \times V_\pm)/(a \times \tau_\pm)$ of an
involution $\tau_\pm$ ($a$ denotes the antipodal map on the circle).
The asymptotic cross-section is of the form
$S^1_{\lnx_\pm} \times S^1_{\lnn_\pm} \times \kd$
or
$(S^1_{\lnx_\pm} \times S^1_{\lnn_\pm})/(a \times a)$ accordingly.
To make the construction, we use a torus matching $\tormat$ and \hk rotation
$\hkr$ to identify the asymptotic cross-section from each
side with a single $T^2 \times \kd$.
We now want to apply the Mayer-Vietoris theorem to $M = M_+ \cup M_-$,
with $M_+ \cap M_- \simeq T^2 \times \kd$.

We set up notation for various cohomology classes
on this cross-section $T^2 \times \kd$,
mirroring that used in \S\ref{sec:blocks}.
On the asymptotic cross-section
$S^1_{\lnx_\pm} \times S^1_{\lnn_\pm} \times \kd$
of $S^1_{\lnx_\pm} \times V_\pm$ let
$\drx_\pm \in H^1(S^1_{\lnx_\pm} \times S^1_{\lnn_\pm} \times \kd)$
correspond to the generator of the ``external'' factor $H^1(S^1_{\lnx_\pm})$,
and let
$\drn_\pm \in H^1(S^1_{\lnx_\pm} \times S^1_{\lnn_\pm} \times \kd)$
correspond to the generator of the ``internal'' factor $H^1(S^1_{\lnn_\pm})$.
If $V_\pm$ has an involution, then like in Notation \ref{notn:abuse} we
abuse notation to denote cohomology classes on the asymptotic cross-section 
$(S^1_{\lnx_\pm} \times S^1_{\lnn_\pm})/(a \times a)$ of
$\mtt{V_\pm}$ identically with their pull-backs to
$S^1_{\lnx_\pm} \times S^1_{\lnn_\pm} \times \kd$.
Thus $2\drx_\pm$ and $2\drn_\pm$ denote primitive elements in
$H^1(T^2 \times \kd)$ in this case, but the subgroup they generate has index 2.
In particular, $\drx_-, \drn_- \in H^1(T^2 \times \kd)$ make sense
\emph{only} when $M_-$ is not a mapping torus, like for \fpif-matchings
in \S\ref{subsec:s1} (``square'' ones with $b_+ = 1$ and $b_- = 0$ in terms
of the discussion in~\S\ref{subsec:angles}).

$H^1(M_\pm) \to H^1(T^2 \times \kd)$ is an isomorphism onto the cyclic subgroup
of $H^1(T^2)$ dual to the internal circle factor, \ie the image is generated by
$\drx_\pm$ or $2\drx_\pm$ depending on whether $M_\pm$ comes from an
ordinary block or an involution block. The images never intersect,
so $H^1(M) = 0$.
The sum of the images is primitive precisely for the arrangements when $M$ is
simply connected; otherwise the contribution to $H^2(M)$ is (obviously)
the finite cyclic group $\pi_1(M)$, but we ignore this case from now on.

$H^2(M_\pm) \to H^2(T^2 \times \kd)$ is an isomorphism onto
$N_\pm \subset H^2(\kd)$, regardless of whether $M_\pm$ comes from an ordinary
or an involution block. Thus $H^2(M) = N_+ \cap N_-$, and we get a contribution
$\bbz \oplus L/(N_+ + N_-)$ to $H^3(M)$. Whether this is torsion-free
depends on the choice of push-out $W$ in the matching, and on whether we embed
$W$ primitively in $L$ or not.

Since $H^3(M_\pm)$ are torsion-free, there is
no other contribution to the torsion in $H^3(M)$.
Thus, we get $M$ 2-connected if and only if we use building blocks with $K_\pm = 0$ and a configuration such that $N_+ \cap N_- = 0$ and $N_+ \oplus N_-$ is
primitive in $L$.

To determine $H^3(M)$ we only need to deal with
$H^3(M_\pm) \to H^3(T^2 \times \kd)$ rationally;
the contribution to the torsion in $H^4(M)$ will have to be dealt with case
by case. The image of $H^3(M_\pm; \bbq)$ is the Lagrangian
$\drx_\pm N_\pm \oplus \drn_\pm T_\pm
\subset H^3(T^2 \times \kd; \, \bbq)$.
Since
\begin{equation}
\drx_+  = \cos \vartheta \drx_-  + \sin \vartheta \drn_- , \quad
\drn_+  = \sin \vartheta \drx_-  - \cos \vartheta \drn_- ,
\end{equation}
for $\drx_+ n_+ + \drn_+ t_+$ to equal $\drx_- n_- + \drn_- t_-$
for some $n_\pm \in N_\pm$ and $t_\pm \in T_\pm$ implies that
$\pi_\pm n_\mp = \cos \vartheta n_\pm$, and thus $n_\pm \in N_\pm^\thet$
in Notation \ref{notn:espace}. 
Hence the dimension of the intersection of the images of $H^3(M_\pm, \bbq)$
equals $d_\vartheta = \rk N_+^\thet = \rk N_-^\thet$
(or the multiplicity of $\thet$ as a configuration angle \eqref{eq:mult_angle}).
On the other hand, the kernel in $H^3(M; \bbq)$ is the $\tau$-invariant
subgroup $H^3(Z;\bbq)^\tau$, or
just $H^3(Z;\bbq)$ in the case of an ordinary block. Denoting the dimension of
that by $b_3^+(Z_\pm)$, we obtain
\begin{equation}
\label{eq:b3}
b_3(M) = 23 - \rho_+ - \rho_- + b_2(M) + b_3^+(Z_+) + b_3^+(Z_-)  + d_\vartheta .
\end{equation}

\begin{rmk}
\label{rmk:parity}
$b_3^+(Z_\pm)$ is always even since $H^3(Z_\pm)^\tau \subseteq H^3(Z_\pm)$ is
symplectic. %
Therefore
\[ 1 + b_2(M) + b_3(M) = \rho_+ + \rho_- + d_\vartheta \mmod 2. \]
Further,
$\rho_+ + \rho_- = \rk N_+^\frac{\pi}{2} + \rk N_-^\frac{\pi}{2} \mmod 2$, the
rank of the perpendicular parts. Hence the ``semi-characteristic'' of $M$
equals $d_\vartheta + \rk N_+^\frac{\pi}{2} + \rk N_-^\frac{\pi}{2}$.
\end{rmk}

\begin{rmk}
For $\vartheta = \frac{\pi}{2}$ we should interpret
$d_\vartheta$ to mean $\rk N_+^\frac{\pi}{2} + \rk N_-^\frac{\pi}{2}
= \rk (N_+ \cap T_-) + \rk (N_- \cap T_+)$.
In case of orthogonal matching we get 
$d_\vartheta = \rho_+ + \rho_- - 2b_2(M)$, and \eqref{eq:b3} recovers the
claim from \cite[(8.56)]{kovalev03} that
$b_2(M) + b_3(M) = 23 + b_3(Z_+) + b_3(Z_-)$ in this setting.
(And from Remark \ref{rmk:parity} we get that the semi-characteristic
is even for any rectangular twisted connected sum, equivalent to claim (i) of
the introduction.)
\end{rmk}

\begin{rmk}
\label{rmk:mvsplit}
When the involution blocks are pleasant, then $H^4(M_\pm)$ is
torsion-free, so the image $\delta(H^3(T^2 \times \kd))$ of the Mayer-Vietoris
boundary map is a direct summand of $H^4(M)$, and contains all torsion
in $H^4(M)$.
\end{rmk}

In \S\ref{subsec:s1}--\ref{subsec:h1} we study $\delta(H^3(T^2 \times \kd))$ in
further detail in the cases
$\vartheta = \frac{\pi}{4}$ and $\frac{\pi}{6}$.
We can make a general statement about the torsion linking form $b_M$ (\cf
\cite[Propositon 3.2]{goette20}).

\begin{lem}
\label{lem:torlink}
Let $M^7 = M_+ \cup_X M_-$ be a gluing of manifolds with boundary $X$,
and let $I_\pm \subseteq H^3(X)$ be the image of $H^3(M_\pm)$. Let
$p_1, p_2 \in H^3(X)$ be classes that are torsion modulo $I_+ + I_-$,
so that their images $\delta(p_1), \delta(p_2) \in H^4(M)$ under the Mayer-Vietoris boundary map are torsion classes.
Then we can write $mp_1 = p_1^+ - p_1^-$ for some $m \in \bbz$ and
$p_1^\pm \in I_\pm$, and
\[ b_M(\delta(p_1), \delta(p_2)) = \frac{1}{m}p_1^+p_2
= \frac{1}{m}p_1^-p_2 \in \Q/\Z . \]
\end{lem}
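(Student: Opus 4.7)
The plan is to reinterpret the linking form geometrically via Poincar\'e duality. Since $M$ is a closed oriented 7-manifold, PD identifies $H^4(M)$ with $H_3(M)$, and the torsion linking form there is classical: for torsion classes $\alpha,\beta$ represented by 3-cycles $A,B\subset M$ with $mA = \partial C$ for a 4-chain $C\subset M$, one has $b(\alpha,\beta) = \frac{1}{m}(C \cdot B') \bmod \Z$, where $B'$ is a generic perturbation of $B$ disjoint from $\partial C$. So the task is to produce an explicit 4-chain $C$ bounding $m$ times a cycle representing $\delta(p_1)$, and then to compute its intersection with a cycle representing $\delta(p_2)$.

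First I would identify the Poincar\'e duals of the relevant classes. Let $P_i \in H_3(X)$ denote the PD in the closed oriented 6-manifold $X$ of $p_i\in H^3(X)$. A standard Mayer--Vietoris diagram chase (comparing the cohomology MV sequence for the open cover $\{M_+, M_-\}$ of $M$ with the corresponding homology MV sequence under PD) shows that $\delta(p_i)$ is PD in $M$ to the class of $P_i$ viewed via $X \hookrightarrow M$. Next I would construct the bounding 4-chain. The lifts $\tilde p_1^\pm \in H^3(M_\pm)$ of $p_1^\pm$ correspond, under Poincar\'e--Lefschetz duality on the 7-manifold with boundary $(M_\pm, X)$, to relative 4-chains $\tilde P_1^\pm\subset M_\pm$ with $\partial \tilde P_1^\pm = P_1^\pm$. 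The hypothesis $mp_1 = p_1^+ - p_1^-$ translates to $mP_1 = P_1^+ - P_1^-$, so the 4-chain $C := \tilde P_1^+ - \tilde P_1^- \subset M$ has $\partial C = mP_1$, as required.

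The remaining step is to compute $C \cdot P_2$. Perturbing $P_2$ off $X$ into the interior of $M_+$ via the collar, the perturbed cycle $P_2'$ misses $\tilde P_1^-\subset M_-$ entirely, so $C \cdot P_2' = \tilde P_1^+ \cdot P_2'$. The standard ``push-off'' identity --- namely that the intersection in $M_+$ of a relative 4-chain whose boundary is a 3-cycle $P_1^+\subset X$ with the inward push-off of a transverse 3-cycle $P_2\subset X$ equals the intersection number $P_1^+ \cdot P_2$ in $X$ --- then yields $C \cdot P_2' = p_1^+ p_2 \in \Z$. Dividing by $m$ gives $b(\delta(p_1),\delta(p_2)) = \frac{1}{m} p_1^+ p_2 \bmod \Z$; perturbing into $M_-$ instead produces $\frac{1}{m}p_1^- p_2$, and the two agree mod $\Z$ since $p_1^+ p_2 - p_1^- p_2 = m\, p_1 p_2$. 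The only delicate step is this push-off identity together with maintaining consistent orientations on $X = \partial M_+ = -\partial M_-$; this is a standard piece of geometric-topology bookkeeping, but is where most of the care in the proof will go.
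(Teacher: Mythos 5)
The paper states Lemma \ref{lem:torlink} without proof, so there is no argument of the author's to compare against; judged on its own terms, your proof is correct and complete in outline. The three ingredients you use are all genuine: the MV coboundary $\delta\colon H^3(X)\to H^4(M)$ is Poincar\'e dual to the pushforward $H_3(X)\to H_3(M)$; a lift $\tilde p_1^\pm\in H^3(M_\pm)$ of $p_1^\pm$ is Lefschetz dual to a relative $4$-chain $\tilde P_1^\pm$ with $\partial\tilde P_1^\pm$ dual in $X$ to $p_1^\pm$; and pushing $P_2$ into the interior of $M_\pm$ through a collar in which $\tilde P_1^\pm$ is a product $P_1^\pm\times[0,\epsilon)$ gives transverse intersection points in bijection (with signs) with those of $P_1^\pm$ and $P_2$ in $X$, so $C\cdot P_2'=\tilde P_1^+\cdot P_2'=p_1^+p_2$, resp.\ $p_1^-p_2$ when pushing the other way. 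Your observation that the two answers differ by the integer $m\,p_1p_2$ is the right consistency check, and the only genuinely delicate point is, as you say, fixing orientation conventions (boundary orientation of $X=\partial M_+=-\partial M_-$, collar direction, and the convention used to define $b$), which at worst affects an overall universal sign.

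It may be worth knowing that there is an alternative, purely cohomological route which fits the style of the surrounding computations and avoids all chain-level bookkeeping: since $m p_1=p_1^+-p_1^-$ with $p_1^\pm$ restricted from $M_\pm$, the classes $\bigl[\tfrac1m\tilde p_1^\pm\bigr]\in H^3(M_\pm;\Q/\Z)$ agree on $X$, hence glue by Mayer--Vietoris to a class $\hat\alpha\in H^3(M;\Q/\Z)$ whose Bockstein is $\delta(p_1)$; then
\[
b(\delta(p_1),\delta(p_2))=\langle \hat\alpha\cup\delta(p_2),[M]\rangle
=\pm\langle \hat\alpha|_X\cup p_2,[X]\rangle
=\tfrac1m\,p_1^{\pm}p_2 ,
\]
using that $\delta$ is adjoint to restriction (the dual statement to your identification of $\delta$ with the pushforward). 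Both proofs are of comparable length; yours is more geometric, the Bockstein argument localises the sign issues in two standard adjunction formulas. One further remark: the fact that the right-hand side is independent of the choice of $m$ and $p_1^\pm$, and that the formula is symmetric in $p_1,p_2$, ultimately relies on $I_\pm$ being isotropic for the intersection pairing on $H^3(X)$ (they are restrictions from manifolds bounded by $X$); this is automatic in the geometric setting and needs no separate verification for the lemma as stated, but it is the reason the formula is self-consistent.
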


\begin{proof}
To compute the torsion linking form, we first need a pre-image of
$\delta(p_1)$ under the Bockstein map $\beta : H^3(M; \Q/\Z) \to H^4(M; \Z)$.
First let $q^\pm \in H^3(M_\pm; \Q)$ be a pre-image of
$\frac{1}{m}p_1^\pm \in H^3(X; \Q)$.

The Mayer-Vietoris sequences with coefficients $\Z, \Q$ or $\Q/\Z$ form a
commuting periodic grid with the change-of-coefficients sequences.
It is a general feature of such grids that equality of the images in
$H^3(X; \Q)$ of $p_1 \in H^3(X; \Z)$ and
$(q^+, q^-) \in H^3(M_+; \Q) \oplus H^3(M_-; \Q)$ implies that there exists
$q \in H^3(M; \Q/\Z)$ such that
$q_{|M_\pm} = \frac{1}{m}q^\pm \in H^3(M_\pm; \Q/\Z)$ while 
$\beta(q) = -\delta(p_1)$.

More explicitly, pick cochain representatives $\sigma$ of $p_1$ and $\rho^\pm$
of $q^\pm$. We can write $\sigma = \tau^+_{X} - \tau^-_{|X}$ for some
$(\tau^+, \tau^-) \in C^3(M_+; \Z) \oplus C^3(M_-;\Z)$.
Meanwhile $m\sigma - \rho^+_{|X} + \rho^-_{|X}$ is an exact cochain on~$X$,
so we can pick a pre-differential $\nu \in C^2(X; \Z)$, which we 
in turn write as $\nu = \mu^+_{|X} - \mu^-_{|X}$ for some 
$(\mu^+, \mu^-) \in C^2(M_+; \Z) \oplus C^2(M_-; \Q/\Z)$.
Then $\left(\frac{1}{m}(\rho^++ d\mu^+)-\tau^+ ,
\frac{1}{m}(\rho^- +d\nu^-)-\tau^- \right)$ has a pre-image in
$C^3(M; \Q)$. That is closed mod $\Z$, and we can take $q$ to be the class
represented by the mod $\Z$ reduction.

Using $-q$ as a pre-image of $\delta(p_1)$ in the definition of the torsion
linking form now gives
\[ b(\delta(p_1), \delta(p_2)) = (-q \cup \delta(p_2))[M]
= (q_{|X} \cup p_2)[X] = \frac{1}{m}(p_1^+p_2)[X]. \qedhere \]
\end{proof}

\subsection{The spin characteristic class}
\label{subsec:p}

Apart from the integral cohomology, the main invariant of an extra-twisted
connected sum that we are interested in is the spin characteristic
class $p(M) \in H^4(M)$. It is a refinement of the first Pontrjagin class
$p_1(M)$ in the sense that $p_1(M) = 2p(M)$ (so in the absence of 2-torsion in
$H^4(M)$, $p(M)$ is in fact determined by $p_1(M)$), see \eg
\cite[\S2.1]{7class}. Here are essentially the only facts we need about $p(M)$
beyond it being a characteristic class.

\begin{lem}[{See \cite[Lemmas 2.2 and 2.39]{7class}}]
\label{lem:p}
\hfill
\begin{enumerate}
\item $p(M) \in H^4(M)$ is even for any spin manifold of dimension $\leq 7$.
\item $p(M) = -c_2(M)$ for any $SU$-manifold.
\end{enumerate}
\end{lem}

While we should remember that the building blocks $Z_\pm$ are \emph{not} spin
(because $c_1(Z_\pm) = PD(\kd)$ is primitive, and in particular odd),
nevertheless $c_2(Z_\pm) \in H^4(Z_\pm)$ is always even, see
\cite[Lemma~5.10]{cym}.
Our plan is to think of $p(M)$ as the result of patching up
the classes $-c_2(Z_\pm) \in 2H^4(Z_\pm)$, and we make this precise in Theorem
\ref{thm:phalf_glue}. However, even once we have a formula for $p(M)$, one
needs to look carefully at the Mayer-Vietoris sequence to understand 
what it means (\eg what the greatest divisor in $H^4(M)$ is),
which we do in \S\ref{subsec:s1}--\ref{subsec:h1}. 

To apply classification results for 2-connected manifolds
(see \S \ref{subsec:nu_and_class}), all we need to
know about $p(M)$ is the class of $(H^4(M), p(M))$ up to isomorphisms of
abelian groups with a distinguished element.
If $H^4(M)$ is torsion-free this simply amounts to determining the greatest
integer dividing $p(M)$ (while in general one would also need to capture
information such as the greatest integer dividing $p(M)$ modulo torsion).
Since the image of $p(M)$ in $H^4(\kd)$ is divisible by exactly
$\chi(K3) = 24$, we effectively care about the value of $p(M)$ only modulo 24.

This proves practical to evaluate when $c_2(Z_\pm)$ has been computed in the
form \eqref{eq:c2cbar}, as we have done for all the pleasant involution
blocks in \S\ref{sec:blocks} and \S\ref{sec:k3blocks}. Recall also from
Remarks \ref{rmk:h_inv} and \ref{rmk:h_inv2} that the class
$\wh B(h) \in H^3_{cpt}(V)$ from \eqref{eq:h_inv_fail} vanishes in all those
examples.

\begin{thm}
\label{thm:phalf_main}
Write $c_2(Z_\pm) = g_\pm\bar c_2(Z_\pm) + 24h_\pm$ as in
\eqref{eq:c2cbar}, and suppose that $h_\pm$ is $\tau$-invariant
with $\wh B_\pm(h_\pm) = 0$. 
Then
\[ p(M) = \delta(\drn_+ \bar c_2(Z_+) - \drn_- \bar c_2(Z_-)) \mod 24 , \]
where $\delta : H^3(T^2 \times \kd) \to H^4(M)$ is the Mayer-Vietoris
snake map.
\end{thm}

Note that $\bar c_2(Z_\pm) \in N_\pm^* = L/T_\pm$ is
always even, say $\bar c_2(Z_\pm) = 2y_\pm \mod T_\pm$ for some $y_\pm \in L$.
Because the image of $H^3(M_\pm) \to H^3(T^2 \times \kd)$ always contains
$2\drn_\pm T_\pm$ (regardless of whether $M_\pm$ is of the form
$S^1 \times V_\pm$ or $\mtt{V_\pm}$), the value of the Mayer-Vietoris
map $\delta : H^3(T^2 \times \kd) \to H^4(M)$ on $2\drn_\pm y_\pm$ is
independent of the choice of $y_\pm$, and can be interpreted as a well-defined
element $\delta(\drn_\pm \bar c_2(Z_\pm)) \in H^4(M)$. 
(But there is in general no guarantee that these are even elements of $H^4(M)$,
even though their sum must be even, see
Remarks \ref{rmk:premagic} and \ref{rmk:magic}.)

We will prove Theorem \ref{thm:phalf_main} in the next three subsections.
In practice we apply the following special case. 

\begin{cor}
\label{cor:phalf2}
If in addition the building blocks $Z_+$ and $Z_-$ are both pleasant
then the equivalence class of $p(M) \in H^4(M)$ (modulo isomorphisms of the
abelian group $H^4(M)$) is determined by
\begin{equation}
\label{eq:mod24res}
\drn_+ \bar c_2(Z_+) - \drn_- \bar c_2(Z_-) \in H^3(T^2 \times \kd)
\mod I_+ + I_-
\end{equation}
where $I_\pm$ are the images of $H^3(M_\pm)$. In particular, the greatest
integer dividing $p(M)$ is $\gcd(24, n)$, where $n$ is the greatest integer
dividing \eqref{eq:mod24res}.
\end{cor}

\begin{proof}
We noted in Remark \ref{rmk:mvsplit} that if $Z_+$ and $Z_-$ are pleasant then
the image of $\delta$ is a direct summand in $H^4(M)$.
\end{proof}

If $M_\pm = S^1 \times V$ (\ie does not involve dividing by an involution)
then $I_\pm$ is simply $\drx_\pm N_\pm \oplus \drn_\pm T_\pm$, while
if $M_\pm = \mtt{V_\pm}$ and comes from a pleasant involution block then
$I_\pm$ is determined in Lemma \ref{lem:mtres}(ii).
However, even in the auspicious setting of Corollary \ref{cor:phalf2},
we \emph{still} need to work out more details about
$\delta(H^3(T^2 \times \kd)) \cong H^3(T^2 \times \kd)/(I_+ + I_-)$.

That will have to proceed case by case for different choices of gluing angle
$\vartheta$ and torus isometry (see in particular Propositions
\ref{prop:pi4detail} and \ref{prop:pi6detail}), but let us point out an
important qualitative difference between the cases $\vartheta = \frac{\pi}{2}$
and $\vartheta \not= \frac{\pi}{2}$: For
rectangular TCS the images of $\delta(\drn_+\bar c_2(Z_+))$ and
$\delta(\drn_-\bar c_2(Z_-))$ belong to two different direct summands in
$H^4(M)$ (the respective images of the push-forward maps
$H^4_{cpt}(M_\pm) \to H^4(M)$), so that it suffices to
compute the greatest divisors separately and then take their greatest
common divisor.
But for extra-twisted connected sums the images of $H^4_{cpt}(M_\pm) \to
H^4(M)$ can overlap, so there can be cancellation between
$\delta(\drn_+\bar c_2(Z_+))$ and $\delta(\drn_-\bar c_2(Z_-))$, and
we need to know both terms precisely.

\newcommand{\pmt}{\pi}
\newcommand{\hvt}[1]{H^4_{vt}(#1)}
\newcommand{\hvtmt}[1]{H^4_{vt}(\mtt{#1})}
\newcommand{\clps}{S^1 \widehat{\times} Z}
\newcommand{\pclps}{\widetilde \pmt}
\newcommand{\smashed}{R}
\newcommand{\smashmap}{\kappa}

\subsection{Gluing vertical cohomology classes}

Let 
\[ H^4(M_+) \oplus_0 H^4(M_-) = \{ (x_+, x_-) \in H^4(M_+) \oplus H^4(M_-) :
\gamma_+ x_+ = \gamma_- x_- \} , \]
the subspace of classes whose images under pull-back
$\gamma_\pm : H^4(M_\pm) \to H^4(T^2 \times \kd)$ by the inclusion
$T^2 \times \kd \into M_\pm$ agree.
At the most elementary level, the problem we need to deal with in describing
$p(M)$ is that the map
$H^4(M) \to H^4(M_+) \oplus H^4(M_-)$ in the Mayer-Vietoris sequence, whose
image is $H^4(M_+) \oplus_0 H^4(M_-)$,
does not have a canonical right inverse
$H^4(M_+) \oplus_0 H^4(M_-) \to H^4(M)$. Thus it is not possible to determine
$p(M)$ just from its restrictions $p(M_+)$ and $p(M_-)$.
We wish to exploit that we
do not just know $p(M_\pm) \in H^4(M_\pm)$, we also know
$p(\mtt{Z}) \in H^4(\mtt{Z})$ which contain much more information.
To be able to reconstruct $p(M)$ from that, we further need to exploit that
$p(\mtt{Z})$ is in some sense a ``vertical'' class.

Certainly, the restriction of $p(M)$ to a neighbourhood of
$S^1 \times \kd \subset \mtt{Z}$ is a pull-back of $p(\kd) \in H^4(\kd)$.
Now given cocycles on $\mtt{Z_+}$ and $\mtt{Z_-}$ whose restrictions to
neighbourhoods of $S^1 \times \kd$ are pull-backs of the same cocycle on $\kd$,
we could patch their pull-backs to $M_\pm$ to a cocycle on $M$.
The computation in \cite[Proposition 4.20]{g2m}
of $p(M)$ of a rectangular TCS is carried out in terms of a gluing map
\cite[Definition 4.15]{g2m} described in these terms, but it is complicated
and does not adapt well to the XTCS setting. Instead we wish to define
essentially the gluing map in terms of pull-backs of maps between certain
auxiliary spaces.

To this end we first consider a space $\clps$ obtained from
$\mtt{Z}$ by collapsing the external circle factor over $\kd \subset Z$,
and the projection map $\rho : \mtt{Z} \to \clps$.
(A cochain on $\mtt{Z}$ that near $\kd$ is a pull-back of a cochain on $\kd$
is thus roughly the same thing as a pull-back of a cochain from $\clps$.)

Further, given a pair of blocks that are used to form an extra-twisted connected
sum $M$, let $\smashed := \clps_+ \cup_\kd \clps_-$. We can define a collapsing
map
\[ \smashmap : M \to \smashed, \]
as well as obvious inclusion maps
\[ j_\pm : \clps_\pm \into \smashed . \]
By Mayer-Vietoris,
\[ (j_+^*, j_-^*) : H^4(\smashed) \to H^4(\clps_+) \times H^4(\clps_-) \]
is an \emph{isomorphism} onto the subgroup
$H^4(\clps_+) \oplus_0 H^4(\clps_-)$ of pairs with equal image
in~$H^4(\kd)$.  Thus composing the inverse with $\smashmap*$ gives a canonical
way to glue elements of $H^4(\clps_+) \oplus_0 H^4(\clps_-)$.

In a sense, this repackages the problem of gluing classes in $H^4(\mtt{Z_\pm})$
as a problem of finding pre-images of those classes in $H^4(\clps_\pm)$.
The issue now is that while $\rho^* : H^4(\clps_\pm) \to H^4(\mtt{Z})$ is
surjective, it is certainly not injective. We could now ask ourselves for 
which subsets of $H^4(\mtt{Z})$ there is a canonical right inverse to $\rho^*$,
and try to give an answer in terms of certain kinds of ``vertical'' classes
(\eg $\rho_*$ is injective on the kernel of a natural map
$H^4(\clps_\pm) \to H^3_{cpt}(V_\pm)$).

Something that is good enough for our purposes is to define a map
\begin{equation}
\label{eq:clplift}
2H^4(Z)^\tau \to H^4(\clps), \; 2x \mapsto \clplift{2x}
\end{equation}
as follows.
If $x$ is a $\tau$-invariant class, pick a cochain representative $\alpha$
whose restriction to a neighbourhood of $\kd$ is a pull-back of a cochain
on $\kd$. Then the cocycle $\alpha + \tau^* \alpha$ on $S^1 \times Z$
descends to $\mtt{Z}$. Because its restriction to a neighbourhood of the
collapsing set $S^1 \times \kd$ is a pull-back from $\kd$, it is moreover
a pull-back of a cocycle on $\clps$. The resulting class
$\clplift{2x} \in H^4(\clps)$ is clearly independent of the choice of
representative $\alpha$ of $x$.
Because $H^4(Z)$ is assumed to be torsion-free, it then depends only on $2x$
(and not on $x$).

\begin{defn}
\label{def:gluemap}
Define
\begin{equation}
\label{eq:vtgluemap}
Y : 2H^4(Z_+)^\tau \oplus_0 2H^4(Z_-)^\tau \to H^4(M)
\end{equation}
to be the composition of $2x \mapsto \clplift{2x}$,
the inverse of $(j_+^*, j_-^*)$, and $\smashmap^* : H^4(\smashed) \to H^4(M)$.
\end{defn} 

\noindent
In Theorem \ref{thm:phalf_glue} we express $p(M)$ as a gluing of $c_2(Z_\pm)$
in this sense, but to prove it
we need to know something about how
$\clplift{c_2(Z_\pm)} \in H^4(\clps_\pm)$ relate to some actual bundles.

\subsection{Pre-image of the vertical tangent bundle}

It is natural to ask
whether the pull-back of $\clplift{c_2(Z_\pm)}$ in $H^4(\mtt{Z_\pm})$ equals
the second Chern class of the vertical tangent bundle $T_{vt}(\mtt{Z_\pm})$.
We will in fact need the stronger claim that
there is a bundle $\wh E \to \clps$ such that $c_2(\wh E) = \clplift{c_2(Z)}$,
while the pull-back of $\wh E$ to $\mtt{Z}$ is $T_{vt}(\mtt{Z_\pm})$.

To describe $\wh E$ and related bundles, 
it is convenient to present bundles on $\mtt{Z}$ as mapping tori of a bundle
involution of a bundle on $Z$ that covers $\tau$ (just like $T_{vt}(\mtt{Z})$
itself could be described as the mapping torus of $D\tau : TZ \to TZ$).

Since the normal bundle of $\kd \subset Z$ is trivial, it has a tubular
neighbourhood $\Delta \times \kd$ for a disc $\Delta$. We can think of
$Z$ as the result of gluing $V$ to $\Delta \times \kd$ along
$\bbr \times S^1 \times \kd \cong \Delta^\times \times \kd$.

\begin{lem}
\label{lem:bundle_E}
There exists an $SU(3)$-bundle $E \to Z$ with a bundle isomorphism $\tau_E$
covering $\tau$ such that
\begin{enumerate}
\item $TZ \oplus \trivc$ is $\Z_2$-invariantly isomorphic to $E \oplus -K_Z$
(where $\Z_2$ acts trivially on $\trivc$ and by $D\tau^*$ on $-K_Z$);
\item $E_{|V} \cong TV$, identifying $\tau_E$ with $D\tau$;
\item $E_{|\Delta \times \kd} \cong \trivc \oplus T\kd$, and the restriction
of $\tau_E$ is the corresponding trivial lift.
\end{enumerate}
\end{lem}

\begin{proof}
Given (ii) and (iii), to construct $E$ all that remains is to describe how the
two pieces are glued together.
On a collar neighbourhood $\bbrp \times S^1_\lnn \times \kd$ of the boundary of
$V$, we use the isomorphism
$f : T(\bbrp \times S^1_\lnn \times \kd) \to \trivc \oplus T\kd$
coming from the ``obvious'' $\bbr \times S^1$-invariant trivialisation
of $T(\bbr \times S^1)$.
This matches up the action of $D\tau$ on $TV$ with the trivial action on
$\trivc \oplus T\kd$, so $\tau_E$ is well-defined.

To prove (i), we now describe $-K_Z$ and $TZ$ in similar terms.
If we glue $\trivc \to V$ to $T\Delta \to \Delta \times \kd$ by
$g : 1  \mapsto e^{iu}\contra{z}$, then the resulting line bundle over $Z$
clearly has a section vanishing precisely along~$\kd$, so in other words
it is the complex line bundle $-K_Z$.

We may also consider $TZ$ itself as being obtained by gluing
$TV$ over $V$ to $T(\Delta \times U)$ over $\Delta \times U$
by the derivative of
$\bbrp \times S^1_\lnn \times \kd \cong \Delta^* \times \kd$,
$(t,\anglen) \mapsto z = x+iy = e^{-t-i\anglen}$, 
which equals precisely $(g \times \Id_{T\kd}) \circ f$.

Now let us compare 
$\trivc \oplus TZ$ with $-K_Z \oplus E$.
By the above, we can regard both of them as the result of gluing
$\trivc \oplus TV$ to
$\trivc \oplus T\Delta \oplus T\kd$.
For the first, the gluing map is the composition of
$(\Id \times f) :
\trivc \oplus TV \to \trivc \oplus \trivc \oplus T\kd$ with
$\sm{\Id & 0 \\ 0 & g} \times \Id_{T\kd}$.
For the second, we instead compose with 
$\sm{0 & g \\ \Id & 0} \times \Id_{T\kd}$. All the maps are $\Z_2$-equivariant
provided that we choose the $\Z_2$ action on $\trivc \oplus \trivc \oplus T\kd$
over $\Delta^\times \times \kd$ to be the trivial lift of $\tau$.

Hence the composition of one gluing map with the inverse of the other is
the automorphism 
$\sm{0 & \Id \\ \Id & 0} \times \Id_{T\kd}$ of $\trivc^2 \oplus T\kd$,
which is trivially homotopic to the identity in the space of $\Z_2$-equivariant
complex vector bundle automorphisms, which proves (i).
\end{proof}

By taking the mapping torus of $\tau_E$ we obtain an $SU(3)$-bundle
$\wt E \to \mtt{Z}$, and Lemma \ref{lem:bundle_E}(i) shows that
$\wt E$ is stably isomorphic to
$T_{vt}(\mtt{Z}) \oplus \det(T_{vt}^*(\mtt{Z}))$.
Since $c_1(T_{vt}(\mtt{Z}))$ is Poincar\'e dual to $S^1 \times \kd$, which can be deformed off itself, it squares to 0, so
\begin{equation}
\label{eq:c2tilde}
c_2(\wt E) = c_2(T_{vt}(\mtt{Z})) .
\end{equation}

\begin{rmk}
\label{rmk:kth}
An alternative justification (which will be more crucial below) of
\eqref{eq:c2tilde} that does not rely on Lemma \ref{lem:bundle_E}(i)
is to start by noting that since $\wt E$ and $T_{vt}(\mtt{Z})$ are
isomorphic over $\mtt{V}$, the difference of their $c_2$s lies in
$H^4(\mtt{Z}, \mtt{V}) \cong H^4(\Delta \times \kd, \Delta^\times \times \kd)
\cong H^2_{cpt}(\Delta) \times H^2(\kd)$.
Following Atiyah \cite[\S 2.6]{atiyah67} we can make this more precise by considering the pair $(\wt E, T_{vt}(\mtt{Z}))$ together with the
natural isomorphism over $\mtt{V}$ as an element 
\[ \krel{\wt E}{T_{vt}(\mtt{Z})} \in K(\mtt{Z}, \mtt{V}) \cong
K(\Delta \times \kd, \Delta^\times \times \kd),\]
of relative K-theory. (To reduce notational clutter, we are a little careless
and omit the isomorphism from the notation for the difference element despite
its significance, instead relying on describing the isomorphism in the text.)
Now $D = \krel{\wt E}{T_{vt}(\mtt{Z})}$ has Chern classes in
$H^*(\mtt{Z}, \mtt{V})$ and we can write
\[ c_2(\wt E) = c_2(T_{vt}(\mtt{Z})) + c_1(T_{vt}(\mtt{Z}))c_1(D) + c_2(D) . \] 
Because the image of $D$
in $K(\Delta \times \kd, \Delta^\times \times \kd)$ is a pull-back from
$K(\Delta, \Delta^\times)$, it is clear that
$c_2(D) = c_1(T_{vt}(\mtt{Z}))c_1(D) = 0$.
\end{rmk}

\smallskip
Next, note that the mapping torus $\wt E$ of $\tau_E$ is by construction
identified
with $\trivc \oplus T\kd$ near $S^1 \times \kd \subset \mtt{Z}$. Thus the
fibres over each point on one of the collapsed $S^1$s are all identified,
defining a bundle $\wh E \to \clps$ such that $\rho^*\wh E = \wt E$.

\begin{prop}
\label{prop:bundle_clps}
$c_2(\wh E) = \clplift{c_2(Z)}$ 
\end{prop}

The remainder of this subsection is devoted to the proof of Proposition
\ref{prop:bundle_clps}.
We first construct a further $SU(3)$-bundle
$F \to Z$ with involution $\tau_F$ as follows.

Recall from \S\ref{subsec:inv_blocks} that in addition to the K3 divisor
$\kd$ that is fixed point-wise by $\tau$, there is a second invariant
K3 divisor $\kd' \subset Z$. The fixed set of $\tau$ is the union of $\kd$ and
a curve $C \subset \kd'$.
Consider a tubular neighbourhood $W$ of $C$ in $\kd'$ (so $W \cong$ unit disc
bundle in $N_{C/\kd} \cong T^*C$).
Then $\Delta \times W$ is a tubular neighbourhood of $C$ in $Z$.

We define $F$ as a gluing of $E_{|Z \setminus C}$ and
$T(\Delta \times W)$. The overlap region deformation-retracts to the unit
3-sphere bundle $\Sph$ of $T^*C \oplus \trivc \to C$ (using some arbitrary
hermitian metric on $T^*C$) and the restriction of both bundles to the overlap
is $TC \oplus T^*C \oplus \trivc$. We define $F$ using the gluing map 
\[ \Sph \to SU(TC \oplus T^*C \oplus \trivc), \;
(\alpha, z) \mapsto
\begin{pmatrix}
1 & 0 & 0 \\
0 & z & \alpha \\
0 & \bar \alpha & \bar z
\end{pmatrix} . \]
Next, define a bundle isomorphism $\tau_F : F \to F$ covering $\tau$
by patching up $\tau_E$ over $Z \setminus C$ (where $F \cong E$)
and the trivial lift of $\tau$ over $\Delta \times W$
(where $F \cong TC \oplus T^*C \oplus \trivc$).
This works because on the overlap,
$E \cong TV \cong TC \oplus T^*C \oplus \trivc$ identifies
$\tau_E \cong D\tau \cong \mathrm{diag}(1,-1, -1)$,
which equals the difference of the glue map evaluated at $p \in \Sph$
and $\tau(p)$.

Now because $\tau_F$ acts trivially over the fixed set $\kd \cup C$ of
$\tau$, the quotient defines a bundle $F^0 \to Z^0$, whose pull-back by
$Z \to Z^0$ is $F$.
$F^0$ can also be pulled back to a bundle $\wh F \to \clps$. 

\begin{lem}
\begin{enumerate}
\item
$c_2(F) = c_2(E) + PD(C) \in H^4(Z)$.
\item
$c_2(\wh F) = c_2(\wh E) + PD(S^1 \times C) \in H^4(\clps)$.
\end{enumerate}
\end{lem}

\begin{proof}
Since $F$ and $E$ are constructed to be isomorphic outside $C$, the
difference of their $c_2$s is in the image of $H^4(Z, Z\setminus C) \cong
H^4_{cpt}(\Delta \times W)$, \ie
the difference is a multiple of the Poincar\'e dual $PD(C) \in H^4(Z)$.
In turn, $H^4_{cpt}(\Delta \times W) \cong H^4_{cpt}(A)$ for any fibre $A$
of $T^*C  \oplus \trivc \to C$. We can reason like in Remark \ref{rmk:kth}
and consider the difference element $\krel{E}{F} \in K(Z, Z, \setminus C)$
defined by $E, F$
and the given isomorphism away from $C$, and its image in $K(A, A^\times)$.
The second Chern class of the latter is clearly the generator of
$H^4_{cpt}(A)$, which pins down the coefficient of $PD(C)$ in (i).

(ii) is proved by the same argument.
\end{proof}

\begin{lem}
$c_2(F^0) \in H^4(Z^0)$ is even.
\end{lem}

\begin{proof}
Consider the blow-up $\pi : \overline Z \to Z^0$ in the singular curve $C$.
We will exploit that $c_2(\overline Z) \in H^4(\overline Z)$ is even because
$\overline Z$ is a smooth complex 3-manifold with 
$c_1(\overline Z)^2 = 0$ \cite[Lemma 5.10]{cym}.

Let $E \subset \overline Z$ be the exceptional set (a $\PP^1$-bundle over $C$).
The sequence
$0 \to H^4(Z^0) \stackrel{\pi^*}{\to} H^4(\overline Z) \to H^4(E) \to 0$
is split exact, so it suffices to show that $c_2(\pi^* F^0)$ is even
in $H^4(\overline Z)$.

Analogously to Remark \ref{rmk:kth}, we consider the difference element
$\krel{\pi^*F}{TZ} \in K(\overline Z, \overline Z \setminus E)$
defined by the isomorphism between $\pi^* F^0$ and $TZ$ away from $E$,
and its second Chern class
$\ctrel{\pi^* F^0}{TZ} \in H^4( \overline Z, \overline Z \setminus E)
\cong H^4_{cpt}(U)$,
for a tubular neighbourhood $U$ of $E$.
Then $c_2(\pi^*F^0) - c_2(Z)$ is the image of
$\ctrel{\pi^*F^0}{TZ}$ (since $c_1(F^0) = 0$).

Thinking of $E$ as the projectivisation of the rank 2 bundle
$\trivc \oplus T^*C$ over $C$, $U$ is the total space of $\calo_E(-2)$.
$TU$ splits as $T_{vt}U \oplus TC$. $T_{vt}U$ can itself be further split as
a pull-back of the line bundle $\calo_E(-2)$ itself, and the pull-back
of the line bundle $T_{vt}E$ over $E$.
Meanwhile the restriction of $F^0$ to a neighbourhood of $C$ is by construction
the pull-back of $\trivc \oplus T^*C \oplus TC$ from $C$, and hence the same
is true for $\pi^* F^0$ over~$U$.

The identification of these bundles along the boundary of $U$ maps
\begin{itemize}
\item the $TC$ summand in $TU$ identically to the $TC$ summand in $\pi^* F^0$,
\item the $\calo_E(-2)$ summand to $\trivc$, taking the ``outward'' section of
$\calo_E(-2)$ to a constant one in $\trivc$
\item the pull-back of $T_{vt}E$ to $T^*C$.
\end{itemize}
Writing 
\[ \ctrel{TZ}{\pi^*F^0} = \ctrel{T_{vt}U}{\trivc \oplus T^*C} +
c_1(TC)\corel{T_{vt}U}{\trivc \oplus T^*C} , \]
the second term will always be even because $c_1(C)$ is.
In turn,
\[ 
\ctrel{T_{vt}U}{\trivc \oplus T^*C} =
c_1(\calo_E(-2))\corel{T_{vt}E}{T^*C} + \corel{\calo_E(-2)}{\trivc}c_1(T^*C) ,\]
and the factors $c_1(\calo_E(-2))$ and $c_1(T^*C) \in H^2(U)$ are both
even. (Looking closer at the identifications at the boundary one can also see
\eg that $\corel{\calo_E(-2)}{\trivc} \in H^2_{cpt}(U)$ is the Poincar\'e dual
to $E$, but that does not actually seem necessary if we just need to know
the parity of $\ctrel{TZ}{\pi^* F^0}$.)
\end{proof}

Now it is clear that $\clplift{c_2(F)} = c_2(\wh F)$,
and (since $PD(C)$ is even, which we could also see as a consequence of
Lemma \ref{lem:smoothing_top}) that $\clplift{PD(C)} = PD(S^1 \times C)$,
completing the proof of Proposition~\ref{prop:bundle_clps}.

\begin{rmk}
\label{rmk:premagic}
As a by-product of the above lemmas, 
we find that the mod 2 residue of $c_2(\wh E)$
is the Poincar\'e dual of $S^1 \times C$, so it is \emph{not} even in general.
At first sight it seems disconcerting that some of the
intermediate steps in the calculation of $p(M)$ are odd, even though $p(M)$
itself must always be even by Lemma \ref{lem:p}.
The explanation is that thanks to
Lemma \ref{lem:smoothing_top}, the parity of $PD(S^1 \times C)$ is controlled
by the bilinear form on $N$, which of course also controls the matchings. 

\end{rmk}

\subsection{Completing the proof of Theorem \ref{thm:phalf_main}}
\label{subsec:p_end}

We are now ready to express $p(M)$ in terms of the gluing map $Y$ from
Definition \ref{def:gluemap}.

\begin{thm}
\label{thm:phalf_glue}
$p(M) = -Y(c_2(Z_+), c_2(Z_-))$.
\end{thm}

\begin{proof}
We define a $Spin(7)$-bundle
$T \to \smashed$ such that
\begin{enumerate}
\item
$j_\pm^* T$ is isomorphic to $\wh E_\pm \oplus \trivr$, so in particular
$j_\pm^* p(T) = -c_2(\wh E_\pm)$, and
\item
$\smashmap^* T$ is isomorphic to $TM$.
\end{enumerate}
The claim then follows immediately Proposition \ref{prop:bundle_clps} and
the definition of $Y$.

If we define $T$ by gluing $\wh E_+ \oplus \trivr$ to $\wh E_- \oplus \trivr$
in any way over the overlap $\kd = \clps_+ \cap \clps_- \subset \smashed$
then (i) will automatically hold, so we just need to choose the gluing map
to ensure (ii) holds too.

The construction of $\wh E_\pm$ amounts to a gluing of $T_{vt}(\mtt{V_\pm})$ over
$V_\pm \subset \clps_\pm$ to $\trivc \oplus T\kd$ over a neighbourhood of $\kd$ in $\clps_\pm$,
using the bundle isomorphism $f_\pm : (x\contra{t}, y\contra{u}, v) \mapsto (x+iy, v)$
from the proof of Lemma \ref{lem:bundle_E}.
Thus the gluing map we use to construct $T$ over $\smashed$ should be a bundle map
$h : \trivr \oplus \trivc \oplus T\kd \to \trivr \oplus \trivc \oplus T\kd$ over $\kd$.

Meanwhile, the tangent bundle of the XTCS $M$ can be viewed as a gluing of
$T(\mtt{V_+})$ and $T(\mtt{V_-})$ by the derivative of the map $F$ from
\eqref{eq:gluemap} that is used to glue together $\mtt{V_+}$ to $\mtt{V_-}$.
The crucial point is that the bundle map $DF$ over $\bbr \times T^2 \times \kd$
clearly depends only on the $\kd$ factor. To make sense of this more formally
we first need to identify the bundles with pull-backs from $\kd$, so we define
\[ \tilde f_\pm : T(\mtt{V_\pm}) \to \trivr \oplus \trivc, \;
(x\contra{t}, y\contra{u}, s\contra{v}, w) \mapsto (s, x+iy, w) . \]
Then the composition $\tilde f_+^{-1} \circ DF \circ f_- :
\trivr \oplus \trivc \oplus T\kd \to \trivr \oplus \trivc \oplus T\kd$
is a pull-back of a bundle map $h$ over the $\kd$ factor.
(To be really explicit, the action of $h$ on the $\bbr \times \bbc$ factor is the
conjugation of $(s, z) \mapsto (-s, e^{i\vartheta} \bar z)$ by $(s, x+iy) \mapsto (x, s+iy)$,
so is independent of the coordinates on the base). If we use that $h$ in the
construction of $T$, then $\smashmap^* T = TM$ as desired.
\end{proof}

To complete the proof of Theorem \ref{thm:phalf_main}, it remains to
explain how to interpret Theorem \ref{thm:phalf_glue} in terms of $c_2(Z_\pm)$,
presented in the form \eqref{eq:c2cbar};
\ie we write $c_2(Z_\pm) = g_\pm\bar c_2(Z_\pm) + 24h_\pm$ for some
$\bar c_2(Z_\pm) \in N_\pm^*$ and some $h_\pm \in H^4(Z_\pm)$
whose image in $H^4(\kd)$ is a generator, with $h_\pm$ assumed to be
$\tau$-invariant if $Z_\pm$ is an involution block.

Recall that 
$g_\pm : H^2(\kd) \to H^4(Z_\pm)$ is the Poincar\'e dual
of the restriction map.
Define $\tilde g_\pm : H^2(\kd) \cong H^2(S^1 \times \kd) \to H^4(\mtt{Z_\pm})$
analogously, and recall from Notation \ref{notn:abuse} that on the
cross-section
$(S^1_{\lnx_\pm} \times S^1_{\lnn_\pm})/(\antip \times \antip) \times \kd
\cong T^2 \times \kd$ of $(S^1_{\lnx_\pm} \times M_\pm)/(\antip \times \tau)$,
$2\drn_\pm \in H^1(T^2 \times \kd)$ denotes the (primitive) element whose
pull-back to $S^1_\lnx \times S^1_\lnn \times \kd$ corresponds to twice the
generator $\drn_\pm$ from the internal $S^1$ factor.

\begin{lem}
\label{lem:lift_snake}
For any $y \in H^2(\kd)$%
\[ Y(2g_+(y)), 0) = \delta((2\drn_+) y)  \]
and
\[ Y(0, 2g_-(y))) = -\delta((2\drn_-) y) , \]
where $\delta : H^3(T^2 \times \kd) \to H^4(M)$ is the Mayer-Vietoris snake map.
\end{lem}

\begin{proof}
Recall that $g_+(y)$ can be described as $i_{+*} \partial_+ (\drn_+ y)$, where
$i_{+*} : H^4_{cpt}(V_+) \to H^4(Z_+)$ is the push-forward of the inclusion
$i : V_+ \to Z_+$ and $\partial_+ : H^3(S^1 \times \kd) \to H^4_{cpt}(V_+)$
is the snake map in the relative
cohomology sequence for the pair $(Z_+, \kd)$.
From the cochain description of \eqref{eq:clplift} it is clear that
\[ \clplift{2g_+(y)} =
\hat\iota_{+*} \tilde \partial_+ (2\drn_+ y)), \]
for the push-forward $\hat \iota_{+*} : H^4_{cpt}(\mtt{V_+}) \to H^4(\clps_+)$
of the inclusion $\hat \iota_+ : \mtt{V_+} \to \clps_+$,
and the snake map $\tilde \partial_+ : H^3(T^2 \times \kd) \to H^4_{cpt}(\mtt{V_+})$.

Now the composition $\smashmap^* \circ \hat \iota_{+*} : H^4_{cpt}(\mtt{V_+}) \to H^4(M)$
is simply the push-forward of the inclusion $\mtt{V_+} \to M$, and its
composition with
$\tilde \partial_+$ equals $\delta$.
Hence
\[ Y(2\tilde g_+(y), 0)
= \smashmap^*(\hat \iota_{+*} \tilde \partial_+((2\drn_+)y)))
= \delta((2\drn_+)y) . \qedhere \]
\end{proof}

\begin{cor}
\label{cor:phalf}
Suppose that $c_2(Z_\pm) = g_\pm\bar c_2(Z_\pm) + 24h_\pm$ as in
\eqref{eq:c2cbar}. Then 
\[ p(M) = \delta(\drn_+ \bar c_2(Z_+) - \drn_- \bar c_2(Z_-))
+ 12Y({2h_+}, {2h_-}). \]
\end{cor}

\pagebreak[2]

\begin{lem}
Let $(h_+, h_-) \in H^4(Z_+)^\tau \oplus_0 H^4(Z_-)^\tau$.
Then 
\[ Y(2h_+, 2h_-) = (k_+)_*(s_+(\wh B_+(h_+))) + (k_-)_*(s_-(\wh B_-(h_-)))
\mod 2 ,\]
where $k_\pm : \mtt{V} \to M$ are the obvious inclusions and
$s_\pm : H^3_{cpt}(V) \to H^4_{cpt}(\mtt{V})$ are the snake maps in the
compactly supported version of the exact sequence \eqref{eq:mv_maptorus}
for the cohomology of the mapping torus. 
\end{lem}

\begin{proof}
Note that the mod 2 residue of $\clplift{2h_\pm}$ in
$H^4(\clps_\pm)$ is $(\hat \iota_\pm)_\pm(s_\pm(\wh B_\pm(h_\pm)))$,
for $\hat \iota_\pm$ the inclusion $\mtt{V_\pm} \to \clps_\pm$ as before,
and that the maps $j_\pm$ and $\gamma$ in the definition of $Y$ satisfy
$j_\pm = \gamma \circ k_\pm \circ \hat \iota_\pm$.
\end{proof}

If $\wh B_\pm(h_\pm) = 0$, then 
the final term in Corollary \ref{cor:phalf} is divisible by 24,
completing the proof of Theorem \ref{thm:phalf_main}.
In general
\begin{itemize}
\item If either of $\wh B_\pm$ has non-zero image in
$H^3(V_\pm)/\im(\Id + \tau^*)$, then the image of $Y(2h_+, 2h_-)$ in $H^4(M_\pm)$ is odd, and the image of $p(M)$ in $H^4(M_\pm)$ is divisible by exactly 12.
Thus the class of $(H^4(M), p(M))$
is determined by the mod 12 residue of $p(M)$, which
Corollary \ref{cor:phalf} says is equal to
$\delta(\drn_+ \bar c_2(Z_+) - \drn_- \bar c_2(Z_-))$.
In particular, if the involution blocks are pleasant then
the greatest divisor of $p(M)$ is
$\gcd(12,n)$, where $n$ is the greatest integer dividing \eqref{eq:mod24res}.
\item If both $\wh B_\pm$ can be chosen to have zero image in $H^3(V_\pm)$,
then we can write $\wh B_\pm = \partial_\pm b_\pm$ for some $b_\pm \in L$,
and $p(M) = \delta\big(\drn_+ \bar c_2(Z_+) - \drn_- \bar c_2(Z_-)
+ 12\drx_+ b_+ - 12\drx_- b_-\big) \mod 24$. In particular, if the involution
blocks are pleasant then the greatest integer dividing $p(M)$ is $\gcd(24,n)$
for $n$ the greatest integer dividing
$\drn_+ \bar c_2(Z_+) - \drn_- \bar c_2(Z_-) + 12\drx_+ b_+ - 12\drx_- b_-
\in H^3(T^2 \times \kd) \mod I_+ + I_-$.
\end{itemize}

\subsection{\fpif-twisted connected sums}
\label{subsec:s1}

Now we describe how to work out the torsion in $H^4(M)$ and the divisibility
of $p(M)$ in the case $\vartheta = \frac{\pi}{4}$, and carry it out for some
examples.

As described before, we use a block $Z_+$ with involution and an ordinary block
$Z_-$. We assume that $Z_+$ is pleasant, in order that $H^4(M_+)$ is
torsion-free. Therefore the only contribution to the torsion comes from the
Mayer-Vietoris map $\delta : H^3(T^2 \times \kd) \to H^4(M)$, whose image is a
split summand in~$H^4(M)$.

By Lemma \ref{lem:mtres} the assumption that $Z_+$ is pleasant ensures that
the image of $H^3(M_+) \to H^3(T^2 \times \kd)$ is exactly
\begin{equation}
\begin{aligned}
I_+ := \{\drx_+ n + \drn_+ t :
\; n \in N_+, \, t \in T_+, \, n+t = 0 \mod 2L\} .
\end{aligned}
\end{equation}
On the other hand, the image of $H^3(M_-)$ is just
\[ I_- := \drx_- N_- \oplus \drn_- T_- . \]
The image $\delta(H^3(T^2 \times \kd))$ is isomorphic to
$H^3(T^2 \times \kd)/(I_+ + I_-)$.

To make this more manageable, note that $\{2\drn_+ , \drn_- \}$
is a basis of $H^1(T^2)$, and that we may define a surjective homomorphism
\begin{equation}
\begin{aligned}
\label{eq:s1proj}
H^3(T^2 \times \kd) &\to N_+^* \oplus N_-^*, \\
2\drn_+ x + \drn_- y &\mapsto (\flat^+(x), \flat^-(y))
\end{aligned}
\end{equation}
for $x, y \in L$, where $\flat^\pm : L \to N_\pm^*$ is defined by the
intersection form. Elements in the kernel of \eqref{eq:s1proj} have $x \in T_+$,
$y \in T_-$, so definitely lie in $I_+ + I_-$.
Hence
\begin{equation}
\label{eq:s1induced}
\delta(H^3(T^2 \times \kd)) \cong (N_+^* \oplus N_-^*)/(\bar I_+ + \bar I_-),
\end{equation}
where $\bar I_\pm$ is the image of $I_\pm$ under \eqref{eq:s1proj}. Using that 
\[ \drx_+ = \drn_+ + \drn_-, \quad
\drx_- = 2\drn_+ + \drn_- \]
in a $\vartheta = \frac{\pi}{4}$ matching, we find
\enlargethispage{0.8\baselineskip}
\begin{equation}
\label{eq:torsion_aid}
\begin{aligned}
\bar I_+ &= \{ (\half \flat^+(x), \flat^-(x)) : x \in \bar N_+ \}, \\
\bar I_- &= \{ (\flat^+(y), \flat^-(y)) : y \in N_- \},
\end{aligned}
\end{equation}
where $\bar N_+ = \{x \in N_+ : \flat^+(x) \in 2N_+^* \}$.

\begin{prop}
\label{prop:pi4detail}
Let $M$ be a $\frac{\pi}{4}$-twisted connected sum of blocks $Z_+$ and $Z_-$,
where $Z_+$ is pleasant, and let
\[ \wh W : \bar N_+ \oplus N_- \to N_+^* \oplus N_-^*,
\; (x, y) \mapsto (\half\flat^+(x) + \flat^+(y), \; \flat^-(x) + \flat^-(y) )
. \]
Then
\begin{enumerate}
\item $\delta(H^3(T^2 \times \kd)) \cong \coker \wh W$.
\item
Under the hypotheses of Corollary \ref{cor:phalf2},
this isomorphism maps $p(M) \mmod 24$ to the image of
$(\half \bar c_+, -\bar c_-)$.
\item
Let $z_1, z_2 \in \Tor \delta(H^3(T^2 \times \kd))$, let 
$(\alpha_1, \beta_1), (\alpha_2, \beta_2) \in N_+^* \oplus N_-^*$
be representatives of the images of $z_1$ and $z_2$ in $\coker \wh W$,
and pick $(x,y) \in \bar N_+ \times N_-$ such that
$m(\alpha_1, \beta_1) = \wh W(x,y)$. %
Then $b_M(z_1, z_2) = \frac{1}{m}(\alpha_2(x) + \beta_2(y)) \in \Q/\Z$.
\end{enumerate}
\end{prop}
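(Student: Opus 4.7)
My plan is to establish the three parts as follows.

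Part (i) is essentially a restatement of what precedes the proposition. The computation leading to \eqref{eq:s1induced} already shows $\delta(H^3(T^2 \times \kd)) \cong (N_+^* \oplus N_-^*)/(\bar I_+ + \bar I_-)$, and \eqref{eq:torsion_aid} describes $\bar I_\pm$ explicitly. Inspecting the definition of $\wh W$, its image is precisely $\bar I_+ + \bar I_-$, so $\coker \wh W = (N_+^* \oplus N_-^*)/(\bar I_+ + \bar I_-)$, giving the asserted isomorphism.

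For (ii), Corollary \ref{cor:phalf} combined with the evenness hypothesis on $\tilde{x}_\pm$ gives $p(M) \equiv \delta(\drn_+ \bar c_2(Z_+) - \drn_- \bar c_2(Z_-)) \pmod{24}$. Since $\bar c_\pm \in 2N_\pm^*$, I can choose $y_\pm \in L$ with $\flat^\pm(y_\pm) = \half \bar c_\pm$, so the integral class $\delta(\drn_+ \bar c_+ - \drn_- \bar c_-)$ is represented in $H^3(T^2 \times \kd)$ by $2\drn_+ y_+ - 2\drn_- y_-$. Applying the projection \eqref{eq:s1proj} sends this class to $(\flat^+(y_+), -\flat^-(2y_-)) = (\half \bar c_+, -\bar c_-)$, which is the image of $p(M) \bmod 24$ in $\coker \wh W$.

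For (iii), I apply Lemma \ref{lem:torlink}. Choose lifts $u_j, v_j \in L$ with $\flat^+(u_j) = \alpha_j$ and $\flat^-(v_j) = \beta_j$, and represent the $(\alpha_j, \beta_j)$ by $P_j := 2\drn_+ u_j + \drn_- v_j \in H^3(T^2 \times \kd)$. Given $m(\alpha_1, \beta_1) = \wh W(x,y)$, I pick $s \in L$ with $2s - x \in T_+$ (available because $x \in \bar N_+$), so that $Q_+ := 2\drn_+ s + \drn_- x$ lies in $I_+$ (rewriting as $\drx_+ x + \drn_+(2s-x)$) and $Q_- := \drx_- y = 2\drn_+ y + \drn_- y$ lies in $I_-$, and their images in $N_+^* \oplus N_-^*$ are $(\half\flat^+(x), \flat^-(x))$ and $(\flat^+(y), \flat^-(y))$ respectively. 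The difference $mP_1 - Q_+ - Q_-$ lies in the kernel of \eqref{eq:s1proj}, which is contained in $I_+ + I_-$, and so can be absorbed by enlarging $Q_\pm$ to $\tilde Q_\pm \in I_\pm$ with $mP_1 = \tilde Q_+ + \tilde Q_-$. A direct cup product computation in $H^6(T^2 \times \kd) \cong \Z$, using $\drx_+ = \drn_+ + \drn_-$, $\drx_- = 2\drn_+ + \drn_-$, $\drn_\pm^2 = 0$, and normalising so $2\drn_+\drn_-$ is the positive generator of $H^2(T^2)$, yields $\tilde Q_+ \cdot P_2 \equiv \alpha_2(x) + \beta_2(y) \pmod m$; dividing by $m$ gives the formula.

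The main obstacle will be part (iii): keeping track of the orientation conventions for $T^2$ and the relationship between the bases $\{\drx_+, \drn_+\}$ and $\{\drx_-, \drn_-\}$, so that the sign in the linking form matches the claim (rather than its negative). A symmetry check—evaluating $b(z_1,z_2)$ and $b(z_2,z_1)$ using lifts $(x_j,y_j)$ with $m(\alpha_j,\beta_j) = \wh W(x_j,y_j)$ and verifying that both reduce modulo $\Z$ to the symmetric quantity $\tfrac{1}{m^2}(\tfrac{1}{2} x_1.x_2 + y_1.x_2 + x_1.y_2 + y_1.y_2)$—will provide a useful sanity check that the orientation and sign conventions have been fixed consistently.
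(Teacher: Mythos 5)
Your proposal is correct and follows essentially the same route as the paper: (i) is the restatement of the discussion around \eqref{eq:s1induced}--\eqref{eq:torsion_aid}, (ii) follows from Corollary \ref{cor:phalf} via the projection \eqref{eq:s1proj}, and (iii) applies Lemma \ref{lem:torlink} to an explicit decomposition $mp_1 = p_1^+ - p_1^-$ with $p_1^\pm \in I_\pm$ and evaluates one cup product on $T^2 \times \kd$; the only cosmetic difference is that you compute $\tfrac1m p_1^+ p_2$ in the basis $\{2\drn_+, \drn_-\}$ while the paper computes the (equal) quantity $\tfrac1m p_1^- p_2$ after rewriting $p_2$ in terms of $\drx_-, \drn_-$. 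On the one point you flag: the convention implicit in the paper's computation is that $\drn_- \smile \drx_- \,(= -\,2\drn_+ \smile \drn_-)$ evaluates to $+1$, i.e.\ the \emph{opposite} of your tentative normalisation, which as written would produce $-(\alpha_2(x)+\beta_2(y))$; and note that your proposed symmetry sanity check cannot detect this overall sign, since the negative of a symmetric form is again symmetric, so the sign must be pinned down from the orientation conventions entering Lemma \ref{lem:torlink} rather than from symmetry.
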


\begin{proof}
(i) is proved in the preceding discussion, while (ii) is immediate from
Corollary \ref{cor:phalf2}.

For (iii), let $p_1, p_2$ be $\delta$-pre-images of $z_1, z_2$ in
$H^3(T^2 \times \kd)$
According to Lemma \ref{lem:torlink},
\[ b_M(z_1, z_2) = \frac{1}{m}p_1^- p_2, \]
where $mp_1 = p_1^+ - p_1^- \in I_+ + I_-$. Now $\delta(p_1) = z_1$
means that \eqref{eq:s1proj} maps $p_1$ to $(\alpha_1, \beta_1)$.
Therefore $m(\alpha_1, \beta_1) = \wh W(x,y)$ means that
$mp_1 = \drn_+ (x + 2y + t_+) + \drn_-(x+y+t_-)$
for some $t_\pm \in T_\pm$ (with $x + t_+$ even). Therefore
\begin{gather*}
p_1^+ = \drx_+ x + \drn_+ t_+ = \drn_+ (x + t_+) + \drn_- x, \\
-p_1^- = \drx_- y + \drn_- t_- = 2\drn_+ y + \drn_- (y+ t_-),
\end{gather*}
and in particular $y+t_- = -x \mmod m$.
Hence, writing $p_2 = 2\drn_+ w_+ + \drn_- w_-$ for $w_\pm \in L$
(so that $\alpha_2 = \flat^+(w_+)$ and $\beta_2 = \flat^-(w_-)$),
\begin{align*}
p_1^- p_2 &= -(\drx_- y + \drn_- t_-)(\drx_- w_+ + \drn_- (w_- - w_+))
= -t_- w_+ + y(w_- - w_+) \\
&= yw_- - w_+(y + t_-) = yw_- + w_+x = \beta_2(y) + \alpha_2(x) \mod m . \qedhere
\end{align*}
\end{proof}

Now consider the case when $N_+$ and $N_-$ are purely at angle $\frac{\pi}{4}$.
Let $\pi_\pm : N_\mp(\bbr) \to N_\pm(\bbr)$ be the orthogonal projections,
and recall that pure angle $\frac{\pi}{4}$ means that
$\pi_\pm(x). \pi_\pm(y) = \half x.y$ for any $x, y \in N_\mp(\bbr)$.
In particular, note that $\pi_-^*N_-^* \subset N_+^*(\bbr)$ equals
$(2\pi_+N_-)^*$. Therefore $N_+^* + 2\pi_-^*N_-^* = (N_+ \cap 2\pi_+N_-)^*$,
and we get a surjective homomorphism
\begin{equation}
\label{eq:pures1proj}
\begin{aligned}
N_+^* \oplus N_-^* &\to (N_+ \cap 2\pi_+N_-)^*, \\
 (\alpha, \beta) &\mapsto \alpha - \pi_-^*\beta .
\end{aligned}
\end{equation}
Note further that $\pi_-^* \circ \flat^-$ equals $\flat^+$ on $N_-(\bbr)$ and
$\half \flat^+$ on $N_+(\bbr)$. Therefore $\bar I_\pm$ are both contained in
the kernel of \eqref{eq:pures1proj}. The kernel is in fact
\[ \{ (\alpha, \beta) : \alpha = \pi_-^*\beta \in (N_+ + 2\pi_+N_-)^* \}, \]
isomorphic to $(N_+ + 2\pi_+N_-)^*$ by projection to the first component.
The images of $\bar I_\pm$ in there are simply $\flat^+(\half \bar N_+)$ and
$\flat^+(N_-)$, respectively. Notice that
\begin{equation}
\label{eq:imsum}
\flat^+(\half \bar N_+) + \flat^+(N_-)
\; = \; \{\half \flat^+(x) \; : \; x \in N_+ + 2\pi_+N_-, \;
\flat^+(x) \in 2(N_+ + 2\pi_+N_-)^* \} .
\end{equation}
Hence there is a surjection $f$ from the discriminant group $\Delta$ of the
even integral lattice $N_+ + 2\pi_+N_-$ to the coquotient of
$\im \wh W = \bar I_+ + \bar I_-$ in the kernel of \eqref{eq:pures1proj}, with
kernel precisely the 2-torsion $T_2\Delta$;
thus $\Tor \delta(H^3(T^2 \times \kd)) \cong \Delta/T_2\Delta$.

To evaluate the torsion linking form on a pair of elements in $\Tor H^4(M)$
corresponding to images in $\Delta$ of
$\alpha_1, \alpha_2 \in (N_+ + 2\pi_+N_-)^*$, note that the corresponding
elements in $N_+^* \oplus N_-^*$ are $(\alpha_i, 2\pi_+^*\alpha_i)$.
If $m\alpha_1 = \half \flat^+(x + 2\pi_+y)$ for $x \in \bar N_+$
and $y \in N_-$, then $m(\alpha_1, 2\pi_+^*\alpha_1) = \wh W(x,y)$ and
Proposition \ref{prop:pi4detail} gives
$b_M(f(\alpha_1), f(\alpha_2)) = \frac{1}{m}(\alpha_2(x) +
(2\pi_+^*\alpha_2)(y)) = \frac{1}{m}\alpha_2(x+ 2\pi_+y)$.
In summary

\begin{cor}
\label{cor:pi4pure}
For a pure $\frac{\pi}{4}$ matching where $Z_+$ is pleasant
\begin{itemize}
\item There is an isomorphism $f :  \Delta/T_2\Delta \to \Tor H^4(M)$. 
\item For $x, y \in \Delta$, $b_M(f(x), f(y)) = 2b_\Delta(x, y)$, where
$b_\Delta$ is the discriminant form of $\Delta$.
\item The free part of $\delta(H^3(T^2 \times \kd))$ is naturally isomorphic to
$(N_+ \cap 2\pi_+N_-)^* \stackrel{2\pi_+^*}{\cong} (\pi_-N_+ \cap N_-)^*$.
\item The image of $p(M) \mmod 24$ in the free part of
$\delta(H^3(T^2 \times \kd))$ corresponds to
$\half\bar c_+ + \pi_-^* \bar c_- \in (N_+ \cap 2\pi_+N_-)^*$, or
$\pi_+^* \bar c_+ + \bar c_- \in (\pi_-N_+ \cap N_-)^*$.
\end{itemize}
\end{cor}

Note in particular that if $N_+$ has 2-elementary discriminant then
automatically $\Delta = T_2\Delta$ and $2\pi_+N_- \subseteq N_+$
(and $\pi_-N_+ \supseteq N_-$), so $H^4(M)$ is torsion-free, and the direct
summand $\delta(H^3(T^2 \times \kd)) \subseteq H^4(M)$ is naturally isomorphic
to $N_-^*$.

\begin{rmk}
\label{rmk:magic}
In Remark \ref{rmk:premagic} we pointed out that some of the expressions
for $p(M)$ are not obviously even, even though Lemma \ref{lem:p} tells us that
$p(M)$ is even for any closed 7-manifold.
The appearance of $\half$ as the coefficient
of $\bar c_+$ in Corollary \ref{cor:pi4pure} is an instance of this:
$\bar c_- \in N_-$ is even, and hence so is its contribution to $p(M)$,
but it is not
obvious why that of $\half\bar c_+$ should be too. Indeed, $\bar c_+$ need not
be even considered as an element of $2N_+^* + \flat(N_+)$, and for
arbitrary even elements $c \in N_+^*$, the image of $\half c$ in
$(N_+ \cap 2\pi_+N_-)^*$ need not be even.

However, note that $N_+ \cap 2\pi_+N_- \subseteq \bar N_+$, and that any
$x \in N_+ \cap 2\pi_+N_-$ has $x^2$ divisible by~4.
Meanwhile Remark \ref{rmk:premagic} and Lemma \ref{lem:smoothing_top}
imply that $\bar c_+ x = x^2 \mmod 4$ for any $x \in \bar N_+$,
explaining why $\half \bar c_+$ must be even as an element of
$(N_+ \cap 2\pi_+N_-)^*$.
\end{rmk}

\subsection{\fpis-twisted connected sums}
\label{subsec:h1}

Now we move on to describing the torsion in $H^4(M)$ and the divisibility
of $p(M)$ in the case $\vartheta = \frac{\pi}{6}$.
The calculations are very similar to the case $\thet = \frac{\pi}{4}$, but the
details are just sufficiently different to require repetition.

We use a pair of involution blocks $Z_\pm$, but recall that there is a basic
asymmetry in the set-up (see Figure \ref{fig:1/6}).
We assume that $Z_\pm$ are both pleasant, in order that $H^4(M_\pm)$ are 
torsion-free. Therefore the only contribution to the torsion comes from the
Mayer-Vietoris map $\delta : H^3(T^2 \times \kd) \to H^4(M)$, whose image is a
split summand in $H^4(M)$.
By Lemma \ref{lem:mtres} the image of $H^3(M_\pm) \to H^3(T^2 \times \kd)$ is
exactly
\begin{equation}
\label{eq:s01}
\begin{aligned}
I_\pm := \{\drx_\pm n + \drn_\pm t :
\; n \in N_\pm, \, t \in T_\pm, \, n+t = 0 \mod 2L\} .
\end{aligned}
\end{equation}
The image $\delta(H^3(T^2 \times \kd))$ is isomorphic to
$H^3(T^2 \times \kd)/(I_+ + I_-)$.

Note that $\{2\drn_+ , 2\drn_- \}$
is a basis of $H^1(T^2)$, so that we may define a surjective homomorphism
\begin{equation}
\begin{aligned}
\label{eq:h1proj}
H^3(T^2 \times \kd) &\to N_+^* \oplus N_-^*, \\
2\drn_+ x + 2\drn_- y &\mapsto (\flat^+(x), \flat^-(y))
\end{aligned}
\end{equation}
for $x, y \in L$, where $\flat^\pm : L \to N_\pm^*$ is defined by the
intersection form. Elements in the kernel of \eqref{eq:h1proj} have
$x \in T_+$, $y \in T_-$, so definitely lie in $I_+ + I_-$. This reduces the
problem to understanding the image of the induced isomorphism 
\begin{equation}
\label{eq:h1induced}
\delta(H^3(T^2 \times \kd)) \cong (N_+^* \oplus N_-^*)/(\bar I_+ + \bar I_-),
\end{equation}
where $\bar I_\pm$ is the image of $I_\pm$ under \eqref{eq:h1proj}. Using that 
\[ \drx_+ = \drn_+ + 2\drn_-, \quad
\drx_- = 2\drn_+ + 3\drn_- \]
in a $\vartheta = \frac{\pi}{6}$ matching, we find
\begin{align*}
\bar I_+ &= \{ (\half \flat^+(x), \flat^-(x)) : x \in \bar N_+ \}, \\
\bar I_- &= \{ (\flat^+(y), \threehalf \flat^-(y)) : y \in \bar N_- \},
\end{align*}
where $\bar N_\pm = \{x \in N_\pm : \flat^\pm(x) \in 2N_\pm^* \}$.

\enlargethispage{1.5\baselineskip}

\begin{prop}
\label{prop:pi6detail}
Let $M$ be a $\frac{\pi}{6}$-twisted connected sum of pleasant involution
blocks $Z_+$ and $Z_-$, and let
\[ \wh W : \bar N_+ \oplus \bar N_- \to N_+^* \oplus N_-^*,
\; (x, y) \mapsto (\half\flat^+(x) + \flat^+(y),
\; \flat^-(x) + \threehalf\flat^-(y) ) . \]
Then
\begin{enumerate}
\item $\delta(H^3(T^2 \times \kd)) \cong \coker \wh W$.
\item
Under the hypotheses of Corollary \ref{cor:phalf2},
this isomorphism maps $p(M) \mmod 24$ to the image of
$(\half \bar c_+, -\half \bar c_-)$.
\item
Let $z_1, z_2 \in \Tor \delta(H^3(T^2 \times \kd))$, let 
$(\alpha_1, \beta_1), (\alpha_2, \beta_2) \in N_+^* \oplus N_-^*$
be representatives of the images in
$(N_+^* \oplus N_-^*)/(\bar I_+ + \bar I_-)$, and pick
$(x,y) \in \bar N_+ \times \bar N_-$ such that
$m(\alpha_1, \beta_1) = \wh W(x,y)$. 
Then $b_M(z_1, z_2) = \frac{1}{m}(\alpha_2(x) + \beta_2(y)) \in \Q/\Z$.
\end{enumerate}
\end{prop}

\begin{proof}
(i) is proved in the preceding discussion, while (ii) is immediate from
Corollary \ref{cor:phalf2}.

For (iii), let $p_1, p_2$ be $\delta$-pre-images of $z_1, z_2$ in
$H^3(T^2 \times \kd)$.
According to Lemma \ref{lem:torlink},
\[ b_M(z_1, z_2) = \frac{1}{m}p_1^- p_2, \]
where $mp_1 = p_1^+ - p_1^- \in I_+ + I_-$. Now
\begin{gather*}
p_1^+ = \drx_+ x + \drn_+ t_+ = \drn_+ (x + t_+) + 2\drn_- x, \\
-p_1^- = \drx_- y + \drn_- t_- = 2\drn_+ y + \drn_- (3y + t_-),
\end{gather*}
for some $t_\pm \in T_\pm$ (with $x + t_+$ and $y + t_-$ both even).
In particular, $\frac{3y+t_-}{2} = -x \mmod m$.
Hence, writing $p_2 = 2\drn_+ w_+ + 2\drn_- w_-$ for $w_\pm \in L$,
\begin{align*}
p_1^- p_2 &= -(\drx_- y + \drn_- t_-)(\drx_- w_+ + \drn_- (2w_- - 3w_+))
= \half(-t_- w_+ + y(2w_--3w_+)) \\
&= yw_- - w_+\frac{3y + t_-}{2} = yw_- + w_+x = \beta_2(y) + \alpha_2(x) \mod m . \qedhere
\end{align*}
\end{proof}

Now let us assume that $N_+$ and $N_-$ are purely at angle $\frac{\pi}{6}$.
Let $\pi_\pm : N_\mp(\bbr) \to N_\pm(\bbr)$ be the orthogonal projections,
and recall that pure angle $\frac{\pi}{6}$ means that
$\pi_\pm(x). \pi_\pm(y) = \threequart x.y$ for any $x, y \in N_\mp(\bbr)$.
In particular, see that $\twothird \pi_-^*N_-^* \subset N_+^*(\bbr)$ equals
$(2\pi_+N_-)^*$. We can therefore surjectively map
\begin{equation}
\label{eq:pureh1proj}
\begin{aligned}
N_+^* \oplus N_-^* &\to (N_+ \cap 2\pi_+N_-)^*, \\
 (\alpha, \beta) &\mapsto \alpha - \twothird \pi_-^*\beta .
\end{aligned}
\end{equation}
Note further that $\pi_-^* \circ \flat^-$ equals $\flat^+$ on $N_-(\bbr)$ and
$\threequart \flat^+$ on $N_+(\bbr)$. Therefore $\bar I_\pm$ are both contained
in the kernel of \eqref{eq:pureh1proj}. The kernel is in fact
\[
\{ (\alpha, \beta) :
\alpha = \twothird \pi_-^* \beta \in (N_+ + 2\pi_+N_-)^* \},
\]
isomorphic to $(N_+ + 2\pi_+N_-)^*$ by projection to the first component.
The images of $\bar I_\pm$ in there are simply $\flat^+(\half \bar N_+)$ and
$\flat^+(\bar N_-)$, respectively.
Like in the $\vartheta = \frac{\pi}{4}$ case, their sum is described by
\eqref{eq:imsum}, implying that
the coquotient of $\bar I_+ + \bar I_-$ in the kernel of \eqref{eq:pureh1proj}
is isomorphic to the discriminant group $\Delta$ of the even integral
lattice $N_+ + 2\pi_+N_-$ modulo its 2-torsion $T_2\Delta$.

Similarly to Corollary \ref{cor:pi4pure} we thus obtain

\begin{cor}
\label{cor:pi6pure}
For a pure $\frac{\pi}{6}$ matching where $Z_\pm$ are both pleasant 
\begin{itemize}
\item There is an isomorphism $f :  \Delta/T_2\Delta \to \Tor H^4(M)$. 
\item For $x, y \in \Delta$, $b_M(f(x), f(y)) = 2b_\Delta(x, y)$, where
$b_\Delta$ is the discriminant form of $\Delta$.
\item
the free part of $\delta(H^3(T^2 \times \kd))$ is naturally isomorphic to
$(N_+ \cap 2 \pi_+ N_-)^* \stackrel{2\pi_+^*}{\cong}
(\twothird \pi_- N_+ \cap N_-)^*$.
\item the image of $p(M) \mmod 24$ in the free part of $\delta(H^3(T^2 \times \kd))$
corresponds to
$\half \bar c_+ + \third \pi_+^* \bar c_- \in (N_+ \cap 2\pi_+ N_-)^*$,
or $\pi_+^* \bar c_+ + \half \bar c_- \in (\twothird \pi_- N_+ \cap N_-)^*$.
\end{itemize}
\end{cor}

Note in particular that if $N_+$ has 2-elementary discriminant then
automatically $\Delta = T_2\Delta$ and $2\pi_+N_- \subseteq N_+$
(or $N_- \subseteq \twothird \pi_-N_+$), so
$H^4(M)$ is torsion-free, and $\delta(H^3(T^2 \times \kd))$ 
is naturally isomorphic to $N_-^*$.
(The asymmetry of the construction entails that $N_-$ being 2-elementary is
not as helpful: note that $2\pi_+N_-$ is isometric to $N_-(3)$ which always
has some 3-primary discriminant.)

\subsection{Further invariants and classification results}
\label{subsec:nu_and_class}

Any metric of holonomy $G_2$ has an associated torsion-free \gtstr.
To a \gtstr{} $\varphi$ on closed 7-manifolds, \cite[Definition 1.2]{nu}
associates a value $\nu(\varphi) \in \Z/48$ which is invariant under
diffeomorphisms and homotopies, and can thus in particular distinguish
components of the moduli space of metrics of holonomy $G_2$.

A stronger invariant $\bar \nu(\varphi) \in \Z$ is introduced
in \cite[Definition 1.4]{eta}; for manifolds with holonomy $G_2$ the value
of $\nu(\varphi)$ is recovered by
\begin{equation}
\nu(\varphi) = \bar \nu(\varphi) + 24 \mod 48 .
\end{equation}
 For extra-twisted connected sums
(involving only involutions as in this paper), it can be computed purely in
terms of the gluing angle $\thet$ and the configuration angles of the matching
(Definition \ref{def:angles}).

\begin{thm}[{\cite[Corollary 2]{eta}}]
\label{thm:nubar}
Let $(M,\varphi)$ be an extra-twisted connected sum \gtmfd{} as in Construction
\ref{constr:xtcs} with gluing angle $\thet$. Set $\rho := \pi-2\thet$.
Then
\[ \bar\nu(\varphi) = -72\frac{\rho}{\pi} + 3(\mathrm{sign}\,\rho)
    \Bigl(\#\bigl\{\,j
		\bigm|\alpha_j^-\in\{\pi-|\rho|,\pi\}\,\bigr\}-1
        +2\,\#\bigl\{\,j
		\bigm|\alpha_j^-\in(\pi-|\rho|,\pi)\,\bigr\}\Bigr)\;,
\]
where $\alpha^-_1, \ldots, \alpha^-_{19}$ are the configuration angles of
the configuration of the \hk rotation used in the construction.
\end{thm}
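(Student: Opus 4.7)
The plan is to reduce the computation of $\bar\nu(\varphi)$ to an analysis on the reducible pieces of the construction together with a spectral flow along a deformation. Recall that $\bar\nu$ is defined via eta invariants of Dirac-type operators twisted by the $G_2$-structure, and that $\bar\nu$ is invariant under deformations through torsion-free $G_2$-structures. The strategy is to compare $\bar\nu(\varphi)$ with the value at the baseline case $\thet = \pi/2$ (ordinary TCS), where (iii) from the introduction forces $\bar\nu = 0$, and to track the change along a deformation of $\thet$ from $\pi/2$ to the given gluing angle.

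First I would use the adiabatic limit / stretching technique to express $\bar\nu(\varphi_\ell)$ as the sum of an interior contribution from each reducible end $M_\pm$ plus a contribution localised on the cross-section $T^2 \times \kd$. On each end the $G_2$-structure is pulled back from a reducible $SU(3)$-structure, so the associated Dirac operator splits; in particular the end contributions depend only on the ACyl Calabi--Yau data of $V_\pm$ and on the geometry of the $T^2$ factor (via $\thet$), not on the choice of \hk rotation. Integrating the variation formula for eta invariants along the deformation path of $\thet$ gives a smooth piece that is linear in $\rho = \pi - 2\thet$; a careful curvature computation, using that the relevant twisting bundles have degree-$24$ traces on K3, produces the coefficient $-72/\pi$ in front of $\rho$.

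The second step is to analyse when the deformation fails to be through invertible boundary operators, producing jumps in the eta invariant. On the cross-section $T^2 \times \kd$ the boundary Dirac operator decomposes via the Hodge decomposition of $\kd$; the modes relevant for $\bar\nu$ correspond to the anti-self-dual harmonic $2$-forms on $\kd$, of which there are $19$, and the action of the composition of reflections $A_+ \circ A_-$ on this space has eigenvalues $e^{\pm i\alpha_j^-}$. The eigenvalues of the corresponding family of boundary operators depend on the phase difference between the two markings, which as $\thet$ is deformed from $\pi/2$ to its final value sweeps through an interval of width $|\rho|$; a kernel element appears precisely when a configuration angle $\alpha_j^-$ lies in the closed interval $[\pi - |\rho|, \pi]$, and spectral flow contributes with sign governed by $\mathrm{sign}\,\rho$. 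Keeping track of boundary cases (half-contributions at the endpoints $\pi - |\rho|$ and $\pi$ versus full contributions in the open interval) and subtracting the single universal constant mode (the $-1$) yields the combinatorial formula with coefficient $3$ (reflecting the factor from the relevant Clifford/twisting multiplicities).

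The main obstacle will be the precise bookkeeping of the spectral flow and the identification of the jumps with the configuration angle condition: one needs to show that the modes entering and leaving the kernel are indexed exactly by those $j$ with $\alpha_j^- \in (\pi-|\rho|, \pi)$, with half-weight contributions at the endpoints, and that no other modes of the boundary operator cross zero along the path. This requires a careful analysis of the action of the $\thet$-\hk rotation on the full K3 period/Picard decomposition and of how it intertwines with Clifford multiplication, together with a check that the contributions coming from the $T^2$ factor itself are absorbed into the smooth linear term rather than producing extra jumps. Once these points are established, combining the smooth and jumping contributions gives the formula in Theorem \ref{thm:nubar}, and homotopy invariance of $\bar\nu$ through torsion-free $G_2$-structures guarantees that the answer depends only on $\thet$ and the configuration angles and not on the auxiliary choices (torus side lengths, neck length, cutoff).
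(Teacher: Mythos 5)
First, a point of reference: the paper does not prove this statement at all --- it is imported verbatim from \cite[Corollary 2]{eta}, so the only comparison available is with the proof in that reference, which computes $\bar\nu$ directly for the glued manifold via gluing/adiabatic-limit formulas for eta invariants on the two ACyl pieces (and their quotients), with the configuration angles entering through the contribution of the cross-section, expressed in terms of the relative position of the two subspaces of $H^2(\kd;\R)$ determined by $M_\pm$. Your overall cast of characters (eta invariants, splitting along $T^2\times\kd$, the $19$ anti-self-dual directions, angles of $A_+\circ A_-$, endpoint versus interior weights) is in the right spirit of that argument.

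The genuine gap is in your central mechanism: the proposed interpolation of the gluing angle $\thet$ from $\pi/2$ to its final value, with $\bar\nu$ tracked by spectral flow along the path. No such family exists: the gluing angle is quantised (it must be $\tfrac{k\pi}{2}$ or $\tfrac{k\pi}{3}$ for the torus isometry \eqref{eq:tormat} to be defined), so for intermediate values of $\thet$ the closed manifold $M$ is not even defined, let alone equipped with a torsion-free \gtstr. Worse, if a path of torsion-free \gtstr s joining different gluing angles did exist, the invariance of $\bar\nu$ under deformations through torsion-free structures (which you invoke at the end) would force the answer to be independent of $\thet$, contradicting the linear term $-72\rho/\pi$; so your framework is internally inconsistent unless the path is through non-torsion-free structures, in which case $\bar\nu$ is not invariant along it and your ``smooth linear piece'' has no variational formula to integrate. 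The baseline input $\bar\nu=0$ for ordinary twisted connected sums is likewise not free --- it is itself \cite[Corollary 3]{eta}, proved by the same gluing machinery you would be trying to reconstruct. To repair the argument you would have to abandon the $\thet$-deformation picture and instead establish an honest splitting formula for the eta invariants defining $\bar\nu$ (APS/Kirk--Lesch type, or an adiabatic limit as the neck length $\ell\to\infty$), compute the two end contributions from the reducible $SU(3)$ geometry of $M_\pm$, and evaluate the cross-section term as a rho-invariant-type quantity whose dependence on the matching is exactly through the angles $\alpha_j^-$; the coefficients $3$, the $-1$, and the endpoint/interior weights must then come out of that computation rather than being asserted.
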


There are a number of further invariants of closed 7-manifolds with \gtstr{}
that we do not compute: the quadratic refinement $q$ of the torsion linking
form \cite[Definition 2.32]{crowley02}, the generalised Eells-Kuiper invariant
$\mu$ that can detect different smooth structures \cite[(26)]{7class},
and the diffeomorphism and homotopy invariant $\xi(\varphi)$ of the \gtstr{}
\cite[Definition 6.8]{nu}.
The problem is that these invariants are defined in terms of coboundaries,
and we have not identified any explicit coboundaries of our extra-twisted
connected sums. (The invariant $\nu(\varphi)$ is also defined in terms of
coboundaries, but in this case the analytic formula for $\bar \nu$ above gives
an alternative method of calculation.)
In the case of 2-connected 7-manifolds we have good 
classification results, but they do in general rely on all of the invariants.

\begin{thm}[{\cite[Theorem 1.2 \& 1.3]{7class}}]
\label{thm:7class}
Let $M_1$ and $M_2$ be closed 2-connected 7-manifolds, and let $F : H^4(M_2) \to H^4(M_1)$ be a group isomorphism. Then $F$ is realised as $f^*$ for some
homeomorphism $f : M_1 \to M_2$ if and only if $F(p(M_2)) = p(M_1)$ and $F$
preserves $b$ and $q$.
$F$ is realised as $f^*$ of some diffeomorphism if and only if $F$ is in addition preserves $\mu$.
\end{thm}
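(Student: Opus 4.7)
The plan is to follow Kreck's modified surgery approach, which is well-suited to classification problems for highly connected odd-dimensional manifolds. Since every $2$-connected $7$-manifold $M$ is spin, its normal $2$-type is $B\mathrm{Spin}$, and the classification reduces to analysing when two $B\mathrm{Spin}$-bordant manifolds are in fact diffeomorphic (or homeomorphic), with refined invariants concentrated in the middle dimension playing the decisive role.

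For the necessity direction I would check each invariant in turn. The class $p(M)$ arises from $p_1(TM)$, which is a topological invariant, so it is preserved by any homeomorphism. The torsion linking form $b$ on $\Tor H^4(M) \cong \Tor H_3(M)$ and its quadratic refinement $q$ are homotopy invariants, built from the Bockstein and the spin structure. In contrast, $\mu$ is defined by an index-theoretic formula on a smooth spin coboundary $W^8$, weighted by the $\ahat$-genus, and so is only invariant under diffeomorphism; the existence of exotic $7$-spheres with differing Eells--Kuiper invariants shows that $\mu$ is strictly needed in the smooth classification.

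For sufficiency, the first step is to realise $F$ by an actual map, starting from low skeleta. Since $M_i$ is $2$-connected, Hurewicz gives $\pi_3(M_i)\cong H_3(M_i)$, so a $5$-skeleton $M_i^{(5)}$ can be modelled as a wedge of $3$-spheres together with Moore spaces $M(\Z/n,3)$ accounting for $\Tor H_3$. Obstruction theory then produces a map $f\colon M_1^{(5)}\to M_2$ inducing $F$ on cohomology in the appropriate degrees. The extension of $f$ over the $4$- and $5$-cells is obstructed, but the obstructions are detected precisely by $p$ and by the pair $(b,q)$: the hypotheses on $F$ force these obstructions to vanish and supply a map $f\colon M_1\to M_2$ of degree~$\pm1$ inducing~$F$.

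The hard part will be the final surgery step. The map $f$ above, equipped with a compatible spin normal structure, represents an element of a relative spin bordism group, and one wants to kill it by surgeries below the middle dimension to produce a homotopy equivalence (topologically, a homeomorphism by high-dimensional smoothing/rigidity results). The residual surgery obstruction lives in a group controlled by middle-dimensional linking data on $H_3$, and is captured precisely by the quadratic refinement $q$ -- paralleling Wilkens and Crowley's earlier torsion analysis -- while in the smooth setting an extra $\Z$-valued obstruction to normal bordism is measured by $\mu$, arising through the $\ahat$-genus of a chosen coboundary. Preservation of all the stated invariants by $F$ forces these obstructions to vanish, completing the construction of the desired homeomorphism, respectively diffeomorphism. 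The delicate point throughout is bookkeeping the $2$-torsion contributions, where $b$ alone is insufficient and the finer invariant $q$ is indispensable.
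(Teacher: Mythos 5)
You are sketching a proof of a theorem that this paper does not prove at all: Theorem \ref{thm:7class} is quoted verbatim from Crowley--Nordstr\"om \cite{7class}, so the only meaningful comparison is with that reference. At the level of strategy your plan does match the source -- the classification there is carried out with Kreck's modified surgery, with the invariants $b$, $q$, $p$ and the generalised Eells--Kuiper invariant $\mu$ doing the work -- so you have identified the right framework.

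As a proof, however, the proposal has genuine gaps at exactly the points where the real content lies. First, your middle step (build a $5$-skeleton from $S^3$'s and Moore spaces, then extend a map cell by cell and claim the obstructions ``are detected precisely by $p$ and $(b,q)$'') is asserted, not argued, and it is not how the cited argument runs; realising an abstract isomorphism $F$ that preserves the invariants by a normal map or bordism is the hard part, and in \cite{7class} it is handled by working over refined normal $2$-types that record the divisibility of the spin class $p$, computing the relevant spin bordism groups, and analysing Kreck's obstruction monoid $l_8$ -- none of which appears in your sketch, and without which the ``residual surgery obstruction is captured precisely by $q$'' claim is just a restatement of the theorem. Second, the homeomorphism statement does not follow from generic ``high-dimensional smoothing/rigidity'': one needs the topological invariance of the spin characteristic class $p$ in this setting and the identification of homeomorphism with almost-diffeomorphism for $2$-connected $7$-manifolds (via $\Theta_7 \cong \Z/28$ and the vanishing of the Kirby--Siebenmann obstruction), which is a separate argument in the source. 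Finally, a factual slip: $\mu$ is not a $\Z$-valued bordism obstruction; it takes values in a finite cyclic group whose order depends on the divisibility of $p(M)$, which is precisely why it can distinguish the finitely many smooth structures within one homeomorphism type.
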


\begin{thm}[{\cite[Theorem 6.9]{nu}}]
\label{thm:g2class}
Let $M_1$ and $M_2$ be closed 2-connected 7-manifolds with \gtstr s $\varphi_1$ and $\varphi_2$, and let $F : H^4(M_2) \to H^4(M_1)$ be a group isomorphism.
Then $F$ is realised as $f^*$ for some diffeomorphism $f : M_1 \to M_2$ such
that $f^*\varphi_2$ is homotopic to $\varphi_1$ if and only if
$F(p(M_2)) = p(M_1)$, $\nu(\varphi_1) = \nu(\varphi_2)$ and $F$
preserves $b$, $q$ and $\xi$.
\end{thm}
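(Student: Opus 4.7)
Necessity is a routine verification: each invariant on the right is invariant under the equivalence relation on the left. For $p(M)$, $b$, $q$ this is classical; for $\nu$ it is \cite[Definition 3.1]{nu}; for $\xi$ it is \cite[Definition 6.8]{nu}. So my plan would be to dispense with this direction by enumeration and concentrate the work on sufficiency.

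For sufficiency, I would proceed in two stages. In the first, I forget the \gtstr s and aim to realise $F$ by a diffeomorphism $f\colon M_1 \to M_2$. Theorem \ref{thm:7class} shows that preservation of $p$, $b$, $q$ (which is assumed) delivers a homeomorphism $f_0$ with $f_0^* = F$, and that the only obstruction to promoting $f_0$ to a diffeomorphism is agreement of the generalised Eells--Kuiper invariant $\mu$. The key claim I would have to verify is that $F(\mu(M_2))=\mu(M_1)$ is forced by the hypotheses: concretely, that precomposing $\varphi_2$ with a self-homeomorphism of $M_2$ that changes the smooth structure by a generator of $\Theta_7\cong\bbz/28$ shifts $\nu$ in a controlled way that, combined with fixed $p$, lets one read off $\mu$ from $\nu$. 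This reduces to a direct spin-cobordism calculation, done on a standard model of the smooth-structure change.

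In the second stage, with a diffeomorphism $f$ realising $F$ in hand, I would compare $f^*\varphi_2$ and $\varphi_1$ as \gtstr s on $M_1$. On a fixed 2-connected 7-manifold, homotopy classes of \gtstr s form a torsor over an abelian group whose elements are distinguished by the pair $(\nu,\xi)$. By hypothesis $\nu(f^*\varphi_2)=\nu(\varphi_2)=\nu(\varphi_1)$ and, since $f^*$ corresponds to $F$ on $H^4$, the $\xi$-invariants of $f^*\varphi_2$ and $\varphi_1$ also agree. Provided this pair is a complete set of invariants for the homotopy classification on $M_1$, we conclude $f^*\varphi_2 \simeq \varphi_1$; otherwise, we would compose $f$ with an element of the Torelli mapping class group of $M_1$ (acting trivially on $H^4$, hence preserving $F$) to bridge the remaining gap, which requires understanding this group's action on homotopy classes.

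The hard part will be controlling the interaction of these two stages. A self-diffeomorphism of $M_1$ used at the end of stage two acts trivially on $H^4$ but may act non-trivially on $\xi$; symmetrically, a choice made at stage one (which diffeomorphism in the $\Theta_7$-coset we pick) also affects $\xi$. One needs a clean bookkeeping lemma identifying these actions and showing that the hypotheses $\nu(\varphi_1)=\nu(\varphi_2)$ and preservation of $\xi$ together pin down both a consistent smooth model and a simultaneous homotopy of \gtstr s. I expect this bookkeeping---essentially the computation of the residual action of $\pi_0\Diff(M_1)$ on the set of $(\mu,\nu,\xi)$-tuples---to be the main technical obstacle, whereas the individual classification statements feeding into it are standard.
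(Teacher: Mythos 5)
You should first note that this paper does not prove Theorem \ref{thm:g2class} at all: it is quoted verbatim from \cite[Theorem 6.9]{nu}, so the only meaningful comparison is with the proof given there. Measured against that, your proposal has a genuine gap in Stage 1. You claim that agreement of the generalised Eells--Kuiper invariant $\mu$ is ``forced'' because changing the smooth structure by an exotic sphere shifts $\nu$ in a way that, with $p$ fixed, lets one read $\mu$ off from $\nu$. This is not true. The invariant $\nu$ is defined by $\chi(W)-3\sigma(W) \bmod 48$ for a coboundary $W$ and contains no $p_1(W)^2$ term, which is exactly the term that $\mu$ measures; consequently $\nu$ is blind to Eells--Kuiper data. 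Concretely, every homotopy $7$-sphere is parallelisable and carries \gtstr s realising every value of $\nu$ allowed by the parity constraint \eqref{eq:nu_parity}, so when $p$, $b$, $q$ are trivial $\nu$ cannot distinguish smooth structures. The invariant that forces $\mu$-agreement in the hypotheses is $\xi$, whose defining coboundary formula does involve the $p_W^2$-type term --- which is precisely why $\xi$ appears in the statement and why this paper remarks that $\xi$ is determined by $\mu$ and $\nu$ only under a divisibility condition on the greatest divisor of $p(M)$. (A smaller but real point: one cannot ``precompose $\varphi_2$ with a self-homeomorphism that changes the smooth structure''; \gtstr s do not pull back along homeomorphisms, and the standard device is connected sum with an exotic sphere equipped with a \gtstr, after which one must track $\xi$, not just $\nu$.)

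Stage 2 also defers the actual content. That homotopy classes of \gtstr s on a fixed $2$-connected $7$-manifold are classified by the pair $(\nu,\xi)$ is the main theorem-level input of \cite[\S 6]{nu} (obstruction theory for unit spinor fields, where the indeterminacy of the degree-$7$ difference class is what makes $\xi$ necessary and sufficient); you assume it conditionally, and your fallback --- computing the action of the Torelli mapping class group on homotopy classes of \gtstr s --- is left entirely open and is not how the cited proof proceeds. So as written the proposal postpones both hard steps (recovering $\mu$ from the hypotheses, and the homotopy classification on a fixed manifold), and attributes the first to the wrong invariant.
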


However, in many examples the invariants $q$, $\mu$ and $\xi$ are redundant.
The quadratic refinement $q$ is uniquely determined by $b$ unless $TH^4(M)$
has 2-torsion.
The Eells-Kuiper invariant is vacuous unless $p(M)$ is divisible by 8 modulo
torsion, and $\xi$ is completely determined by $\mu$ and $\nu$ when the
greatest divisor of $p(M)$ modulo torsion divides 112.
Therefore, even though we have not computed $q$, $\mu$ and $\xi$ we can
still apply the above classification theorems to many of the examples in
\S\ref{sec:examples}.
 
For rectangular twisted connected sums, $q$ and $\mu$ were computed in
\cite{exotic}, and $\xi$ by Wallis \cite{wallis18}.

\section{Examples of extra-twisted connected sums}
\label{sec:examples}

We now combine the preceding results to produce examples of
extra-twisted connected sums. We select 50 convenient examples that illustrate
some interesting phenomena.
All but Example~\ref{ex:intersection} are 2-connected, and their
properties are summarised in Tables \ref{table:rk1pi4xtcs} and
\ref{table:xtcs}. In each case, we describe a configuration of the polarising
lattices in terms of a push-out $W$ as described in Remark \ref{rmk:nikulin},
and deduce from Theorem \ref{thm:matching} that the given configuration
is realised by some $\thet$-matching.

\newcommand{\rkonetable}{
\begin{table}
\[
\renewcommand{\arraystretch}{1.1}
\setlength{\tabcolsep}{8pt}
\begin{array}[b]{llc@{\hspace{15pt}}c@{\hspace{12pt}}c@{\hspace{6pt}}c}
\toprule
\hspace{1ex} Z_+ & \hspace{1ex} Z_- & b_3 & d & TH^4 & b  \\ \midrule
\ref{ex:rk1index2}_4  & \ref{ex:spec1}^1_{16} &  60 & 24 &  4 & \frac14 \\
\ref{ex:p3cover}      & \ref{ex:spec1}^1_{18} &  64 & 24 &  2  \\
\ref{ex:rk1index2}_3  & \ref{ex:spec1}^1_{12} &  68 &  6 &  3 & \frac13 \\
\ref{ex:rk1index2}_2  & \ref{ex:spec1}^1_{18} &  72 & 12 &  2  \\
\ref{ex:rk1index2}_4  & \ref{ex:spec1}^2_{2}  &  74 & 12 &  4 & \frac14  \\
\ref{ex:p3cover}      & \ref{ex:spec1}^1_{8}  &  78 &  4 &  2  \\
\ref{ex:p3cover}      & \ref{ex:spec1}^2_{4}  &  78 & 24 &  2  \\
\ref{ex:p1rsmooth}_1  & \ref{ex:spec1}^1_{16} &  82 &  4 &     \\
\ref{ex:rk1index2}_2  & \ref{ex:spec1}^2_{4}  &  86 &  8 &  2  \\
\ref{ex:rk1index2}_2  & \ref{ex:spec1}^1_{8}  &  86 & 12 &  2  \\
\ref{ex:rk1index2}_1  & \ref{ex:spec1}^1_{16} &  92 &  4 &     \\
\ref{ex:p3cover}      & \ref{ex:spec1}^2_{1}  &  92 &  2 &  2  \\
\ref{ex:p1rsmooth}_1  & \ref{ex:spec1}^2_{2}  &  96 &  2 &     \\
\bottomrule
\end{array}
\qquad
\begin{array}[b]{llc@{\hspace{15pt}}c@{\hspace{12pt}}c@{\hspace{6pt}}c}
\toprule
\hspace{1ex} Z_+ & \hspace{1ex} Z_- & b_3 & d & TH^4 & b  \\ \midrule
\ref{ex:rk1index2}_2  & \ref{ex:spec1}^2_{1}  & 100 &  6 &   2 \\
\ref{ex:rk1index2}_4  & \ref{ex:spec1}^4_{1}  & 102 &  2 &   4 & \frac14  \\
\ref{ex:rk1index2}_4  & \ref{ex:spec1}^1_{4}  & 102 &  4 &   4 & \frac14  \\
\ref{ex:rk1index2}_1  & \ref{ex:spec1}^2_{2}  & 106 &  2 &     \\
\ref{ex:p1rsmooth}_1  & \ref{ex:spec1}^1_{4}  & 124 &  2 &     \\
\ref{ex:p1rsmooth}_1  & \ref{ex:spec1}^4_{1}  & 124 &  8 &     \\
\ref{ex:rk1index2}_1  & \ref{ex:spec1}^1_{4}  & 134 &  6 &     \\
\ref{ex:rk1index2}_1  & \ref{ex:spec1}^4_{1}  & 134 & 24 &     \\
\ref{ex:p3cover}      & \hspace{-1ex} \ref{ex:p1rsmooth}_1  & 148 &  4  &  2 \\
\ref{ex:p3cover}      & \ref{ex:spec1}^1_{2}  & 148 & 12 &  2 \\
\ref{ex:rk1index2}_2  & \ref{ex:spec1}^1_{2}  & 156 &  8 &  2 \\
\ref{ex:rk1index2}_2  & \hspace{-1ex} \ref{ex:p1rsmooth}_1  & 156 &  8  &  2 \\
\\
\bottomrule
\end{array}
\]
\vspace{-2mm}
\caption{Extra-twisted connected sums of rank 1 one blocks, with $\vartheta = \frac{\pi}{4}$}
\vspace{-4mm}
\label{table:rk1pi4xtcs}
\end{table}}

\newcommand{\xtcstable}{
\renewcommand{\thefootnote}{$\dagger$}
\begin{table}
\[
\renewcommand{\arraystretch}{1.4}
\setlength{\tabcolsep}{8pt}
\begin{array}[b]{lrllc@{\hspace{2ex}}c@{\hspace{2ex}}ccr}
\toprule
 \text{Ex} & \vartheta & Z_+ & Z_- & b_3 & d & TH^4 & b & \bar\nu\hspace*{3pt}  \\ \midrule
\ref{ex:pi4main}     & \frac{\pi}{4} & \ref{ex:deg_sextic} & \ref{ex:mm2}_3
& 97 & 2 &   &   & - 36 \\
\ref{ex:77pi4}       & \frac{\pi}{4} & \ref{ex:p1p1smooth} & \ref{ex:mm2}_{10}
& 77 & 2 &   &   &  -36 \\
\ref{ex:diag}        & \frac{\pi}{4} & \ref{ex:DQf}_2 & \ref{ex:mm2}_{10}
& 57 & 2 & 2 {\cdot} 2 & \sm{\frac12 & 0 \\ 0 & \frac12} & -36 \\[1mm]
\ref{ex:hyperb}      & \frac{\pi}{4} & \ref{ex:DQf}_4 &\ref{ex:mm2}_{10}
& 57 & 2 & 2 {\cdot} 2 & \sm{0 & \frac12 \\ \frac12 & 0} & -36 \\
\ref{ex:pi4t7}       & \frac{\pi}{4} & \ref{ex:Dcon}_2 & \ref{ex:con}_2
& 45 & 2 & 7 & -\frac17 & -36 \\
\ref{ex:pi4rk3b}     & \frac{\pi}{4} & \ref{ex:p1rsmooth}_3& \ref{ex:p1cube}
& 98 & 6 & & & -33 \\
\ref{ex:pi4mixed}    & \frac{\pi}{4} & \ref{ex:rk1index2}_1 & \ref{ex:mm2}_{10}
& 91 & 4 &  & & -36 \\
\ref{ex:skew34}      & \frac{\pi}{4} & \ref{ex:p1rsmooth}_2 & \ref{ex:mm2}_{27}
& 92 & 4 & & & -33 \\
\ref{ex:big_angle}   & \frac{\pi}{4} & \ref{ex:DQf}_3 & \ref{ex:mm2}_{17}
& 60 & 6 & & & -33 \\
\ref{ex:small_angle} & \frac{\pi}{4} & \ref{ex:inv2blp}_4 & \ref{ex:mm2}_{17}
& 60 & 6 & & & -39 \\
\ref{ex:t3a}         & \frac{\pi}{4} & \ref{ex:rk1index2}_3 & \ref{ex:Dmm2}_6
& 71 & 6 & 3 & \phantom{-} \frac13 & -36 \\
\ref{ex:t3b}         & -\frac{\pi}{4} & \ref{ex:Dmm2}_6 & \ref{ex:spec1}^2_3 
& 71 & 6 & 3 & \phantom{-} \frac13 & 36 \\
\ref{ex:inertia}     & \frac{\pi}{4} & \ref{ex:Dmm2}_6  & \ref{ex:octic}
& 42 & 4\footnotemark & 8 & & -33 \\
\ref{ex:1+1pi6} & \frac{\pi}{6} & \ref{ex:rk1index2}_4  & \ref{ex:rk1index2}_3
&  54 &  6  &  & & -51  \\
\ref{ex:1+1pi6} & \frac{\pi}{6} & \ref{ex:rk1index2}_3  & \ref{ex:rk1index2}_4
&  54 &  2  &  3  & \phantom{-}\frac13 & -51 \\
\ref{ex:1+1pi6} & \frac{\pi}{6} & \ref{ex:p1rsmooth}_1  & \ref{ex:rk1index2}_3
&  76 &  6  &  & & -51   \\
\ref{ex:1+1pi6} & \frac{\pi}{6} & \ref{ex:rk1index2}_3  & \ref{ex:p1rsmooth}_1
&  76 &  24  & 3 & \phantom{-}\frac13 & -51 \\
\ref{ex:1+1pi6} & \frac{\pi}{6} & \ref{ex:rk1index2}_1 & \ref{ex:rk1index2}_3 &
86 & 6 & & & -51  \\
\ref{ex:1+1pi6} & \frac{\pi}{6} & \ref{ex:rk1index2}_3 & \ref{ex:rk1index2}_1 &
86 & 4 & 3 & \phantom{-}\frac13 & -51 \\
\ref{ex:pi6main} & \frac{\pi}{6} & \ref{ex:deg_sextic} & \ref{ex:deg_sextic}
& 109 & 2 & & & -48  \\
\ref{ex:pi6main8} & \frac{\pi}{6} & \ref{ex:deg_sextic} & \ref{ex:deg_sextic}
& 109 & 8 & & & -48   \\
\ref{ex:pi6deg5} & \frac{\pi}{6} & \ref{ex:deg_sextic} & \ref{ex:DQf}_5
& 77 & 2 & & & -48   \\
\ref{ex:pi6deg5flop} & \frac{\pi}{6} & \ref{ex:deg_sextic} & \ref{ex:Dcon}_5
& 77 & 4 &  & & -48  \\
\ref{ex:pi6t7} & \frac{\pi}{6} & \ref{ex:Dcon}_2 & \ref{ex:spec1}^1_6
& 45 & 2 & 7 & -\frac17 & -48 \\
\bottomrule
\end{array}
\]
\vspace{-2mm}
\caption{Examples of 2-connected extra-twisted connected sums} 
\label{table:xtcs}
\end{table}
}

\subsection{Matchings with pure angle \fpif}

We begin by considering \fpif-extra twisted connected sums, using configurations where the polarising lattices are at ``pure angle'' \fpif{} as discussed in
\S\ref{subsec:sufficient}, so that Theorem \ref{thm:matching} can be
applied to produce matchings without using any genericity results beyond
Proposition \ref{prop:generic_fano}.
The topology is also easy to compute using Corollary \ref{cor:pi4pure}.

Matchings among rank 1 blocks are relatively easy to study systematically.
 We have listed
7 involution blocks of rank 1 (Examples \ref{ex:p3cover},
\ref{ex:rk1index2}$_1$, \ldots \ref{ex:rk1index2}$_5$ and
\ref{ex:p1rsmooth}$_1$), and 18 ordinary rank 1 blocks ($17$ in
Example \ref{ex:spec1}, and one in Example \ref{ex:p1rsmooth}).

If the squares of the generators $x_+$ and $x_-$ of the polarising lattices of
the building blocks are $n_+$ and $n_-$ respectively, then as in
\eqref{eq:sqrt} the necessary and sufficient condition for the existence of a
matching is that $2n_+n_-$ be a square. A simple computer script identifies that
the condition is satisfied for 25 of the 119 pairs of blocks, and computes
the topological invariants from the data in Tables \ref{table:species1} and
\ref{table:invblocks} as follows.

\rkonetable

When the condition holds, we can uniquely write $n_+ = 2mq_+^2$ and $n_- =
mq_-^2$, for $q_+$ and $q_-$ coprime, and define the configuration by
\[ W = \begin{pmatrix} 2mq_+^2 & mq_+q_- \\ mq_+q_- & mq_-^2 \end{pmatrix} . \]
We can now apply Corollary \ref{cor:pi4pure} to compute the topological invariants. We find that $\pi_+$ maps $x_-$ to $\frac{q_-}{2q_+}x_+$,
so $N_+ + 2\pi_+N_-$ is generated by $\frac{1}{q_+}x_+$, which has square $2m$.
Therefore $\Tor H^4(M) \cong \cg{m}$.
Meanwhile $\pi_- N_+ \cap N_-$ is generated by $q_+x_-$, so the
greatest divisor of $p(M)$ modulo torsion is
$(\pi_+^*\bar c_+ + \bar c_-)(q_+x_-)
= \frac{q_-}{2}\bar c_+(x_+) + q_+\bar c_-(x_-)$.

In those cases where the order $m$ of $\Tor H^4(M)$ divides the greatest
divisor of $p(M)$ modulo torsion, the above computation does not suffice to
determine $p(M)$ up to isomorphisms of $H^4(M)$. However, in all cases it turns
out that the greatest divisor of $p(M)$ equals the greatest divisor of $p(M)$
modulo torsion; then it is possible to choose the isomorphism $H^4(M) \cong \ZZ^{b_3(M)} \times \cg{m}$ so that the image of $p(M)$ has no $\cg{m}$ component.
When $m = 2$ there is nothing to check, since $p(M)$ is even
a priori for any spin 7-manifold according to Lemma \ref{lem:p}.
In the remaining 4 cases, we find that $\half \bar c_+$ and $\bar c_-$ are both
divisible by $m$, so $p(M)$ is too.

Finally, $b_3(M)$ is simply $22 + b_3^+(Z_+) + b_3(Z_-)$ by \eqref{eq:b3}.
This is even, so cannot coincide with $b_3$ of any 2-connected ordinary TCS.

These topological invariants of the 25 \fpif-matchings of rank 1 blocks are
summarised in Table \ref{table:rk1pi4xtcs}, listing $b_3(M)$, the greatest divisor $d$ of $p(M)$ and the order of $TH^4(M)$. We also list the self-linking of
a generator of $TH^4(M)$ when it is not vacuous (\ie when the order of the
cyclic group $TH^4(M)$ is greater than 2). We have not included the
$\bar\nu$-invariant in the table, since it is the same in all cases:
for a $\frac{\pi}{4}$-matching of rank 1 blocks, the only possibility for the
configuration angles is that $\alpha^-_1 = \cdots = \alpha^-_{19} = 0$,
so Theorem \ref{thm:nubar} gives $\bar\nu = -39$.

\bigskip
We now give 5 examples of pure angle \fpif-matchings of blocks
of rank 2. In each case we define the desired configuration by writing down
a symmetric $4 \times 4$ matrix $W$, where the diagonal $2 \times 2$ blocks
are the polarising lattices $N_+$ and $N_-$ of the two building blocks,
and the off-diagonal blocks are chosen to ensure that
$N_\pm^\frac{\pi}{4} = N_\pm$; this can be verified by checking that
$\pi_+(x).\pi_+(y) = \half x.y$ for any $x, y \in N_+$.
By using bases for $N_+$ and $N_-$ that consist of edges of the respective
ample cones (\ie the bases used in Tables \ref{table:ordblocks} and
\ref{table:invblocks}), verifying hypothesis \eqref{eq:amps}
of Theorem \ref{thm:matching} becomes a simple matter of checking that
some element in the positive quadrant of $N_+$ is mapped to the positive
quadrant of $N_-$ by $\pi_+$ (or vice versa). 
Theorem \ref{thm:matching} then produces a matching with the desired
configuration. The resulting \fpif-twisted connected sum $M$ has
$b_3(M) = 21 + b_3^+(Z_+) + b_3(Z_-)$ by \eqref{eq:b3}, and the main
remaining topological invariants are easily computed using Corollary
\ref{cor:pi4pure}.

For a pure angle \fpif{} configuration of rank 2 blocks, two of the
configuration
angles take the values $\frac{\pi}{2}$ and $-\frac{\pi}{2}$ while the remaining
17 configuration angles are 0. Hence Theorem \ref{thm:nubar} gives
$\bar\nu = -36$.

We collect the data of these and all remaining 2-connected examples in
Table \ref{table:xtcs}. We list for each example the gluing angle,
the blocks used, $b_3(M)$, the greatest divisor $d$ of $p(M)$ in $H^4(M;\Z)$
(which for all  examples except \ref{ex:inertia} is the same as the greatest
divisor modulo torsion),
the order of the torsion subgroup $TH^4(M)$,
a description of the torsion linking form $b$, and $\bar \nu$.
When the torsion $TH^4(M)$ is cyclic we describe the linking form by giving
the self-linking of a generator. The only examples of non-cyclic $TH^4(M)$
are $(\Z/2)^2$, where the possibilities for the linking form are that it is
diagonalisable $\sm{\frac12 & 0 \\ 0 & \frac12}$ or hyperbolic
$\sm{0 & \frac12 \\ \frac12 & 0}$.

\begin{ex}
\label{ex:pi4main}
We match the involution block from Example \ref{ex:deg_sextic} (from one-point
blow-up of degree 1 del Pezzo 3-fold) and the regular block from Example
\ref{ex:mm2}$_3$ (from degree 1 del Pezzo 3-fold blown up in an elliptic curve)
at pure angle $\frac{\pi}{4}$. The polarising lattices are $N_+ = \sm{2 & 2 \\ 2 & 0}$ and $N_- = \sm{4 & 2 \\ 2 & 0}$, and we define the configuration
using the matrix
\[ W = \begin{pmatrix} 2 & 2 & 2 & 1 \\ 2 & 0 & 2 & 0
\\ 2 & 2 & 4 & 2 \\ 1 & 0 & 2 & 0 \end{pmatrix} . \]
Actually, because Example \ref{ex:deg_sextic} is not a semi-Fano block,
Proposition \ref{prop:generic_fano} does not provide the genericity result
needed for Theorem \ref{thm:matching} to produce matchings; the required
genericity result is instead Lemma \ref{lem:deg_hyper}.

The resulting \fpif-twisted connected sum $M$ is 2-connected,
with $b_3(M) = 21 + 44 + 32 = 97$.
Because $N_+$ has 2-elementary discriminant,
it is immediate from Corollary \ref{cor:pi4pure} that $H^4(M)$ is torsion-free.
In the respective bases for $N_\pm^*$, we have $\bar c_+ = \rvec{26}{24}$ and
$\bar c_- = \rvec{20}{12}$, while $\pi_+ :N_- \to N_+$ is represented by
$\sm{1 & 0 \\ 0 & \frac{1}{2}}$.
In the basis for $N_-^*$ we thus get
$\pi_+^*\bar c_+ = \rvec{26}{24}\sm{1 & 0 \\ 0 & \frac{1}{2}} = \rvec{26}{12}$,
and
\[ \pi_+^* \bar c_+ + \bar c_- = \rvec{26+20}{12+12} = \rvec{46}{24}, \]
so $p(M)$ has greatest divisor 2 by Corollary \ref{cor:pi4pure}.

By Theorem \ref{thm:7class}, there is a unique diffeomorphism class
of 2-connected 7-manifolds $M$ with $b_3(M) = 97$, torsion-free $H^4(M)$
and $d =2$. According to \cite[Table 3]{g2m}, there are
two different rectangular twisted connected sums of rank 1 Fano blocks with
these invariants, so yield further torsion-free \gtstr s on the same manifold.
However, the \fpif-twisted connected sum has $\nu = 36$ while the rectangular
twisted connected sums have $\nu = 24$, so the \gtstr s cannot be homotopic.
In particular, the moduli space of holonomy $G_2$ metrics on this manifold
is disconnected.
\end{ex}

\begin{ex}
\label{ex:77pi4}
Match Example \ref{ex:p1p1smooth} (from K3 with non-symplectic involution
that is a branched double cover of $\PP^1 \times \PP^1$) and
Example \ref{ex:mm2}$_{10}$ (from blow-up of complete intersection of two
quadrics in an elliptic curve) using the configuration defined by
\[ W = \begin{pmatrix}
0 & 2 & 4 & 0 \\ 
2 & 0 & 1 & 1\\ 
2 & 2 & 8 & 4 \\ 
2 & 0 & 4 & 0\end{pmatrix}
\]
Now $b_3(M) = 21 + 32 + 24 = 77$. Corollary \ref{cor:pi4pure} gives that
$H^4(M)$ is torsion-free. Also, $\pi_+^* \bar c_+ + \bar c_- =
\rvec{12}{12}\sm{\frac12 & \frac 12 \\ 2 & 0} + \rvec{28}{12}
= \rvec{30+28}{6+12} = \rvec{58}{18}$, whose greatest divisor is 2.

These are the same invariants as Example \ref{ex:pi6deg5}. Moreover,
according to \cite[Table 3]{g2m} there is also a rectangular twisted
connected sum of rank 1 Fano-type blocks (namely Examples
\ref{ex:spec1}$^1_{12}$ and \ref{ex:spec1}$^1_{14}$) with these invariants.
Thus the smooth 2-connected 7-manifold $M$ with $b_3(M) = 77$, torsion-free
$H^4(M)$ and $d = 2$ admits torsion-free \gtstr s with
$\bar \nu(\varphi) = -36, -48$ and $0$, so its moduli space of holonomy $G_2$
metrics has at least 3 components.
\end{ex}

\begin{ex}
\label{ex:diag}
Match Examples \ref{ex:DQf}$_2$ (from double cover of
quadric-fibred degree 2 semi del Pezzo 3-fold) and \ref{ex:mm2}$_{10}$
using the configuration defined by
\[ W = \begin{pmatrix}
4 & 4 & 4 & 2 \\ 
4 & 0 & 4 & 0\\ 
4 & 4 & 8 & 4 \\ 
2 & 0 & 4 & 0
\end{pmatrix}. \]

$b_3(M) = 21 + 12 + 24 = 57$.

To use Corollary \ref{cor:pi4pure} to compute $TH^4(M)$, note that $2\pi_+N_-$
is contained in $N_+$, so $N_+ + 2\pi_+N_- = N_+$. The discriminant
is a diagonal $\Delta \cong (\Z/4)^2$, so $TH^4(M) \cong (\Z/2)^2$
with diagonal linking form.

$\pi_+^* \bar c_+ + \bar c_- = \rvec{28}{12} \sm{1 & 0 \\ 0 & \frac12} + 
\rvec{28}{12} = \rvec{56}{18}$, so Corollary \ref{cor:pi4pure} implies that
the greatest divisor of $p(M)$ modulo torsion is 2. Since there is only
2-torsion, and $p(M)$ is even a priori, $p(M)$ cannot have any interesting
torsion component.
\end{ex}

\begin{ex}
\label{ex:hyperb}
Matching Examples \ref{ex:DQf}$_4$ (from double cover of
quadric-fibred degree 4 semi del Pezzo 3-fold) and \ref{ex:mm2}$_{10}$
using
\[ W = \begin{pmatrix}
8 & 4 & 6 & 4 \\ 
4 & 0 & 2 & 0\\ 
6 & 2 & 8 & 4 \\ 
4 & 0 & 4 & 0
\end{pmatrix}. \]

The calculations are very similar to the previous example. We again find
$b_3(M) = 21 + 12 + 24 = 57$. However,
this time the discriminant form on $\Delta \cong (\Z/4)^2$ is hyperbolic,
so although $TH^4(M) \cong (\Z/2)^2$ again, 
the torsion linking form is hyperbolic is hyperbolic in this example.

$\pi_+^* \bar c_+ + \bar c_- = \rvec{32}{12} \sm{\frac12 & 0 \\ \frac12 & 1} + 
\rvec{28}{12} = \rvec{46}{24}$, so Corollary \ref{cor:pi4pure} implies that
the greatest divisor of $p(M)$ modulo torsion is 2.
Again $p(M)$ cannot have any interesting torsion component.

Thus this example is distinguished from Example \ref{ex:diag} only by
the torsion linking form.
\end{ex}

\begin{ex}
\label{ex:pi4t7}
Match Examples \ref{ex:Dcon}$_2$ (from double cover of conic-fibred
degree 2 del Pezzo 3-fold) and \ref{ex:con}$_2$ (ordinary block from the
conic-fibred degree 2 del Pezzo 3-fold itself) using
\[ W = \begin{pmatrix}
4 & 6 & 6 & 2 \\ 
6 & 2 & 2 & 3\\ 
6 & 2 & 4 & 6 \\ 
2 & 3 & 6 & 2
\end{pmatrix}. \]
$b_3 = 21 + 6 + 18 = 45$. $N_+ + 2\pi_+N_- = N_+$, whose discriminant group
$\Delta \cong \Z/14 \times \Z/2$.
Thus $TH^4(M) \cong \Delta/T_2\Delta \cong \Z/7$, and the image of
$\alpha := \rvec{1}{0} \in \Delta$ is a generator of $TH^4(M)$.
Now $b_\Delta(\alpha, \alpha) =
\rvec{1}{0} \sm{4 & 6 \\ 6 & 2}^{-1}\cvec{1}{0} = -\frac{1}{14}$,
so the image in $TH^4(M)$ has self-linking $-\frac17$.

$\pi_+^*\bar c_+ + \bar c_- =
 \rvec{28}{18}\sm{0 & \frac12 \\ 1 & 0} + \rvec{20}{18} = \rvec{38}{32}$,
so $d = 2$.
\end{ex}

Finally, here is a rank 3 matching.

\begin{ex}
\label{ex:pi4rk3b}
Use involution block from Example \ref{ex:p1rsmooth}$_3$ and ordinary block
from Example \ref{ex:p1cube}. Match using
\[ W = \begin{pmatrix}
2 &  2 &  2 & 1 &  1 & 2 \\
2 &  0 &  2 & 1 &  1 & 0 \\
2 &  2 &  0 & 0 &  2 & 2 \\
1 &  1 &  0 & 0 &  2 & 2 \\
1 &  1 &  2 & 2 &  0 & 2 \\
2 &  0 &  2 & 2 &  2 & 0
\end{pmatrix} \]
$b_3(M) = 20 + 28 + 50 = 98$. Since $N_+$ is 2-elementary,
$H^4(M)$ is torsion-free.

$\pi_+^*\bar c_+ + \bar c_- = \sm{18 & 12 & 12}\sm{0 & 1 & 0 \\ 0 & 0 &1 \\ \frac12 & -\frac12 & 0} + \sm{12 & 12 & 12} = \sm{18 & 24 & 24}$, so $d = 6$.

For any pure \fpif{} matching of rank 3 blocks, exactly 2 each of the
configuration angles $\alpha^-_1, \ldots, \alpha^-_{19}$ are $\frac{\pi}{2}$
and $-\frac{\pi}{2}$ while the other 15 are 0. Thus Theorem \ref{thm:nubar}
gives $\bar \nu = -33$.
\end{ex}

\subsection{Other \fpif-matchings}

We now consider 8 examples of \fpif-extra twisted connected sums where the
configuration does not have pure angle \fpif{} (including one that is not
2-connected). This involves carrying out some
extra work for each example. In addition to checking hypothesis \eqref{eq:amps} in Theorem \ref{thm:matching}, we also need to compute $\Lambda_\pm$
as in \eqref{eq:Lambda}, and verify that the families are $\Lambda_\pm$-generic
(most of the work for the last step has already been carried out in
\S\ref{sec:genericity}).

Moreover, we cannot use Corollary \ref{cor:pi4pure} to compute the topology,
but instead have to apply the more cumbersome Proposition \ref{prop:pi4detail}.
However, we can speed up the required calculation of $\coker \wh W$ a little with the following observation: if $A_+ \in N_+^\frac{\pi}{4}$ and $A_- = \pi_- A_+ \in N_-^\frac{\pi}{4}$, then the image of $\wh W$ is contained in the
kernel of the homomorphism $(A_+,A_-) : N_+^* \oplus N_-^* \to \Z$.

\begin{ex}
\label{ex:pi4mixed}
Match the involution block from 
Example \ref{ex:rk1index2}$_1$ and the regular block from Example \ref{ex:mm2}$_{10}$
at angle $\frac{\pi}{4}$ using the matrix
\[ W = \begin{pmatrix} 2 & 3 & 1 \\ 3 & 8 & 4 \\ 1 & 4 & 0 \end{pmatrix} . \]
$\pi_-$ maps the positive generator $H_+ \in N_+$
to $\quart A_-$, for  $A_- := \cvec{1}{1} \in N_-$.
This is in indeed in the ample cone of
the family of Example \ref{ex:mm2}$_{10}$, so \eqref{eq:amps} holds.

Now $\Lambda_- = N_-$, so for the family of Example \ref{ex:mm2}$_{10}$
we do not need any genericity result beyond Proposition \ref{prop:generic_fano}.
On the other hand, $\Lambda_+$ is generated by $N_+$ and the orthogonal
complement of $A_-$ in $N_-$, so
\[ \Lambda \cong \begin{pmatrix} 2 & 0 \\ 0 & -16 \end{pmatrix} . \]
In particular there are no $(-2)$-classes orthogonal to the degree 2
class $H_+$. Therefore Proposition~\ref{prop:hyper} implies that
the family of blocks from Example \ref{ex:rk1index2}$_1$
is $(\Lambda_+, H_+\bbrp)$-generic, so we can apply Theorem \ref{thm:matching}
to find a matching with this configuration.

The resulting extra-twisted
connected sum $M$ is 2-connected, and
\eqref{eq:b3} gives $b_3(M) = 23 - 1 - 2 + 46 + 24 + 1 = 91$.
Proposition \ref{prop:pi4detail} shows that the torsion is isomorphic
to the cotorsion of the image of the matrix
\[ \wh W = \begin{pmatrix} 1 & 3 & 1 \\ 3 & 8 & 4 \\ 1 & 4 & 0 \end{pmatrix} . \]
Its image is exactly the kernel of $\sm{4 & -1 & -1}$,
so the torsion is in fact trivial.

Since $\bar c_+ = 26$ while
$\bar c_- = \rvec{12}{4}$, Proposition \ref{prop:pi4detail}
further gives the greatest divisor of $p(M)$
in terms of the greatest divisor of $\sm{13 & -12 & -4\!}$
modulo $\im \wh W$; since
$\sm{4 & -1 & -1} \cdot \sm{13 & -12 & -4\!} = 68$, the greatest divisor of
$p(M)$ is $\gcd(24,68) = 4$.

Only one of the configuration angles $\alpha^-_1, \ldots, \alpha^-_{19}$ is
non-zero, and takes the value $\pi$. Hence Theorem \ref{thm:nubar} gives
$\bar \nu = -36$.

According to \cite[Table 4]{exotic}, there are two 
rectangular twisted connected sums from Fanos of rank 1 or 2,
with the same diffeomorphism invariants.
\end{ex}

\begin{ex}
\label{ex:skew34}
Match Examples \ref{ex:p1rsmooth}$_2$ (from K3 with non-symplectic involution
branched over one-point blow-up of $\PP^2$) and \ref{ex:mm2}$_{27}$ (from
$\PP^3$ blown up in a twisted cubic) using
\[ W = \begin{pmatrix}
2 & 2 & 2 & 3 \\ 
2 & 0 & 1 & 1\\ 
2 & 1 & 2 & 5 \\ 
3 & 1 & 5 & 4
\end{pmatrix}. \]
Let $A_+ := \cvec{2}{3} \in N_+$, and $A_- = \cvec{1}{1} \in N_-$.
Then $A_+^2 = 32$ and $A_-^2 = 16$, and $\pi_- A_+ = A_-$ and
$\pi_+ A_- = \half A_+$. Thus $A_\pm \in N_\pm^\frac{\pi}{4}$, so
\eqref{eq:amps} is satisfied.

The orthogonal complements of $A_\pm$ in $N_\pm$ are spanned by
$B_\pm$ for $B_+ := \cvec{2}{-5}$ and $B_- := \cvec{9}{-7}$.
$\Lambda_\pm$ is spanned by $N_\pm$ and $B_\mp$, so
\[
\Lambda_+ = \begin{pmatrix}
2 & 2 & -3 \\
2 & 0 & 2 \\
-3 & 2 & -272
\end{pmatrix}, \quad
\Lambda_- = \begin{pmatrix}
2 & 5 & -1 \\
5 & 4 & 1 \\
-1 & 1 & -32
\end{pmatrix}
\]
Then Proposition \ref{prop:smoothing_generic} and Lemma \ref{lem:twisted}
give the genericity results needed for Theorem \ref{thm:matching}
to yield a matching.

$b_3(M) = 23 - 2 -2 + 32 + 40 + 1 = 92$.
By Proposition \ref{prop:pi4detail}, $TH^4(M)$ is isomorphic to the cotorsion of
\[ \wh W = \begin{pmatrix}
1 & 1 & 2 & 3 \\ 
1 & 0 & 1 & 1\\ 
2 & 1 & 2 & 5 \\ 
3 & 1 & 5 & 4
\end{pmatrix}, \]
which is trivial. Indeed, $\coker \wh W$ is mapped isomorphically to $\Z$ by
$\sm{2 & 3 & -1 & -1}$. This maps
$(\half \bar c_+, -\bar c_-) = \sm{9 & 6 & -18 & -22}$ to $76$, so $d = 4$.

To compute $\bar \nu$ we need to determine the configuration angles.
Note that $\pi_+ B_- = \half B_+$, whose square is $\frac{1}{34}$ of the square
of $B_-$. So $B_\pm$ is in the $\frac{1}{34}$-eigenspace of $\pi_\pm \pi_\mp$.
By \eqref{eq:mult_angle}, two of the configuration angles
are $\pm 2\psi$ where $(\cos \psi)^2 = \frac{1}{34}$, and the other 17
configuration angles are 0.
Because $2\psi$ is in the interval $(\frac{\pi}{2}, \pi)$,
Theorem \ref{thm:nubar} gives $\bar \nu = -33$.

The diffeomorphism classifying invariants coincide with those of the
extra-twisted connected sum of Examples \ref{ex:rk1index2}$^1_2$ and
\ref{ex:spec1}$^1_{16}$ in line 11 of Table \ref{table:rk1pi4xtcs}, but
the $\bar\nu$-invariants differ.
\end{ex}

The next two examples illustrate the dependence of $\bar\nu$ on the
configuration angles.

\begin{ex}
\label{ex:big_angle}
Matching of Examples \ref{ex:DQf}$_3$ (from double cover of a quadric-fibred
degree 3 semi del Pezzo, or equivalently a double cover of a small resolution
of cubic 3-fold containing a plane) and \ref{ex:mm2}$_{17}$ (from the blow-up
of a quadric 3-fold in an elliptic curve of degree 5), using
\[ W = \begin{pmatrix} 6 & 4 & 4 & 5 \\ 4 & 0 & 2 & 2 \\
4 & 2 & 4 & 7 \\ 5 & 2 & 7 & 6 \end{pmatrix} . \]
The ample class $A_+ = \cvec{4}{3} \in N_+$ (of square 192) is mapped by
$\pi_-$ to $A_- = \cvec{2}{2} \in N_-$ (of square~96),
while $\pi_+A_- = \half A_+$.
Therefore $A_\pm \in N_\pm^\frac{\pi}{4}$, so \eqref{eq:amps} is satisfied.

The orthogonal complement of $A_\pm$ in $N_\pm$ is spanned by $B_\pm$
for $B_+ = \cvec{4}{-9}$ and $B_- = \cvec{13}{-11}$, of square $-192$ and
$-600$ respectively.
\[ \Lambda_+ =
\begin{pmatrix} 6 & 4 & 3 \\ 4 & 0 & -4 \\ 3 & -4 & -600 \end{pmatrix}, \quad
 \Lambda_- =
\begin{pmatrix} 4 & 7 & 4 \\ 7 & 6 & -2 \\ 4 & -2 & -192 \end{pmatrix}  \]
Proposition \ref{prop:deg_cubics} and Lemma \ref{lem:deg5generic} provide the
genericity results needed for Theorem \ref{thm:matching} to yield
matchings.

$b_3(M) = 23 -2 -2 + 12 + 28 + 1 = 60$. The cokernel of
\[ \wh W = \begin{pmatrix} 3 & 2 & 4 & 5 \\ 2 & 0 & 2 & 2 \\
4 & 2 & 4 & 7 \\ 5 & 2 & 7 & 6 \end{pmatrix} . \]
is mapped isomorphically to $\Z$ by $\sm{4 & 3 & -2 & -2}$, so $H^4(M)$ is
torsion-free. $(\half \bar c_+, \bar c_-) = \sm{15 & 6 & -22 & -26}$
is mapped to 174, so $d = \gcd(174, 24) = 6$.

$\pi_+ B_- = \quart B_+$, whose square is $-12$. Therefore $B_\pm$ are
$\pi_\mp\pi_\mp$-eigenvectors with eigenvalue $\frac{1}{50}$.
Then the non-zero configuration angles are $\pm 2\psi$ for $(\cos \psi)^2 = \frac{1}{50}$. Because $\psi \in (\frac{\pi}{2}, \pi)$, Theorem \ref{thm:nubar} gives $\bar \nu = -33$.
\end{ex}

\begin{ex}
\label{ex:small_angle}
Match Examples \ref{ex:inv2blp}$_4$ (from double cover of one-point blow-up
of a complete intersection of two quadrics, or equivalently a flop of the small
resolution of a cubic 3-fold containing a plane that was used in the previous
example) and \ref{ex:mm2}$_{17}$, using
\[ W = \begin{pmatrix} 8 & 8 & 4 & 6 \\ 8 & 6 & 5 & 4 \\
4 & 5 & 4 & 7 \\ 6 & 4 & 7 & 6 \end{pmatrix} . \]
The ample class $A_+ = \cvec{3}{2} \in N_+$ (of norm 192) is mapped by $\pi_-$
to $A_- = \cvec{2}{2} \in N_-$ (of norm 96), while $A_-$ is mapped by $\pi_+$
to $\half A_+$. So $A_\pm \in N^{\frac{\pi}{4}}_\pm$.
The orthogonal complements are spanned by $B_+ = \cvec{-9}{10} \in N_+$
and $B_- = \cvec{13}{-11} \in N_-$, of square $-192$ and $-600$ respectively.
\[ \Lambda_+ =
\begin{pmatrix} 8 & 8 & 14 \\ 8 & 6 & -21 \\ 14 & -21 & -600 \end{pmatrix}, \quad
 \Lambda_- =
\begin{pmatrix} 4 & 7 & 14 \\ 7 & 6 & -14 \\ 14 & -14 & -192 \end{pmatrix}  \]
Proposition \ref{prop:deg_cubics} and Lemma \ref{lem:deg5generic} provide the
genericity results needed for Theorem \ref{thm:matching} to yield
matchings.

$b_3(M) = 60$ just as in the previous example. Also, we find again that
$H^4(M)$ is torsion-free, and that $d = 6$, so the classifying diffeomorphism
invariants all agree.

However, $\pi_+ B_- = \frac{7}{4}B_+$, whose square is $-588$. Therefore the non-trivial configuration angles $\pm 2\psi$ are in this case given by
$(\cos \psi)^2 = \frac{49}{50}$. Since $2\psi < \frac{\pi}{2}$, Theorem \ref{thm:nubar} yields $\bar \nu = -39$.
\end{ex}

The next two examples of $\frac{\pi}{4}$-twisted connected sums are related by
an orientation-reversing diffeomorphism.  As the underlying manifold has
$TH^4 = \Z/3$, it does not admit an orientation reversing self-diffeomorphism,
and components of its $G_2$ moduli space can be distinguished by the sign of
$\bar \nu$.

\begin{ex}
\label{ex:t3a}
Match Example \ref{ex:rk1index2}$_3$ (from double cover of cubic hypersurface)
with Example \ref{ex:Dmm2}$_6$ (from double cover of (1,1)-divisor).
The polarising
lattices are $N_+ = (6)$ and $N_- = \sm{2 & 4 \\ 4 & 2}$, and we use
the configuration defined by
\[ W = \begin{pmatrix} 6 & 3 & 3 \\ 3 & 2 & 4 \\ 3 & 4 & 2 \end{pmatrix} . \]
If $H_+$ is the generator of $N_+$ and $A_- := \cvec{1}{1} \in N_-$ then
$\pi_+A_- = H_+$ and $\pi_-H_+ = \half A_-$, so
$N_+ = N_+^\frac{\pi}{4}$ and $A_- \in N_-^\frac{\pi}{4}$.
Thus condition \eqref{eq:amps} holds.

The orthogonal complement of $A_-$ in $N_-$
is generated by $B_- = \cvec{1}{-1}$, and
\[ \Lambda_+ = N_+ \oplus B_-\Z \cong
\begin{pmatrix} 6 & 0 \\ 0 & -12 \end{pmatrix}. \]
The family of blocks from Example \ref{ex:spec1}$_2$ is
$(\Lambda_+, H_+\bbrp)$-generic by Proposition \ref{prop:cubics}, so
Theorem \ref{thm:matching} yields matchings with the given configuration.

$b_3(M) = 23 - 1 -2 +18 + 32 + 1  = 71$.
By Proposition \ref{prop:pi4detail},
$\delta(H^3(T^2 \times \kd))$ is isomorphic to the cokernel of
\[ \wh W =
\begin{pmatrix} 3 & 3 & 3 \\ 3 & 2 & 4 \\ 3 & 4 & 2 \end{pmatrix} . \]
The image of $\wh W$ is an index 3 sublattice of the kernel
of $\sm{2 & -1 & -1} : \Z^3 \to \Z$, so $TH^4(M) \cong \Z/3$.
The cotorsion of $\wh W$ is generated by $\sm{1 \\ 1 \\ 1}$.
Its preimage under $\wh W$ is $\frac{1}{3} \sm{1 \\ 0 \\ 0}$,
so by Proposition \ref{prop:pi4detail} the corresponding generator of $TH^4(M)$
has self linking $\frac{1}{3}$.

The image of $(\half \bar c_+, \bar c_-) = \sm{15 & -18 & -18}$ in $\Z$
is $56$, so the greatest divisor of $p(M)$ modulo torsion is $\gcd(66,24) = 6$.
Since this is not coprime to the order of the torsion subgroup, we also need
to check the divisibility of $p(M)$ itself to determine the isomorphism class
of the pair $(H^4(M), p(M))$. But the image of $\sm{15 & -18 & -18}$ in
$\coker \wh W$ is divisible by 6 too, so we can choose an isomorphism
$H^4(M) \cong \Z^{71} \times \Z/3$ such that the image of $p(M)$ has no $\Z/3$
component.

We find $\bar\nu = -36$ like in Example \ref{ex:pi4mixed}.
\end{ex}

\begin{ex}
\label{ex:t3b}
Match Example \ref{ex:Dmm2}$_6$ (from double cover of (1,1)-divisor) with
Example \ref{ex:spec1}$^2_3$ (from cubic 3-fold in $\PP^4$).
The polarising lattices are the same as in the previous example, except that
the roles of $N_+$ and $N_-$ have been swapped, so we can use
essentially the same $W$ as above to define the configuration.
The justification for existence of matching is then just the same, and
$\bar \nu = -36$ by the same calculation as before.

However, the topological computations are different from the previous example,
even though most of the final values turn out to be the same.
This time $b_3(M)$ is computed by $23 - 1 -2 +14 + 36 + 1  = 71$,
while $TH^4(M)$ etc is controlled by
\[ \wh W =
\begin{pmatrix} 1 & 2 & 3 \\ 2 & 1 & 3 \\ 3 & 3 & 6 \end{pmatrix} . \]
The image of $\wh W$ is an index 3 sublattice of the kernel
of $\sm{1 & 1 & -1} : \Z^3 \to \Z$, so $TH^4(M) \cong \Z/3$.
The cotorsion of $\wh W$ is generated by $\sm{1 \\ 1 \\ 2}$.
Its preimage under $\wh W$ is $\frac{1}{3} \sm{0 & 0 & 1}$,
so by Proposition \ref{prop:pi4detail} the corresponding generator of $TH^4(M)$
has self linking $\frac{2}{3}$.

$(\half \bar c_+, -\bar c_-) = \sm{9 & 9 & -24}$, which is divisible by 6
modulo the image of $\wh W$. Thus $p(M)$ is divisible by 6.
The image in the free part of the cokernel is $9 + 9 + 24 = 42$, so the
greatest divisor of $p(M)$ modulo torsion is 6 too.

Since the torsion-linking form is different from Example \ref{ex:t3a},
there is no orientation-preserving diffeomorphism between these \fpif-twisted
connected sums. However, if we reverse the orientation of one, then the sign of
the torsion linking form changes (as does $\bar \nu$) while the other
invariants stay the same, so there does exist an orientation-reversing
diffeomorphism.
\end{ex}

\begin{rmk}
Recalling from \S\ref{subsec:angles} that changing the sign of the gluing angle
corresponds to reversing orientation, we could rephrase this as:
If we use the configuration in this example to construct a
$(-\frac{\pi}{4})$-twisted connected sum, then that is oriented-diffeomorphic
to the \fpif-twisted connected sum from Example \ref{ex:t3a}. 
However, the $(-\frac{\pi}{4})$-twisted connected sum has $\bar \nu = 36$,
so the two components of the $G_2$ moduli space are distinguished.
To emphasise this point, the entry in Table \ref{table:xtcs} for %
Example \ref{ex:t3b} lists the $(-\frac{\pi}{4})$-twisted connected sum. 
\end{rmk}

\begin{ex}
\label{ex:inertia}
Match Example \ref{ex:Dmm2}$_6$ with \ref{ex:octic} using
\[ W = \begin{pmatrix} 2 & 4 & 4 & 2 \\ 4 & 2 & 4 & 2 \\
4 & 4 & 8 & 8 \\ 2 & 2 & 8 & 0 \end{pmatrix} . \]
If we set $A_+ = \cvec{1}{1} \in N_+$ and $A_- = \cvec{1}{1} \in N_-$, then
$\pi_+A_- = A_+$ and $\pi_- A_+ = \half A_-$.
So $A_\pm \in N_\pm^{\frac{\pi}{4}}$, and condition \eqref{eq:amps} is
satisfied.
The orthogonal complements are generated by $B_+ = \cvec{1}{-1} \in N_+$
and $B_- = \cvec{1}{-2}$ respectively. In fact $B_\pm$ is also orthogonal
to $N_\mp$, and
\[ \Lambda_+ = \begin{pmatrix} 2 & 4 & 0 \\ 4 & 2 & 0 \\ 0 & 0 & -32 \end{pmatrix}, \quad 
 \Lambda_- =
\begin{pmatrix} 8 & 8 & 0 \\ 8 & 0 & 0 \\ 0 & 0 & -12 \end{pmatrix} . \]
Proposition \ref{prop:11div} and Lemma \ref{lem:octic_elliptic} provide
the genericity results needed for Theorem \ref{thm:matching} to produce
matchings with the given configuration.

$b_3(M) = 23-2-2+14+8+1 = 42$.
The cokernel of 
\[ \wh W = \begin{pmatrix} 1 & 2 & 4 & 2 \\ 2 & 1 & 4 & 2 \\
4 & 4 & 8 & 8 \\ 2 & 2 & 8 & 0 \end{pmatrix}  \]
is isomorphic to $\Z \oplus \Z/8$. The first component is multiplication by
$\sm{2 & 2 & -1 & -1}$, while the second component can be taken to be
multiplication by $\sm{0 & 0 & 1 & 2}$. In particular $TH^4(M) = \Z/8$.
We can take $\sm{0 \\ 0 \\ 1 \\ -1}$ as a generator for the cotorsion.
It has $\frac18 \sm{0 \\ 0 \\ -1 \\ 2}$ as a preimage under $\wh W$, so
the self-linking of the corresponding generator of $TH^4(M)$ is $\frac58$.

The image $(\half \bar c_+, -\bar c_-) = \sm{9 & 9 & -28 & -24}$ in the
free part of $\coker \wh W$ is $88$, so the greatest divisor of
$p(M)$ modulo torsion is 8. On the other hand, the image in $\Z/8$ is
$28+24 = 4 \mmod 8$.

The parameter $\max\{ d_o : sp(M) \textrm{ is divisible by } s^2d_o \textrm{ for some } s \in \mathbb{N}\}$ is identified by
Wilkens \cite[Conjecture p.\,548]{wilkens74} as key to computing the inertia
group of a 2-connected 7-manifold. In this example, we have $d_o = 4$,
so Wilkens' conjecture predicts that the inertia group of $M$ is the full
group of homotopy 7-spheres $\Theta_7 \cong \Z/28$; equivalently that
the topological manifold underlying $M$ has a unique class of smooth structure.
However, it turns out that this isomorphism class $(H^4(M), p(M))$ is
an exceptional case where Wilkens' prediction is incorrect. There are in fact
two inequivalent smooth structures on this manifold, see
\cite[Theorem 1.10 \& Example 5.2]{7class}.

Of the 19 configuration angles $\alpha^-_1, \ldots, \alpha^-_{19}$, two take
the value $\pi$ while the other 17 are 0. Thus $\bar \nu = -33$.
\end{ex}

Finally, here is a \fpif-matching using a configuration where there is a
non-trivial intersection between the polarising lattices.
 
\begin{ex}
\label{ex:intersection}
The involution blocks in Example \ref{ex:mm2}$_8$ (from double cover of one-point blow-up of $\PP^3$) have polarising lattice $N_+ = \sm{4 & 4 \\ 4 & 2}$,
while Example \ref{ex:deg7} (from blow-up of $\PP^3$ in an elliptic curve of
degree 7) has $N_- = \sm{4 & 9 \\ 9 & 8}$.
Let $A_+ := \cvec{1}{8} \in N_+$ and $A_- := \cvec{3}{1} \in N_-$. The
respective orthogonal complements are spanned by $B_+ := \cvec{5}{-9} \in N_+$
and $B_- := \cvec{-5}{3}$.
We have $A_+^2 = 196$, $A_-^2 = 98$ and $B_+^2 = B_-^2 = -98$.
We can thus view $N_+$ as the overlattice extending $\sm{196 & 0 \\ 0 & -98}$
by adjoining $\frac{1}{49}(9A_+ + 8B_+)$, and $N_-$ as
extending $\sm{98 & 0 \\ 0 & -98}$ by $\frac{1}{14}(5A_- + 3B_-)$.
Now extending
\[ \begin{pmatrix}
196 & 0 & 98 \\
0 & -98 & 0 \\
98 & 0 & 98
\end{pmatrix} \]
by $\frac{1}{49}\sm{9 \\ 8 \\ 0}$ and $\frac{1}{14}\sm{0 \\ 3 \\ 5}$ defines
an integral lattice $W$ that contains $N_+$ and $N_-$, and can be used to
define a configuration where $A_\pm \in N_\pm^{\frac{\pi}{4}}$. Alternatively,
$W$ can be described as the quotient of the degenerate lattice
\[ \begin{pmatrix}
4 & 4 & 5 & 3 \\
4 & 2 & 2 & 4 \\
5 & 2 & 4 & 9 \\
3 & 4 & 9 & 8
\end{pmatrix} \]
by its kernel. In any case, although this configuration does not have pure
angle \fpif, because $N_\pm$ is spanned by $N_\pm^{\frac{\pi}{4}}$ and
$N_+ \cap N_-$ it is still the case that $N_\pm = \Lambda_\pm$.
Therefore we do not need any genericity results beyond Proposition
\ref{prop:generic_fano} in order to produce matchings with this configuration
from Theorem \ref{thm:matching}.

The resulting \fpif-twisted connected sums have $\pi_2 M \cong H^2(M) \cong N_+ \cap N_- \cong \Z$, so are not 2-connected. From \eqref{eq:b3} we get
$b_3(M) = 23 - 2 - 2 + 1 + 16 + 12 + 1 = 49$. The cokernel of
\[ \wh W = \begin{pmatrix}
2 & 2 & 5 & 3 \\
2 & 1 & 2 & 4 \\
5 & 2 & 4 & 9 \\
3 & 4 & 9 & 8
\end{pmatrix} \]
is mapped isomorphically to $\Z$ by $\sm{1 & 8 & -3 & -1}$, so $H^4(M)$
is torsion-free. The image of
$(\half \bar c_+, -\bar c_-) = \sm{10 & 9 & -22 & -32}$ is 186, so the greatest
divisor of $p(M)$ is $d = \gcd(186,24) = 6$.

All 19 of the configuration angles $\alpha^-_1 = \cdots = \alpha^-_{19} = 0$,
so $\bar \nu = -39$ by Theorem \ref{thm:nubar}.
\end{ex}

\pagebreak

\xtcstable

\subsection{\fpis-matchings}

Finally we give 11 examples of \fpis-matchings (all but one with pure
angle \fpis).

\begin{ex}
\label{ex:1+1pi6}
We can search for \fpif-matchings of rank 1 involution blocks
similarly to how we found the \fpif-matchings of rank 1 blocks in Table
\ref{table:rk1pi4xtcs}. If the generators of the polarising lattices square to
$n_+$ and $n_-$ respectively, then there is a \fpis-configuration if and only
if $3n_+n_-$ is a square integer. 
Among the 7 rank 1 involution blocks in Table \ref{table:invblocks}, there
are 6 such (ordered) pairs.

\footnotetext{In Example \ref{ex:inertia}, the greatest divisor of $p(M)$
\emph{modulo torsion} is 8.}

For instance, we can match the involution blocks from
Examples \ref{ex:rk1index2}$_1$ and \ref{ex:rk1index2}$_3$ at
pure angle~$\frac{\pi}{6}$ using the matrix
\[ W = \begin{pmatrix} 2 & 3 \\3 & 6\end{pmatrix} . \]
Then
\[ b_3(M) = 23 - 1 - 1 + 18 + 46 +1 = 86 . \]
Since $N_+$ is 2-elementary, $H^4(M)$ is torsion-free.
Further we have that $\twothird \pi_- N_+ \cap N_- = N_-$, so
${\pi_+^* \bar c_+ + \half \bar c_-} \in (\twothird \pi_- N_+ \cap N_-)^*
= N_-^* \cong \Z$ corresponds to $26 \cdot \frac{3}{2} + \half 30 = 54$.
Hence the greatest divisor of $p(M)$ is~6.

If we swap the roles of those two blocks, then we instead define the
configuration by 
\[ W = \begin{pmatrix} 6 & 3 \\3 & 2\end{pmatrix} . \]
$\pi_+$ of the generator of $N_-$ is half the generator of $N_+$, so in
particular $N_+ + 2\pi_+ N_- = N_+$. Its discriminant group is
$\Delta = \ZZ/6\ZZ$, so Corollary \ref{cor:pi6pure} gives
$\Tor H^4(M) \cong \Delta/T_2\Delta \cong \ZZ/3\ZZ$, and that
a generator has self-linking $\frac13$.

We still have $\twothird \pi_-N_+ \cap N_- = N_-$. In terms of the generator
for $N_-^*$ we have
$\pi_+^* \bar c_+ + \half \bar c_- =
\third 26 \cdot \frac{3}{2} + \half 30 = 28$, so the greatest divisor of
$p(M)$ is 4.

Similarly we get two examples by matching Example \ref{ex:rk1index2}$_3$ to
Example \ref{ex:rk1index2}$_4$ and another two by matching it to Example
\ref{ex:p1rsmooth}$_1$, with invariants as listed in Table \ref{table:xtcs}.
\end{ex}

\begin{ex}
\label{ex:pi6main}
Match the involution block from Example \ref{ex:deg_sextic} with itself at pure
angle $\vartheta = \frac{\pi}{6}$ using the matrix
\[ W = \begin{pmatrix}
2 & 2 & 2 & 1 \\
2 & 0 & 1 & 2 \\
2 & 1 & 2 & 2 \\
1 & 2 & 2 & 0 \end{pmatrix} . \]
\[ b_3(M) = 23 - 2.2 + 2.44 + 2 = 109 .\]
Since $N_+$ has 2-elementary discriminant, $H^4(M)$ is torsion-free, and
to determine the greatest divisor of $p(M)$ we just have to consider
${\pi_+^* \bar c_+ + \half \bar c_-} \in N_+^*$.
We compute 
\[ \pi_+^* \bar c_+ + \half \bar c_- = 
\rvec{26}{24}\sm{\frac12 & 1 \\ \frac12 & \frac12} + \rvec{13}{12}
= \rvec{38}{50} \]
so the greatest divisor of $p(M)$ is 2.

According to row labelled 86 in \cite[Table 3]{g2m}, there are 3 rectangular
TCS of rank 1 Fanos with the same classifying invariants.
\end{ex}

\begin{ex}
\label{ex:pi6main8}
Example \ref{ex:deg_sextic} with itself at pure angle \fpis{} again, but
this time with configuration

\[ W = \begin{pmatrix}
2 & 2 & 2 & 1 \\
2 & 0 & 3 & 0 \\
2 & 3 & 2 & 2 \\
1 & 0 & 2 & 0 \end{pmatrix} . \]
The topological calculations are the same as in the previous example, except
that $p(M)$ is determined from
\[ \pi_+^* \bar c_+ + \half \bar c_- = 
\rvec{26}{24}\sm{\frac32 &  0 \\ -\frac12 & \frac12 } + \rvec{13}{12}
= \rvec{40}{24} \]
leading to $d = 8$ instead. So different pure angle matchings of the same
pair of blocks can lead to non-diffeomorphic extra-twisted connected sums.
\end{ex}

\begin{ex}
\label{ex:pi6deg5}
Match the involution blocks from Examples \ref{ex:deg_sextic} and
\ref{ex:DQf}$_5$ at pure angle $\thet = \frac{\pi}{6}$ using the configuration
defined by
\[ W = \begin{pmatrix}
2 & 2 & 4 & 2 \\
2 & 0 & 3 & 0 \\
4 & 3 & 10 & 4 \\
2 & 0 & 4 & 0 \end{pmatrix} . \]
$b_3(M) = 21 + 44 + 12 = 77$.
$N_+$ is 2-elementary, so $H^4(M)$ is torsion-free
$\pi_+^* \bar c_+ + \half \bar c_- =
\rvec{26}{24}\sm{\frac32 & 0 \\ \frac12 & 1} + \rvec{17}{6} = \rvec{68}{30}$,
with greatest divisor 2.
\end{ex}

\begin{ex}
\label{ex:pi6deg5flop}
We can match the involution blocks from Examples \ref{ex:deg_sextic} and
\ref{ex:Dcon}$_5$
with a configuration defined by
\[ W = \begin{pmatrix}
2 & 2 & 4 & 2 \\
2 & 0 & 3 & 3 \\
4 & 3 & 10 & 6 \\
2 & 3 & 6 & 2 \end{pmatrix} . \]
In fact, instead of applying Theorem \ref{thm:matching} directly,
we can obtain the matchings with this prescribed configuration from the
matchings in Example \ref{ex:pi6deg5}. This relies on the fact that
Example \ref{ex:Dcon}$_5$ is a flop of Example \ref{ex:DQf}$_5$, and the
lattice $W$ defining the configuration here is isometric to the configuration
lattice from Example \ref{ex:pi6deg5}. Therefore, for any $\frac{\pi}{6}$-matching $\hkr : \kd_+ \to \kd_-$ of blocks $Z_+$ from Example \ref{ex:deg_sextic}
and $Z_-$ from Example \ref{ex:DQf}$_5$ as in Example \ref{ex:pi6deg5},
flopping $Z_-$ yields a building block $\wh Z_-$ in the family of Example
\ref{ex:Dcon}$_5$ with the same anticanonical divisor $\kd_-$, so that
$\hkr$ is a $\frac{\pi}{6}$-matching of $Z_+$ and $\wh Z_-$.
Thus the $\frac{\pi}{6}$-twisted connected sums from this example and Example
\ref{ex:pi6deg5} can be regarded as being related by a ``$G_2$ conifold
transition'' of the kind discussed in \cite[\S 8]{g2m}.

Flopping does not change the cohomology groups, so just like
in the previous example we find that $b_3(M) = 21 + 44 + 12 = 77$,
and $H^4(M)$ is torsion-free. On the other hand
$\pi_+^* \bar c_+ + \half \bar c_- =
\rvec{26}{24}\sm{\frac32 & \frac32 \\ \frac12 & -\frac12} + \rvec{17}{9}
= \rvec{68}{36}$,
so the greatest divisor of $p(M)$ is 4 in this example.
\end{ex}

Finally we consider a matching that is not at pure angle \fpis.

\begin{ex}
\label{ex:pi6t7}
Match Examples \ref{ex:Dcon}$_2$ and \ref{ex:spec1}$^1_6$ using
\[ W =
\begin{pmatrix}
4 & 6 & 5\\
6 & 2 & 4 \\
5 & 4 & 6
\end{pmatrix} \]
Letting $A_+ = \cvec{1}{1} \in N_+$ and $H_-$ be the generator of $N_-$, we find
$\pi_-A_+ = \frac{3}{2} H_-$ and $\pi_+ H_- = \half A_+$, so
$A_+ \in N_+^{\frac{\pi}{6}}$ and $N_- = N_-^\frac{\pi}{6}$.
Thus \eqref{eq:amps} holds.
$\Lambda_-$ is spanned by $N_-$ and $B_+ := \cvec{4}{-5}$, so
\[ \Lambda_- \cong \begin{pmatrix} 6 & 0 \\ 0 & -126 \end{pmatrix} . \]
The family of blocks from Example \ref{ex:spec1}$^1_6$ is
$(\Lambda_-, H_-\bbrp)$-generic by Proposition \ref{prop:cubics}, so
Theorem \ref{thm:matching} yields a matching with the prescribed
configuration.

$b_3(M) = 23 -1 -2 + 6 + 18 +1 = 45$. The image of
\[ \wh W =
\begin{pmatrix}
2 & 3 & 5\\
3 & 1 & 4 \\
5 & 4 & 9
\end{pmatrix} \]
is an index 7 sublattice of the kernel of $\sm{1 & 1 & -1} : \Z^3 \to \Z$, so
$TH^4(M) \cong \Z/7$. The image of
$(\half \bar c_+, -\half \bar c_-) =  (14, 9, -15)$ in $\Z$ is 38,
so the greatest divisor of $p(M)$ modulo torsion $d = \gcd(38,24) = 2$.
As this is coprime to the order of the torsion, $p(M)$ can have no interesting
torsion component.

The data we have computed so far is enough to show that this \fpis-twisted
connected sum is diffeomorphic to Example \ref{ex:pi4t7}, but to determine the
orientedness of the diffeomorphism we also need to determine the
torsion-linking form.
The cotorsion of $\wh W$ is generated by $\sm{1 \\ 1 \\ 2}$. That has
$\frac17\sm{1 \\ 0 \\ 1}$ as a preimage under $\wh W$, so the corresponding
generator of $TH^4(M)$ has torsion self-linking~$\frac{3}{7}$.
As 3 is not a quadratic residue mod 7, another choice of generator has
self-linking $\frac{-1}{7}$.
Thus the diffeomorphism between this \fpis-twisted connected sum and
the one from Example \ref{ex:pi4t7} is orientation-preserving.
\end{ex}

\pagebreak

\bibliographystyle{amsinitial}
\bibliography{g2geom}

\end{document}